\newcommand*{\mint}[1]{%
	\mint@l{#1}{}%
}
\newcommand*{\mint@l}[2]{%
	\@ifnextchar\limits{%
		\mint@l{#1}%
	}{%
		\@ifnextchar\nolimits{%
			\mint@l{#1}%
		}{%
			\@ifnextchar\displaylimits{%
				\mint@l{#1}%
			}{%
				\mint@s{#2}{#1}%
			}%
		}%
	}%
}
\newcommand*{\mint@s}[2]{%
	\@ifnextchar_{%
		\mint@sub{#1}{#2}%
	}{%
		\@ifnextchar^{%
			\mint@sup{#1}{#2}%
		}{%
			\mint@{#1}{#2}{}{}%
		}%
	}%
}
\def\mint@sub#1#2_#3{%
	\@ifnextchar^{%
		\mint@sub@sup{#1}{#2}{#3}%
	}{%
		\mint@{#1}{#2}{#3}{}%
	}%
}
\def\mint@sup#1#2^#3{%
	\@ifnextchar_{%
		\mint@sup@sub{#1}{#2}{#3}%
	}{%
		\mint@{#1}{#2}{}{#3}%
	}%
}
\def\mint@sub@sup#1#2#3^#4{%
	\mint@{#1}{#2}{#3}{#4}%
}
\def\mint@sup@sub#1#2#3_#4{%
	\mint@{#1}{#2}{#4}{#3}%
}
\newcommand*{\mint@}[4]{%
	\mathop{}%
	\mkern-\thinmuskip
	\mathchoice{%
		\mint@@{#1}{#2}{#3}{#4}%
		\displaystyle\textstyle\scriptstyle
	}{%
		\mint@@{#1}{#2}{#3}{#4}%
		\textstyle\scriptstyle\scriptstyle
	}{%
		\mint@@{#1}{#2}{#3}{#4}%
		\scriptstyle\scriptscriptstyle\scriptscriptstyle
	}{%
		\mint@@{#1}{#2}{#3}{#4}%
		\scriptscriptstyle\scriptscriptstyle\scriptscriptstyle
	}%
	\mkern-\thinmuskip
	\int#1%
	\ifx\\#3\\\else_{#3}\fi
	\ifx\\#4\\\else^{#4}\fi  
}
\newcommand*{\mint@@}[7]{%
	\begingroup
	\sbox0{$#5\int\m@th$}%
	\sbox2{$#5\int_{}\m@th$}%
	\dimen2=\wd0 %
	\let\mint@limits=#1\relax
	\ifx\mint@limits\relax
	\sbox4{$#5\int_{\kern1sp}^{\kern1sp}\m@th$}%
	\ifdim\wd4>\wd2 %
	\let\mint@limits=\nolimits
	\else
	\let\mint@limits=\limits
	\fi
	\fi
	\ifx\mint@limits\displaylimits
	\ifx#5\displaystyle
	\let\mint@limits=\limits
	\fi
	\fi
	\ifx\mint@limits\limits
	\sbox0{$#7#3\m@th$}%
	\sbox2{$#7#4\m@th$}%
	\ifdim\wd0>\dimen2 %
	\dimen2=\wd0 %
	\fi
	\ifdim\wd2>\dimen2 %
	\dimen2=\wd2 %
	\fi
	\fi
	\rlap{%
		$#5%
		\vcenter{%
			\hbox to\dimen2{%
				\hss
				$#6{#2}\m@th$%
				\hss
			}%
		}%
		$%
	}%
	\endgroup
}
\newcommand{\lyxaddress}[1]{
	\par {\raggedright #1
	\vspace{1.4em}
	\noindent\par}
}
\newtheorem{Lem}{Lemma}
\newtheorem{Theo}{Theorem}
\theoremstyle{remark}
\newtheorem{remark}{Remark}
\newcommand{\N}{\mathbb{N}}
\newcommand{\R}{\mathbb{R}}
\numberwithin{equation}{section}
\numberwithin{Lem}{section}
\begin{document}

\title[Partial gradient regularity]{\textbf{Partial gradient regularity for parabolic systems with degenerate diffusion and Hölder continuous coefficients}}
\author{Fabian Bäuerlein}
\date{September 24, 2024}

\begin{abstract} We consider vector valued weak solutions $u:\Omega_T\to \mathbb{R}^N$ with $N\in \mathbb{N}$ of degenerate or singular parabolic systems of type
\begin{equation*}
    \partial_t u - \mathrm{div} \, a(z,u,Du) = 0 \qquad\text{in}\qquad \Omega_T= \Omega\times (0,T),
\end{equation*}
where $\Omega$ denotes an open set in $\mathbb{R}^{n}$ for $n\geq 1$ and $T>0$ a finite time. Assuming that the vector field $a$ is not of Uhlenbeck-type structure, satisfies $p$-growth assumptions and $(z,u)\mapsto a(z,u,\xi)$ is Hölder continuous for every $\xi\in \mathbb{R}^{Nn}$, we show that the gradient $Du$ is partially Hölder continuous, provided the vector field degenerates like that of the $p$-Laplacian for small gradients.
\end{abstract}

\subjclass[2020]{35K55, 35K65, 35K67}

\keywords{Singular systems; Degenerate systems; Hölder continuous coefficients; $\mathcal{A}$-caloric approximation; $p$-caloric approximation; Gradient regularity}

\maketitle
\tableofcontents

\section{Introduction}\label{Intro}

In this paper we prove partial local Hölder continuity for the gradient of a possibly vector valued weak solution $u:\Omega_T\to \R^N$ with $N\in \N$ for degenerate or singular parabolic systems of type
\begin{align}\label{Sys1}
    \partial_t u - \mathrm{div} \, a(z,u,Du) = 0 \qquad\text{in}\qquad \Omega_T.
\end{align}
Here $\Omega$ denotes an open set of $\R^n$ with $n\geq 1$ and $\Omega_T$ the cylindrical domain $\Omega\times (0,T)$ for some time $T>0$. The vector field $a: \Omega_T \times \R^N \times \R^{Nn} \to \R^N$ satisfies for $\frac{2n}{n+2} < p$ degenerate ($p>2$) or singular ($p<2$) $p$-growth conditions with respect to the gradient variable and behaves like the $p$-Laplacian vector field for small gradients. This will be made precise in Section \ref{Assumptions}. Furthermore, we assume that $a$ is Hölder continuous with respect to the space-time and solution variables, namely that $(z,u)\mapsto a(z,u,\xi)$ is Hölder continuous for every $\xi\in \R^{Nn}$ and impose an additional regularity assumption on the gradient variable. The main novelty of our approach is that we do not assume a Uhlenbeck-type structure condition for the vector field, which is, to the knowledge of the author, the only condition that implies everywhere regularity for solutions to systems of type (\ref{Sys1}). Uhlenbeck-type structure is thereby understood as follows: There exists a scalar function $\Tilde{a}: \Omega \times \R^N \times \R \to \R$ such that 
\begin{equation*}
    a(z,u,Du)=\Tilde{a}(z,u,|Du|) Du,
\end{equation*}
namely that the vector field points in the direction of the gradient. The stated condition can be slightly relaxed, see for instance \cite{BOGELEIN2022113119}, nonetheless there is still a quite restrictive assumption on the direction of the vector field imposed. \\
In order to understand the considered continuity of the gradient, let us briefly summarize what type of regularity we can expect for systems of type (\ref{Sys1}). For stationary solutions, i.e. solutions without time dependency, where $a$ is of Uhlenbeck-type structure, it is known that the gradient is locally Hölder continuous. This was proven in the classical works by Ural'tseva \cite{Ural'tseva} for degenerate equations ($N=1$), see also \cite{Ural'tseva2}, and Uhlenbeck \cite{Uhlenbeck} for degenerate systems ($N\geq 1$). The latter were also treated by Giaquinta and Modica in \cite{GiaquintaModica}. Furthermore, the same gradient regularity was obtained for singular equations by Tolksdorf in \cite{tolksdorf1984regularity} and thereafter by Acerbi and Fusco in \cite{Acerbi} for singular systems. For stationary solutions to systems of type (\ref{Sys1}) for which the vector field $a$ does not possess Uhlenbeck-type structure, we cannot expect the gradient to be Hölder continuous in the entire domain $\Omega$. Namely, it can even occur that solutions $u$ possess singularities. For counterexamples, we refer to the survey \cite{Darkside} by Mingione. Instead, one can prove that the gradient is locally partial Hölder continuous. This is to be understood as follows: There exists a set $\Sigma \subset \Omega$ with $n$-dimensional Lebesgue-measure zero such that $\Omega\setminus \Sigma$ is open and $Du \in C^{\alpha}_\mathrm{loc}(\Omega\setminus \Sigma , \R^N)$, for some $\alpha\in (0,1)$. Using the method of $p$-caloric approximation, this was established by Duzaar and Mingione \cite{duzaar2004regularity}. For a general account of the partial regularity theory, we refer to \cite{giaquinta1983multiple,GuistiDirect}. Further contributions to partial regularity include~\cite{bogelein2012partial,irving2023bmo} and a list of references can be found in~\cite{Darkside}. \\
The local Hölder continuity for evolutionary systems of Uhlenbeck-type structure was established in the groundbreaking works of DiBenedetto and Friedman \cite{DiBenedettoFriedman1,DiBenedettoFriedman2,DiBenedettoFriedman3}, where the nowadays widespread concept of intrinsic geometry is extensively used to derive appropriate local estimates. This approach is at length discussed in the monograph \cite{DiBenedettoDPE} by DiBenedetto. Again, everywhere local Hölder continuity for the gradient cannot be expected if the vector field $a$ does not possess Uhlenbeck structure. Nevertheless, local partial regularity, i.e., the existence of a subset $\Sigma \subset \Omega_T$ with $n+1$-dimensional Lebesgue-measure zero such that $\Omega_T\setminus \Sigma$ is open and $Du \in C^{\alpha}_\mathrm{loc}(\Omega_T\setminus \Sigma , \R^N)$ holds. This was established for singular and degenerate systems by Bögelein, Duzaar and Mingione \cite{bogelein2013regularity}, where the vector field does not depend on the space-time point $z$ and the solution $u$ and later generalized by Ok, Scilla and Stroffolini to vector fields that satisfy Orlicz-growth \cite{ok2024partial}. Both papers heavily rely on the machinery of the $\mathcal{A}$-caloric- and $p$-caloric approximation techniques. The former was first used for the treatment of non-degenerate linear ($p=2$) second order parabolic systems by Duzaar and Mingione in \cite{duzaar2005second}. Thereafter, both techniques were used for non-linear systems in the super-quadratic regime ($p\geq 2$) by Duzaar, Mingione and Steffen \cite{duzaar2011parabolic}, whereas the sub-quadratic regime ($2n/(n+2)<p<2$) was treated by Scheven \cite{Scheven2006}. Another paper for non-degenerate parabolic systems with Orlicz growth is the one by Foss, Isernia, Leone and Verde \cite{Foss2022}, which was later generalized in \cite{ok2024partial}. In the present paper, we generalize the partial regularity obtained in \cite{bogelein2013regularity} to vector fields that can depend on the space-time point as well as the solution and possess degenerate diffusion. Analogs to the results presented in this paper were obtained for non-degenerate diffusion in \cite{duzaar2011parabolic,Scheven2006}. Moreover, we simplify the approach given in \cite{bogelein2013regularity} with the help of a new a priori estimate, namely Lemma \ref{Lem7.02}, for the $p$-Laplacian. Furthermore, the proof does not require Hölder continuity for the solution $u$. Let us now take a closer look at the structure of the vector field $a$.

\section{Assumptions and results}\label{Assumptions}

Throughout the paper we assume $p>1$. The vector field $a: \Omega_T \times \R^N \times \R^{Nn} \to \R^N$ is assumed to be Borel-measurable and the partial map $\xi\mapsto a(\,\cdot\,,\,\cdot\,,\xi)$ is of class $C^1$ on $\R^{Nn}$ for $p\geq 2$ and on $\R^{Nn}\setminus \{0\}$ for $1<p<2$. Moreover, the vector field  satisfies the classical \textbf{$p$-growth and -ellipticity assumptions}
\begin{align}
    &|a(z,u,\xi)|\leq L |\xi|^{p-1} \label{As1}\\ 
    &\nu |\xi|^{p-2} |\zeta|^2 \leq \langle( D_\xi a)(z,u,\xi) \cdot \zeta,\zeta \rangle = \sum\limits_{i=1}^n\sum\limits_{j=1}^N \big( ( D_{\xi_i^j} a)(z,u,\xi) \cdot \zeta \big) \zeta_i^j\label{As2}
\end{align}
for almost every $z\in\Omega_T$ and all $u\in \R^N$, $\zeta, \xi\in \R^{Nn}$. Furthermore, given $M\geq 0$, we assume that there exists a number $\kappa_M>0$, such that we have for every $\xi\in \R^{Nn}$ with $0<|\xi|\leq M$ that
\begin{equation}\label{As3}
    \big| ( D_\xi a) (z,u,\xi) \big| \leq L \kappa_M |\xi|^{p-2}
\end{equation}
for some $L>0$ and almost every $z\in\Omega_T$ and all $u\in \R^N$. The function $\kappa: \R_{\geq 0}\to \R_{\geq 0}; M\mapsto \kappa_M$ is assumed to be non-decreasing. In the sub-quadratic case, we define $( D_\xi a) (z,u,\xi) = 0$, whenever $\xi=0$. Furthermore, we can assume $\kappa_1=1$ by re-scaling $L$. We additionally impose a continuity assumption on the derivative of the vector field $a$ with respect to the gradient variable $\xi$. Let $\omega_{M}: [0,\infty)\to [0,\infty)$ be a non-decreasing modulus of continuity such that $\omega_M^2$ is concave, then we require
\begin{equation}\label{ModCon}
    | ( D_\xi a) (z,u,\xi_1) - ( D_\xi a) (z,u,\xi_2) | \leq \begin{cases}
        L \omega_{M} \bigg( \frac{|\xi_1 - \xi_2|^2}{|\xi_1|^2 + |\xi_2|^2} \bigg) \big( |\xi_1|^2 + |\xi_2|^2 \big)^{\frac{p-2}{2}} &\text{for } p\geq 2, \\
        L \omega_{M} \bigg( \frac{|\xi_1 - \xi_2|^2}{|\xi_1|^2 + |\xi_2|^2} \bigg) \bigg( \frac{|\xi_1|^2 + |\xi_2|^2}{|\xi_1|^2 |\xi_2|^2} \bigg)^{\frac{2-p}{2}} &\text{for } p<2
    \end{cases} 
\end{equation}
for almost every $z\in \Omega_T$ and all $u\in \R^N$, $\xi_1,\xi_2\in \R^{Nn}$ satisfying $|\xi_1|,|\xi_2|\leq M$. With respect to the $(z,u)$-variables we assume Hölder-continuity of $a$, namely the existence of some $\beta\in (0,1)$ such that
\begin{equation}\label{ModConu}
    |a(z,u,\xi) - a(z_0,u_0,\xi)| \leq L \min\big\{ (|x-x_0|^2 + |t-t_0|)^{\frac{\beta}{2}} + |u-u_0|^\beta , 1 \big\} |\xi|^{p-1}
\end{equation}
for every $z=(x,t),z_0=(x_0,t_0) \in \Omega $, $u,u_0\in \R^n$ and all $\xi\in \R^{N n}$. Furthermore, we require that the vector field $a$ exhibits $p$-Laplace type behavior for small gradients in the sense that
\begin{equation}\label{Beh0}
    \lim\limits_{s\rightarrow 0} \frac{a(z,u,s\xi)}{s^{p-1}} = \Tilde{a}(z,u) |\xi|^{p-2} \xi
\end{equation}
holds uniformly in $\{ \xi \in \mathbb{R}^{Nn} \,|\, |\xi|=1 \}$ for almost every $z \in \Omega_T$ and $u \in \mathbb{R}^{N}$. Thereby, $\Tilde{a}: \Omega_T \times \mathbb{R}^{N}\to \mathbb{R}$ denotes a measurable function satisfying
\begin{align}\label{Asa}
    &\nu \leq \Tilde{a}(z,u) \leq L
\end{align}
for every $(z,u)\in \Omega_T\times \R^N$, which is Hölder-continuous with exponent $\beta$, i.e. 
\begin{equation}\label{ModConu2}
    |\Tilde{a}(z,u) - \Tilde{a}(z_0,u_0)| \leq L \min\big\{ (|x-x_0|^2 + |t-t_0|)^{\frac{\beta}{2}} + |u-u_0|^\beta , 1 \big\}
\end{equation}
holds for every $z,z_0 \in \Omega_T $ and $u,u_0\in \R^n$. The notion of (weak) solutions to (\ref{Sys1}) adopted throughout this paper is that of the usual distributional one. This is understood to be a map 
\begin{align}
    u\in  C\big([0,T], L^p(\Omega,\R^N)\big) \cap L^p \big(0,T,W^{1,p}\big(\Omega,\R^N\big)\big)
\end{align}
satisfying
\begin{align}\label{weakfrom}
    \int\limits_{\Omega_T} u\cdot \partial_t \phi - a(z,u,Du) \cdot D\phi \, dz = 0
\end{align}
for every $\phi\in C^\infty_0(\Omega_T,\R^N)$, where we require at least (\ref{As1}) for the integral to be finite. We are now in the position to state the main partial regularity result.
\begin{Theo}[Partial regularity]\label{Theo1}
    Let
    \begin{equation*}
        \frac{2n}{n+2} < p \neq 2
    \end{equation*}
    and
    \begin{align*}
        u\in  C\big([0,T], L^p(\Omega,\R^N)\big) \cap L^p \big(0,T,W^{1,p}\big(\Omega,\R^N\big)\big)
    \end{align*}
    be a weak solution of the parabolic system (\ref{Sys1}) that satisfies the assumptions (\ref{As1})-(\ref{ModConu2}). Then there exists $\alpha=\alpha(n,N,p,\nu,L,\kappa_3)\in (0,\beta)$ and an open set $Q_0\subset \Omega_T$ such that
    \begin{align}
        Du \in C_\mathrm{loc}^{\alpha,\alpha/2}(Q_0,\R^{Nn}) \qquad\text{and}\qquad |\Omega_T \setminus Q_0|=0.
    \end{align}
\end{Theo}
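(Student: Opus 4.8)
The plan is to follow the by-now classical scheme for partial regularity of degenerate/singular parabolic systems via harmonic-type approximation, as in \cite{bogelein2013regularity,duzaar2011parabolic,Scheven2006}, but combining the $\mathcal{A}$-caloric approximation in the non-degenerate regime with the $p$-caloric approximation in the degenerate regime and carrying along the perturbations generated by the $(z,u)$-dependence through \eqref{ModConu} and \eqref{ModConu2}. First I would fix $z_0=(x_0,t_0)\in\Omega_T$, introduce the intrinsic cylinders $Q_\rho^{(\lambda)}(z_0)=B_\rho(x_0)\times(t_0-\lambda^{2-p}\rho^2,\,t_0+\lambda^{2-p}\rho^2)$ with $\lambda$ comparable to $|(Du)_{z_0;\rho}^{(\lambda)}|$, on which the $p$-growth system behaves, at the intrinsic scale $\lambda$, like a non-degenerate one. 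The central object is an excess functional measuring the mean-square oscillation of $Du$, e.g.
\[
\Phi_\lambda(z_0,\rho) \;=\; \frac{1}{|Q_\rho^{(\lambda)}(z_0)|}\int_{Q_\rho^{(\lambda)}(z_0)} \Big|V_\lambda(Du)-\big(V_\lambda(Du)\big)_{z_0;\rho}^{(\lambda)}\Big|^2 \, dz, \qquad V_\lambda(\xi)=(\lambda^2+|\xi|^2)^{\frac{p-2}{4}}\xi,
\]
together with a companion term controlling $\lambda^{2-p}|Du-(Du)_{z_0;\rho}^{(\lambda)}|^{p}$; the target is a Campanato-type decay estimate for $\Phi_\lambda$.

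Second, I would assemble the tools. A Caccioppoli inequality of the second kind on intrinsic cylinders (from \eqref{As1}, \eqref{As2}, \eqref{As3}) and a Gehring-type higher-integrability estimate, so that the excess controls slightly super-$p$ integrals. In the \emph{non-degenerate regime}, where $\lambda$ dominates a large multiple of $\sqrt{\Phi_\lambda(z_0,\rho)}$, one freezes the coefficients at $\big(z_0,(u)_{z_0;\rho},(Du)_{z_0;\rho}\big)$ and, using \eqref{ModCon} and \eqref{ModConu}, shows $u$ is approximately $\mathcal{A}$-caloric with $\mathcal{A}=D_\xi a\big(z_0,(u)_{z_0;\rho},(Du)_{z_0;\rho}\big)$; the $\mathcal{A}$-caloric approximation lemma supplies a nearby solution $h$ of a linear parabolic system whose classical interior $C^\infty$-estimates yield excess decay. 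In the \emph{degenerate regime}, where $\lambda$ is of the order of $\sqrt{\Phi_\lambda(z_0,\rho)}$ (so $|Du|$ is small at scale $\rho$), the key is \eqref{Beh0}: the vector field is uniformly close to $\xi\mapsto\tilde a(z_0,u_0)|\xi|^{p-2}\xi$, hence $u$ is approximately $p$-caloric; the $p$-caloric approximation lemma produces a $p$-caloric comparison map, to which one applies the a priori $C^{1,\alpha_0}$-estimates for $p$-caloric maps — in particular the new a priori estimate, Lemma \ref{Lem7.02} — to transfer the decay back to $u$. (Note that no Hölder continuity of $u$ is needed: the oscillation of the means $(u)_{z_0;\rho}$ is absorbed through a parabolic Poincaré inequality.)

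Third, I would merge both regimes into a single excess-decay inequality: for a fixed small $\theta\in(0,1)$ and provided a smallness condition on $\Phi_\lambda(z_0,\rho)$ together with a bookkeeping condition relating $\lambda$ to $|(Du)_{z_0;\rho}|$ holds, one obtains
\[
\Phi_{\lambda'}(z_0,\theta\rho)\;\le\; C\,\theta^{2\alpha_0}\,\Phi_\lambda(z_0,\rho)\;+\;C\,\rho^{2\beta},
\]
with $\lambda'$ the updated intrinsic parameter; here $\alpha<\beta$ must be chosen so that the perturbation term does not spoil the iteration. A careful iteration — verifying that $\lambda$ stays comparable along the scales and that transitions between the two regimes are controlled — yields $\Phi_\lambda(z_0,r)\le C\,r^{2\alpha}$ for all small $r$ at every $z_0$ in the set
\[
Q_0=\Big\{z_0\in\Omega_T:\ \liminf_{\rho\downarrow 0}\big(\text{excess at }(z_0,\rho)\big)=0\ \text{ and }\ \limsup_{\rho\downarrow 0}\big|(Du)_{z_0;\rho}\big|<\infty\Big\},
\]
which is open and, by the Lebesgue differentiation theorem applied to $Du$ (and, via \eqref{ModConu}, to the coefficients), has full measure in $\Omega_T$. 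The Campanato characterization of parabolic Hölder spaces then upgrades this decay to $Du\in C^{\alpha,\alpha/2}_{\mathrm{loc}}(Q_0,\R^{Nn})$, with $\alpha=\alpha(n,N,p,\nu,L,\kappa_3)$ because along the iteration one may normalize so that $|Du|$ stays below the fixed threshold $3$, whence only $\kappa_3$ from \eqref{As3} enters the ellipticity ratios.

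The step I expect to be the main obstacle is the matching of the two regimes under the intrinsic scaling: in the degenerate regime the cylinders are effectively "$p$-intrinsic" with $\lambda\sim\sqrt{\Phi_\lambda}$, while in the non-degenerate one $\lambda\sim|(Du)_{z_0;\rho}|$, and one must show that a single step of the iteration can cross between them without the excess blowing up — this is precisely where the quantitative form of \eqref{Beh0}, the sharp $p$-caloric a priori estimate (Lemma \ref{Lem7.02}), and a delicate stopping-time/alternative argument are required. A secondary difficulty is that for $1<p<2$ the vector field and $V_\lambda$ are only $C^1$ away from the origin, so every linearization must be performed with the regularized quantities $(\lambda^2+|\xi|^2)^{(p-2)/2}$ and the modulus $\omega_M$ of \eqref{ModCon} handled through its concavity and Jensen's inequality.
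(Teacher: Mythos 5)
Your plan matches the paper's proof in essentially every structural respect: intrinsic cylinders, a hybrid excess carrying the perturbation terms from \eqref{ModConu}--\eqref{ModConu2}, $\mathcal{A}$-caloric approximation in the non-degenerate regime and $p$-caloric approximation together with the new a priori estimate Lemma~\ref{Lem7.02} in the degenerate one, a stopping-time alternative to cross between regimes, and Campanato's characterization at the end, with the regular set described exactly as $\Omega_T\setminus(\Sigma_1\cup\Sigma_2)$. One minor inaccuracy: the paper nowhere invokes a Gehring-type higher-integrability estimate; all the needed control comes from the Caccioppoli, Poincaré, and Sobolev--Poincaré inequalities of Section~\ref{SCacc}, so that step in your outline is superfluous.
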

\begin{remark}
    We exclude the case $p=2$, since this has already been dealt with in \cite{duzaar2005second}.
\end{remark}
If we denote for $(x_0,t_0)=z_0\in\Omega_T$ and $\varrho>0$ by $Q^{(1)}_\varrho(z_0)\subset \Omega_T$ the standard parabolic cylinder $Q^{(1)}_\varrho(z_0)=B_\varrho(x_0) \times (t_0-\varrho^2,t_0+\varrho^2)$, where $B_\varrho(x_0)$ is an open ball of radius $\varrho$ with center $x_0$ in $\R^n$, we define the \textbf{excess-functional} 
\begin{align}\label{excess}
    \Phi_1(u;z_0,\varrho)= \frac{1}{|Q^{(1)}_\varrho|}\int\limits_{Q^{(1)}_\varrho(z_0)} \Big( \big|  (Du)_{Q^{(1)}_\varrho(z_0)} \big|^2 + \big| Du - (Du)_{Q^{(1)}_\varrho(z_0)} \big|^2 \Big)^{\frac{p-2}{2}} \big| Du - (Du)_{Q^{(1)}_\varrho(z_0)} \big|^2 \, dz.
\end{align}
Hereby,
\begin{align*}
    (Du)_{Q^{(1)}_\varrho(z_0)} = \frac{1}{|Q^{(1)}_\varrho|}\int\limits_{Q^{(1)}_\varrho(z_0)} Du \, dz
\end{align*}
denotes the \textbf{cylindrical mean} of $Du$ with respect to the cylinder $Q^{(1)}_\varrho(z_0)$. Then we have the following characterization of the singular set.
\begin{Theo}[Description of the singular set]\label{Theo2}
    Under the assumptions of Theorem \ref{Theo1}, we know that the singular set $\Sigma=\Omega_T \setminus Q_0$ is contained in $\Sigma_1\cup\Sigma_2$, where
    \begin{align}
        &\Sigma_1 \equiv \Big\{ z_0\in \Omega_T \,\Big|\, \liminf\limits_{\varrho\downarrow 0} \Phi_1 (u;z_0,\varrho) > 0 \Big\} ,\\
        &\Sigma_2 \equiv \Big\{ z_0\in \Omega_T \,\Big|\, \limsup\limits_{\varrho\downarrow 0} \big|(Du)_{Q^{(1)}_\varrho(z_0)}\big| = \infty \Big\}.
    \end{align}
\end{Theo}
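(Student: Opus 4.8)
The plan is to obtain Theorem \ref{Theo2} as a short corollary of the regularity criterion on which the proof of Theorem \ref{Theo1} is built. That proof will establish a statement of the following type: given $M_0\ge 0$, there exist $\varepsilon_\ast=\varepsilon_\ast(n,N,p,\nu,L,\kappa_{M_0},M_0)>0$ and $\varrho_\ast=\varrho_\ast(n,N,p,\nu,L,\beta,M_0)>0$ such that, whenever $z_0\in\Omega_T$ and $\varrho\in(0,\varrho_\ast]$ satisfy $Q^{(1)}_{2\varrho}(z_0)\Subset\Omega_T$,
\[
\big|(Du)_{Q^{(1)}_\varrho(z_0)}\big|\le M_0
\qquad\text{and}\qquad
\Phi_1(u;z_0,\varrho)\le\varepsilon_\ast ,
\]
then $z_0$ is a \emph{regular point}, i.e.\ it possesses a neighborhood contained in $Q_0$ on which $Du\in C^{\alpha,\alpha/2}$. (The lower-order error term produced by the Hölder continuity \eqref{ModConu}, \eqref{ModConu2} of the coefficients on a cylinder of radius $\varrho$ can be absorbed into $\varepsilon_\ast$ at the cost of shrinking $\varrho_\ast$.) Granting this, it suffices to show that every point of $\Omega_T$ lying outside $\Sigma_1\cup\Sigma_2$ meets these hypotheses for at least one admissible radius.

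So fix $z_0\in\Omega_T\setminus(\Sigma_1\cup\Sigma_2)$. Since $z_0\notin\Sigma_2$, the quantity $\limsup_{\varrho\downarrow0}|(Du)_{Q^{(1)}_\varrho(z_0)}|$ is finite, so there are $M_0\ge0$ and $\varrho_0>0$ with $|(Du)_{Q^{(1)}_\varrho(z_0)}|\le M_0$ for all $\varrho\in(0,\varrho_0]$, and this fixes the constants $\varepsilon_\ast$, $\varrho_\ast$ above. Since $z_0\notin\Sigma_1$, we have $\liminf_{\varrho\downarrow0}\Phi_1(u;z_0,\varrho)=0$, so we may pick a sequence $\varrho_j\downarrow0$ with $\Phi_1(u;z_0,\varrho_j)\to0$. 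Choosing $j$ so large that $\varrho_j\le\min\{\varrho_0,\varrho_\ast\}$, $Q^{(1)}_{2\varrho_j}(z_0)\Subset\Omega_T$ and $\Phi_1(u;z_0,\varrho_j)\le\varepsilon_\ast$, all hypotheses of the criterion hold at the cylinder $Q^{(1)}_{\varrho_j}(z_0)$, whence $z_0\in Q_0$. This proves the inclusion $\Omega_T\setminus Q_0\subseteq\Sigma_1\cup\Sigma_2$ asserted in the theorem. (Incidentally, $|\Sigma_1|=|\Sigma_2|=0$ by a Lebesgue-point argument applied to $Du\in L^1_{\mathrm{loc}}$ and to the integrand of $\Phi_1$, which also re-proves the measure bound $|\Omega_T\setminus Q_0|=0$ of Theorem \ref{Theo1}.)

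The only point requiring genuine care is the interplay between the excess $\Phi_1$ appearing in the definitions of $\Sigma_1$, $\Sigma_2$ and the intrinsically scaled excess that naturally drives the iteration for the degenerate/singular system: the cylinders used there carry a scaling factor depending on $|(Du)_{Q^{(1)}_\varrho(z_0)}|$, so one must know a priori that these means stay bounded — precisely the information encoded in $z_0\notin\Sigma_2$ — in order to compare the intrinsic excess with $\Phi_1$ up to constants depending only on $M_0$ and the data. Once that comparison is in place, no continuity of $\varrho\mapsto\Phi_1(u;z_0,\varrho)$ is needed: a single good radius $\varrho_j$ suffices, since the iteration itself produces the geometric decay of the excess — hence the $C^{\alpha,\alpha/2}$-regularity of $Du$ — on all smaller (intrinsic) cylinders. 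I therefore expect the substance of the work to lie inside the proof of Theorem \ref{Theo1}, with Theorem \ref{Theo2} following by the extraction above; the main obstacle is thus not in this deduction but in setting up that iteration simultaneously in the presence of the degeneracy and of the $(z,u)$-dependent coefficients.
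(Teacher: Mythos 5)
Your proposal is correct and follows essentially the same route as the paper: fix $z_0\notin\Sigma_1\cup\Sigma_2$, use $z_0\notin\Sigma_2$ to bound $|(Du)_{z_0;\varrho}|$ by some $M_1$ for small $\varrho$, use $z_0\notin\Sigma_1$ to find a radius at which $\Phi_1$ is small enough, then verify the smallness hypothesis of the $\varepsilon$-regularity criterion (Lemma \ref{Lem9.5}), which by continuity of $z\mapsto(Du)_{z;\varrho}$ and $z\mapsto E_1(z,\varrho)$ holds on a whole neighborhood of $z_0$ and hence yields a regular point. Your observation that the $H_\lambda$-contribution coming from the Hölder continuity of the coefficients can be absorbed by shrinking $\varrho_\ast$ is precisely how the paper passes from the hybrid excess $E_1$ in Lemma \ref{Lem9.5} to the plain excess $\Phi_1$ appearing in $\Sigma_1$, via the estimate \eqref{9.03}.
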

\begin{remark}
    The set $Q_0$ contains the set of all Lebesgue points of $Du$ that are not in $\Sigma_2$, i.e. the set
    \begin{align}
        &Q_1 \equiv \Bigg\{ z_0\in \Omega_T \,\Big|\, \liminf\limits_{\varrho\downarrow 0} \frac{1}{|Q^{(1)}_\varrho|}\int\limits_{Q^{(1)}_\varrho(z_0)} \big| Du - (Du)_{Q^{(1)}_\varrho(z_0)} \big|^p \, dz = 0 \Bigg\} \setminus \Sigma_2.
    \end{align}
\end{remark}
 This can be seen as follows: In the case $p<2$ we have the inequality
 \begin{align*}
     \Phi_1(u;z_0,\varrho) \leq \frac{1 }{|Q^{(1)}_\varrho|} \int\limits_{Q^{(1)}_\varrho(z_0)} \big| Du - (Du)_{Q^{(1)}_\varrho(z_0)} \big|^p \, dz
 \end{align*}
 and in the case $p>2$, we obtain with the help of Hölder's inequality that
 \begin{align*}
     \Phi_1(u;z_0,\varrho) 
     &\leq \frac{2^p}{|Q^{(1)}_\varrho|} \int\limits_{Q^{(1)}_\varrho(z_0)} 
     \big| (Du)_{Q^{(1)}_\varrho(z_0)} \big|^{p-2}
    \big| Du - (Du)_{Q^{(1)}_\varrho(z_0)} \big|^2 
    +
    \big| Du - (Du)_{Q^{(1)}_\varrho(z_0)} \big|^p \, dz
    \nonumber \\ & \leq 
    2^p 
    \big| (Du)_{Q^{(1)}_\varrho(z_0)} \big|^{p-2}
    \Bigg[ \frac{1}{|Q^{(1)}_\varrho|} \int\limits_{Q^{(1)}_\varrho(z_0)}  \big| Du - (Du)_{Q^{(1)}_\varrho(z_0)} \big|^p \, dz \Bigg]^{\frac{2}{p}}
    \nonumber \\ & \qquad +
    \frac{2^p}{|Q^{(1)}_\varrho|} \int\limits_{Q^{(1)}_\varrho(z_0)} \big| Du - (Du)_{Q^{(1)}_\varrho(z_0)} \big|^p \, dz .
 \end{align*}
 We see that the first factor of the first term on the right-hand side is bounded since $z_0\notin \Sigma_2$. Both estimates of $\Phi_1$ immediately yield the desired characterization.

\subsection{Strategy of proof}\label{Strategy}

The proof will be achieved by means of the method of $p$-caloric approximation, which has been introduced in \cite{bogelein2013regularity}. The main ideas and techniques used are briefly presented in the present section. It should be mentioned that we distinguish between the super- and sub-quadratic cases, i.e., if $p>2$ or $2n/(n+2)<p<2$ holds for the parameter $p$. As this is a technical detail, we will not discuss it here. The ultimate goal is to show a decay estimate for the \textbf{excess functional} (\ref{excess}) of the type
\begin{equation}\label{decay}
    \Phi_1(u;z_0,\varrho) \leq C \varrho^\alpha,
\end{equation}
which implies the Hölder continuity of $Du$ by a standard characterization dating back to Campanato and Da Prato. To obtain this decay estimate, we distinguish between the \textbf{degenerate regime (DR)} and \textbf{non-degenerate regime (NDR)}, which are defined by considering the \textbf{hybrid excess functional}
\begin{align*}
     E_\lambda(z_0,\varrho) := \Phi_\lambda(u;z_0,\varrho) + H_\lambda(u;z_0,\varrho),
\end{align*}
where the functions $\Phi_\lambda$ and $H_\lambda$ are defined in \eqref{6.01} and \eqref{8.04}, respectively, and decay with respect to the radius $\varrho$. For $H_\lambda$, without going into the details, we will see that there holds
\begin{align*}
    H_\lambda(u;z_0,\varrho)\leq C \varrho^\beta \big| (Du)_{Q^{(\lambda)}_\varrho(z_0)} \big|^p,
\end{align*}
where $Q^{(\lambda)}_\varrho(z_0)$ denotes the  intrinsic geometric cylinder $B_\varrho(x_0) \times (t_0-\lambda^{2-p}\varrho^2,t_0+\lambda^{2-p}\varrho^2)$. Here the additional term $H_\lambda$ is due to the Hölder-continuity assumption (\ref{ModConu}) with respect to the space-time and solution variables $(z,u)$ of the vector field $a$. In analogy to the treatment of the excess functional in \cite{bogelein2013regularity}, we are in the non-degenerate regime, if the hybrid excess functional satisfies
\begin{align*}
    E_\lambda(z_0,\varrho) \ll \big| (Du)_{Q^{(\lambda)}_\varrho(z_0)} \big|^p,
\end{align*}
i.e., that the hybrid excess is very small compared to the mean of the gradient in a fixed intrinsic geometric cylinder. Furthermore, we additionally assume that the intrinsic scaling of the cylinder is coupled to the parameter $\lambda$. More precisely, we have
\begin{align*}
    \big| (Du)_{Q^{(\lambda)}_\varrho(z_0)} \big| \approx \lambda,
\end{align*}
due to which $Q^{(\lambda)}_\varrho(z_0)$ is referred to as an intrinsically scaled cylinder. This particular choice of scaling allows us to freeze the coefficients of the system around the center $z_0$ and the cylindrical mean of $u$ and compare the solution locally to the solution $v$ of a linear parabolic system. Thereby, the so-called $\mathcal{A}$-caloric approximation derived in \cite{bogelein2013regularity, duzaar2011parabolic,Scheven2006} is used. Then $u$ inherits the well-known a priori estimates for $v$ by comparison. This yields, for fixed $\vartheta\in (0,1)$, a quantified excess improvement of the form
\begin{align*}
    E_\lambda(z_0,\vartheta\varrho) \leq \vartheta^\beta E_\lambda(z_0,\varrho).
\end{align*}
Note that the improvement of the second term $H_\lambda$ of $E_\lambda$ is due to the upper bound $\varrho^\beta$. It should be mentioned that in the non-degenerate regime, the cylindrical mean of $Du$ remains approximately constant when transitioning to the smaller cylinder. We are therefore again in the non-degenerate regime on the next standard parabolic cylinder $Q^{(\lambda)}_{\vartheta\varrho}(z_0)$, which implies that the quantified excess improvement obtained can be iterated. From said iteration, we derive the excess decay (\ref{decay}). It remains to be investigated what happens in the degenerate regime.\\
As the degenerate regime refers to the case where the above assumptions of the non-degenerate case are not satisfied, we either have that
\begin{align}\label{Degen}
    \big| (Du)_{Q^{(\lambda)}_\varrho(z_0)} \big|^p \ll E_\lambda(z_0,\varrho) \qquad\text{or}\qquad 
    \big| (Du)_{Q^{(\lambda)}_\varrho(z_0)} \big| \ll \lambda,
\end{align}
where the alternative
\begin{align*}
     \lambda\ll \big| (Du)_{Q^{(\lambda)}_\varrho(z_0)} \big|
\end{align*}
is ruled out during the proof. Both inequalities of (\ref{Degen}) allow us to exploit the behavior of $a$ for small gradients, namely (\ref{Beh0}). Therefore, we can compare our solution $u$ to the solution $v$ of a suitable system of $p$-Laplacian type in the considered cylinder. This is done by means of the so-called $p$-caloric approximation procedure from \cite{bogelein2013regularity}, which allows us to transfer the a priori estimate of $v$ to $u$ and provides us with information about the excess and the cylindrical mean of $Du$ on a sub-cylinder $Q^{(\Tilde{\lambda})}_{\vartheta\varrho}(z_0)$, whose scaling parameter $\Tilde{\lambda}$ can be chosen within the interval $[\vartheta^{\alpha_1}\lambda,\lambda]$. In fact, we obtain
\begin{align*}
    E_{\Tilde{\lambda}}(u;z_0,\vartheta\varrho) \ll \Tilde{\lambda}^p \qquad\text{and}\qquad
    \big| (Du)_{Q^{(\Tilde{\lambda})}_{\vartheta\varrho}(z_0)}\big| \leq C \lambda.
\end{align*}
Note that we cannot iterate the degenerate regime and the previous estimate for the cylindrical mean of the gradient is not sufficient to establish the excess decay later since the right-hand side depends on $\lambda$ instead of $\Tilde{\lambda}$. However, the latter issue can be circumvented by considering the behavior of the solution on a sub-cylinder. Namely, we distinguish whether the degenerate or non-degenerate regime is applicable on said sub-cylinder. In this way, we obtain for $\lambda_1 = \vartheta^{\alpha_1}\lambda$ that
\begin{align*}
    \big| (Du)_{Q^{(\lambda_1)}_{\vartheta\varrho}(z_0)}\big| \leq \lambda_1,
\end{align*}
which, together with $E_{\lambda_1}(u;z_0,\vartheta\varrho) \ll \lambda_1^p$, provides the desired quantified excess improvement in the degenerate regime. On the new intrinsic geometric cylinder $Q^{(\lambda_1)}_{\vartheta\varrho}(z_0)$, we again distinguish between the non-degenerate and the degenerate regimes, which iteratively yields the desired excess decay (\ref{decay}) and completes the proof.

\subsection{Plan of the paper} 

We start the next Section \ref{Not} by fixing the notation and introducing some preliminary result, which will be used throughout the paper. Especially the therein introduced $V$-function and time independent minimizing affine functions will play an important role in many calculations. Furthermore, we introduce the $\mathcal{A}$-caloric and $p$-caloric approximations in both this and the thereafter following Sections \ref{Not} and \ref{pCal}. We continue by establishing suitable Caccioppoli- and Poincaré inequalities in Section \ref{SCacc}. Thereafter, the parabolic system is linearized in the Section \ref{SLin}, depending on whether we are in the non-degenerate or degenerate regime on some intrinsic geometric cylinder. After an quick interlude in Section \ref{SDI}, where we establish a priori estimates for solutions to linear parabolic systems as well as for solutions to the parabolic $p$-Laplacian system, we continue with the proof of partial regularity in Section \ref{SIN}.

\section{Notation and preliminaries}\label{Not}

\subsection{Notation}

Let us start this section by fixing the notation used throughout this paper. We denote by $C$ generally a positive constant that may vary on different occasions, even within the same line of estimates. If we want to highlight the dependencies on parameters of said constants, we will use parenthesis. In order to avoid an overburdened notation as a result of the dependencies of the constants from the structural parameters, we use the abbreviation $\mathfrak{C}$ in order to indicate the dependence on $n,N,p,\nu,L$ or some of these parameters; for example, when writing $C=C(\mathfrak{C},\gamma)$ we mean that $C$ depends on $n,N,p,\nu,L$ and also on $\gamma$. The norm on the Euclidean space $\R^k$, denoted by $|\cdot|$ will be the standard one, i.e. $|\xi|=\sqrt{\xi\cdot\xi}$ for $\xi \in \R^k$, where $\cdot$ denotes the inner product of $\R^k$. Furthermore, we will identify matrices $\R^{k \times l}$ with $\R^{kl}$.\\
For space-time points, we will use the abbreviation $z=(x,t)$ for the spatial variable $x\in\R^n$ and times $t\in\R$. By $B_\varrho(x_0)=\{ x\in\R^n \,|\, |x-x_0|<\varrho \}$ we denote the spatial open ball with radius $\varrho>0$ and center $x_0\in \R^n$. Moreover, we use the notation
\begin{align*}
    Q^{(\lambda)}_\varrho(z_0):= B_\varrho(x_0) \times \Lambda_\varrho^{(\lambda)}(t_0) := B_\varrho(x_0) \times (t_0-\lambda^{2-p}\varrho^2, t_0+\lambda^{2-p}\varrho^2),
\end{align*}
where $z_0=(x_0,t_0)\in \R^n\times \R$ and $\lambda,\varrho\in \R_+$, for the intrinsic geometric cylinder with vertex $(x_0,t_0)$ and width $\varrho$. The parameter $\lambda$ describes the scaling of the cylinder. If the center of balls and cylinders is clear from context, we shall simply omit it in the notation, which will be denoted by $B_\varrho\equiv B_\varrho(x_0)$ for example. We do the same for intrinsic geometric cylinders when $\lambda=1$ holds. \\
If $E\subset \R^k$ is a Lebesgue-measurable set, we denote by $|E|$ its $k$-dimensional Lebesgue measure. Furthermore, if $0<|E|<\infty$, the mean of a function $f\in L^1(E)$ is defined by
\begin{align*}
    (f)_E:= \mint{-}\limits_E f(y) \, dy = \frac{1}{|E|}\int\limits_E f(y) \, dy
\end{align*}
and we abbreviate $(f)_{z_0;\varrho}^{(\lambda)} \equiv (f)_{Q^{(\lambda)}_\varrho(z_0)}$. We again omit the center $z_0$ in the notation, when it is clear from the context. \\
Due to the differing behavior of solutions, we introduce the symbols
\begin{equation*}
    \mathds{1}_{\{p<2\}} \qquad\text{and}\qquad \mathds{1}_{\{p>2\}}
\end{equation*}
to unify the treatment of the sub-quadratic $p<2$ and super-quadratic case $p\geq 2$. The first symbol indicates that the terms that are multiplied by the symbol only occur in the sub-quadratic case, and the latter does the same for the super-quadratic case.

\subsection{The \texorpdfstring{$V$}{V}-function}

In the following we define the auxiliary function $V_\lambda : \R^k \to \R^k$ for $k\in \N$ and $\lambda\geq 0$ by
\begin{align}\label{2.21}
    V_\lambda(B) := \big( \lambda^2 + |B|^2 \big)^{\frac{p-2}{4}} B \qquad\text{for } B\in \R^k,
\end{align}
where we set $V_\lambda(B)=0$, if $\lambda$ and $B$ both assume zero in the sub-quadratic case. This so called $V$-function will be essential for most estimates. We start by collecting some useful algebraic properties of said function. For the following Lemma we refer to \cite[Lemma 1]{duzaar2004regularity}.
\begin{Lem}\label{Lem2.1}
    Let $p>1$, $k\in \mathbb{N}$ and $\lambda\geq 0$. Then for any $A,B\in \mathbb{R}^k$ there holds
    \begin{equation}\label{2.11}
        \big( \lambda^2 + |A|^2 \big)^{\frac{p-2}{2}}|A| |B| 
        \leq
        C(p) \big( |V_\lambda(A)|^2 + |V_\lambda(B)|^2 \big)
    \end{equation}
    and
    \begin{equation}\label{2.12}
        |V_\lambda(A+B)|
        \leq
        C(p) \big( |V_\lambda(A)| + |V_\lambda(B)| \big)
    \end{equation}
    and
    \begin{equation}\label{2.13}
        |V_\lambda(A) - V_\lambda(B)|
        \leq
        C(p,k,M) |V_\lambda(A-B)| \qquad\text{if } |B|\leq M
    \end{equation}
    Furthermore, in the case $1<p<2$ there exists $C=C(p)$ such that
    \begin{equation}\label{2.14}
        C^{-1} \min\big\{ \lambda^{\frac{p-2}{2}} |A| , |A|^{\frac{p}{2}} \big\} \leq |V_\lambda(A)| \leq C^{-1} \max\big\{ \lambda^{\frac{p-2}{2}} |A| , |A|^{\frac{p}{2}} \big\} \qquad\forall A\in \mathbb{R}^k
    \end{equation}
\end{Lem}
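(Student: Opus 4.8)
All four inequalities are elementary algebraic facts about the radial profile of $V_\lambda$, so the plan is to reduce each of them to a one-variable estimate. Write $\psi(s):=s\,(\lambda^2+s^2)^{\frac{p-2}{4}}$ and $\phi(s):=s\,(\lambda^2+s^2)^{\frac{p-2}{2}}$ for $s\ge0$, so that $|V_\lambda(A)|=\psi(|A|)$ and $(\lambda^2+|A|^2)^{\frac{p-2}{2}}|A|=\phi(|A|)$. Differentiating gives
\[
\phi'(s)=(\lambda^2+s^2)^{\frac{p-4}{2}}\big(\lambda^2+(p-1)s^2\big),\qquad
\psi'(s)=(\lambda^2+s^2)^{\frac{p-6}{4}}\big(\lambda^2+\tfrac p2 s^2\big),
\]
both non-negative since $p>1$, so $\phi$ and $\psi$ are non-decreasing on $[0,\infty)$. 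For \eqref{2.11} I would split into the cases $|A|\ge|B|$ and $|A|<|B|$: in the first, $\phi(|A|)\,|B|\le\phi(|A|)\,|A|=|V_\lambda(A)|^2$; in the second, $\phi(|A|)\,|B|\le\phi(|B|)\,|B|=|V_\lambda(B)|^2$; summing the two bounds gives \eqref{2.11}. For \eqref{2.12} I would first record the doubling property $\psi(2s)\le c(p)\,\psi(s)$, which follows from $\lambda^2+4s^2\le4(\lambda^2+s^2)$ together with the sign of the exponent $\tfrac{p-2}{4}$ (yielding $c(p)=2^{p/2}$ for $p\ge2$ and $c(p)=2$ for $p<2$), and then estimate
\[
|V_\lambda(A+B)|=\psi(|A+B|)\le\psi(|A|+|B|)\le\psi\big(2\max\{|A|,|B|\}\big)\le c(p)\,\psi\big(\max\{|A|,|B|\}\big)\le c(p)\big(|V_\lambda(A)|+|V_\lambda(B)|\big).
\]

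For \eqref{2.14}, where $1<p<2$ and thus $\tfrac{p-2}{4}<0$, I would simply distinguish $\lambda\ge|A|$ and $\lambda<|A|$. If $\lambda\ge|A|$, then $\lambda^2\le\lambda^2+|A|^2\le2\lambda^2$, hence $|V_\lambda(A)|\simeq\lambda^{\frac{p-2}{2}}|A|$; moreover $\lambda\ge|A|$ and $\tfrac{p-2}{2}<0$ force $\lambda^{\frac{p-2}{2}}|A|\le|A|^{\frac p2}$, so this quantity equals $\min\{\lambda^{\frac{p-2}{2}}|A|,|A|^{\frac p2}\}$ and is controlled by $\max\{\lambda^{\frac{p-2}{2}}|A|,|A|^{\frac p2}\}$ as well. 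If $\lambda<|A|$, then $|A|^2\le\lambda^2+|A|^2\le2|A|^2$, hence $|V_\lambda(A)|\simeq|A|^{\frac p2}$, and $\lambda<|A|$ gives $|A|^{\frac p2}\le\lambda^{\frac{p-2}{2}}|A|$, so $|A|^{\frac p2}=\min\{\lambda^{\frac{p-2}{2}}|A|,|A|^{\frac p2}\}$. Combining the two cases yields the two-sided bound with a constant depending only on $p$.

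The genuinely technical point is \eqref{2.13}. I would use the fundamental theorem of calculus along the segment $[B,A]$ — in the range $p<2$ first for $A,B$ whose segment stays away from $0$, then passing to the limit — to write $V_\lambda(A)-V_\lambda(B)=\int_0^1(DV_\lambda)\big(B+t(A-B)\big)\,dt\,(A-B)$, together with the pointwise bound $|(DV_\lambda)(\xi)|\le c(p)\,(\lambda^2+|\xi|^2)^{\frac{p-2}{4}}$, which follows from $(DV_\lambda)(\xi)=(\lambda^2+|\xi|^2)^{\frac{p-2}{4}}\mathrm{Id}+\tfrac{p-2}{2}(\lambda^2+|\xi|^2)^{\frac{p-6}{4}}\,\xi\otimes\xi$ and $|\xi|^2\le\lambda^2+|\xi|^2$. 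This reduces \eqref{2.13} to controlling $\int_0^1(\lambda^2+|B+t(A-B)|^2)^{\frac{p-2}{4}}\,dt$ by $c(p,k,M)\,(\lambda^2+|A-B|^2)^{\frac{p-2}{4}}$. In the superquadratic case the integrand is non-decreasing in $|B+t(A-B)|\le|B|+|A-B|\le M+|A-B|$, and matching the resulting bound with $|V_\lambda(A-B)|$ through \eqref{2.12} gives the claim; this is precisely where $|B|\le M$ enters. In the subquadratic case one instead uses that $t\mapsto\lambda^2+|B+t(A-B)|^2$ is a quadratic with leading coefficient $|A-B|^2$ and minimum $\ge\lambda^2$, and estimates the integral by splitting $[0,1]$ around the vertex of this quadratic; since $\tfrac{p-2}{4}\in(-\tfrac12,0)$ the relevant integrals converge and are bounded by $c(p)\,(\lambda^2+|A-B|^2)^{\frac{p-2}{4}}$. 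I expect this comparison between the segment integral and the endpoint value $(\lambda^2+|A-B|^2)^{\frac{p-2}{4}}$ to be the main obstacle; the differentiation formula for $V_\lambda$ and the bound for $|DV_\lambda|$ are routine. All of this is contained in \cite[Lemma~1]{duzaar2004regularity}, which we merely quote.
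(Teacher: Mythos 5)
The paper does not prove Lemma~\ref{Lem2.1} but quotes it from \cite[Lemma~1]{duzaar2004regularity}, and you defer to the same reference at the end, so the two routes coincide on the authoritative source. Your sketches of \eqref{2.11}, \eqref{2.12} and \eqref{2.14} are correct and are the standard elementary arguments: the two-case monotonicity trick for \eqref{2.11} even gives $C(p)=1$, the doubling bound $\psi(2s)\le c(p)\psi(s)$ with the constants you record is right, and the case split $\lambda\gtrless|A|$ is exactly the right reduction for \eqref{2.14}.

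The gap is in your treatment of \eqref{2.13} for $p\ge2$. After the FTC reduction you need
\begin{equation*}
\int_0^1\big(\lambda^2+|B+t(A-B)|^2\big)^{\frac{p-2}{4}}\,dt \;\le\; C\,\big(\lambda^2+|A-B|^2\big)^{\frac{p-2}{4}},
\end{equation*}
but the bound $|B+t(A-B)|\le M+|A-B|$ together with ``matching through \eqref{2.12}'' cannot deliver this: when $|A-B|$ is small the left-hand side is of order $(\lambda^2+M^2)^{\frac{p-2}{4}}$, which, as $\lambda\to0$, is not controlled by $(\lambda^2+|A-B|^2)^{\frac{p-2}{4}}$. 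In fact \eqref{2.13} is \emph{false} for $p>2$: take $k=1$, $p=4$, $\lambda=0$, $B=1$, $A=1+\varepsilon$; then $|V_0(A)-V_0(B)|=2\varepsilon+\varepsilon^2$ while $|V_0(A-B)|=\varepsilon^2$, so no constant $C(p,k,M)$ can work as $\varepsilon\downarrow0$. Your superquadratic branch is therefore not merely incomplete, it is attempting to prove a statement that only holds in the subquadratic range. For $1<p<2$ your vertex-splitting argument works, but it is cleaner to observe that it is exactly Lemma~\ref{Lem2.5} applied with $\sigma=\tfrac{p-2}{4}\in(-\tfrac14,0)$ (not $(-\tfrac12,0)$ as you write, though that is harmless), which gives $\int_0^1(\lambda^2+|B+t(A-B)|^2)^{\frac{p-2}{4}}\,dt\le c(p)(\lambda^2+|A|^2+|B|^2)^{\frac{p-2}{4}}\le c(p)\,2^{\frac{2-p}{4}}(\lambda^2+|A-B|^2)^{\frac{p-2}{4}}$; note in particular that for $p<2$ the constraint $|B|\le M$ is not even needed. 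So \eqref{2.13} should carry the restriction $1<p<2$, as \eqref{2.14} does; since \eqref{2.13} is never invoked elsewhere in the paper, this affects the statement of the lemma but not the paper's results.
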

In addition, we will need the slight generalization of the \textit{iteration Lemma} in \cite[Lemma3.7]{habermann2008partial}. See also \cite{bogelein2013regularity,carozza1998partial}
\begin{Lem}\label{Lem2.2}
    Let $p\geq 1$, $0<\vartheta<1$, $a,b,c\geq 1$, $A\in \mathbb{R}^{Nn}$, $v\in L^p\big(Q_\varrho^{(\lambda)}(z_0),\mathbb{R}^N\big)$ and $\varphi: [r,\varrho]\to [0,\infty)$ be a bounded function satisfying 
    \begin{equation*}
        \varphi(s) \leq \vartheta \varphi(t) + a \int\limits_{Q_\varrho^{(\lambda)}(z_0)} \bigg| V_{|A|}\bigg( \frac{v}{t-s} \bigg) \bigg|^2 \, dz + \frac{b}{(t-s)^2} + c
    \end{equation*}
    for all $r\leq s < t \leq \varrho$. Then there exists a constant $C=C(\vartheta,p)$ such that
    \begin{equation*}
        \varphi(r) \leq C \Bigg[ a \int\limits_{Q_\varrho^{(\lambda)}(z_0)} \bigg| V_{|A|}\bigg( \frac{v}{\varrho-r} \bigg) \bigg|^2 \, dz + \frac{b}{(\varrho-r)^2} + c \Bigg].
    \end{equation*}
\end{Lem}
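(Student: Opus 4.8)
The plan is to reduce the statement to the classical hole\-filling iteration lemma with a single power inhomogeneity, for which I would cite \cite[Lemma~3.7]{habermann2008partial} (see also \cite{bogelein2013regularity,carozza1998partial}): if $\psi\colon[r,\varrho]\to[0,\infty)$ is bounded and $\psi(s)\le\vartheta\,\psi(t)+\mathcal A\,(t-s)^{-\alpha}+\mathcal B$ for all $r\le s<t\le\varrho$, with $\vartheta\in(0,1)$ and $\alpha>0$, then $\psi(r)\le C(\alpha,\vartheta)\big[\mathcal A\,(\varrho-r)^{-\alpha}+\mathcal B\big]$. The only point that requires care is to bound $a\int_{Q_\varrho^{(\lambda)}(z_0)}|V_{|A|}(v/(t-s))|^2\,dz$ by a constant multiple of $(t-s)^{-\alpha}$ with a prefactor that is still comparable, after division by $(\varrho-r)^{\alpha}$, to the quantity on the right\-hand side of the assertion; everything else is bookkeeping.

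\emph{Step 1 (quasi\-homogeneity of $V$).} I would first establish the pointwise estimate: for every $w\in\R^N$, every $\mu\ge0$ and all $0<h_1\le h_2$,
\[
\big|V_\mu(w/h_1)\big|^2\le C(p)\big(h_2/h_1\big)^{\max\{2,p\}}\big|V_\mu(w/h_2)\big|^2 .
\]
Writing $\tau=h_2/h_1\ge1$ and $B=w/h_2$ (so $w/h_1=\tau B$), the case $p\ge2$ follows directly from the definition \eqref{2.21}: since $\mu^2+\tau^2|B|^2\le\tau^2(\mu^2+|B|^2)$ and $\tfrac{p-2}{2}\ge0$, one gets $|V_\mu(\tau B)|^2\le\tau^p|V_\mu(B)|^2$. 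For $1<p<2$ the inequality $\mu^2+\tau^2|B|^2\le\tau^2(\mu^2+|B|^2)$ points the wrong way under the negative power $\tfrac{p-2}{2}$, so I would instead invoke the equivalence \eqref{2.14}, which upon squaring gives $C(p)^{-1}\min\{\mu^{p-2}|B|^2,|B|^p\}\le|V_\mu(B)|^2\le\min\{\mu^{p-2}|B|^2,|B|^p\}$; distinguishing $|w|\ge\mu h_2$ and $|w|<\mu h_2$ and using $\tau\ge1$, $\tau^p\le\tau^2$, one obtains the bound with exponent $2=\max\{2,p\}$. In all cases $C(p)$ is independent of $\mu$, in particular valid for $A=0$.

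\emph{Step 2 (reduction).} Applying Step 1 pointwise with $h_1=t-s$, $h_2=\varrho-r$, $\mu=|A|$, $w=v(z)$ and integrating over $Q_\varrho^{(\lambda)}(z_0)$ gives, with $\alpha:=\max\{2,p\}$,
\[
a\int_{Q_\varrho^{(\lambda)}(z_0)}\big|V_{|A|}(v/(t-s))\big|^2\,dz\le C(p)\,\frac{(\varrho-r)^{\alpha}}{(t-s)^{\alpha}}\;a\int_{Q_\varrho^{(\lambda)}(z_0)}\big|V_{|A|}(v/(\varrho-r))\big|^2\,dz .
\]
Since $t-s\le\varrho-r$ and $\alpha\ge2$, also $b\,(t-s)^{-2}\le b\,(\varrho-r)^{\alpha-2}(t-s)^{-\alpha}$. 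Hence the hypothesis yields $\varphi(s)\le\vartheta\,\varphi(t)+\mathcal A\,(t-s)^{-\alpha}+c$ for all $r\le s<t\le\varrho$, with $\mathcal A:=C(p)(\varrho-r)^{\alpha}\big[a\int_{Q_\varrho^{(\lambda)}(z_0)}|V_{|A|}(v/(\varrho-r))|^2\,dz+b(\varrho-r)^{-2}\big]$.

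\emph{Step 3 (conclusion) and the main obstacle.} Since $\varphi$ is bounded, the iteration lemma quoted above (with $\mathcal B=c$) gives $\varphi(r)\le C(\alpha,\vartheta)\big[\mathcal A\,(\varrho-r)^{-\alpha}+c\big]$, and substituting the definition of $\mathcal A$ produces exactly the asserted estimate with $C=C(\vartheta,p)$, as $\alpha$ depends only on $p$. The only genuinely delicate step is Step~1 in the sub\-quadratic range, where the naive monotonicity argument fails and one has to pass through \eqref{2.14} and a short case distinction to confirm that the correct scaling exponent is $2$; once this homogeneity is in place the reduction to the standard iteration lemma is routine, and if one prefers a self\-contained argument one can iterate the assumption along a geometric sequence $t_i=r+(1-\tau^i)(\varrho-r)$ and choose $\tau\in(0,1)$ with $\vartheta\tau^{-\alpha}<1$.
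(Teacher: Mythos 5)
Your proposal is correct and takes the same route the paper implicitly endorses by citing \cite[Lemma~3.7]{habermann2008partial} and \cite{bogelein2013regularity,carozza1998partial}: reduce the inhomogeneity to a single power of $(t-s)^{-1}$ via a quasi-homogeneity estimate for the $V$-function, then invoke the standard hole-filling iteration lemma. (The paper offers no proof of its own, only the citation.)

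One small but worthwhile remark on Step~1: in the subquadratic range you write that the monotonicity argument ``fails'' and detour through \eqref{2.14}. It does not fail — you have just picked the wrong inequality. For $1<p<2$ and $\tau\ge1$ simply observe $\mu^2+\tau^2|B|^2\ge\mu^2+|B|^2$, and since $\tfrac{p-2}{2}<0$ this gives $(\mu^2+\tau^2|B|^2)^{\frac{p-2}{2}}\le(\mu^2+|B|^2)^{\frac{p-2}{2}}$, hence
\[
|V_\mu(\tau B)|^2=(\mu^2+\tau^2|B|^2)^{\frac{p-2}{2}}\tau^2|B|^2\le\tau^2(\mu^2+|B|^2)^{\frac{p-2}{2}}|B|^2=\tau^2|V_\mu(B)|^2 ,
\]
which is the one-line proof of the exponent $2$ in that case, with no case distinction and no appeal to \eqref{2.14}. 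The rest of your argument (Steps~2 and~3) is routine bookkeeping and is fine as written.
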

The following Lemma shows that the cylindrical mean is a quasi-minimizer of a certain functional involving the $V$-function. This will be crucial later and we refer to \cite[Lemma 2.4]{bogelein2013regularity} for a proof.
\begin{Lem}\label{Lem2.4}
    Let $p\geq 1$, $Q\subset\R^{n+1}$ such that $|Q|>0$ and $f\in L^p(Q,\R^k)$ and $k\geq 1$. Then, we have
    \begin{equation*}
        \mint{-}\limits_Q \big| V_{|(f)_Q|}(f-(f)_Q) \big|^2 \, dz
        \leq
        C(p) \mint{-}\limits_Q \big| V_{|A|}(f-A) \big|^2 \, dz 
        \qquad\forall A\in \R^k.
    \end{equation*}
    In the case $p\geq 2$ the inequality holds with the particular constant $C(p)=2^{2p}$.
\end{Lem}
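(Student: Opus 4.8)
The plan is to prove Lemma~\ref{Lem2.4} by separating the two growth regimes, since the $V$-function behaves like an $L^2$-norm for $p\ge 2$ (after absorbing the weight) and needs the sharper two-sided bound \eqref{2.14} for $1<p<2$. In both cases the strategy is to reduce the statement to the elementary fact that the mean $(f)_Q$ minimizes $A\mapsto \mint{-}_Q |f-A|^2\,dz$, and then to control the $\lambda$-parameter $\lambda=|(f)_Q|$ on the left against the arbitrary $\lambda=|A|$ on the right by means of the algebraic inequalities \eqref{2.11}--\eqref{2.13} together with a case distinction comparing $|(f)_Q|$ to $\mint{-}_Q|f-A|^2\,dz$ (equivalently, to $|A|$).

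For the super-quadratic case $p\ge 2$, I would first note the elementary bound $\tfrac12(\lambda^2+|B|^2)^{\frac{p-2}{2}}|B|^2\le |V_\lambda(B)|^2\le (\lambda^2+|B|^2)^{\frac{p-2}{2}}|B|^2$ and, more usefully, that $|V_\lambda(B)|^2$ is (up to a $p$-dependent constant) comparable to $(\lambda^{p-2}+|B|^{p-2})|B|^2$. Writing $g:=f-(f)_Q$, the left-hand side is $\mint{-}_Q(|(f)_Q|^{p-2}|g|^2+|g|^p)\,dz$ up to constants; the first summand is $|(f)_Q|^{p-2}\mint{-}_Q|g|^2\,dz$. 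Now $\mint{-}_Q|g|^2\,dz\le \mint{-}_Q|f-A|^2\,dz$ by minimality of the mean, and $\mint{-}_Q|g|^p\,dz\le 2^{p-1}(\mint{-}_Q|f-A|^p\,dz+|(f)_Q-A|^p)$, with $|(f)_Q-A|^p=|\mint{-}_Q(f-A)\,dz|^p\le\mint{-}_Q|f-A|^p\,dz$ by Jensen; this already handles the pure-power part against $\mint{-}_Q|V_{|A|}(f-A)|^2\,dz\gtrsim\mint{-}_Q|f-A|^p\,dz$. For the weighted part $|(f)_Q|^{p-2}\mint{-}_Q|g|^2\,dz$ I would split: if $|(f)_Q|\le |A|$ then $|(f)_Q|^{p-2}\le|A|^{p-2}$ and we bound by $|A|^{p-2}\mint{-}_Q|f-A|^2\,dz\le \mint{-}_Q|V_{|A|}(f-A)|^2\,dz$; if $|(f)_Q|>|A|$, then $|(f)_Q|\le|A|+|(f)_Q-A|\le |A|+(\mint{-}_Q|f-A|^2\,dz)^{1/2}$, so either $|(f)_Q|\le 2|A|$ (previous subcase up to $2^{p-2}$) or $|(f)_Q|^2\le 4\mint{-}_Q|f-A|^2\,dz$, whence $|(f)_Q|^{p-2}\mint{-}_Q|g|^2\,dz\le |(f)_Q|^{p-2}\mint{-}_Q|f-A|^2\,dz\le 4^{\frac{p-2}{2}}(\mint{-}_Q|f-A|^2\,dz)^{p/2}\le 4^{\frac{p-2}{2}}\mint{-}_Q|f-A|^p\,dz$ by Jensen (using $p\ge 2$), again absorbed into the right-hand side. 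Tracking constants carefully gives the explicit $C(p)=2^{2p}$ claimed.

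For the sub-quadratic case $1<p<2$, I would invoke \eqref{2.14}: $|V_\lambda(B)|$ is comparable, with a $p$-only constant, to $\min\{\lambda^{\frac{p-2}{2}}|B|,|B|^{p/2}\}$. Thus on the left we must estimate $\mint{-}_Q\min\{|(f)_Q|^{p-2}|g|^2,|g|^p\}\,dz$ and on the right $\mint{-}_Q\min\{|A|^{p-2}|f-A|^2,|f-A|^p\}\,dz$. Using $\min\{a,b\}\le a$ and $\min\{a,b\}\le b$ freely, one reduces to the two scalar comparisons handled as above, except that now $|(f)_Q|^{p-2}$ is a \emph{decreasing} function of $|(f)_Q|$, so the roles of the subcases $|(f)_Q|\lessgtr|A|$ swap; the point $|(f)_Q-A|^2$ versus $\mint{-}_Q|f-A|^2\,dz$ distinction still drives everything, combined with the truncation $\min\{\cdot,|g|^p\}$ to prevent the weight from blowing up when $|(f)_Q|$ is small. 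One also needs $\mint{-}_Q|g|^p\,dz\le C(p)\mint{-}_Q|f-A|^p\,dz$, which follows from $|g|\le|f-A|+|(f)_Q-A|$ and Jensen as before.

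The main obstacle is purely bookkeeping: making the case distinction on $|(f)_Q|$ versus $|A|$ (and the secondary distinction on $|(f)_Q-A|$ versus the $L^2$-excess) interact correctly with the non-convex, truncated weight $(\lambda^2+|B|^2)^{\frac{p-2}{2}}$ in both growth regimes while keeping the constant $p$-dependent only, and — for $p\ge 2$ — sharp enough to land on $2^{2p}$. No genuinely hard analysis is involved; the whole argument is a careful chase through \eqref{2.11}--\eqref{2.14}, Jensen's inequality, and the minimizing property of the mean.
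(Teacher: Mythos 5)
The paper itself does not prove Lemma~\ref{Lem2.4}: it refers to \cite[Lemma 2.4]{bogelein2013regularity} for a proof. There is therefore no in-paper argument to compare against; what follows assesses your sketch on its own terms.

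Your super-quadratic case ($p\geq 2$) is sound. With $g=f-(f)_Q$ and $h=f-A$, the comparability $|V_\lambda(B)|^2\approx\lambda^{p-2}|B|^2+|B|^p$, the $L^2$-minimality of the mean, the convexity of $t\mapsto t^{p/2}$, and the case distinction $|(f)_Q|\leq 2|A|$ versus $|(f)_Q|>2|A|$ (the latter forcing $|(f)_Q|^2\leq 4\,\mint{-}\limits_Q|h|^2\,dz$) fit together correctly. You assert but do not verify the constant $2^{2p}$; the method plausibly yields it, but the tracking is genuinely nontrivial and is left undone.

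Your sub-quadratic case ($1<p<2$) has a real gap, not mere bookkeeping. Replacing $\min\{a,b\}$ by either branch produces only \emph{upper} bounds, and you need an upper bound on the left-hand side that lands \emph{below} the right-hand side, which is the integral of a pointwise minimum; dropping the $\min$ on the right to a single branch makes it larger and proves nothing. After the triangle inequality \eqref{2.12} for $V$, what remains to show is precisely
\begin{equation*}
\big|V_{|A|}\big((f)_Q-A\big)\big|^2 \leq C(p)\,\mint{-}\limits_Q \big|V_{|A|}(f-A)\big|^2\,dz ,
\end{equation*}
a Jensen-type estimate for $B\mapsto|V_{|A|}(B)|^2\approx\min\{|A|^{p-2}|B|^2,|B|^p\}$. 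For $1<p<2$ this function is \emph{not} convex: it has a concave kink at $|B|=|A|$, since the one-sided slopes there are $2|A|^{p-1}$ and $p|A|^{p-1}$ with $p<2$. So plain Jensen fails and a quasi-Jensen with a $p$-only constant must actually be proven. One workable route is to observe that $t\mapsto t^2(|A|+t)^{p-2}$ \emph{is} convex for $p\geq 1$ and comparable to $|V_{|A|}(\cdot)|^2$ with constants depending only on $p$; an alternative is a direct decomposition of $Q$ according to whether $|f-A|$ exceeds $|A|$, combined with Markov's inequality and Cauchy--Schwarz. Neither step appears in your sketch, and your remark that "the truncation prevents blowup" does not supply the missing argument.
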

The following algebraic fact can be retrieved from \cite{hamburger1992regularity}.
\begin{Lem}\label{Lem2.5}
    For every $\sigma\in (-\frac12,0)$ and $\mu\geq 0$ we have
    \begin{equation*}
        1\leq \frac{ \int\limits_0^1 \big( \mu^2 + |A + s(\Tilde{A}-A)|^2 \big)^\sigma \, ds }{\big( \mu^2 + |A|^2 + |\Tilde{A}|^2 \big)^\sigma} \leq \frac{8}{2\sigma + 1}
    \end{equation*}
    for any $A,\Tilde{A} \in \mathbb{R}^{Nn}$, not both zero in case $\mu=0$.
\end{Lem}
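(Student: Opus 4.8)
The plan is to treat the two inequalities separately: the lower bound is an immediate consequence of convexity, while the upper bound needs a short case analysis after reducing to a one‑dimensional integral.

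\emph{Lower bound.} Since $\sigma\in(-\tfrac12,0)$, the function $t\mapsto t^\sigma$ is convex and decreasing on $(0,\infty)$. Applying Jensen's inequality with the probability measure $ds$ on $[0,1]$ and then monotonicity gives
\[
  \int_0^1 \big(\mu^2+|A+s(\Tilde A-A)|^2\big)^\sigma \, ds
  \ \ge\ \Big(\int_0^1 \big(\mu^2+|A+s(\Tilde A-A)|^2\big)\, ds\Big)^\sigma .
\]
A direct computation yields $\int_0^1 |A+s(\Tilde A-A)|^2\,ds=\tfrac13\big(|A|^2+\langle A,\Tilde A\rangle+|\Tilde A|^2\big)\le \tfrac12\big(|A|^2+|\Tilde A|^2\big)$ by Young's inequality, so the bracket on the right is $\le\mu^2+|A|^2+|\Tilde A|^2$, and since $t\mapsto t^\sigma$ is decreasing the lower bound $1$ follows at once. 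This step is routine.

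\emph{Upper bound.} The key observation is that $s\mapsto|A+s(\Tilde A-A)|^2$ is a convex quadratic. Assume first $A\ne\Tilde A$; completing the square we write $\mu^2+|A+s(\Tilde A-A)|^2=a^2(s-s_0)^2+b^2$, where $a:=|\Tilde A-A|>0$, $s_0$ is the minimizing parameter, and $b^2:=\mu^2+d^2$ with $d$ the distance of the origin to the affine line through $A$ and $\Tilde A$. The substitution $u=a(s-s_0)$ turns the integral into $\tfrac1a\int_{u_1}^{u_2}(u^2+b^2)^\sigma\,du$ over an interval of length $u_2-u_1=a$, and since $|A|^2=u_1^2+d^2$, $|\Tilde A|^2=u_2^2+d^2$ one has $R^2:=\mu^2+|A|^2+|\Tilde A|^2=b^2+d^2+u_1^2+u_2^2$, whence $M\le R^2\le 4M$ for $M:=\max\{b^2,u_1^2,u_2^2\}$, and therefore $M^\sigma\le 4^{|\sigma|}R^{2\sigma}\le 2R^{2\sigma}$. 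Thus it suffices to show $\tfrac1a\int_{u_1}^{u_2}(u^2+b^2)^\sigma\,du\le \tfrac{4}{2\sigma+1}M^\sigma$. I would use the two elementary pointwise bounds $(u^2+b^2)^\sigma\le b^{2\sigma}$ and $(u^2+b^2)^\sigma\le|u|^{2\sigma}$; integrating the latter gives $\int_{u_1}^{u_2}|u|^{2\sigma}\,du\le \tfrac1{2\sigma+1}\big(|u_1|^{2\sigma+1}+|u_2|^{2\sigma+1}\big)\le \tfrac{2}{2\sigma+1}M^{\sigma+\frac12}$, and this is exactly where $\sigma>-\tfrac12$ (i.e.\ $2\sigma+1>0$) enters to keep everything finite. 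Then one splits cases: if $a\ge\tfrac12\sqrt M$, dividing the last estimate by $a$ already yields $\tfrac{4}{2\sigma+1}M^\sigma$; if $a<\tfrac12\sqrt M$, then either $b^2=M$, in which case $(u^2+b^2)^\sigma\le b^{2\sigma}=M^\sigma$ suffices, or $\max\{u_1^2,u_2^2\}=M$, and then the short interval $[u_1,u_2]$ cannot contain $0$ and all its points satisfy $|u|>\tfrac12\sqrt M$, so $(u^2+b^2)^\sigma<(\tfrac14 M)^\sigma\le 2M^\sigma$ throughout. Combining the cases and using $M^\sigma\le 2R^{2\sigma}$ gives the stated constant $8/(2\sigma+1)$. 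The remaining case $A=\Tilde A$ is trivial: the integral equals $(\mu^2+|A|^2)^\sigma$, and $\mu^2+|A|^2\le\mu^2+2|A|^2\le 2(\mu^2+|A|^2)$ yields both inequalities with room to spare (the hypothesis that $A,\Tilde A$ are not both zero when $\mu=0$ guarantees the quantities are positive).

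\emph{Main obstacle.} The only real difficulty is the bookkeeping in the upper bound: a single naive estimate does not work. When the segment $s\mapsto A+s(\Tilde A-A)$ passes close to the origin the integrand is unbounded, and only the bound $(u^2+b^2)^\sigma\le|u|^{2\sigma}$ together with $2\sigma+1>0$ rescues integrability; conversely, when the interval $[u_1,u_2]$ is short the integrand is essentially constant and only the crude $b^{2\sigma}$ (or ``bounded away from $0$'') estimate is efficient. Interpolating between the two regimes via the case split on the ratio $a/\sqrt M$ is what makes the argument go through.
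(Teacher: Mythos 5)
Your proof is correct. The paper does not actually supply a proof of Lemma~\ref{Lem2.5}; it only records the statement and refers to~\cite{hamburger1992regularity}, so there is no argument in the text to compare against. Your self-contained argument is a clean instance of the standard approach for such estimates: Jensen's inequality for the lower bound, and for the upper bound a reduction to the one-dimensional integral $\frac1a\int_{u_1}^{u_2}(u^2+b^2)^\sigma\,du$ by completing the square, together with the two pointwise bounds $(u^2+b^2)^\sigma\le b^{2\sigma}$ and $(u^2+b^2)^\sigma\le|u|^{2\sigma}$ and the comparison $M\le R^2\le 4M$. I checked the details: the identity $|A|^2=u_1^2+d^2$, $|\tilde A|^2=u_2^2+d^2$ giving $R^2=b^2+d^2+u_1^2+u_2^2$; the case analysis on $a\gtrless\tfrac12\sqrt M$, including the observation that when $a<\tfrac12\sqrt M$ and $\max\{u_1^2,u_2^2\}=M$ the interval $[u_1,u_2]$ avoids the origin and satisfies $|u|>\tfrac12\sqrt M$; and the trivial $A=\tilde A$ case. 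Each step holds, and the constant $8/(2\sigma+1)$ comes out exactly as required. The one place worth being explicit in a final write-up is the passage $\int_{u_1}^{u_2}|u|^{2\sigma}\,du\le\frac{1}{2\sigma+1}(|u_1|^{2\sigma+1}+|u_2|^{2\sigma+1})$, which you use uniformly; it is in fact an equality when $0\in[u_1,u_2]$ and an inequality otherwise, as your reader should verify once since it is applied silently across the sign configurations.
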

The following inequality immediately follows from Lemma \ref{Lem2.5}: For every $\sigma\in (-\frac12,0)$ and $\mu\geq 0$ we have
\begin{equation}\label{e2.6}
    \int\limits_0^1 \big( \mu^2 + |A + sB|^2 \big)^\sigma \, ds \leq \frac{24}{2\sigma+1} \big( \mu^2 + |A|^2 + |B|^2 \big)^\sigma
\end{equation}
for any $A,B \in \mathbb{R}^{Nn}$, not both zero if $\mu=0$.

\subsection{Basic deductions from the structure conditions}

Let us observe the following immediate consequence of hypothesis (\ref{Beh0}): There exists a function $\eta: (0,\infty) \to (0,\infty)$ such that for any $\delta>0$, $u\in \mathbb{R}^{N}$ and almost every $z\in \Omega_T$ we have
\begin{equation}\label{2.7}
    \big|a(z,u,\xi)-\Tilde{a}(z,u) |\xi|^{p-2}\xi \big| \leq \delta |\xi|^{p-1} 
    \qquad
    \text{for every } \xi \in \mathbb{R}^{Nn} \text{ with } |\xi|\leq \eta(\delta).
\end{equation}
Additionally, we have the following useful Lemma, the proof of which only uses assumptions (\ref{As1}) and (\ref{As3}). Therefore it can be obtained exactly as in \cite[Lemma 2.7]{bogelein2013regularity}.
\begin{Lem}\label{Lem2.7}
    Let $p>1$, $A,B\in \mathbb{R}^{Nn}$ satisfying $|A| \leq |B|$ and $a:\mathbb{R}^{Nn}\to\mathbb{R}^{Nn}$ satisfying the growth assumptions (\ref{As1}) \& (\ref{As3}). Then there holds for almost every $z\in \Omega_T$ and $l\in \R^N$
    \begin{equation*}
        |a(z,l,A+B)-a(z,l,A)| \leq C(p) L |B|^{p-1}.
    \end{equation*}
\end{Lem}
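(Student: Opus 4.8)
The plan is to read the estimate straight off the growth bound (\ref{As1}), using the hypothesis $|A|\le|B|$ to control $|A+B|$; in particular no information on $D_\xi a$ is in fact needed. Concretely, I would first observe that $|A+B|\le|A|+|B|\le 2|B|$ by the triangle inequality, and then, since $t\mapsto t^{p-1}$ is nondecreasing on $[0,\infty)$ for $p>1$, apply (\ref{As1}) to each term separately:
\begin{align*}
    |a(z,l,A+B)-a(z,l,A)|
    &\le |a(z,l,A+B)|+|a(z,l,A)|\\
    &\le L|A+B|^{p-1}+L|A|^{p-1}\\
    &\le L(2|B|)^{p-1}+L|B|^{p-1}
    =\big(2^{p-1}+1\big)L|B|^{p-1}.
\end{align*}
This yields the claim with $C(p)=2^{p-1}+1$.

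There is essentially no obstacle here; the only point worth flagging is why one should \emph{not} try to write $a(z,l,A+B)-a(z,l,A)=\int_0^1(D_\xi a)(z,l,A+sB)\cdot B\,ds$ and estimate via (\ref{As3}). In the sub-quadratic regime $D_\xi a$ is only defined on $\R^{Nn}\setminus\{0\}$, and the segment $\{A+sB:s\in[0,1]\}$ may hit the origin, so that route needs an extra case distinction and, worse, would produce a constant depending on $\kappa_{2|B|}$ rather than on $p$ alone. The crude bound above sidesteps both issues, which is precisely the content of the statement: the scaling $|A|\le|B|$ makes the pointwise bound (\ref{As1}) already sufficient, and (\ref{As3}) is only invoked in the reference to phrase the same elementary reasoning. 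This is the argument carried out in \cite[Lemma~2.7]{bogelein2013regularity}, which therefore transfers verbatim to the present setting.
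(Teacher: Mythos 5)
Your proof is correct and matches the elementary argument the paper delegates to \cite[Lemma 2.7]{bogelein2013regularity}: with $|A|\le|B|$ the triangle inequality gives $|A+B|\le 2|B|$, and applying the growth bound (\ref{As1}) to each of $a(z,l,A+B)$ and $a(z,l,A)$ then yields the claim with $C(p)=2^{p-1}+1$. Your remark that (\ref{As1}) alone suffices --- and that the fundamental-theorem-of-calculus route via (\ref{As3}) would, in the present paper's formulation, import an unwanted $\kappa_{2|B|}$ dependence and also require care where the segment $\{A+sB\}$ meets the origin in the subquadratic case --- is accurate and explains why the stated constant can depend on $p$ only.
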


\subsection{Gagliardo-Nirenberg inequality}
The following form of the Gagliardo-Nirenberg inequality from \cite[Lemma 2.11]{bogelein2013regularity} will be used later. 
\begin{Lem}\label{Lem2.11}
    Let $B_\varrho(x_0)\subset \mathbb{R}^n$ with $\varrho\leq 1$, $1\leq q,p,r \leq \infty$ and $\vartheta\in (0,1)$ such that $-\frac nq\leq \vartheta ( 1- \frac n p) - (1-\vartheta) \frac nr$ and $u\in W^{1,p}(B_\varrho(x_0))$. Then there exists a constant $C=C(n,q)$ such that there holds
    \begin{equation*}
        \mint{-}\limits_{B_\varrho(x_0)} \bigg| \frac{u}{\varrho} \bigg|^q
        \leq
        C
        \Bigg( \mint{-}\limits_{B_\varrho(x_0)}  \bigg| \frac{u}{\varrho} \bigg|^p + |Du|^p \, dx \Bigg)^{\frac{\vartheta q}{p}}
        \Bigg( \mint{-}\limits_{B_\varrho(x_0)}  \bigg| \frac{u}{\varrho} \bigg|^r \, dx \Bigg)^{\frac{(1-\vartheta) q}{r}}.
    \end{equation*}
\end{Lem}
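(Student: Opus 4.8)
\emph{Sketch of proof.} The asserted inequality is scale--covariant: under the change of variables $w(y) := \varrho^{-1}u(x_0 + \varrho y)$, $y \in B_1(0) =: B_1$, one has $Dw(y) = (Du)(x_0 + \varrho y)$, and each of the three averages over $B_\varrho(x_0)$ in the statement is identified with the corresponding average over $B_1$ of $w$, built respectively from $|w|^q$, from $|w|^p + |Dw|^p$, and from $|w|^r$. Hence the plan is to reduce to the case $\varrho = 1$, $x_0 = 0$, namely the Gagliardo--Nirenberg inequality
\[
    \mint{-}\limits_{B_1} |w|^q \, dy \;\leq\; C \bigg( \mint{-}\limits_{B_1} |w|^p + |Dw|^p \, dy \bigg)^{\!\frac{\vartheta q}{p}} \bigg( \mint{-}\limits_{B_1} |w|^r \, dy \bigg)^{\!\frac{(1-\vartheta)q}{r}}
\]
for $w \in W^{1,p}(B_1)$ on the fixed smooth bounded domain $B_1$, and then to establish the latter by classical means.

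Next I would reduce the exponent inequality in the hypothesis to an equality. With $\tfrac{1}{q_\ast} := \vartheta(\tfrac1p - \tfrac1n) + (1-\vartheta)\tfrac1r$, the assumption $-\tfrac nq \leq \vartheta(1 - \tfrac np) - (1-\vartheta)\tfrac nr$ is exactly $\tfrac1q \geq \tfrac1{q_\ast}$, i.e. $q \leq q_\ast$ (with $q_\ast = \infty$ when the right--hand side is $\leq 0$). Since $|B_1| < \infty$, Hölder's inequality gives $\|w\|_{L^q(B_1)} \leq C\,\|w\|_{L^{q_\ast}(B_1)}$, so it is enough to prove the displayed inequality in the critical case $q = q_\ast$.

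For the critical case, if $p < n$ I would combine the Sobolev embedding $W^{1,p}(B_1) \hookrightarrow L^{p^\ast}(B_1)$, $\tfrac1{p^\ast} = \tfrac1p - \tfrac1n$, with the interpolation (Hölder) inequality $\|w\|_{L^{q_\ast}(B_1)} \leq \|w\|_{L^{p^\ast}(B_1)}^{\vartheta}\,\|w\|_{L^r(B_1)}^{1-\vartheta}$, which is legitimate precisely because $\tfrac1{q_\ast} = \tfrac\vartheta{p^\ast} + \tfrac{1-\vartheta}{r}$, and then pass back to averages. If $p \geq n$ the endpoint $p^\ast$ is no longer available and the inequality is not obtained from Sobolev embedding together with a naive interpolation against $L^\infty$ (that route produces the wrong exponent on the $W^{1,p}$--factor); in this range I would instead invoke the classical Gagliardo--Nirenberg inequality on $B_1$ directly --- equivalently, extend $w$ to $\widetilde w \in W^{1,p}(\mathbb{R}^n)$ supported in $B_2$ by a bounded extension operator, apply the scale--invariant Gagliardo--Nirenberg inequality on $\mathbb{R}^n$ to $\widetilde w$, and bound its right--hand side by $\|w\|_{W^{1,p}(B_1)}$ and $\|w\|_{L^r(B_1)}$ using boundedness of the extension. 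The limiting case $p = n$ is admissible precisely because the hypothesis includes $\vartheta < 1$.

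The change of variables, the Hölder reduction and the interpolation step are routine. The part that genuinely requires care is the range $p \geq n$ --- above all the borderline $p = n$, where one must appeal to the full Gagliardo--Nirenberg inequality rather than to Sobolev embedding --- together with the bookkeeping of the constants (the powers of $|B_1|$ introduced by passing to averages, and the Sobolev and extension constants) needed to arrive at the stated dependence $C = C(n,q)$.
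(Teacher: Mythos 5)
The paper does not prove Lemma~\ref{Lem2.11}; it is cited from \cite[Lemma~2.11]{bogelein2013regularity} and used as a black box, so there is no ``paper's proof'' to compare your argument against. Your sketch is nonetheless a sound route to the statement. The rescaling $w(y)=\varrho^{-1}u(x_0+\varrho y)$ does identify each normalized integral over $B_\varrho(x_0)$ with the corresponding one over $B_1$, and the observation that the hypothesis $-\tfrac nq\le \vartheta(1-\tfrac np)-(1-\vartheta)\tfrac nr$ is equivalent to $q\le q_*$ with $\tfrac1{q_*}=\vartheta(\tfrac1p-\tfrac1n)+(1-\vartheta)\tfrac1r$ is correct; the reduction to $q=q_*$ via Jensen (raising the critical inequality to the power $q/q_*$ and using $\mint{-}|w|^q\le(\mint{-}|w|^{q_*})^{q/q_*}$) is clean precisely because you work with normalized integrals. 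For $p<n$ the chain Sobolev embedding plus $L^p$-interpolation is the standard argument, and you are right that for $p\ge n$ one must go through the full Gagliardo--Nirenberg inequality (e.g.\ via a Stein extension, which is simultaneously bounded on $W^{1,p}$ and on $L^r$, as is needed to control both factors on the right).

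Two small caveats. First, the constant bookkeeping you defer is where the stated dependence $C=C(n,q)$ is least transparent: after reinstating the $|B_1|$ powers one picks up $|B_1|^{\vartheta q_*/n}$ together with the Sobolev (or extension) constant raised to $\vartheta q_*$, and $q_*$, $\vartheta$, $p$, $r$ are not determined by $n$ and $q$ alone, so the uniformity in $(p,r,\vartheta)$ asserted in the lemma is not an automatic by-product of the sketch and would require either a sharper form of the Gagliardo--Nirenberg constant or the observation that in the paper's only application the remaining exponents lie in a compact range fixed by $n$ and $q$. Second, the endpoint cases (e.g.\ $q_*=\infty$, or $r=\infty$) deserve a word, but they are handled by the same $L^\infty$ embedding/extension machinery and do not affect the structure of the argument.
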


\subsection{Minimizing affine functions}\label{SMinaff}

In what follows we approximate the solution with affine functions, that depend only on the space variables. These are functions $L: \R^n \to \R^N, x \mapsto l + (Dl)x$ with $l\in \R^N$ and $(Dl)\in \R^{N\times n}$. This subsection is dedicated to summarize certain properties of these functions that will be applied later. For details we refer to \cite{bogelein2013regularity,duzaar2011parabolic,kronz2002partial}. Let $Q_\varrho^{(\lambda)}(z_0)\subset \R^{n+1}$ be an intrinsic geometric cylinder and $u\in L^2(Q_\varrho ^{(\lambda)}(z_0),\R^N)$, then we denote with $L_{z_0;\varrho}^{(\lambda)}: \R^n \to \R^N$ the unique affine map minimizing 
\begin{align}\label{2.5.1}
    L \mapsto \mint{-}\limits_{Q_\varrho^{(\lambda)}(z_0)} |u-L|^2 \, dz
\end{align}
amongst all affine maps $L(z)=L(x)$ which are independent of time $t$. The minimum of course exists and can be computed to be of the form $L_{z_0;\varrho}^{(\lambda)}(x) = l_{z_0;\varrho}^{(\lambda)} + (Dl)_{z_0;\varrho}^{(\lambda)}(x-x_0)$, where
\begin{align}\label{2.5.2}
    l_{z_0;\varrho}^{(\lambda)}= (u)_{z_0;\varrho}^{(\lambda)}
    \qquad\text{and}\qquad
    (Dl)_{z_0;\varrho}^{(\lambda)} = \frac{n+2}{\varrho^2} \mint{-}\limits_{Q_\varrho^{(\lambda)}} u\otimes (x-x_0) \, dz.
\end{align}
Moreover, for any $w\in \R^{Nn}$ there holds
\begin{align}\label{2.5.3}
    \big|(Dl)_{z_0;\varrho}^{(\lambda)} - w\big|^2 \leq \frac{n(n+2)}{\varrho^2} \mint{-}\limits_{Q_\varrho^{(\lambda)}} \big| u - (u)_{z_0;\varrho}^{(\lambda)} - w (x-x_0)\big|^2 \, dz.
\end{align}
Finally, $L_{z_0;\varrho}^{(\lambda)}$ is a quasi-minimum of the $L^p$-distance for $p\geq 2$; see \cite[Lemma 2.8]{bogelein2013regularity}.
\begin{Lem}\label{Lem2.8}
    Let $p\geq 2$, $Q_\varrho^{(\lambda)}(z_0)\subset \R^{n+1}$ be an intrinsic geometric cylinder and $u\in L^p(Q_\varrho ^{(\lambda)}(z_0),\R^N)$. For any affine function $L:\R^n\to\R^N$ independent of $t$, we then have
    \begin{align}\label{2.5.4}
    \mint{-}\limits_{Q_\varrho^{(\lambda)}(z_0)} \big|u- L_{z_0;\varrho}^{(\lambda)} \big|^p \, dz
    \leq C(n,p)
    \mint{-}\limits_{Q_\varrho^{(\lambda)}(z_0)} |u-L|^p \, dz,
    \end{align}
    where $L_{z_0;\varrho}^{(\lambda)}$ denotes the unique minimizer of (\ref{2.5.1}).
\end{Lem}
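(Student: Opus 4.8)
The plan is to exploit the orthogonality characterising the $L^2$-minimising affine map $L_{z_0;\varrho}^{(\lambda)}$ together with the elementary fact that, on the finite-dimensional space of time-independent affine maps, all $L^q$-averages over an intrinsic cylinder are mutually comparable with constants depending only on $n$ and $q$. Throughout write $Q:=Q_\varrho^{(\lambda)}(z_0)$ and $\bar L:=L_{z_0;\varrho}^{(\lambda)}$.

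First I would record the first-variation (Euler--Lagrange) identity for the minimisation problem \eqref{2.5.1}: since $\bar L$ minimises $L\mapsto \mint{-}\limits_Q|u-L|^2\,dz$ among all affine maps independent of $t$, one has $\mint{-}\limits_Q(u-\bar L)\cdot\phi\,dz=0$ for every such $\phi$. Choosing $\phi=L-\bar L$ — which is again affine and independent of $t$ — and writing $L-\bar L=(L-u)+(u-\bar L)$ inside $\mint{-}\limits_Q|L-\bar L|^2\,dz$, the orthogonality annihilates the second contribution while Cauchy--Schwarz handles the first, giving
\begin{equation*}
    \mint{-}\limits_Q|L-\bar L|^2\,dz \le \mint{-}\limits_Q|u-L|^2\,dz .
\end{equation*}

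Next I would pass from the $L^2$- to the $L^p$-average of $L-\bar L$. Since $L-\bar L$ is affine and independent of $t$, its averages over $Q=B_\varrho(x_0)\times\Lambda_\varrho^{(\lambda)}(t_0)$ coincide with those over $B_\varrho(x_0)$. Writing $(L-\bar L)(x)=c+D(x-x_0)$ with $c\in\R^N$ and $D\in\R^{N\times n}$, the cross terms vanish by symmetry of the ball, so $\mint{-}\limits_{B_\varrho}|c+D(x-x_0)|^2\,dx=|c|^2+\tfrac{\varrho^2}{n+2}|D|^2$, whereas $\mint{-}\limits_{B_\varrho}|c+D(x-x_0)|^p\,dx\le C(n,p)\big(|c|^p+\varrho^p|D|^p\big)\le C(n,p)\big(|c|^2+\varrho^2|D|^2\big)^{p/2}$, the last inequality being the comparison of $\ell^p$- and $\ell^2$-norms on $\R^2$ for $p\ge 2$. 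Combining this with the $L^2$-bound above and Jensen's inequality (valid since $p\ge 2$) yields
\begin{equation*}
    \mint{-}\limits_Q|L-\bar L|^p\,dz \le C(n,p)\bigg(\mint{-}\limits_Q|u-L|^2\,dz\bigg)^{p/2} \le C(n,p)\mint{-}\limits_Q|u-L|^p\,dz .
\end{equation*}

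Finally, the triangle inequality $|u-\bar L|^p\le 2^{p-1}|u-L|^p+2^{p-1}|L-\bar L|^p$, integrated over $Q$ and combined with the bound just obtained, produces \eqref{2.5.4} with a constant $C=C(n,p)$. The argument is entirely elementary; the only genuinely computational point is the norm-equivalence for affine maps used in the third paragraph, which presents no real obstacle, and the approach reproduces that of \cite[Lemma 2.8]{bogelein2013regularity}.
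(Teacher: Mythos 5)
Your proof is correct and is the standard argument for this quasi-minimality statement (which the paper does not prove but merely cites from \cite[Lemma 2.8]{bogelein2013regularity}). The chain of ideas — the $L^2$ first-variation orthogonality yields $\mint{-}\limits_Q|L-\bar L|^2\le\mint{-}\limits_Q|u-L|^2$, the finite-dimensional norm equivalence on time-independent affine maps (via the explicit $\mint{-}\limits_{B_\varrho}|c+D(x-x_0)|^2\,dx=|c|^2+\tfrac{\varrho^2}{n+2}|D|^2$ and the $\ell^p\le\ell^2$ comparison, valid precisely because $p\ge 2$) upgrades this to $L^p$, Jensen's inequality handles the right-hand side, and the triangle inequality closes the argument — is exactly the intended route, and each step is justified. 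One minor stylistic remark: the Cauchy–Schwarz detour in the first step can be shortened to a Pythagorean identity, $\mint{-}\limits_Q|u-L|^2=\mint{-}\limits_Q|u-\bar L|^2+\mint{-}\limits_Q|\bar L-L|^2$, which gives the $L^2$ bound immediately; but your version is equally correct.
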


\subsection{\texorpdfstring{$\mathcal{A}$}{A}-caloric approximation Lemma}

In the non-degenerate case, we approximate the original solution with solutions to a linear parabolic system with constant coefficients. The approximation will be achieved by the following two $\mathcal{A}$-caloric approximation Lemmas, where we distinguish between the sub- and super-quadratic cases. First, we consider the super-quadratic case; for a statement, see \cite[Lemma 2.12]{bogelein2013regularity}; for a proof that requires minor modification, we refer to \cite[Lemma 3.2]{duzaar2011parabolic}.
\begin{Lem}\label{Lem2.12}
    Let $p\geq 2$. Given $0<\nu \leq L$ and $\varepsilon>0$, there exists a positive function $\delta_0=\delta_0(n,p,\nu,L,\varepsilon) \in (0,1]$ with the following property: Whenever $\mathcal{A}$ is a elliptic bilinear form on $\R^{Nn}$ with ellipticity constant $\nu$ and upper bound $L$ - i.e.
    \begin{align}\label{2.1201}
        \mathcal{A}(\xi, \xi) \geq \nu |\xi|^2
        \qquad\text{and}\qquad 
        \mathcal{A}(\xi,\Tilde{\xi}) \leq L |\xi| |\Tilde{\xi}|
    \end{align}
    hold for any $\xi,\Tilde{\xi}\in \R^{Nn}$ - and $\gamma\in (0,1]$ and a function
    \begin{equation*}
        w\in L^p\big(\Lambda_\varrho(t_0); W^{1,p}(B_\varrho(x_0),\R^N)\big)
    \end{equation*}
    with 
    \begin{align}\label{2.1202}
        \mint{-}\limits_{Q_{\varrho}(z_0)} & \bigg|\frac{w}{\varrho}\bigg|^2 + |Dw|^2 \, dz 
        +
        \gamma^{p-2}
        \mint{-}\limits_{Q_{\varrho}(z_0)} \bigg|\frac{w}{\varrho}\bigg|^p + |Dw|^p \, dz  \leq 1,
    \end{align}
    is approximately $\mathcal{A}$-caloric in the sense that there holds
    \begin{align}\label{2.1203}
        \Bigg| \mint{-}\limits_{Q_{\varrho}(z_0)} w \cdot\partial_t \phi - \mathcal{A}(Dw,D\phi) \, dz \Bigg|
        \leq
        \delta_0 \sup\limits_{Q_{\varrho}(z_0)} |D\phi|, 
        \qquad\forall \phi\in C^\infty_0(Q_{\varrho}(z_0),\R^N),
    \end{align}
    then there exists an $\mathcal{A}$-caloric function $h\in L^p\big(\Lambda_{\varrho/2}(t_0); W^{1,p}(B_{\varrho/2}(x_0),\R^N)\big)$, i.e.
    \begin{align}\label{2.1204}
        \mint{-}\limits_{Q_{\varrho/2}(z_0)} h \cdot\partial_t \phi - \mathcal{A}(Dh,D\phi) \, dz = 0 
        \qquad\forall \phi\in C^\infty_0(Q_{\varrho},\R^N),
    \end{align}
    such that
    \begin{align}\label{2.1205}
        \mint{-}\limits_{Q_{\varrho/2}(z_0)} & \bigg|\frac{h}{\varrho/2}\bigg|^2 + |Dh|^2 \, dz 
        +
        \gamma^{p-2}
        \mint{-}\limits_{Q_{\varrho/2}(z_0)} \bigg|\frac{h}{\varrho/2}\bigg|^p + |Dh|^p \, dz  \leq 2^{n+3+2p}
    \end{align}
    and
    \begin{align}\label{2.1206}
        \mint{-}\limits_{Q_{\varrho/2}(z_0)} & \bigg|\frac{w-h}{\varrho/2}\bigg|^2 \, dz 
        +
        \gamma^{p-2}
        \mint{-}\limits_{Q_{\varrho/2}(z_0)} \bigg|\frac{w-h}{\varrho/2}\bigg|^p \, dz  \leq \varepsilon.
    \end{align}
\end{Lem}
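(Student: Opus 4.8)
\textbf{Proof strategy for the $\mathcal{A}$-caloric approximation Lemma (Lemma \ref{Lem2.12}).}

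The plan is to argue by contradiction, which is the standard route for approximation lemmas of this type and the one used in \cite{duzaar2011parabolic}. Suppose the statement fails for some $\varepsilon>0$. Then for every $k\in\N$ there exist an elliptic bilinear form $\mathcal{A}_k$ satisfying \eqref{2.1201}, a parameter $\gamma_k\in(0,1]$, and a function $w_k$ satisfying the normalization \eqref{2.1202} which is approximately $\mathcal{A}_k$-caloric with $\delta_0$ replaced by $1/k$ in \eqref{2.1203}, but such that no $\mathcal{A}_k$-caloric comparison function $h$ satisfies both \eqref{2.1205} and \eqref{2.1206}. After a translation and scaling we may assume $z_0=0$, $\varrho=1$. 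The bound \eqref{2.1202} gives uniform bounds for $(w_k)$ in $L^2(\Lambda_1;W^{1,2}(B_1))$ and, weighted by $\gamma_k^{p-2}$, in $L^p(\Lambda_1;W^{1,p}(B_1))$; also $\mathcal{A}_k\to\mathcal{A}$ (up to a subsequence) for some limiting bilinear form with the same ellipticity constants, and $\gamma_k\to\gamma_\ast\in[0,1]$.

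The core analytic input is a \emph{compactness} step: one must upgrade weak convergence $w_k\rightharpoonup w$ in $L^2(\Lambda_1;W^{1,2}(B_1))$ to strong convergence $w_k\to w$ in $L^2(Q_{1/2})$, and in fact in $L^p(Q_{1/2})$ when $\gamma_\ast>0$. Spatial compactness comes from the Sobolev bound; the delicate point is control of the time derivative, which is only available in a weak-in-distribution sense through \eqref{2.1203} — this yields $\partial_t w_k \to 0$ in a negative Sobolev norm tested against smooth functions, while the full equation is not at our disposal. The standard remedy is a variant of the Aubin--Lions / Lions--Aubin lemma adapted to the situation where $\partial_t w_k$ is controlled only as a functional on $C^\infty_0$ (and the ``elliptic part'' $\mathcal{A}_k(Dw_k,\cdot)$ is bounded in $L^2$ plus a $\gamma_k^{p-2}$-weighted $L^{p'}$ piece); see the argument in \cite{duzaar2011parabolic}. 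This produces a limit $w$ which, passing to the limit in \eqref{2.1203}, is exactly $\mathcal{A}$-caloric on $Q_{1/2}$, satisfies the energy bound, and enjoys the interior a priori estimates for linear parabolic systems with constant coefficients. I expect this compactness/passage-to-the-limit step to be the main obstacle, precisely because of the weak control on $\partial_t w_k$ and the need to keep track of the two scales of integrability (the $L^2$ and the $\gamma_k^{p-2}$-weighted $L^p$ bound) simultaneously through the limit; the case distinction $\gamma_\ast>0$ versus $\gamma_\ast=0$ has to be handled with care, since in the latter case the $L^p$ information may degenerate.

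Granting the compactness, one takes $h:=w$ (the limit itself) as the competitor. By construction $h$ is $\mathcal{A}_k$-caloric only in the limit, so in fact one replaces $\mathcal{A}_k$ by $\mathcal{A}$ and notes that for $k$ large $\mathcal{A}_k$ is close to $\mathcal{A}$; a small perturbation argument (or, more simply, choosing $h$ to be the $\mathcal{A}_k$-caloric function with the same boundary/initial data as $w$ on a slightly smaller cylinder and using continuous dependence on the coefficients) fixes this. The energy estimate \eqref{2.1205} for $h$ follows from lower semicontinuity of the norms together with the interior a priori estimate for $\mathcal{A}$-caloric functions, which upgrades the $L^2$-energy bound on $Q_1$ to the stated bound on $Q_{1/2}$ with the explicit constant $2^{n+3+2p}$ (and, using $\gamma_k^{p-2}\le\gamma_k^{p-2}$ monotonicity together with parabolic regularity, the weighted $L^p$ part as well). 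Finally, the strong convergence $w_k\to w$ in $L^2(Q_{1/2})$ and in the weighted $L^p(Q_{1/2})$ sense forces the left-hand side of \eqref{2.1206} (with $h=w$ and $w=w_k$) to tend to $0$ as $k\to\infty$, so for $k$ large it is $\le\varepsilon$ — contradicting the assumption that no good $h$ existed. This closes the argument; undoing the normalization gives the Lemma with $\delta_0=1/k$ for the chosen large $k$, depending only on $n,p,\nu,L,\varepsilon$.
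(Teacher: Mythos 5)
Your sketch reproduces the contradiction-plus-compactness argument that the paper itself does not reprove but imports from \cite[Lemma~3.2]{duzaar2011parabolic}, and you correctly identify the Aubin--Lions step and the case distinction $\gamma_\ast>0$ versus $\gamma_\ast=0$ as the delicate points. One correction to the middle of your paragraph: you write that \eqref{2.1203} yields $\partial_t w_k\to 0$ in a negative Sobolev norm, which is not what happens — that would force a time-independent limit. What \eqref{2.1203} gives is that the caloricity \emph{defect}, i.e.\ the distribution $\partial_t w_k+\mathrm{div}\,\mathcal{A}_k(Dw_k,\cdot)$, tends to zero (in a $W^{-1,\infty}$-type sense), while $\partial_t w_k$ itself is only \emph{bounded} in a negative Sobolev space, precisely because $\mathcal{A}_k(Dw_k,\cdot)$ is bounded in $L^2$ plus a $\gamma_k^{p-2}$-weighted $L^{p'}$ piece — which is exactly the statement you make one sentence later, so as written the two claims contradict each other; it is the boundedness, not the vanishing, that feeds the Lions--Aubin compactness. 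With that fixed, the rest — extracting $\mathcal{A}_k\to\mathcal{A}$ and $\gamma_k\to\gamma_\ast$, taking $h_k$ to be the $\mathcal{A}_k$-caloric function with the boundary data of the strong $L^2$ limit on a slightly smaller cylinder, lower semicontinuity for \eqref{2.1205}, and the interpolation needed to carry the $\gamma_k^{p-2}$-weighted $L^p$ term in \eqref{2.1206} through the limit — matches the cited proof.
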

For the sub-quadratic case we refer to \cite{Scheven2006} for a $V_1$-version of the $\mathcal{A}$-caloric approximation Lemma.
\begin{Lem}\label{Lem2.13}
    Let $\frac{2n}{n+2} < p < 2$. Given $0<\nu \leq L$ and $\varepsilon>0$, there exists a positive function $\delta_0=\delta_0(n,p,\nu,L,\varepsilon) \in (0,1]$ with the following property: Whenever $\mathcal{A}$ is a elliptic bilinear form on $\R^{Nn}$ with ellipticity constant $\nu$ and upper bound $L$, as in (\ref{2.1201})  and a function
    \begin{equation*}
        w\in L^\infty\big(\Lambda_\varrho(t_0); L^2(B_\varrho(x_0),\R^N)\big) L^p\big(\Lambda_\varrho(t_0); W^{1,p}(B_\varrho(x_0),\R^N)\big)
    \end{equation*}
    with 
    \begin{align}\label{2.1302}
    \sup\limits_{t\in \Lambda_\varrho(t_0)} \mint{-}\limits_{B_\varrho(x_0)} \bigg| \frac{w(\,\cdot\,, t)}{\varrho} \bigg|^2 \, dx +
    \mint{-}\limits_{Q_{\varrho}(z_0)} & |V_1(Dw)|^2 \, dz 
    \leq 1,
    \end{align}
    is approximately $\mathcal{A}$-caloric in the sense there holds (\ref{2.1203}), then there exists an $\mathcal{A}$-caloric function
    \begin{equation*}
        h\in L^p\big(\Lambda_{\varrho/2}(t_0); W^{1,p}(B_{\varrho/2}(x_0),\R^N)\big),
    \end{equation*}
    i.e. (\ref{2.1204}) holds, such that
    \begin{align}\label{2.1305}
        \mint{-}\limits_{Q_{\varrho/2}(z_0)} & \bigg|\frac{h}{\varrho/2}\bigg|^2 + |V_1(Dh)|^2 \, dz \leq 2^{n+5}
    \end{align}
    and
    \begin{align}\label{2.1306}
        \mint{-}\limits_{Q_{\varrho/2}(z_0)} & \bigg|\frac{w-h}{\varrho/2}\bigg|^2 \, dz 
        \leq \varepsilon.
    \end{align}
\end{Lem}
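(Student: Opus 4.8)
The plan is to prove Lemma~\ref{Lem2.13} by a contradiction-and-compactness argument in the spirit of \cite{duzaar2011parabolic,Scheven2006}. After the parabolic rescaling $z\mapsto z_0+(\varrho x,\varrho^2 t)$, $w\mapsto w/\varrho$ one may assume $z_0=0$ and $\varrho=1$, so that the statement concerns the cylinders $Q_1$ and $Q_{1/2}$. Suppose the conclusion fails for some $\varepsilon>0$: then for each $k\in\N$ there are a bilinear form $\mathcal{A}_k$ obeying \eqref{2.1201} with the \emph{fixed} constants $\nu,L$ and a map $w_k$ as in the statement which satisfies \eqref{2.1302} and is $\tfrac{1}{k}$-approximately $\mathcal{A}_k$-caloric in the sense of \eqref{2.1203}, but such that \emph{no} $\mathcal{A}_k$-caloric function simultaneously satisfies \eqref{2.1305} and \eqref{2.1306} (with $w$ there replaced by $w_k$).

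The first real step is compactness. By \eqref{2.14} of Lemma~\ref{Lem2.1}, the bound \eqref{2.1302} on $\mint{-}\limits_{Q_1}|V_1(Dw_k)|^2\,dz$ forces a uniform bound on $\|Dw_k\|_{L^p(Q_1)}$ (split the cylinder according to whether $|Dw_k|\le 1$ or $|Dw_k|>1$), and, combined with the uniform $L^\infty$-in-time $L^2$-in-space bound and the Gagliardo--Nirenberg inequality of Lemma~\ref{Lem2.11}, the sequence $\{w_k\}$ is bounded in $L^q(Q_1)$ for some $q>2$ --- this is the only place where the restriction $p>\tfrac{2n}{n+2}$ enters. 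The approximate equation \eqref{2.1203} together with the growth of $\mathcal{A}_k$ bounds $\partial_t w_k$ in a fixed negative-order space, so an Aubin--Lions-type compactness argument gives, after passing to a subsequence, $w_k\to w$ strongly in $L^2(Q_{\varrho'})$ for every $\varrho'<1$, $Dw_k\rightharpoonup Dw$ weakly in $L^p(Q_1)$, and $\mathcal{A}_k\to\mathcal{A}_\infty$ with $\mathcal{A}_\infty$ still obeying \eqref{2.1201}. Passing to the limit in \eqref{2.1203} then shows that $w$ is $\mathcal{A}_\infty$-caloric in the interior of $Q_1$, i.e.\ satisfies \eqref{2.1204}, and hence is smooth there by the regularity theory for constant-coefficient linear parabolic systems.

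What is still missing, and what I expect to be the main obstacle, is \emph{strong} (equivalently, almost everywhere) convergence $Dw_k\to Dw$ on interior cylinders; it is needed in order to transfer the $V_1$-energy bound to the limit with a \emph{universal} constant, and this is where the sub-quadratic case genuinely differs from the super-quadratic one, because $|V_1(\cdot)|^2$ is not convex and $Dw$ lies only in $L^p$. The standard remedy is a localized Caccioppoli estimate for $w_k-w$: one tests the difference of the equations for $w_k$ and $w$ with $\eta^2$ times a Steklov-averaged version of $w_k-w$, where $\eta$ is a spatial cut-off, so that the caloricity defect contributes a term tending to zero with $\tfrac{1}{k}$, the coefficient mismatch is absorbed using $\mathcal{A}_k\to\mathcal{A}_\infty$ together with $Dw\in L^p$ and the smoothness of $w$, and the parabolic term is handled by the strong $L^2$ convergence already obtained. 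Pushing this through the $V_1$-truncation machinery of \cite{Scheven2006} yields $Dw_k\to Dw$ strongly in $L^p(Q_{\varrho'})$ for every $\varrho'<1$.

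It then remains to close the contradiction. Set $h:=w|_{Q_{1/2}}$, which is $\mathcal{A}_\infty$-caloric on $Q_{1/2}$ since $\overline{Q_{1/2}}\subset Q_1$. Using $|Q_1|=2^{n+2}|Q_{1/2}|$, the strong $L^2$ convergence $w_k\to w$ and Fatou's lemma applied to $|V_1(Dw_k)|^2$ along the almost-everywhere convergent subsequence, the bounds $\sup_t\mint{-}\limits_{B_1}|w_k|^2\,dx\le 1$ and $\mint{-}\limits_{Q_1}|V_1(Dw_k)|^2\,dz\le 1$ pass to the limit and give
\[
    \mint{-}\limits_{Q_{1/2}}\big(4|w|^2+|V_1(Dw)|^2\big)\,dz \ \le\ 2^{n+4}+2^{n+2}\ \le\ 2^{n+5},
\]
which is \eqref{2.1305}; on the other hand $\mint{-}\limits_{Q_{1/2}}4|w_k-w|^2\,dz\to 0$, so \eqref{2.1306} holds for $h$ once $k$ is large. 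Thus for large $k$ the $\mathcal{A}_k$-caloric function $h$ satisfies both \eqref{2.1305} and \eqref{2.1306}, contradicting the choice of $w_k$; undoing the rescaling completes the proof. The super-quadratic Lemma~\ref{Lem2.12} follows from the same scheme with $L^p$-quantities in place of the $V_1$-quantities and is technically lighter, since the relevant integrand is then convex and weak lower semicontinuity already suffices at the last step.
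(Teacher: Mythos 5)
The paper does not give its own proof of Lemma~\ref{Lem2.13}; it cites \cite{Scheven2006}, so the comparison below is against the standard argument in that reference and in \cite{duzaar2011parabolic}.

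Your compactness-and-contradiction framework is the right one, but the way you close the contradiction has a genuine gap. In the contradiction hypothesis each $w_k$ comes with its own bilinear form $\mathcal{A}_k$ satisfying \eqref{2.1201}, and the claim to be refuted is that no \emph{$\mathcal{A}_k$}-caloric map on $Q_{1/2}$ satisfies \eqref{2.1305} and \eqref{2.1306}. Your candidate $h:=w|_{Q_{1/2}}$ is $\mathcal{A}_\infty$-caloric for the limiting form $\mathcal{A}_\infty=\lim\mathcal{A}_k$, which is in general distinct from every $\mathcal{A}_k$; so exhibiting $h$ with the desired bounds does not contradict the assumption, and the argument does not close. (If the lemma were stated with a fixed $\mathcal{A}$ your step would be fine, but then $\delta_0$ would depend on $\mathcal{A}$, which is not what is claimed.) The fix used in \cite{Scheven2006} and in \cite[Lemma~3.2]{duzaar2011parabolic} is to solve, for each large $k$, the $\mathcal{A}_k$-caloric Cauchy--Dirichlet problem on $Q_{1/2}$ with lateral and initial data derived from $w_k$ (or a suitable mollification), and then show that these genuinely $\mathcal{A}_k$-caloric solutions $h_k$ converge to $w$ and satisfy the energy bound.

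A second, related point: once one passes through the Cauchy--Dirichlet problem, the energy bound \eqref{2.1305} comes directly from the a priori estimate for the linear problem, and one never needs to transport the nonconvex $V_1$-energy of $Dw_k$ to the limit. This makes the strong $L^p$-convergence of $Dw_k$, which you single out as the main technical obstacle and propose to obtain via a localized Caccioppoli estimate and $V_1$-truncation, entirely unnecessary in the standard proof; it is an artifact of choosing $h=w$ rather than solving the $\mathcal{A}_k$-problem. Moreover, the localized Caccioppoli argument you sketch implicitly treats $w_k$ and $w$ as solutions of linear equations with nearby coefficients and differentiates them, but $w_k$ is only \emph{approximately} $\mathcal{A}_k$-caloric: the defect term is controlled only through $\sup|D\phi|$, so the Steklov-averaged test function must be cut off in space and time and bounded in $W^{1,\infty}$, which is not automatic for $w_k-w\in L^p_tW^{1,p}_x$; making this rigorous is exactly where the $V_1$-truncation machinery in \cite{Scheven2006} is nontrivial, and it is simply not needed if one follows the Cauchy--Dirichlet route.

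The remaining ingredients in your sketch --- the $L^p$ gradient bound from \eqref{2.1302} via \eqref{2.14}, the higher $L^q$ bound via Gagliardo--Nirenberg using $p>\tfrac{2n}{n+2}$, the Aubin--Lions compactness giving strong $L^2$ convergence on interior cylinders, and the passage to the limit in \eqref{2.1203} to conclude that $w$ is $\mathcal{A}_\infty$-caloric --- are all correct and essentially as in the reference.
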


\section{The \texorpdfstring{$p$}{p}-caloric approximation Lemma}\label{pCal}

In this section, we state the $p$-caloric approximation Lemma, which will be crucial when we linearize the solution for small gradients. This will be achieved by approximation with appropriate systems of parabolic $p$-Laplacian type, where the vector fields $\mathcal{A}: \R^{Nn}\to \R^{Nn}$ for $p>\frac{2n}{n+2}$ and  some $\gamma>0$ are given by
\begin{equation*}
    \mathcal{A}(\xi)= \gamma |\xi|^{p-2} \xi.
\end{equation*}
\begin{remark}
    Given the vector field $\mathcal{A}: \R^{Nn}\to \R^{Nn}: \xi \mapsto \gamma |\xi|^{p-2} \xi$, we call the vector valued function 
    \begin{align*}
        h\in  C\big([0,T], L^2(\Omega,\R^N)\big) \cap L^p\big(0,T,W^{1,p}(\Omega,\R^N)\big)
    \end{align*}
    $\mathcal{A}$-caloric in the cylinder $\Omega_T$ if it solves
    \begin{align}
        \partial_t h - \mathrm{div} \, \mathcal{A}(Dh) = 0
    \end{align}
\end{remark}
\noindent
weakly. With these structure conditions in mind, we can state the $p$-caloric approximation Lemma in the super-quadratic case. For the proof, we refer to \cite[Theorem 4.2]{bogelein2013regularity}.
\begin{Lem}\label{Lem4.1}
    Let $p\geq 2$, $\mu\in[0,1]$, $0<\nu\leq 1\leq L$ and $C_p\geq 1$ be fixed. Then for every $\varepsilon>0$ there exits a positive function $\delta_0(n,N,p,\nu,L,\mu,C_p,\varepsilon)\leq 1$ with the following property: Let $z_0=(x_0,t_0)\in \R^{n+1}$, $\varrho>0$ and $\mathcal{A}: \R^{Nn}\to \R^{Nn}$ be a vector field, such that $\nu\leq \gamma\leq L$. If $w\in L^p\big(\Lambda_\varrho(t_0); W^{1,p}(B_\varrho(x_0),\R^N)\big)$ fulfills
    \begin{align}\label{4.101}
        \mint{-}\limits_{Q_\varrho(z_0)} \bigg| \frac{w}{\varrho} \bigg|^p \, dz
        +
        \mint{-}\limits_{Q_\varrho(z_0)} |Dw|^p \, dz
        \leq 1
    \end{align}
    and is approximately ($\mathcal{A},p$)-caloric in the sense that
    \begin{align}\label{4.102}
        \Bigg| \mint{-}\limits_{Q_{\varrho}(z_0)} w \cdot\partial_t \phi - \mathcal{A}(Dw)\cdot D\phi \, dz \Bigg|
        \leq
        \delta_0 \sup\limits_{Q_{\varrho}(z_0)} |D\phi|, 
        \qquad\forall \phi\in C^\infty_0(Q_{\varrho}(z_0),\R^N),
    \end{align}
    and moreover the Poincaré inequality 
    \begin{align}\label{4.103}
        \Bigg| \mint{-}\limits_{Q^{(\lambda)}_{\varrho}(z')} \big| w - (w)_{z',r}^{(\lambda)} \big| \, dz \Bigg|
        \leq
        C_p r \Bigg[ \mint{-}\limits_{Q^{(\lambda)}_{\varrho}(z')} |Dw| + \lambda^{2-p} (1+|Dw|)^{p-1} \, dz \Bigg],
    \end{align}
    holds for any intrinsic geometric cylinder $Q^{(\lambda)}_{\varrho}(z')\subset Q_{\varrho}(z_0)$, then there exists an ($\mathcal{A},p$)-caloric function
    \begin{equation*}
        h\in L^p\big(\Lambda_{\varrho/2}(t_0); W^{1,p}(B_{\varrho/2}(x_0),\R^N)\big),
    \end{equation*}
    i.e.
    \begin{align}\label{4.104}
        \partial_t h - \mathrm{div} \, \mathcal{A}(z,Dh) = 0
        \qquad\text{in } Q_{\varrho/2}(z_0),
    \end{align}
    such that
    \begin{align}\label{4.105}
        \mint{-}\limits_{Q_{\varrho/2}(z_0)} \bigg| \frac{h}{\varrho/2} \bigg|^p \, dz
        +
        \mint{-}\limits_{Q_{\varrho/2}(z_0)} |Dh|^p \, dz
        \leq H
    \end{align}
    and
    \begin{align}\label{4.106}
        \mint{-}\limits_{Q_{\varrho/2}(z_0)} \bigg| \frac{w-h}{\varrho/2} \bigg|^p 
        +
        \bigg| \frac{w-h}{\varrho/2} \bigg|^2 \, dz
        \leq \varepsilon
    \end{align}
    holds, where $H$ is a non-decreasing function of the arguments $n,p,L/\nu$.
\end{Lem}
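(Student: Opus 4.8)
The plan is to argue by a compactness--contradiction scheme, in the spirit of the $\mathcal A$-caloric approximation Lemmas~\ref{Lem2.12} and~\ref{Lem2.13} but with the extra work forced by the nonlinearity of the $p$-Laplace operator. By the parabolic scaling $w(x,t)\mapsto\varrho^{-1}w(x_0+\varrho x,t_0+\varrho^2 t)$ one reduces at once to the case $z_0=0$, $\varrho=1$, all hypotheses \eqref{4.101}--\eqref{4.103} and conclusions \eqref{4.105}--\eqref{4.106} being invariant under it; $\gamma$ need not be normalized, as $\delta_0$ does not depend on it. So suppose the assertion fails: there are $\varepsilon_0>0$, numbers $\gamma_k\in[\nu,L]$ and maps $w_k\in L^p(\Lambda_1;W^{1,p}(B_1,\R^N))$ satisfying \eqref{4.101}, the Poincaré inequality \eqref{4.103}, and the approximate $(\mathcal A_k,p)$-caloricity \eqref{4.102} with $\delta_0$ replaced by $1/k$, but for which no $(\mathcal A_k,p)$-caloric function on $Q_{1/2}(0)$ fulfils simultaneously \eqref{4.105} --- with a large constant $H$ to be fixed below --- and \eqref{4.106} with $\varepsilon_0$ in place of $\varepsilon$.

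First I would extract compactness. From \eqref{4.101} the sequence $(w_k)$ is bounded in $L^p(\Lambda_1;W^{1,p}(B_1,\R^N))$, so up to a subsequence $w_k\rightharpoonup w$ weakly there, $\gamma_k\to\gamma_\infty\in[\nu,L]$, and $\mathcal A_k(Dw_k)=\gamma_k|Dw_k|^{p-2}Dw_k\rightharpoonup\bar A$ weakly in $L^{p'}(Q_1)$. By \eqref{4.102}, $\partial_t w_k$ equals $\mathrm{div}\,\mathcal A_k(Dw_k)$ up to an error vanishing in the dual of $C^1_0$, hence $\partial_t w_k$ is bounded in $L^{p'}\big(\Lambda_1;W^{-1,p'}(B_1,\R^N)\big)$ modulo a null sequence; combining this with the Poincaré inequality \eqref{4.103} --- which is precisely what converts the approximate equation into quantitative control of the time increments of $w_k$ --- an Aubin--Lions type argument yields $w_k\to w$ strongly in $L^p(Q_\sigma)$, hence also in $L^2(Q_\sigma)$ since $p\ge2$, for every $\sigma<1$.

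The heart of the argument is then to identify the weak limit. Passing to the limit in \eqref{4.102} gives $\partial_t w=\mathrm{div}\,\bar A$ in $\mathcal D'(Q_\sigma)$, and I would show $\bar A=\gamma_\infty|Dw|^{p-2}Dw$ --- so that $w$ is $(\mathcal A_\infty,p)$-caloric in the interior --- by Minty's monotonicity trick: exploiting the monotonicity of $\xi\mapsto\gamma_k|\xi|^{p-2}\xi$, I test the approximate equation for $w_k$ against $w_k-\psi$ with $\psi\in w+C_0^\infty(Q_\sigma,\R^N)$, treat the parabolic term $\int\partial_t w_k\,(w_k-\psi)$ via Steklov averaging (or mollification in time) together with the energy identity, pass to the limit using the strong $L^p$-convergence of $w_k$ and the weak convergence of $\mathcal A_k(Dw_k)$, and obtain $\int(\bar A-\gamma_\infty|D\psi|^{p-2}D\psi)\cdot D(w-\psi)\ge0$ for all admissible $\psi$; varying $\psi$ gives the claim, and, through the uniform monotonicity together with the $V$-function inequalities of Lemma~\ref{Lem2.1}, also $Dw_k\to Dw$ strongly in $L^p_{\mathrm{loc}}(Q_1)$. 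I expect this to be the main obstacle: $w$ carries no a priori time derivative in a function space, so the parabolic term has to be handled through the approximation machinery with care, and the degeneracy $p\neq2$ makes the estimates for the monotone term delicate.

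Finally I would build the competitor. Since $w$ is $(\mathcal A_\infty,p)$-caloric, let $h_k$ be the unique weak solution of the parabolic $(\mathcal A_k,p)$-Laplace system on $Q_{3/4}(0)$ with parabolic boundary datum $w$; existence, uniqueness and the energy estimate come from the theory of monotone operators, and the energy estimate together with \eqref{4.101} for $w$ (inherited by weak lower semicontinuity) and the interior a priori estimates for $p$-caloric functions gives \eqref{4.105} on $Q_{1/2}(0)$ with $H=H(n,p,L/\nu)$. A further, by now routine, compactness argument as $\gamma_k\to\gamma_\infty$ --- weak compactness of $(h_k)$ from the energy bound, identification of the limit via Minty, and uniqueness for the limit problem, whose solution is $w$ itself --- gives $h_k\to w$ strongly in $L^p(Q_{3/4})$ and hence in $L^2$. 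Together with $w_k\to w$ this forces the left-hand side of \eqref{4.106} for the pair $(w_k,h_k)$ to tend to $0$, so that for $k$ large $h_k$ is an admissible competitor for $w_k$ satisfying \eqref{4.105}--\eqref{4.106} with $\varepsilon_0$ in place of $\varepsilon$, contradicting the choice of $(w_k)$. This contradiction proves the Lemma.
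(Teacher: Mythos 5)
Your proposal is correct and follows the same compactness--contradiction scheme as the reference proof in \cite[Theorem 4.2]{bogelein2013regularity}: after scaling, weak compactness of $(w_k)$, strong $L^p_{\mathrm{loc}}$-convergence obtained from the approximate equation together with the Poincaré hypothesis \eqref{4.103}, identification of the weak limit as a $p$-caloric map through monotonicity, and construction of the competitor by solving the $(\mathcal{A}_k,p)$-caloric initial--boundary value problem on an intermediate cylinder with datum $w$. The delicacies you anticipate in the Minty step are real --- note in particular that \eqref{4.102} only controls $\partial_t w_k$ against test functions with \emph{bounded} gradient, so testing against (a Steklov average of) $w_k$ itself needs an extra regularization, and this is precisely where \eqref{4.103} and the constant $C_p$ enter the argument in the reference --- but the overall route is faithful.
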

In the sub-quadratic case $\frac{2n}{n+2}<p<2$, an additional $L^\infty$-$L^2$-norm bound is required to hold for $w$. This is necessary to ensure compactness in $L^2$. Keep in mind, though, that the kinds of solutions we are looking at in Theorem \ref{Theo1} always satisfy such a constraint. The proof of the following Lemma can be found in \cite[Theorem 4.5]{bogelein2013regularity}.
\begin{Lem}\label{Lem4.2}
    Let $\frac{2n}{n+2}<p<2$, $\mu\in[0,1]$, $0<\nu\leq 1\leq L$ and $C_p\geq 1$ be fixed. Then for every $\varepsilon>0$ there exits a positive function $\delta_0(n,N,p,\nu,L,\mu,C_p,\varepsilon)\leq 1$ with the following property: Let $z_0=(x_0,t_0)\in \R^{n+1}$, $\varrho>0$ and $\mathcal{A}: \R^{Nn}\to \R^{Nn}$ be a vector field satisfying $\nu\leq \gamma\leq L$. If 
    \begin{align*}
        w\in L^\infty\big(\Lambda_\varrho(t_0); L^2 (B_\varrho(x_0),\R^N)\big) \cap L^p\big(\Lambda_\varrho(t_0); W^{1,p}(B_\varrho(x_0),\R^N)\big)
    \end{align*}
    fulfills (\ref{4.101})-(\ref{4.103}) and
    \begin{align}\label{4.201}
        \sup\limits_{t\in \Lambda_\varrho(t_0)} \mint{-}\limits_{B_\varrho(x_0)} \bigg| \frac{w(\,\cdot\,,t)}{\varrho} \bigg|^2 \, dx \leq 1,
    \end{align}
    then there exists an ($\mathcal{A},p$)-caloric function
    \begin{align*}
        h\in C\big(\Lambda_{\varrho/2}(t_0); L^2(B_{\varrho/2}(x_0),\R^N)\big) \cap L^p\big(\Lambda_{\varrho/2}(t_0); W^{1,p}(B_{\varrho/2}(x_0),\R^N)\big),
    \end{align*}
    i.e.
    \begin{align}\label{4.202}
        \partial_t h - \mathrm{div} \, \mathcal{A}(Dh) = 0
        \qquad\text{in } Q_{\varrho/2}(z_0),
    \end{align}
    such that
    \begin{align}\label{4.203}
        \mint{-}\limits_{Q_{\varrho/2}(z_0)} \bigg| \frac{h}{\varrho/2} \bigg|^p \, dz
        +
        \mint{-}\limits_{Q_{\varrho/2}(z_0)} |Dh|^p \, dz
        \leq H
    \end{align}
    and
    \begin{align}\label{4.204}
        \mint{-}\limits_{Q_{\varrho/2}(z_0)} \bigg| \frac{w-h}{\varrho/2} \bigg|^p 
        +
        \bigg| \frac{w-h}{\varrho/2} \bigg|^2 \, dz
        \leq \varepsilon
    \end{align}
    holds, where $H$ is a non-decreasing function of the arguments $n,p,L/\nu$.
\end{Lem}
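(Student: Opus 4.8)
\emph{Strategy.} I would prove the lemma by a compactness/contradiction argument, exactly as for the $\mathcal A$-caloric approximation Lemmas~\ref{Lem2.12} and~\ref{Lem2.13}, the additional difficulty being the non-linearity of $\mathcal A(\xi)=\gamma|\xi|^{p-2}\xi$. Suppose the assertion fails: then there are $\varepsilon>0$ and, for each $k\in\N$, a parameter $\gamma_k\in[\nu,L]$ (hence $\mathcal A_k(\xi)=\gamma_k|\xi|^{p-2}\xi$), a point $z_{0,k}$, a radius $\varrho_k>0$ and a map $w_k$ in the stated class satisfying \eqref{4.101}--\eqref{4.103} and \eqref{4.201} with $\delta_0=\tfrac1k$, but such that no $(\mathcal A_k,p)$-caloric function on $Q_{\varrho_k/2}(z_{0,k})$ satisfies both \eqref{4.203} and \eqref{4.204}. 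After a translation and the parabolic rescaling $z\mapsto z_{0,k}+(\varrho_k x,\varrho_k^2 t)$ we may take $z_{0,k}=0$, $\varrho_k=1$; passing to a subsequence, $\gamma_k\to\gamma_0\in[\nu,L]$ and $\mathcal A_k\to\mathcal A_0:=\gamma_0|\cdot|^{p-2}(\cdot)$ locally uniformly on $\R^{Nn}$.

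\emph{Compactness.} By \eqref{4.101} and \eqref{4.201}, $(w_k)$ is bounded in $L^p(\Lambda_1;W^{1,p}(B_1,\R^N))\cap L^\infty(\Lambda_1;L^2(B_1,\R^N))$, and $(\mathcal A_k(Dw_k))$ is bounded in $L^{p'}(Q_1,\R^{Nn})$ since $|\mathcal A_k(Dw_k)|^{p'}\le L^{p'}|Dw_k|^p$. Up to a subsequence, $w_k\rightharpoonup w$ in $L^p(\Lambda_1;W^{1,p})$, $w_k\overset{*}{\rightharpoonup}w$ in $L^\infty(\Lambda_1;L^2)$ and $\mathcal A_k(Dw_k)\rightharpoonup\chi$ in $L^{p'}(Q_1)$. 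Writing \eqref{4.102} as $\partial_t w_k=\mathrm{div}\,\mathcal A_k(Dw_k)+f_k$ with $|\langle f_k,\phi\rangle|\le\tfrac1k\sup|D\phi|$, the time derivatives $\partial_t w_k$ are bounded in $L^{p'}(\Lambda_1;W^{-1,p'})+L^\infty(\Lambda_1;W^{-m,2})$ for $m$ large. Since $\tfrac{2n}{n+2}<p$ gives the compact embedding $W^{1,p}(B_1)\hookrightarrow\hookrightarrow L^2(B_1)$, the Aubin--Lions--Simon lemma yields $w_k\to w$ in $L^p(\Lambda_1;L^2(B_1))$, hence (as $p<2$) in $L^p(Q_1)$, and the uniform bound $\sup_t\|w_k(\cdot,t)\|_{L^2}\le C$ from \eqref{4.201} upgrades this to $w_k\to w$ in $L^2(Q_1)$ as well; this is the sole use of \eqref{4.201} and the reason it is imposed only for $p<2$. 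Passing to the limit in \eqref{4.102} gives $\partial_t w-\mathrm{div}\,\chi=0$ weakly in $Q_1$.

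\emph{Identification -- the hard part.} The heart of the proof is to show that $w$ is genuinely $\mathcal A_0$-caloric, i.e.\ $\chi=\mathcal A_0(Dw)$ a.e.; unlike in Lemmas~\ref{Lem2.12}--\ref{Lem2.13}, this does not follow from weak continuity. I would use Minty's monotonicity trick: for $\zeta\in C^\infty_0(Q_1)$ with $0\le\zeta\le1$ and any smooth $\psi$, monotonicity of $\xi\mapsto|\xi|^{p-2}\xi$ gives $0\le\int_{Q_1}\zeta\,(\mathcal A_k(Dw_k)-\mathcal A_k(D\psi))\cdot(Dw_k-D\psi)\,dz$, and every term of the expansion converges by weak--strong pairing together with $\mathcal A_k\to\mathcal A_0$, except the energy term $\int_{Q_1}\zeta\,\mathcal A_k(Dw_k)\cdot Dw_k\,dz$. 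To bound its $\limsup$ one tests \eqref{4.102} with $\zeta w_k$; since $w_k$ is not Lipschitz, one first replaces it by its space-time Steklov/mollified average $w_k^{(h)}$, uses $\langle\partial_t w_k,\zeta w_k\rangle=-\tfrac12\int(\partial_t\zeta)|w_k|^2$ and the strong $L^2$-convergence, and observes that the error $|\langle f_k,\zeta w_k^{(h)}\rangle|\le\tfrac1k\,C(h)$ vanishes along a diagonal choice $h=h_k\downarrow0$ (chosen slow enough). This yields $\limsup_k\int_{Q_1}\zeta\,\mathcal A_k(Dw_k)\cdot Dw_k\le\int_{Q_1}\zeta\,\chi\cdot Dw$, hence $\int_{Q_1}\zeta\,(\chi-\mathcal A_0(D\psi))\cdot(Dw-D\psi)\ge0$ for all smooth $\psi$; letting $\psi\to w$ and using hemicontinuity of $\mathcal A_0$ forces $\chi=\mathcal A_0(Dw)$, so $\partial_t w-\mathrm{div}\,\mathcal A_0(Dw)=0$ in $Q_1$.

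\emph{Closing the contradiction.} For each $k$ let $h_k\in C(\Lambda_{1/2};L^2(B_{1/2}))\cap L^p(\Lambda_{1/2};W^{1,p}(B_{1/2}))$ be the unique weak solution of $\partial_t h_k-\mathrm{div}\,\mathcal A_k(Dh_k)=0$ in $Q_{1/2}$ with $h_k=w$ on the parabolic boundary of $Q_{1/2}$ (existence and uniqueness by standard monotone operator theory). Testing with $h_k-w$ and invoking Sobolev--Poincaré on $Q_{1/2}$ together with \eqref{4.101} gives the energy bound \eqref{4.203} with $H=H(n,p,L/\nu)$. Since $\mathcal A_k\to\mathcal A_0$ and $w$ is itself $\mathcal A_0$-caloric with the same boundary data, continuous dependence of the Cauchy--Dirichlet problem on the vector field yields $h_k\to w$ in $L^p(Q_{1/2})$ and in $L^2(Q_{1/2})$; combined with $w_k\to w$ in the same spaces, the left-hand side of \eqref{4.204} tends to $0$, so $h_k$ satisfies \eqref{4.203} and \eqref{4.204} for all large $k$ -- contradicting the choice of the sequence. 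The super-quadratic Lemma~\ref{Lem4.1} is proved by the same scheme, with \eqref{4.201} now superfluous because the $L^\infty(L^2)$-bound comes for free from \eqref{4.101} via the Gelfand triple $W^{1,p}\hookrightarrow L^2\hookrightarrow W^{-1,p'}$, while the Poincaré-type hypothesis \eqref{4.103} enters in producing the uniform bounds that feed the compactness. I expect the main obstacle to be precisely the identification step above: the non-linearity rules out a direct weak-limit argument, and the defect $f_k$ in \eqref{4.102} is available only in the weak norm $\sup|D\phi|$, so the monotonicity argument has to be pushed through a regularization of $w_k$ and a diagonal limit.
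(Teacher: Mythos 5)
Your strategy --- contradiction, parabolic rescaling, Aubin--Lions compactness in $L^2$, identification of the limit as $p$-caloric via monotonicity, and a closing comparison with the Cauchy--Dirichlet solution --- does match the broad shape of the argument in \cite{bogelein2013regularity} (Theorem~4.5), to which the paper delegates this lemma; your compactness step is sound, and you correctly single out the identification step as the crux. But your proposed Steklov-plus-diagonal fix does not close it. Expanding $\int w_k\,\partial_t(\zeta w_k^{(h)})\,dz$ leaves the residual $\int(w_k-w_k^{(h)})\,\zeta\,\partial_t w_k^{(h)}\,dz$, which is bounded by $h^{-1}\omega_k(h)^2$ with $\omega_k(h)$ the $L^2$-modulus of time-translation of $w_k$; killing it requires $\omega_k(h_k)=o(h_k^{1/2})$, a quantitative fractional time-regularity the a priori bounds do not deliver. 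Choosing $h_k$ ``slow enough'' so that $\delta_k C(h_k)\to0$ pulls in exactly the wrong direction for this residual. Nor does re-expressing $\partial_t w_k^{(h)}$ through the equation help: $D(w_k-w_k^{(h_k)})$ need not tend to zero in $L^p$ along the diagonal (the $Dw_k$ are not pre-compact in $L^p$), and the mollified defect $f_k^{(h)}$ is still only controlled against $\sup|D\phi|$.

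What bridges this gap in the reference is a parabolic \emph{Lipschitz truncation} of the test function: one does not smooth $w_k$, but truncates the test function at Lipschitz level $\lambda$ so the defect contributes at most $\delta_k\lambda$, while the exceptional set is controlled by covering/maximal-function estimates on intrinsic $\lambda$-cylinders and one optimizes over $\lambda$. This is exactly where the intrinsic-cylinder Poincaré hypothesis \eqref{4.103} is load-bearing: it is the structural input that lets the truncation run with the degenerate/singular parabolic scaling. In your sketch, \eqref{4.103} is given only the vague role of ``producing the uniform bounds that feed the compactness'' --- but those bounds already follow from \eqref{4.101} and \eqref{4.201}, so a hypothesis that goes unused should have been the signal that a step was missing. (Similarly, your remark that the $L^\infty(L^2)$-bound in the super-quadratic Lemma~\ref{Lem4.1} ``comes for free via the Gelfand triple'' is imprecise: establishing it requires a Caccioppoli-type estimate for $w$, which runs into the same test-function-regularity obstacle and again needs the truncation.)
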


\section{Caccioppoli inequality and Poincaré inequalities}\label{SCacc}

In this section, we establish Caccioppoli and Poincaré inequalities suitable to our setting. To shorten the notation, we introduce the functional
\begin{equation}\label{6.03}
    \Psi_{\lambda}(z_0,\varrho,l)= \mint{-}\limits_{Q_\varrho^{(\lambda)}(z_0)} | u - l|^p \, dz
\end{equation} 
and the auxiliary function
\begin{equation}\label{5.01}
    H^*_{\lambda}(z_0,\varrho,l) = \Psi^{\frac{\beta}{p}}_{\lambda}(z_0,\varrho,l) + \varrho^\beta (1+\lambda^{2-p})^{\frac{\beta}{2}}.
\end{equation}
Then we can establish the following Caccioppoli inequality.
\begin{Lem}\label{Lem5.1}
    Let $p > \frac{2n}{n+2}$, $M\geq 1$ and assume that
    \begin{equation*}
        u\in  C\big([0,T], L^2(\Omega,\R^N)\big) \cap L^p \big(0,T,W^{1,p}(\Omega,\R^N)\big)
    \end{equation*}
    is a weak solution to (\ref{Sys1}) satisfying the growth conditions (\ref{As1})-(\ref{As3}). Then there exists a constant $C=C(p,\nu,L,\kappa_{2M})$ such that for every time independent affine function $l: \mathbb{R}^n \to \mathbb{R}^N$ with $|Dl|\leq M$ and every intrinsic geometric cylinder $Q^{(\lambda)}_\varrho(z_0)\subset \Omega_T$ with $z_0=(x_0,t_0) \in \Omega_T$ and $\varrho/2\leq r<\varrho$ and $\lambda>0$ there holds
    \begin{align*}
        &\sup_{t \in \Lambda^{\lambda}_r(t_0)} \mint{-}\limits_{B_r(x_0)} \lambda^{p-2} \bigg|\frac{u(\,\cdot\, , t)-l}{r}\bigg|^2 \, dx 
        +
        \mint{-}\limits_{Q^{(\lambda)}_r(z_0)} \big| V_{|Dl|}(Du-Dl) \big|^2 \, dz
        \\ & \leq
        C
        \mint{-}\limits_{Q^{(\lambda)}_\varrho(z_0)} \bigg| V_{|Dl|}\bigg(\frac{u-l}{\varrho-r}\bigg) \bigg|^2 + \lambda^{p-2} \bigg|\frac{u-l}{\varrho-r}\bigg|^2 \, dz
        +
        C |Dl|^p  H^*_{\lambda,p}(z_0,\varrho,l(z_0)).
    \end{align*}
\end{Lem}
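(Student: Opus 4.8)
The plan is to run a standard parabolic Caccioppoli argument adapted to the intrinsic geometry, using the weak formulation tested against $\varphi = \zeta^p (u - l - h)$ for a suitable time cut-off correction $h$ (to handle the initial time slice), with a spatial cut-off $\zeta \in C^\infty_0(B_\varrho(x_0))$ satisfying $\zeta \equiv 1$ on $B_r(x_0)$ and $|D\zeta| \lesssim (\varrho - r)^{-1}$, together with the usual Steklov-average regularization in time to make $\partial_t u$ a legitimate object. Since $l$ is time-independent and affine, $\operatorname{div} a(z, l, Dl)$ contributes nothing, so after subtracting this null term the equation reads $\partial_t(u - l) - \operatorname{div}\big(a(z,u,Du) - a(z,l,Dl)\big) = 0$ in the weak sense, and I would test with $\varphi$ as above.

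The key steps, in order: (i) After testing and integrating by parts in time, the parabolic term produces $\sup_t \fint_{B_r} |u(\cdot,t) - l|^2$ (the left-hand $L^\infty$-$L^2$ term, with the $\lambda^{p-2}$ weight appearing after rescaling to the intrinsic cylinder) plus a bulk term $\fint |u-l|^2 |D\zeta^p|\,|\text{something}|$; (ii) the diffusion term splits as $\langle a(z,u,Du) - a(z,u,Dl), Du - Dl\rangle$ — controlled below by the ellipticity (\ref{As2}) after writing it as an integral of $D_\xi a$ along the segment from $Dl$ to $Du$ and invoking Lemma \ref{Lem2.5}/(\ref{e2.6}) to get $\gtrsim |V_{|Dl|}(Du - Dl)|^2$ — plus the cut-off commutator $\langle a(z,u,Du) - a(z,u,Dl), (u - l)\otimes D\zeta^p\rangle$ and the coefficient-freezing error $\langle a(z,u,Dl) - a(z,l,Dl), D\varphi\rangle$; (iii) estimate the commutator using Lemma \ref{Lem2.7} (growth bound $|a(z,u,Du) - a(z,u,Dl)| \leq C L |Du - Dl|^{p-1}$ when $|Dl| \leq |Du|$, and a separate easy bound otherwise) followed by Young's inequality in the $V$-function scale, absorbing the resulting $\fint |V_{|Dl|}(Du - Dl)|^2$ into the left; (iv) estimate the freezing error via (\ref{ModConu}): $|a(z,u,Dl) - a(z,l,Dl)| \leq L \min\{(|x-x_0|^2 + |t-t_0|)^{\beta/2} + |u - l|^\beta, 1\}|Dl|^{p-1}$, and since on $Q^{(\lambda)}_\varrho$ the space-time part is $\lesssim \varrho^\beta(1 + \lambda^{2-p})^{\beta/2}$ while the $|u - l(z_0)|^\beta$ part integrates (after Hölder in exponent $p/\beta$) to $\Psi_\lambda^{\beta/p}$, this produces exactly $C|Dl|^p H^*_\lambda(z_0,\varrho,l(z_0))$ after splitting $|u-l| \le |u - l(z_0)| + |Dl|\varrho$; (v) handle the time-slice term and the bulk $\fint |u-l|^2|D\zeta^p|^2$-type leftovers by noting $\lambda^{p-2}|\cdot/(\varrho-r)|^2$ and $|V_{|Dl|}(\cdot/(\varrho-r))|^2$ already appear on the right-hand side of the claim; (vi) finish with the iteration Lemma \ref{Lem2.2} applied to $\varphi(r) := \sup_t \fint_{B_r}\lambda^{p-2}|(u-l)/r|^2 + \fint_{Q^{(\lambda)}_r}|V_{|Dl|}(Du-Dl)|^2$ to remove the coupling between the two radii and reach the clean statement with $r$ replaced by $\varrho$ on the right. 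Throughout, the cases $p \geq 2$ and $\frac{2n}{n+2} < p < 2$ must be tracked separately, as the $V$-function algebra and the borderline integrability of the sub-quadratic bulk term differ.

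The main obstacle I expect is the absorption step (iii) in the \emph{sub-quadratic} regime: there the commutator term, after applying Lemma \ref{Lem2.7} and Young, yields powers of $|Du - Dl|$ that one must reinterpret in the $V_{|Dl|}$ scale via the two-sided bound (\ref{2.14}), and the borderline condition $p > \frac{2n}{n+2}$ is exactly what makes the parabolic interpolation (implicitly the Gagliardo–Nirenberg Lemma \ref{Lem2.11}, feeding off the $L^\infty$-$L^2$ bound on the left) close; getting the constants to genuinely absorb rather than merely bound is delicate. A secondary technical point is the rigorous time-regularization: one cannot test directly with $u$-dependent test functions, so the Steklov averaging and the passage to the limit, including the treatment of the initial-time boundary term, need care, though this is routine and parallels \cite[Lemma 2.7]{bogelein2013regularity} / \cite{duzaar2011parabolic}. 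Modulo these points, the structure is a faithful adaptation of the Caccioppoli inequality of \cite{bogelein2013regularity} with the extra coefficient-oscillation term (\ref{ModConu}) producing precisely the additional summand $C|Dl|^p H^*_\lambda$.
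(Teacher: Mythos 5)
Your proposal follows the same overall strategy as the paper's proof: test the weak formulation with a cut-off times $u-l$, cancel a null term coming from the affine comparison function, split the diffusion integral into a coercive main term, a cut-off commutator, and a coefficient-freezing error, then absorb and finish with the iteration Lemma~\ref{Lem2.2}. The choice of key lemmas (ellipticity \eqref{As2} with Lemma~\ref{Lem2.5} for the lower bound, Lemma~\ref{Lem2.7} for the commutator, \eqref{ModConu} for the freezing error) also matches.

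There is, however, one genuine error in the set-up that needs repair. You claim that since $l$ is affine and time-independent, $\operatorname{div} a(z,l,Dl)$ vanishes, and then subtract $a(z,l,Dl)$ from $a(z,u,Du)$. This is false: the vector field $(x,t)\mapsto a\big((x,t),l(x),Dl\big)$ depends on $z$ both explicitly and through $l(x)$, so it is neither constant nor divergence-free. The identity one actually uses is
\begin{equation*}
\int_{Q^{(\lambda)}_{\sigma,\tau}} a\big(z_0,\,l(z_0),\,Dl\big)\cdot D\phi\,dz = 0,
\end{equation*}
which holds simply because $a\big(z_0,l(z_0),Dl\big)$ is a \emph{constant} vector and $\phi$ is compactly supported. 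Freezing at the center $z_0$ and the value $l(z_0)$ is precisely what makes the Hölder-continuity hypothesis \eqref{ModConu} produce \emph{both} the space-time oscillation $\varrho^\beta(1+\lambda^{2-p})^{\beta/2}$ and the $u$-oscillation $\Psi_\lambda^{\beta/p}$ that together form $H^*_\lambda$. Your step (iv) in fact invokes the full Hölder bound including the space-time modulus, which is internally inconsistent with freezing at the same $z$; if you genuinely froze at $(z,l(x))$ you would only obtain the $|u-l(x)|^\beta$ term, and you would be left with an uncontrolled error from the non-vanishing divergence.

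A secondary inaccuracy: the Gagliardo--Nirenberg interpolation (Lemma~\ref{Lem2.11}) and the threshold $p>2n/(n+2)$ play no role in closing this Caccioppoli estimate. The sub-quadratic absorption is handled by the two-sided $V$-function bound \eqref{2.14} together with the pointwise observation that $|D\phi|\ge|Dl|$ on $S_2$ (resp.\ $|Du-Dl|\ge|Dl|$ on $S_4$), which does not require any Sobolev embedding. The interpolation enters only later, in the Sobolev--Poincaré inequality of Lemma~\ref{Lem5.4}, which is a separate estimate that then \emph{consumes} the Caccioppoli inequality via its sup-term.
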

\begin{proof}
    By translation, we can assume that $z_0=(0,0)$ and define for $\tau\in \R$ the cylinders $Q^{(\lambda)}_{\sigma,\tau}= Q^{(\lambda)}_\sigma \cap \{ (x,t) \in \mathbb{R}^{n+1} \,|\, t\leq \tau \}$, to shorten the notation. Since we need to apply the iteration Lemma \ref{Lem2.2} later, we consider arbitrary radii $r\leq s<\sigma\leq \varrho$ and choose, in accordance with the radii, the cut-off function $\eta\in C^\infty_0(B_\sigma, [0,1])$ in space and $\zeta\in C^1_0(\Lambda^{(\lambda)}_\sigma,[0,1])$ in time, such that $\eta \equiv 1$ on $B_s$ and $|D\eta| \leq C/(\sigma-s)$ as well as $\zeta \equiv 1$ on $\Lambda^{(\lambda)}_s$ and $|D\zeta| \leq 2\lambda^{p-2}/(\sigma^2-s^2)$. Furthermore, for $\tau\in \Lambda^{(\lambda)}_r$, we define a second cut-off function $\chi_\vartheta \in W^{1,\infty}(\mathbb{R})$ in time as follows
    \begin{equation*}
        \chi_\vartheta(t)=\begin{cases}
            1 &\text{on }(-\infty,\tau], \\ 
            1- \frac{1}{\vartheta}(t - \tau) &\text{on }(\tau,\tau+\vartheta], \\ 
            0 &\text{on }(\tau+\vartheta,\infty).
        \end{cases} 
    \end{equation*}
    To further reduce the burden of notation, we employ the auxiliary functions $v(x,t)=u(x,t)-l(x)$, $\phi_\vartheta(x,t)= \chi_\vartheta(t)\eta^p(x)\zeta(t)v(x,t)$ and $\phi(x,t)=\eta^p(x)\zeta(t)v(x,t)$. In the following, we will proceed formally since $\phi$ is not a valid test function due to the lack of differentiability in time. However, every calculation can be easily made precise by means of Steklov averages or some alternative smoothing of $u$. We immediately observe that 
    \begin{equation}\label{5.02}
        |D\phi| = \big| (Du-Dl)\eta^p \zeta + p D\eta \otimes (u-l) \eta^{p-1}\zeta \big| \leq |Du-Dl| + C \bigg|\frac{u-l}{\sigma-s}\bigg|.
    \end{equation}
    If we now choose $D\phi_\vartheta$ as the test function in (\ref{weakfrom}), we obtain that
    \begin{align*}
        \int\limits_{Q^{(\lambda)}_\sigma} u \cdot \partial_t \phi_\vartheta - a(z,u,Du)\cdot D\phi_\vartheta \, dz=0
    \end{align*}
    We start with the treatment of the term that involves the time derivative. Thereby, we utilize the fact that the affine function does not depend on time and obtain after (formally) integrating by parts twice in the limit $\vartheta\to 0$ that
    \begin{align}\label{5.04}
        \int\limits_{Q^{(\lambda)}_\sigma} u \cdot \partial_t \phi_\vartheta \, dz 
        &=
        \int\limits_{Q^{(\lambda)}_\sigma} (u-l) \cdot \partial_t \phi_\vartheta \, dz
        = \frac{1}{2}
        \int\limits_{Q^{(\lambda)}_\sigma} |v|^2 \eta^p \partial_t( \chi_\vartheta \zeta) \, dz
        \nonumber\\ &
        \underset{\vartheta\to 0}{\longrightarrow}
        \frac{1}{2}
        \int\limits_{Q^{(\lambda)}_\sigma} |v|^2 \eta^p \partial_t \zeta \, dz
        -
        \frac{1}{2}
        \int\limits_{B_\sigma} |v(\,\cdot\, , \tau)|^2 \eta^p \zeta(\tau) \, dx
        \nonumber\\ & \leq
        \int\limits_{Q^{(\lambda)}_{\sigma,\tau}} \lambda^{p-2} \frac{|v|^2 }{\sigma^2-s^2} \, dz
        -
        \frac{1}{2}
        \int\limits_{B_s} |v(\,\cdot\, , \tau)|^2 \, dz,
    \end{align}
    where we used the properties of $\eta$ and $\zeta$ for the latter inequality. In the limit $\vartheta\to 0$, we can therefore combine the previous two estimates to obtain
    \begin{align}\label{5.03}
        \frac{1}{2}
        \int\limits_{B_s} |v(\,\cdot\, , \tau)|^2 \, dz & 
        +
        \int\limits_{Q^{(\lambda)}_{\sigma,\tau}}a(z,u,Du) \cdot D\phi \, dz
        \leq
        \int\limits_{Q^{(\lambda)}_{\sigma,\tau}} \lambda^{p-2} \frac{|v|^2 }{\sigma^2-s^2} \, dz.
    \end{align}
    Now let us continue by treating the diffusion term. To this end, we note that 
    \begin{equation*}
        \int\limits_{Q^{(\lambda)}_{\sigma,\tau}}a(0,l(0),Dl) \cdot D\phi \, dz =0,
    \end{equation*}
    which implies in combination with (\ref{5.02}) that 
    \begin{align}\label{5.10}
        \frac{1}{2}
        \int\limits_{B_s} & |v(\,\cdot\, , \tau)|^2 \, dz
        +
        \int\limits_{Q^{(\lambda)}_{\sigma,\tau}} \big(a(z,u,Du)-a(z,u,Dl) \big) \cdot (Du-Dl) \eta^p \zeta \, dz
        \nonumber\\ & \leq
        \int\limits_{Q^{(\lambda)}_{\sigma,\tau}} \lambda^{p-2} \frac{|v|^2 }{\sigma^2-s^2} \, dz +
        \int\limits_{Q^{(\lambda)}_{\sigma,\tau}} \big( a(0,l(0),Dl) - a(z,u,Dl) \big)\cdot D\phi \, dz 
        \nonumber\\ & \qquad- p
        \int\limits_{Q^{(\lambda)}_{\sigma,\tau}} \big( a(z,u,Du) - a(z,u,Dl) \big)\cdot (D\eta \otimes (u-l)) \eta^{p-1} \zeta \, dz 
        =T + I + II
    \end{align}
    with the obvious definitions of the integrals $T$, $I$ and $II$. Now, we estimate the first integral $I$ on the left-hand side. To this end, we use (\ref{As2}) and Lemma \ref{Lem2.5} in the sub-quadratic case and a simple translation argument in the super-quadratic case to obtain
    \begin{align*}
        \int\limits_{Q^{(\lambda)}_{\sigma,\tau}} & \big(a(z,u,Du)-a(z,u,Dl) \big) \cdot (Du-Dl) \eta^p \zeta \, dz
        \\ & =
        \int\limits_{Q^{(\lambda)}_{\sigma,\tau}} \int\limits_0^1 \big(D_\xi a(z,u,Dl+s(Du-Dl) ) \big)(Du-Dl,Du-Dl) \, ds \, \eta^p \zeta \, dz
        \\ & \geq 
        \nu \int\limits_{Q^{(\lambda)}_{\sigma,\tau}} \int\limits_0^1|Dl+s(Du-Dl)|^{p-2} |Du-Dl|^2 \, ds \, \eta^p \zeta \, dz
        \\ & \geq 
        \frac{\nu}{C(p)} \int\limits_{Q^{(\lambda)}_{\sigma,\tau}} \big| V_{|Dl|}(Du- Dl) \big|^2 \eta^p \zeta \, dz
        \geq 
        \frac{\nu}{C(p)} \int\limits_{Q^{(\lambda)}_{s,\tau}} \big| V_{|Dl|}(Du- Dl) \big|^2 \, dz,
    \end{align*}
    where we used that $\eta^p\zeta=1$ on $Q^{(\lambda)}_{s,\tau}$. We now continue with the estimates for $I$ and $II$. To treat $I$, we use the assistance of the function
    \begin{equation*}
        f(x,t) :=\min\big\{ (|x|^2 + |t|)^{\frac{\beta}{2}} + |u(x,t)-l(0)|^\beta , 1 \big\},
    \end{equation*}
    assumption (\ref{ModConu}) and split the domain of integration into
    \begin{equation*}
        S_1:= \{ z\in Q^{(\lambda)}_{\sigma,\tau} \,|\, |D\phi|(z)\leq |Dl| \} 
        \qquad\text{and}\qquad
        S_2 := Q^{(\lambda)}_{\sigma,\tau}\setminus S_1
    \end{equation*}
    to deduce with Young's inequality and $f\leq 1$ that
    \begin{align*}
        I &= \int\limits_{Q^{(\lambda)}_{\sigma,\tau}} \big( a(0,l(0),Dl) - a(z,u,Dl) \big)\cdot D\phi \, dz 
        \leq
        L \int\limits_{Q^{(\lambda)}_{\sigma,\tau}} f |Dl|^{p-1} |D\phi| \, dz 
        \nonumber\\ & =
        L \int\limits_{S_1} f |Dl|^{p-1} |D\phi| \, dz 
        +
        L \int\limits_{S_2} f |Dl|^{p-1} |D\phi| \, dz 
        \leq
        C(\mathfrak{C},\varepsilon) |Dl|^p \int\limits_{Q^{(\lambda)}_\sigma} f \, dz 
        +
        \varepsilon \int\limits_{S_2} |D\phi|^{p} \, dz,
    \end{align*}
     for some $\varepsilon\in (0,1)$ that will be chosen later. To further estimate the first integral on the right-hand side, we use Young's inequality, which yields that
    \begin{align*}
        \int\limits_{Q^{(\lambda)}_\sigma} f \, dz
        &\leq
        \int\limits_{Q^{(\lambda)}_\varrho} f \, dz
        =
        |Q^{(\lambda)}_\varrho| \mint{-}\limits_{Q^{(\lambda)}_\varrho} f \, dz
        \leq
        |Q^{(\lambda)}_\varrho| \mint{-}\limits_{Q^{(\lambda)}_{\varrho}} \big( (|x|^2 + |t|)^{\frac{\beta}{2}} + |u-l(0)|^\beta \big) \, dz
        \nonumber\\ & \leq
        |Q^{(\lambda)}_\varrho| \Bigg[\varrho^\beta (1+\lambda^{2-p})^{\frac{\beta}{2}} 
        + 
        \Bigg( \mint{-}\limits_{Q^{(\lambda)}_\varrho} |u-l(0)|^p  \, dz \Bigg)^{\frac{\beta}{p}} \Bigg]
        =
        |Q^{(\lambda)}_\varrho| H^*_{\lambda,p}.
    \end{align*}
    To estimate $II$, we start with the observation that the integrand vanishes on $Q^{(\lambda)}_{s,\tau}$ by choice of the cut-off functions and decompose the domain of integration $Q^{(\lambda)}_{\sigma,\tau} \setminus Q^{(\lambda)}_{s,\tau}$ into $S_3 \cup S_4$, where $S_3$ and $S_4$ are defined by
    \begin{align*}
        S_3 = \{ z\in Q^{(\lambda)}_{\sigma,\tau} \setminus Q^{(\lambda)}_{s,\tau} \,|\, |Du-Dl| < |Dl| \}
        \qquad\text{and}\qquad
        S_4 =\big(Q^{(\lambda)}_{\sigma,\tau} \setminus Q^{(\lambda)}_{s,\tau}\big) \setminus S_3.
    \end{align*}
    Let us now start with the estimation of the integral restricted to the domain of integration $S_3$. We use the growth condition (\ref{As3}) for $Da(\, \cdot\, )$ and that $|Dl|\leq M$ implies $|Dl + \mu Du-Dl| \leq 2 |Dl| \leq 2M$ for every $0\leq \mu \leq 1$ on $S_3$. We obtain with an application of Lemma \ref{Lem2.5} in the case $p<2$ and (\ref{2.11}) that
    \begin{align*}
        \Bigg| \int\limits_{S_3} & \big( a(z,u,Du) - a(z,u,Dl) \big)\cdot ( D\eta \otimes (u-l) ) \eta^{p-1} \zeta \, dz  \Bigg|
        \\ & =
        \Bigg| \int\limits_{S_3} \int\limits_0^1 \big\langle (D_\xi a )(z,u,Dl+\mu (Du-Dl)) \cdot (Du-Dl),D\eta \otimes (u-l) \big\rangle \, d\mu dz \Bigg|
        \\ & \leq
        C \int\limits_{S_3} \int\limits_0^1 \big|(D_\xi a )(z,u,Dl+\mu(Du-Dl))\big| \, d\mu \, |Du-Dl| \, \bigg|\frac{u-l }{\sigma-s}\bigg| \, dz
        \\ & \leq
        C  L \kappa_{2M}
        \int\limits_{S_3} \int\limits_0^1 |Dl + \mu (Du-Dl) |^{p-2} \, d\mu |Du-Dl| \, \bigg|\frac{u-l }{\sigma-s}\bigg| \, dz
        \\ & \leq 
        C L \kappa_{2M}
        \int\limits_{S_3} \big( |Dl|^2 + |Du-Dl|^{2} \big)^{\frac{p-2}{2}} |Du-Dl| \, \bigg|\frac{u-l }{\sigma-s}\bigg| \, dz
        \\ & \leq C L \kappa_{2M}
        \int\limits_{Q^{(\lambda)}_{\sigma,\tau} \setminus Q^{(\lambda)}_{s,\tau}} \big| V_{|Dl|}(Du - Dl) \big|^2 + \bigg| V_{|Dl|}\bigg(\frac{u-l}{\sigma-s} \bigg) \bigg|^2\, dz.
    \end{align*}
    Since the argument of $(D_\xi a)(z,u,\,\cdot\,)$ could be $0$, we need to justify this calculation in the sub-quadratic case $p<2$. However, this can be done by a standard argument, which can be found in \cite[Remark 5.2]{bogelein2013regularity}. Now, we consider the domain of integration $S_4$. To this end, we will use the growth conditions (\ref{Asa})\textsubscript{1} on the vector field $a(z,u,\,\cdot\,)$ and the inequality $|Du| \leq |Dl| + |Du-Dl|\leq 2|Du-Dl|$, which holds on $S_4$. The equality $Du=Dl + D\phi + D\psi$ followed by Young's inequality with exponents $(p,p/(p-1))$ then allow us to estimate that
    \begin{align*}
        \Bigg|\int\limits_{S_4} & \big( a(z,u,Du) - a(z,u,Dl) \big)\cdot D\eta \otimes (u-l) \eta^{p-1} \zeta \, dz \Bigg|
        \\ & \leq
        C \int\limits_{S_4} |Du-Dl|^{p-1} \bigg| \frac{u-l}{\sigma-s} \bigg| \, dz
        \leq
        C
        \int\limits_{S_4} |Du-Dl|^{p} +  \bigg| \frac{u-l}{\sigma-s} \bigg|^{p} \, dz.
    \end{align*}
    To estimate the remaining integrals over $S_2$ and $S_4$ further, we distinguish again between the super-quadratic case and the sub-quadratic case. In the former case, we apply the upper bound of (\ref{5.02}) and note that we have
    \begin{align*}
        |Du- Dl|^p + \bigg|\frac{u-l}{\sigma-s} \bigg|^p
        \leq
        \big| V_{|Dl|}(Du- Dl) \big|^2 + \bigg| V_{|Dl|}\bigg(\frac{u-l}{\sigma-s} \bigg) \bigg|^2,
    \end{align*}
    while the latter case $p<2$ necessitates more care. There, we choose
    \begin{equation*}
        A \in \bigg\{Du-Dl, \frac{u-l}{\sigma-s} \bigg\}
        \qquad\text{such that}\qquad
        A= \max \bigg\{|Du-Dl|, \bigg|\frac{u-l}{\sigma-s}\bigg| \bigg\}.
    \end{equation*}
    and observe that $|D\phi| \geq |Dl|$ on $S_2$ with (\ref{5.02}) as well as $|Du-Dl|\geq |Dl|$ on $S_4$ imply $|A| \geq \frac{1}{C} |Dl|$. Therefore, we obtain with inequality (\ref{2.14}) the estimate
    \begin{equation*}
        |A|^p \leq C \big|V_{|Dl|}(|A|) \big|^2.
    \end{equation*}
    By the non-negativity of the involved terms, this yields on $S_2$ and $S_4$ the chain of inequalities
    \begin{equation*}
        |Du-Dl|^p + \bigg|\frac{u-l}{\sigma-s}\bigg|^p \leq 2 |A|^p \leq  C \big|V_{|Dl|}(|A|) \big|^2 \leq C \big| V_{|Dl|}(Du- Dl) \big|^2 + \bigg| V_{|Dl|}\bigg(\frac{u-l}{\sigma-s} \bigg) \bigg|^2.
    \end{equation*}
    As an immediate consequence, we infer
    \begin{align*}
        \varepsilon & \int\limits_{S_2} |D\phi|^{p} \, dz 
        \leq
        \varepsilon C
        \int\limits_{Q^{(\lambda)}_{\sigma,\tau}} \big| V_{|Dl|}(Du- Dl) \big|^2 + \bigg| V_{|Dl|}\bigg(\frac{u-l}{\sigma-s} \bigg) \bigg|^2 \, dz
        \\ & \leq
        \varepsilon C
        \int\limits_{Q^{(\lambda)}_{s,\tau}} \big| V_{|Dl|}(Du- Dl) \big|^2 
        +
        C
        \int\limits_{Q^{(\lambda)}_{\sigma,\tau} \setminus Q^{(\lambda)}_{s,\tau}} \big| V_{|Dl|}(Du- Dl) \big|^2
        +
        C
        \int\limits_{Q^{(\lambda)}_{\sigma,\tau}} \bigg| V_{|Dl|}\bigg(\frac{u-l}{\sigma-s} \bigg) \bigg|^2 \, dz
    \end{align*}
    and 
    \begin{align*}
        \int\limits_{S_4} |Du-Dl|^{p} +  \bigg| \frac{u-l}{\sigma-s} \bigg|^{p} \, dz
        \leq
        C
        \int\limits_{Q^{(\lambda)}_{\sigma,\tau} \setminus Q^{(\lambda)}_{s,\tau}} \big| V_{|Dl|}(Du- Dl) \big|^2
        +
        C
        \int\limits_{Q^{(\lambda)}_{\sigma,\tau}} \bigg| V_{|Dl|}\bigg(\frac{u-l}{\sigma-s} \bigg) \bigg|^2 \, dz.
    \end{align*}
    Collecting the previous estimates in inequality (\ref{5.10}) and choosing $\varepsilon=\frac{\nu}{2C}$ for $C$ large enough yields after absorption that
    \begin{align*}
        \frac{1}{2} \int\limits_{B_s} & |v(\,\cdot\,,\tau)|^2 \, dx
        +
        \frac{\nu}{2C(p)}
        \int\limits_{ Q^{(\lambda)}_{s,\tau}} \big| V_{|Dl|}(Du- Dl) \big|^2 \, dz
        \leq C
        \int\limits_{Q^{(\lambda)}_{\sigma,\tau} \setminus Q^{(\lambda)}_{s,\tau}} \big| V_{|Dl|}(Du- Dl) \big|^2 \, dz 
        \\ & \qquad + C
        \int\limits_{Q^{(\lambda)}_{\sigma,\tau}}
        \bigg| V_{|Dl|}\bigg(\frac{u-l}{\sigma-s} \bigg) \bigg|^2 
        + \lambda^{p-2} \frac{|v|^2 }{\sigma^2-s^2} 
        \, dz
        + C|Dl|^{p} |Q^{(\lambda)}_{\varrho}| H^*_{\lambda}
        ,
    \end{align*}
    where the constant $C$ is the same on both sides of the inequality and we have in terms of dependencies $C=C(n,p,\nu,L,\kappa_{2M})$. Since we want to iterate away the first term on the right-hand side using Lemma \ref{Lem2.2}, we add
    \begin{equation*}
        CL
        \int\limits_{ Q^{(\lambda)}_{s,\tau}} \big| V_{|Dl|}(Du- Dl) \big|^2 \, dz 
    \end{equation*}
    to both sides of the previous equation and divide by $\frac{\nu}{2C(p)} + CL$. This results in
    \begin{align*}
        \frac{1}{2(\nu/(2C(p)) + C L)} & \int\limits_{B_s} |v(\,\cdot\,,\tau)|^2  \, dx
        +
        \int\limits_{ Q^{(\lambda)}_{s,\tau}} \big| V_{|Dl|}(Du- Dl) \big|^2 \, dz
        \leq \vartheta
        \int\limits_{Q^{(\lambda)}_{\sigma,\tau} } \big| V_{|Dl|}(Du- Dl) \big|^2 \, dz 
        \\ & +
        C \int\limits_{Q^{(\lambda)}_{\sigma,\tau}}
        \bigg| V_{|Dl|}\bigg(\frac{u-l}{\sigma-s} \bigg) \bigg|^2 
        + \lambda^{p-2} \frac{|v|^2 }{\sigma^2-s^2} 
        \, dz
        + C|Dl|^{p} |Q^{(\lambda)}_{\varrho}| H^*_{\lambda},
    \end{align*}
    with $\vartheta = \frac{CL}{\nu/(2C(p)) + CL}<1$. Since the radii $s<\sigma$ were not further specified in the set-up of the proof, we observe that this inequality holds for all $r\leq s < \sigma \leq \varrho$. Therefore, we can apply the iteration Lemma \ref{Lem2.2} and afterwards take the supremum over $\tau \in \Lambda^{(\lambda)}_r$ to obtain the estimate for the "sup"-term in the Caccioppoli inequality and $\tau=\lambda^{2-p}r^2$ to deduce the estimate for the second term in the Caccoppoli inequality. This yields after dividing by $|Q^{(\lambda)}_r|$ that
    \begin{align*}
        \sup_{t \in \Lambda^{\lambda}_r} & \mint{-}\limits_{B_r} \lambda^{p-2} \bigg|\frac{u(\,\cdot\, , t)-l}{r^2}\bigg|^2\, dx 
        +
        \mint{-}\limits_{Q^{(\lambda)}_r} \big| V_{|Dl|}(Du-Dl) \big|^2 \, dz
        \\ & \leq
        C
        \mint{-}\limits_{Q^{(\lambda)}_\varrho} \bigg| V_{|Dl|}\bigg(\frac{u-l}{\varrho-r}\bigg) \bigg|^2 + \lambda^{p-2} \bigg|\frac{u-l}{\varrho-r}\bigg|^2 \, dz
        + C|Dl|^{p} H^*_{\lambda}.
    \end{align*}
    This concludes the proof.
\end{proof}
The intermediate Poincaré inequality for solutions to general parabolic systems will be used in the following. For the proof we refer to \cite[Lemma 3.1]{bogelein2013regularity}.
\begin{Lem}\label{Lem3.1}
    Let $p>1$, $t_1<t_2$ and $U\subset \mathbb{R}^n$. Suppose that the functions $\xi\in L^1\big(U\times(t_1,t_2), \mathbb{R}^N\big)$ and $w\in L^p\big(t_1,t_2;W^{1,p}\big(U,\mathbb{R}^N\big)\big)$ satisfy
    \begin{equation}\label{3.10}
        \int\limits_{U\times(t_1,t_2)} w \cdot \partial_t \phi - \xi \cdot D\phi \, dz =0 
        \qquad
        \forall \phi \in C^\infty_0\big(U\times(t_1,t_2),\mathbb{R}^N\big).
    \end{equation}
    Then for $r,\lambda>0$ and $q\in [1,p]$, there exists a constant $C=C(n,N,q)$ such that for any intrinsic geometric cylinder $Q^{(\lambda)}_{r}(z_0)\in U\times(t_1,t_2)$ there holds
    \begin{equation*}
        \mint{-}\limits_{Q^{(\lambda)}_{r}(z_0)} \big|w- (w)_{z_0;r}^{(\lambda)} \big|^q \, dz 
        \leq
        C r^q \Bigg[ \mint{-}\limits_{Q^{(\lambda)}_{r}(z_0)} |Dw|^q \, dz + \bigg( \lambda^{2-p} \mint{-}\limits_{Q^{(\lambda)}_{r}(z_0)} |\xi| \, dz \bigg)^q \Bigg].
    \end{equation*}
\end{Lem}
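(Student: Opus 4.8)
The plan is to split the oscillation of $w$ about its cylindrical mean into a purely spatial part, handled slice by slice by the classical Poincaré inequality, and a time part, handled by testing the equation \eqref{3.10} with functions that are essentially constant in space. By translation I may assume $z_0=(0,0)$; write $Q=Q^{(\lambda)}_r$ and $\Lambda=\Lambda^{(\lambda)}_r$, so that $|\Lambda|=2\lambda^{2-p}r^2$. Fix once and for all a cut-off $\eta\in C^\infty_0(B_r)$ with $0\le\eta\le C(n)$, $(\eta)_{B_r}=1$ and $|D\eta|\le C(n)/r$, and set $w_\eta(t):=(w(\cdot,t)\,\eta)_{B_r}$ and $c:=(w_\eta)_\Lambda\in\R^N$. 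Since $|c-(w)^{(\lambda)}_{0;r}|^q\le \mint{-}\limits_Q|w-c|^q\,dz$, it suffices to bound $\mint{-}\limits_Q|w-c|^q\,dz$, and I would decompose, for a.e.\ $(x,t)\in Q$,
$w(x,t)-c=\big(w(x,t)-(w(\cdot,t))_{B_r}\big)+\big((w(\cdot,t))_{B_r}-w_\eta(t)\big)+\big(w_\eta(t)-c\big)=:\mathrm{I}(x,t)+\mathrm{II}(t)+\mathrm{III}(t)$,
so that $|w-c|^q\le 3^{q-1}\big(|\mathrm{I}|^q+|\mathrm{II}|^q+|\mathrm{III}|^q\big)$.

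For $\mathrm{I}$, the spatial Poincaré inequality on $B_r$ (valid for every $q\ge 1$, and applicable since $w(\cdot,t)\in W^{1,p}(B_r)\hookrightarrow W^{1,q}(B_r)$ for a.e.\ $t$ because $q\le p$) gives $\mint{-}\limits_{B_r}|\mathrm{I}(\cdot,t)|^q\,dx\le C(n,q)\,r^q\mint{-}\limits_{B_r}|Dw(\cdot,t)|^q\,dx$; integrating over $t\in\Lambda$ produces the term $C\,r^q\mint{-}\limits_Q|Dw|^q\,dz$. For $\mathrm{II}$, since $\eta$ is bounded with unit average, $|\mathrm{II}(t)|=\big|\big(\eta\,(w(\cdot,t)-(w(\cdot,t))_{B_r})\big)_{B_r}\big|\le C(n)\mint{-}\limits_{B_r}|\mathrm{I}(\cdot,t)|\,dx$, which after Jensen's inequality is dominated by the same quantity as $\mathrm{I}$.

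The term $\mathrm{III}$ is where the equation enters. Testing \eqref{3.10} with $\phi(x,t)=\chi(t)\eta(x)a$ for an arbitrary $a\in\R^N$ and $\chi\in C^\infty_0(\Lambda)$ shows that $t\mapsto w_\eta(t)\cdot a$ has distributional derivative $-\big(\xi(\cdot,t)\cdot(D\eta\otimes a)\big)_{B_r}$, which lies in $L^1(\Lambda)$ because $\xi\in L^1$ and $|D\eta|\le C/r$; hence $w_\eta$ admits an absolutely continuous representative with $|w_\eta'(t)|\le \tfrac{C}{r}\mint{-}\limits_{B_r}|\xi(\cdot,t)|\,dx$ for a.e.\ $t$ (to be fully rigorous one first mollifies $w$ in $t$, e.g.\ by Steklov averages, derives the identity for the mollification and lets the parameter tend to $0$). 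Consequently $\mathrm{III}(t)=\mint{-}\limits_\Lambda\big(w_\eta(t)-w_\eta(\tau)\big)\,d\tau$, so $|\mathrm{III}(t)|\le \int_\Lambda|w_\eta'(s)|\,ds\le \tfrac{C}{r}\,|\Lambda|\mint{-}\limits_Q|\xi|\,dz\le C\,\lambda^{2-p}r\mint{-}\limits_Q|\xi|\,dz$, and thus $\mathrm{III}$ contributes $C\,r^q\big(\lambda^{2-p}\mint{-}\limits_Q|\xi|\,dz\big)^q$. Collecting the three contributions and averaging over $Q$ yields the asserted estimate with $C=C(n,N,q)$.

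The main obstacle is the step in the previous paragraph: $w$ carries no a priori regularity in time, so the identification of $w_\eta$ with an absolutely continuous function and the bound on $w_\eta'$ must be extracted from the weak formulation together with a time-mollification argument, rather than by differentiating under the integral sign; everything else is routine Poincaré bookkeeping. One should also verify that all constants stay independent of $\lambda$ and $r$, which is automatic here because the averages are taken over $Q$ and the factor $|\Lambda|=2\lambda^{2-p}r^2$ exactly balances the weight $\lambda^{2-p}$ that multiplies $\mint{-}\limits_Q|\xi|\,dz$ in the statement.
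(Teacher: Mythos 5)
Your proof is correct and takes essentially the same route as the cited reference \cite{bogelein2013regularity}: a slicewise spatial Poincaré inequality for the $x$-oscillation, plus testing \eqref{3.10} with $\phi=\chi(t)\eta(x)a$ to control, via Steklov averaging, the time oscillation of the weighted spatial mean $w_\eta$. The bookkeeping with $|\Lambda^{(\lambda)}_r|=2\lambda^{2-p}r^2$ producing exactly the factor $\lambda^{2-p}r$ in front of $\mint{-}_Q|\xi|\,dz$ is the key point and you handle it correctly.
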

Since the previous Poincaré inequality holds irrespective of the structure of $\xi$, we can obtain a slightly stronger result, if we use the vector field $a$ explicitly. This is done in
\begin{Lem}\label{Lem5.3}
    Let $p> \frac{2n}{n+2}$ and $M\geq 1$. Suppose that
    \begin{equation*}
        u\in  C\big([0,T], L^2(\Omega,\R^N)\big) \cap L^p \big(0,T,W^{1,p}\big(\Omega,\R^N\big)\big)
    \end{equation*}
    is a weak solution to (\ref{Sys1}) fulfilling the structural assumptions (\ref{As1})-(\ref{As3}) and (\ref{ModConu}). Then, there exists a constant $C\geq 1$ such that for any $A\in \R^{Nn}$ with $|A|\leq M$, any intrinsic geometric cylinder $Q_\varrho^{(\lambda)}(z_0) \subset \Omega_T$ with center $z_0=(x_0,t_0)\in \Omega_T$ and parameters $\lambda,\varrho>0$ and any $1\leq q\leq s$ such that $|Du|\in L^q\big( Q_\varrho^{(\lambda)}(z_0)\big)$ there holds:
    \begin{align}\label{5.301}
        \mint{-}\limits_{Q_\varrho^{(\lambda)}(z_0)} & \big| u - (u)_{z_0;\varrho}^{(\lambda)} - A(x-x_0) \big|^q \, dz
        \leq
        C \varrho^q \Bigg[ \mint{-}\limits_{Q^{(\lambda)}_{\varrho}(z_0)} |Du-A|^q \, dz 
        +
        \Bigg( \lambda^{2-p} \mint{-}\limits_{Q^{(\lambda)}_{\varrho}(z_0)} |Du-A|^{p-1} 
        \nonumber \\ & \qquad + \mathds{1}_{\{p\geq 2\}} |A|^{p-2}|Du-A|  \, dz 
        + 
        \lambda^{2-p} |A|^{p-1} H^*_\lambda\big(z_0,\varrho,(u)_{z_0;\varrho}^{(\lambda)}\big)\Bigg)^q  \Bigg].
    \end{align}
    In the case $\frac{2n}{n+2}<p<2$ there additionally holds if $A\neq 0$ that
    \begin{align}\label{5.303}
        \mint{-}\limits_{Q_\varrho^{(\lambda)}(z_0)} \big| u & - (u)_{z_0;\varrho}^{(\lambda)} - A(x-x_0) \big|^q \, dz
        \leq
        C \varrho^q \bigg( 1 + \frac{\lambda^{q(2-p)}}{|A|^{q(2-p)}} \bigg) \Bigg] 
        \Bigg[ \mint{-}\limits_{Q^{(\lambda)}_{\varrho}(z_0)} |Du-A|^q \, dz + 
        \nonumber \\ & \qquad +
        \lambda^{q(2-p)}|A|^{q(p-1)} \big( H^*_\lambda\big(z_0,\varrho,(u)_{z_0;\varrho}^{(\lambda)}\big)\big)^q \Bigg].
    \end{align}
    Note, that in both cases the constant is of the form $C=C(n,N,p,s,\kappa_{2M}) L$.
\end{Lem}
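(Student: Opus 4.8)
The plan is to reduce \eqref{5.301} and \eqref{5.303} to the abstract intermediate Poincaré inequality of Lemma~\ref{Lem3.1}, applied to the shifted function $w(x,t):=u(x,t)-A(x-x_0)$. First I would record that $w\in L^p\big(\Lambda_\varrho^{(\lambda)}(t_0);W^{1,p}(B_\varrho(x_0),\R^N)\big)$ with $Dw=Du-A$, and that, since $A(x-x_0)$ is independent of $t$ and every constant element of $\R^{Nn}$ is divergence free, for every $\phi\in C^\infty_0\big(Q_\varrho^{(\lambda)}(z_0),\R^N\big)$ one has $\int A(x-x_0)\cdot\partial_t\phi\,dz=0$ and $\int c\cdot D\phi\,dz=0$, where $c:=a\big(z_0,(u)_{z_0;\varrho}^{(\lambda)},A\big)$. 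Testing the weak formulation \eqref{weakfrom} with such $\phi$ therefore shows that $w$ satisfies \eqref{3.10} on $Q_\varrho^{(\lambda)}(z_0)$ with $\xi:=a(\,\cdot\,,u,Du)-c$, which lies in $L^1$ by \eqref{As1}. Since $B_\varrho(x_0)$ is symmetric about $x_0$, the cylindrical mean of $x\mapsto x-x_0$ vanishes, hence $(w)_{z_0;\varrho}^{(\lambda)}=(u)_{z_0;\varrho}^{(\lambda)}$ and $w-(w)_{z_0;\varrho}^{(\lambda)}=u-(u)_{z_0;\varrho}^{(\lambda)}-A(x-x_0)$, so that the left-hand side of Lemma~\ref{Lem3.1} is exactly the left-hand side of \eqref{5.301}.

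Applying Lemma~\ref{Lem3.1} then reduces everything to estimating $\mint{-}\limits_{Q_\varrho^{(\lambda)}(z_0)}|\xi|\,dz$, and I would split $|\xi|\leq|a(z,u,Du)-a(z,u,A)|+|a(z,u,A)-c|$. For the second term, assumption \eqref{ModConu} gives the bound $L\min\{(|x-x_0|^2+|t-t_0|)^{\beta/2}+|u-(u)_{z_0;\varrho}^{(\lambda)}|^\beta,1\}\,|A|^{p-1}$; integrating over $Q_\varrho^{(\lambda)}(z_0)$, using $(|x-x_0|^2+|t-t_0|)^{\beta/2}\leq\varrho^\beta(1+\lambda^{2-p})^{\beta/2}$ on the cylinder together with Jensen's inequality $\mint{-}\limits_{Q_\varrho^{(\lambda)}(z_0)}|u-(u)_{z_0;\varrho}^{(\lambda)}|^\beta\leq\Psi_\lambda^{\beta/p}\big(z_0,\varrho,(u)_{z_0;\varrho}^{(\lambda)}\big)$ (recall \eqref{6.03}, \eqref{5.01}), this produces exactly $L|A|^{p-1}H^*_\lambda\big(z_0,\varrho,(u)_{z_0;\varrho}^{(\lambda)}\big)$. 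For the first term I would decompose $Q_\varrho^{(\lambda)}(z_0)$ into $\{|Du-A|>|A|\}$ and $\{|Du-A|\leq|A|\}$. On the former, $|Du|\leq2|Du-A|$ and \eqref{As1} (or Lemma~\ref{Lem2.7}) give $|a(z,u,Du)-a(z,u,A)|\leq CL|Du-A|^{p-1}$. On the latter, I would write the difference as $\int_0^1(D_\xi a)(z,u,A+s(Du-A))(Du-A)\,ds$ and use \eqref{As3} with $|A+s(Du-A)|\leq 2|A|\leq 2M$: for $p\geq2$ this gives $CL\kappa_{2M}|A|^{p-2}|Du-A|$ directly, and for $p<2$ the same bound follows after controlling $\int_0^1|A+s(Du-A)|^{p-2}\,ds$ by Lemma~\ref{Lem2.5} (the possible vanishing of the argument being handled by the standard approximation as in \cite[Remark~5.2]{bogelein2013regularity}).

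Collecting the two regions yields $|a(z,u,Du)-a(z,u,A)|\leq CL\kappa_{2M}\big(|Du-A|^{p-1}+|A|^{p-2}|Du-A|\big)$, and inserting this bound together with the one for $|a(z,u,A)-c|$ into the conclusion of Lemma~\ref{Lem3.1} gives \eqref{5.301}. Here the term $\mathds{1}_{\{p\geq 2\}}|A|^{p-2}|Du-A|$ carries the indicator because, when $p<2$, on $\{|Du-A|\leq|A|\}$ one has $|A|^{p-2}|Du-A|\leq|Du-A|^{p-1}$ (as $p-2<0$), so this summand can simply be dropped in favour of $|Du-A|^{p-1}$.

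For the refined estimate \eqref{5.303} (case $\frac{2n}{n+2}<p<2$, $A\neq0$) I would instead keep only the term $|A|^{p-2}|Du-A|$: on $\{|Du-A|>|A|\}$ one has $|Du-A|^{p-1}=|Du-A|^{p-2}|Du-A|\leq|A|^{p-2}|Du-A|$ since $p-2<0$, so in fact $|a(z,u,Du)-a(z,u,A)|\leq CL\kappa_{2M}|A|^{p-2}|Du-A|$ on all of $Q_\varrho^{(\lambda)}(z_0)$. Then Lemma~\ref{Lem3.1} gives
\begin{align*}
\mint{-}\limits_{Q_\varrho^{(\lambda)}(z_0)}\big|u-(u)_{z_0;\varrho}^{(\lambda)}-A(x-x_0)\big|^q\,dz
&\leq C\varrho^q\,\mint{-}\limits_{Q_\varrho^{(\lambda)}(z_0)}|Du-A|^q\,dz\\
&\quad+C\varrho^q\Big(\lambda^{2-p}|A|^{p-2}\mint{-}\limits_{Q_\varrho^{(\lambda)}(z_0)}|Du-A|\,dz+\lambda^{2-p}|A|^{p-1}H^*_\lambda\Big)^q,
\end{align*}
and applying Jensen's inequality $\mint{-}\limits_{Q_\varrho^{(\lambda)}(z_0)}|Du-A|\,dz\leq\big(\mint{-}\limits_{Q_\varrho^{(\lambda)}(z_0)}|Du-A|^q\,dz\big)^{1/q}$ and using $\lambda^{q(2-p)}|A|^{q(p-2)}=\lambda^{q(2-p)}|A|^{-q(2-p)}$, the first summand inside the bracket is absorbed into $\big(1+\lambda^{q(2-p)}|A|^{-q(2-p)}\big)\mint{-}\limits_{Q_\varrho^{(\lambda)}(z_0)}|Du-A|^q\,dz$ while the second becomes $\lambda^{q(2-p)}|A|^{q(p-1)}(H^*_\lambda)^q$, which is \eqref{5.303}. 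Throughout, the constant has the asserted dependence $C=C(n,N,p,s,\kappa_{2M})L$, the factor $L$ stemming from \eqref{As1}, \eqref{As3}, \eqref{ModConu} and $\kappa_{2M}$ only from the degenerate region. The main obstacle is precisely that degenerate region $\{|Du-A|\leq|A|\}$ in the sub-quadratic range, where \eqref{As3} degenerates as the argument of $D_\xi a$ approaches zero and one must pass through Lemma~\ref{Lem2.5} and the approximation argument to justify differentiating along the segment; the only other delicate point is selecting the correct reference element $c=a\big(z_0,(u)_{z_0;\varrho}^{(\lambda)},A\big)$ rather than, say, $0$, without which a non-absorbable contribution of order $|A|^{p-1}$ would remain.
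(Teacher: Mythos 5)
Your proposal is correct and follows essentially the same approach as the paper: apply Lemma~\ref{Lem3.1} to the shifted function with $\xi = a(\,\cdot\,,u,Du) - a\big(z_0,(u)^{(\lambda)}_{z_0;\varrho},A\big)$, split the gradient-dependent difference over $\{|Du-A| \lessgtr |A|\}$ using Lemma~\ref{Lem2.7} on the outer set and \eqref{As3} together with Lemma~\ref{Lem2.5} on the inner set, and bound the $(z,u)$-dependent piece via \eqref{ModConu} and Jensen to recover $|A|^{p-1}H^*_\lambda$. The only cosmetic deviations — working with $w=u-A(x-x_0)$ rather than subtracting the cylindrical mean up front (harmless, since Lemma~\ref{Lem3.1} centers anyway), and finishing \eqref{5.303} with Jensen in place of Minkowski plus Young — do not change the argument.
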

\begin{proof}
    We again assume, without loss of generality, that $z_0=0$ and abbreviate $Q=Q_\varrho^{(\lambda)}$. From the weak formulation (\ref{weakfrom}) and the fact that
    \begin{equation*}
        \int\limits_{\Omega_T}\big( (u)_{\varrho}^{(\lambda)} + Ax \big) \cdot \partial_t\phi \, dz =0
        \qquad\text{and}\qquad
        \int\limits_{\Omega_T} a\big(0,(u)_{\varrho}^{(\lambda)}, A \big) \cdot D\phi \, dz = 0 
        \qquad\forall \phi\in C_0^\infty \big(\Omega_T , \mathbb{R}^N \big),
    \end{equation*}
    we observe in a first step
    \begin{align*}
        \int\limits_{\Omega_T} \big(u - (u)_{\varrho}^{(\lambda)} -Ax\big) \cdot \partial_t\phi & - \big( a(x,u,Du) -  a\big(0,(u)_{\varrho}^{(\lambda)}, A \big) \big) \cdot D\phi \, dz
        = 0.
    \end{align*}
    Therefore, we are in the position to apply Lemma \ref{Lem3.1} to $w=u -(u)_{\varrho}^{(\lambda)} -Ax$ with the particular choice $\xi = a(z,u,Du) -   a(0,(u)_{\varrho}^{(\lambda)}, A )$. By assumption, we have that 
    \begin{equation*}
        w\in L^q\big(-\lambda^{2-p}\varrho^p,\lambda^{2-p}\varrho^p,W^{1,q}\big(B_\varrho,\R^N\big)\big),
    \end{equation*}
    which implies that we can choose for the parameter $q$ of Lemma \ref{Lem3.1} the growth exponent $p$. Using that $u-(u)_{\varrho}^{(\lambda)}-Ax$ has mean value zero with respect to $Q$, we thus obtain
    \begin{align}\label{5.34}
        \mint{-}\limits_{Q} & \big| u - (u)_{\varrho}^{(\lambda)} - Ax \big|^q \, dz 
        \leq
        C \varrho^q \Bigg[ \mint{-}\limits_{Q} |Du-A|^q \, dz + \bigg( \lambda^{2-p} \mint{-}\limits_{Q} \big|a(z,u,Du) -  a\big(0,(u)_{\varrho}^{(\lambda)}, A \big) \big| \, dz \bigg)^q \Bigg].
    \end{align}
    Note that the constant $C$ can be modified so that it depends only on $n,N$ and $s$, since $q\in [1,s]$ and the constant of the Poincaré inequality of Lemma \ref{Lem3.1} is continuous with respect to the exponent $q$. It remains to estimate the second integral on the right-hand side of integral (\ref{5.34}). To this end, we treat the following integrals separately:
    \begin{align}
        \mint{-}\limits_{Q} & \big|a(z,u,Du) -  a\big(0,(u)_{\varrho}^{(\lambda)}, A \big) \big| \, dz
        \leq
        \mint{-}\limits_{Q} |a(z,u,Du) - a(z,u,A) |  \, dz
        \nonumber\\& \qquad + \mint{-}\limits_{Q} \big| a\big(z,u,A \big) - a\big(0,(u)_{\varrho}^{(\lambda)}, A \big) \big| \, dz =: J_1 + J_2,
    \end{align}
    with the obvious definitions of $J_1$ and $J_2$. We start by estimating $J_1$. To achieve this, we split the domain of integration into two sets:
    \begin{align*}
        &S_1:= \{ z\in Q \,|\, |Du(z)-A|<|A| \} \\
        &S_2:= \{ z\in Q \,|\, |Du(z)-A| \geq |A| \}.
    \end{align*}
    Let us start by estimating the integrand of $J_1$ on the set $S_2$. We obtain with Lemma \ref{Lem2.7} that
    \begin{align}\label{5.36}
        \frac{1}{|Q|}\int\limits_{S_2} |a(z,u,Du)  -  a(z,u, A ) | \, dz
         &\leq
        \frac{ C L }{|Q|}\int\limits_{S_2}  |Du-A|^{p-1}
        \leq
        C(p,\kappa_{2M}) L  \mint{-}\limits_{Q}  |Du-A|^{p-1} \, dz.
    \end{align}
    In order to continue with the domain of integration $S_1$ for $J_1$, we need to distinguish the sub- and super-quadratic cases to estimate the first integral on the right hand side. In the \textbf{super-quadratic case} $p\geq 2$, we use the growth assumptions (\ref{As3}) on $a$, $|Du| \leq |Du-A| + |A|\leq 2|A| \leq 2M$, and Jensen's inequality. This yields
    \begin{align}\label{5.37}
        & \frac{1}{|Q|} \int\limits_{S_1} \big| a(z,u,Du)  -  a(z,u, A) \big| \, dz \leq \frac{1}{|Q|}\int\limits_{S_1}   \int\limits_0^1 |D_\xi a|(z,u,A - s( Du -A))  |Du-A| \, ds
        \nonumber\\ &\leq
        \frac{1}{|Q|}\int\limits_{S_1}C L \kappa_{2M} \big( |A|^2 + |Du-A|^2 \big)^{\frac{p-2}{2}} |Du-A| \, dz
        \leq
        C(p,\kappa_{2M}) L |A|^{p-2} \mint{-}\limits_{Q} |Du-A| \, dz.
    \end{align}
    In the \textbf{sub-quadratic case} $p<2$, we perform a similar calculation to (\ref{5.37}). This time, we need to apply inequality (\ref{e2.6}), which yields
    \begin{align}\label{5.38}
        & \frac{1}{|Q|} \int\limits_{S_1} \big| a(z,u,Du)  -  a(z,u, A)) \big| \, dz \leq \frac{1}{|Q|}\int\limits_{S_1}   \int\limits_0^1 |D_\xi a|(z,u,A - s( Du -A))  |Du-A| \, ds
        \nonumber\\ &\leq
        C(p,\kappa_{2M}) L \frac{1}{|Q|}\int\limits_{S_1} \big( |A|^2 + |Du-A|^2 \big)^{\frac{p-2}{2}} |Du-A| \, dz.
    \end{align}
    We further estimate the right-hand side using the definition of $S_1$ as follows: The first inequality (\ref{5.301}) for the sub-quadratic is obtained by estimating
    \begin{equation}\label{5.39}
        \int\limits_{S_1} \big( |A|^2 + |Du-A|^2 \big)^{\frac{p-2}{2}} |Du-A| \, dz
        \leq
        \int\limits_{Q} |Du-A|^{p-1} \, dz.
    \end{equation}
    For (\ref{5.303}), we proceed similarly, using that $|A|>0$ implies 
    \begin{equation}\label{5.391}
        \int\limits_{S_1} \big( |A|^2 + |Du-A|^2 \big)^{\frac{p-2}{2}} |Du-A| \, dz
        \leq
        \int\limits_{Q} |A|^{p-2}|Du-A| \, dz
    \end{equation}
    and
    \begin{equation}\label{5.392}
        \int\limits_{S_2}  |Du-A|^{p-1} \, dz 
        \leq
        \int\limits_{S_2}  |A|^{p-2}|Du-A| \, dz
        \leq
        \int\limits_{Q}  |A|^{p-2}|Du-A| \, dz.
    \end{equation}
    This concludes the estimation for $J_1$. Let us resume the proof with the estimation of $J_2$. We obtain with the Hölder-continuity assumption (\ref{ModConu}) on $a$ and Hölder's inequality with exponents $\left(\frac{p}{\beta}, \frac{p}{p-\beta} \right)$ 
    \begin{align}\label{5.393}
        J_2 & \leq
        L |A|^{p-1} \mint{-}\limits_{Q} \big|u- (u)_\varrho^{(\lambda)} \big|^\beta \, dz + C L \varrho^\beta (1+\lambda^{2-p})^{\frac{\beta}{2}} |A|^{p-1}
        \nonumber \\ & \leq
        C(p) L |A|^{p-1} \Bigg[ \bigg( \mint{-}\limits_{Q}  \big|u- (u)_\varrho^{(\lambda)} \big|^{p} \, dz \bigg)^{\frac{\beta}{p}} + \varrho^\beta ( 1+\lambda^{2-p})^{\frac{\beta}{2}}  \Bigg]
    \end{align}
    Inserting (\ref{5.36}), (\ref{5.37}) and (\ref{5.393}) into (\ref{5.34}) yields the desired inequality (\ref{5.301}) of the Lemma in the super-quadratic case. To obtain (\ref{5.301}) in the sub-quadratic case, we estimate (\ref{5.34}) using (\ref{5.393}), (\ref{5.36}) and (\ref{5.37}) in combination with (\ref{5.39}). For the derivation of (\ref{5.303}) we proceed accordingly, with the minor change that we use (\ref{5.391}) as well as (\ref{5.392}) instead of (\ref{5.39}) and apply Minkowski's inequality followed by Young's inequality to the right-hand side.
\end{proof}
\begin{remark}
    If we use $A=0$ in the preceding inequality (\ref{5.301}), the terms derived from the continuity assumption on $(x,u)\mapsto a(x,u,\,\cdot\,)$ drop out. Using the obtained inequality, we can further estimate the mean oscillation of $u$ on the right-hand side of (\ref{5.301}). Moreover, if the vector field is independent of the space-time variable $z$ and the solution $u$, the $H_\lambda^*$ term can be ignored. This will come in handy for upcoming chapters.
\end{remark}
To conclude this chapter, we will prove the following Sobolev-Poincaré type inequality for the sub-quadratic case $p<2$, which will be used later.
\begin{Lem}\label{Lem5.4}
    Let $\frac{2n}{n+2}<p<2$ and $M,K\geq 1$. Suppose that
    \begin{equation*}
        u\in  C\big([0,T], L^2(\Omega,\R^N)\big) \cap L^p \big(0,T,W^{1,p}(\Omega,\R^N)\big)
    \end{equation*}
    is a weak solution to (\ref{Sys1}) fulfilling the assumptions (\ref{As1})-(\ref{As3}) and (\ref{ModConu}). Then, for any intrinsic geometric cylinder $Q_\varrho^{(\lambda)}(z_0)\subset \Omega_T$ with center $z_0=(x_0,t_0)\in \Omega_T$ and parameters $\varrho,\lambda >0$ and for any $A\in \mathbb{R}^{Nn}$ with
    \begin{equation}\label{5.401}
        |A| \leq M \qquad\text{and}\qquad \frac{\lambda}{K}\leq |A| 
    \end{equation}
    there holds:
    \begin{align*}
        \mint{-}\limits_{Q_{\frac\varrho 2}^{(\lambda)}(z_0)} \big|u - (u)_{z_0,\frac \varrho2}^{(\lambda)} - A(x-x_0) \big|^2 \, dz
        &\leq C \varrho^2
        \Bigg[ \lambda^{2-p} |A|^p H^*_\lambda\big(0,\varrho,(u)_{z_0;\varrho}^{(\lambda)}\big) + \Bigg( \mint{-}\limits_{Q^{(\lambda)}_{\varrho}(z_0)} |Du-A|^p \, dz 
        \\ & \qquad + 
        |A|^{p} \big( H^*_\lambda\big(0,\varrho,(u)_{z_0;\varrho}^{(\lambda)}\big) \big)^p \Bigg)^{\frac{2}{p}}\Bigg],
    \end{align*}
    where $C=C(\mathfrak{C},K,\kappa_{2M})$.
\end{Lem}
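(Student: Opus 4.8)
The plan is to establish a parabolic Sobolev--Poincaré inequality. In the sub-quadratic range only $Du\in L^{p}$ is available a priori, so the $L^{2}$-oscillation of $u$ has to be recovered by interpolating the $L^{p}$-energy against an $L^{\infty}_{t}L^{2}_{x}$-bound furnished by the Caccioppoli inequality. By translation I may assume $z_0=0$; set $l(x):=(u)_{0;\varrho}^{(\lambda)}+Ax$, $v:=u-l$ (so that $Dv=Du-A$ and $|Dl|=|A|\le M$), $H^{*}:=H^{*}_{\lambda}\big(0,\varrho,(u)_{0;\varrho}^{(\lambda)}\big)$ and $\mathcal{G}:=\mint{-}\limits_{Q_{\varrho}^{(\lambda)}}|Du-A|^{p}\,dz+|A|^{p}(H^{*})^{p}$, so the claim reads $\mint{-}\limits_{Q_{\varrho/2}^{(\lambda)}}|v|^{2}\,dz\le C\varrho^{2}\big(\lambda^{2-p}|A|^{p}H^{*}+\mathcal{G}^{2/p}\big)$. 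The first step is to control $v$ in $L^{p}$: I would apply Lemma \ref{Lem5.3}, inequality (\ref{5.303}), with $q=p$ (legitimate since $p<2$ and $A\neq0$), and exploit the hypothesis $\lambda/K\le|A|$ to absorb $1+(\lambda/|A|)^{p(2-p)}\le C(K)$ and $\lambda^{p(2-p)}|A|^{p(p-1)}\le C(K)|A|^{p}$; together with the measure ratio $|Q_{\varrho}^{(\lambda)}|/|Q_{\sigma}^{(\lambda)}|\le2^{n+2}$ this gives $\mint{-}\limits_{Q_{\sigma}^{(\lambda)}}|v|^{p}\,dz\le C(K)\varrho^{p}\mathcal{G}$ for every $\varrho/2\le\sigma\le\varrho$.

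Next, for arbitrary radii $\varrho/2\le r<s\le\varrho$ I would apply the Caccioppoli inequality (Lemma \ref{Lem5.1}) to $l$ on the pair $(r,s)$, which is admissible since $s/2\le r<s$. Because $p<2$, $A\neq0$ and $|A|\ge\lambda/K$ one has $\lambda^{2-p}|V_{|A|}(B)|^{2}\le\lambda^{2-p}|A|^{p-2}|B|^{2}=(\lambda/|A|)^{2-p}|B|^{2}\le K^{2-p}|B|^{2}$ for all $B$; hence, after multiplying the Caccioppoli inequality by $\lambda^{2-p}$, discarding the nonnegative energy term on its left, and noting that the $H^{*}$-term it produces (centred at $(u)_{0;\varrho}^{(\lambda)}$ with outer radius $s\le\varrho$) is $\le CH^{*}$, I obtain the $L^{\infty}_{t}L^{2}_{x}$-bound
\[
    \sup_{t\in\Lambda_{r}^{(\lambda)}}\mint{-}\limits_{B_{r}}\Big|\tfrac{v(\,\cdot\,,t)}{r}\Big|^{2}\,dx\le C(K)\mint{-}\limits_{Q_{s}^{(\lambda)}}\Big|\tfrac{v}{s-r}\Big|^{2}\,dz+C\lambda^{2-p}|A|^{p}H^{*}.
\]
For the interpolation I would invoke the Gagliardo--Nirenberg inequality (Lemma \ref{Lem2.11}) on almost every time slice with $q=2$, Sobolev exponent $p$ and second exponent $2$: the balance condition reduces to $p\ge\tfrac{2n}{n+2}$ and so holds for every interpolation parameter $\vartheta$, which I fix with $2\vartheta/p\le1$. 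Estimating the $L^{2}$-factor by the supremum over $t$, integrating in time, using Jensen's inequality and the first step, this yields
\[
    \mint{-}\limits_{Q_{r}^{(\lambda)}}\Big|\tfrac{v}{r}\Big|^{2}\,dz\le C(K)\Big(\sup_{t\in\Lambda_{r}^{(\lambda)}}\mint{-}\limits_{B_{r}}\Big|\tfrac{v(\,\cdot\,,t)}{r}\Big|^{2}\,dx\Big)^{1-\vartheta}\mathcal{G}^{\,2\vartheta/p}.
\]

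Combining the two displays and applying Young's inequality $X^{1-\vartheta}Y^{\vartheta}\le\epsilon X+C(\epsilon)Y$ with $Y=\mathcal{G}^{2/p}$ produces, for all $\varrho/2\le r<s\le\varrho$, an estimate $\mint{-}\limits_{Q_{r}^{(\lambda)}}|v/r|^{2}\,dz\le C(K)\epsilon\,\mint{-}\limits_{Q_{s}^{(\lambda)}}|v/(s-r)|^{2}\,dz+C(K)\epsilon\,\lambda^{2-p}|A|^{p}H^{*}+C(K,\epsilon)\mathcal{G}^{2/p}$. I would then recast the first right-hand term in the shape demanded by the iteration Lemma \ref{Lem2.2}: on the set $\{|v/(s-r)|\le|A|\}$ one replaces $|v/(s-r)|^{2}$ by $|V_{|A|}(v/(s-r))|^{2}$ (which dominates it there up to a constant depending on $M$, by the inequality above used in reverse), while on the complementary set $|V_{|A|}(v/(s-r))|^{2}\gtrsim|v/(s-r)|^{p}$, so that part is absorbed into $\mathcal{G}$ via the first step; Lemma \ref{Lem2.2} on $[\varrho/2,\varrho]$ then eliminates the $v$-dependent term on the right and delivers $\mint{-}\limits_{Q_{\varrho/2}^{(\lambda)}}|v|^{2}\,dz\le C\varrho^{2}(\lambda^{2-p}|A|^{p}H^{*}+\mathcal{G}^{2/p})$ with $C=C(\mathfrak{C},K,\kappa_{2M})$, which is the claim.

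I expect this last step to be the main obstacle. In the sub-quadratic regime the genuine gain from an $L^{p}$-gradient bound to an $L^{2}$-oscillation bound is only available through the interpolation--absorption mechanism, and closing it rests on two delicate points: careful bookkeeping of the powers of $\lambda$, where the hypothesis $\lambda/K\le|A|$ is indispensable since it is precisely what renders $\lambda^{2-p}|V_{|A|}(\,\cdot\,)|^{2}$ comparable to $|\,\cdot\,|^{2}$ up to constants; and the splitting of the quadratic term into its ``small'' part, comparable to the $V$-function and treatable by the iteration lemma, and its ``large'' part, which scales like $|v|^{p}$ rather than $|V_{|A|}(v)|^{2}$ and must be fed back into $\mathcal{G}$, so that the iteration genuinely terminates.
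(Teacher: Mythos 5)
Your overall strategy matches the paper's quite closely: slicewise Gagliardo--Nirenberg with parameters $(2,p,2,\vartheta)$, an $L^p$ Poincaré bound from Lemma \ref{Lem5.3} via the lower bound $\lambda/K\le|A|$, an $L^\infty_t L^2_x$ bound from the Caccioppoli inequality, Young's inequality, and then the iteration Lemma \ref{Lem2.2}. However, the final absorption step is a genuine gap, not just a technicality, and your proposed ``recasting'' does not repair it.

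Concretely, after Young's inequality your estimate has the shape
\[
\varphi(r)\le C\epsilon\,\mint{-}\limits_{Q_s^{(\lambda)}}\Big|\tfrac{v}{s-r}\Big|^{2}\,dz+\cdots
= C\epsilon\Big(\tfrac{s}{s-r}\Big)^{2}\varphi(s)+\cdots,
\]
so the coefficient in front of $\varphi(s)$ blows up as $s\downarrow r$; no fixed $\epsilon$ makes this a $\vartheta\varphi(s)$ with $\vartheta<1$, and iterating over a geometric sequence of radii then produces a divergent product of prefactors $\epsilon(s_{k+1}/(s_{k+1}-s_k))^2$. Your fix --- splitting $\{|v/(s-r)|\le|A|\}$ versus its complement and replacing $|v/(s-r)|^2$ by $|V_{|A|}(v/(s-r))|^2$ --- works on the small set (there $|B|^2\le C(M)|V_{|A|}(B)|^2$), but fails on the large set: for $|B|>|A|$ and $p<2$ one has $|V_{|A|}(B)|^2\approx|B|^p\le|B|^2$, so the $V$-function is \emph{weaker} than the quadratic expression, and the $L^2$-piece cannot be fed back into $\mathcal G$, which is built from $p$-th powers. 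Even if this worked, the output $\int|V_{|A|}(v/(\varrho-r))|^2$ of Lemma \ref{Lem2.2} still needs a bound on the $L^2$-oscillation of $v$ on the small set, which is precisely the quantity you set out to estimate.

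The actual proof resolves this by the order in which Young is applied. With the Gagliardo--Nirenberg exponent fixed at $\vartheta=p/2$ one first factors out $(R/(R-r))^{2-p}$ from the sup-term, using $(R/(R-r))^2\ge1$:
\[
\Big[(\tfrac{R}{R-r})^{2}\varphi(R)+\lambda^{2-p}|A|^p H^*\Big]^{1-\frac p2}
\le\Big(\tfrac{R}{R-r}\Big)^{2-p}\big[\varphi(R)+\lambda^{2-p}|A|^p H^*\big]^{1-\frac p2},
\]
and only then applies Young with the specific conjugate exponents $\big(\tfrac{2}{2-p},\tfrac{2}{p}\big)$. This attaches the entire blow-up factor $(R/(R-r))^{2(2-p)/p}$ to the $\mathcal G^{2/p}$ term (which sits in the $b/(t-s)^2$-slot of Lemma \ref{Lem2.2}, admissible because $2(2-p)/p<2$ whenever $p>1$), while $\varphi(R)$ appears with a fixed small coefficient --- exactly the shape Lemma \ref{Lem2.2} requires. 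Two further small points the paper handles and you gloss over: the comparison function has a \emph{radius-dependent} center $v_r=u-(u)_r^{(\lambda)}-Ax$, since the Poincaré inequality needs zero mean over the cylinder on which it is applied, and the resulting mean-shift $|(u)_r^{(\lambda)}-(u)_R^{(\lambda)}|$ is controlled separately by a direct second-moment estimate; and the $H^*$-terms centered at the various intermediate radii must be compared back to $H^*_\lambda(0,\varrho,(u)^{(\lambda)}_{z_0;\varrho})$.
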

\begin{proof}
    We again assume, without loss of generality, that $z_0=0$. Furthermore, choose $\frac\varrho2 \leq r < R \leq \varrho$ and abbreviate $v_r(x,t):= u - (u)_r^{(\lambda)} - Ax$ as well as $H^*_\lambda(r,\varrho)=H^*_\lambda(0,r,(u)_{z_0;\varrho}^{(\lambda)})$. Now we apply the Gagliardo-Nirenberg inequality (\ref{Lem2.11}) "slicewise" to $v_r(\,\cdot\,,t)$, replacing $(q,p,r,\vartheta)$ with $(2,p,2,p/2)$. After integrating over $t\in \Lambda_r^{(\lambda)}$ and dividing by $2\lambda^{2-p}\varrho^2$, this results in
    \begin{align}\label{5.41}
        \mint{-}\limits_{Q_r^{(\lambda)}} \bigg| \frac{v_r}{r} \bigg|^2 \, dz
        & \leq C(n) 
        \mint{-}\limits_{\Lambda_r^{(\lambda)}} \mint{-}\limits_{B_r} \bigg| \frac{v_r}{r} \bigg|^p + |Dv_r|^p \, dx 
        \Bigg[ \mint{-}\limits_{B_r} \bigg| \frac{v_r}{r} \bigg|^2 \, dx \Bigg]^{1-\frac{p}{2}} \, dt
        \nonumber \\ & \leq C(n)
        \mint{-}\limits_{Q_r^{(\lambda)}} \bigg| \frac{v_r}{r} \bigg|^p + |Dv_r|^p \, dz 
        \cdot
        \Bigg[ \sup\limits_{t\in \Lambda_r^{(\lambda)}} \mint{-}\limits_{B_r} \bigg| \frac{v_r(\,\cdot\,,t)}{r} \bigg|^2 \, dx \Bigg]^{1-\frac{p}{2}}.
    \end{align}
    If we use the second assumption in (\ref{5.401}) in the Poincaré inequality (\ref{5.303}) of Lemma \ref{Lem5.3} with the parameter choice $q=p$, we obtain
    \begin{align}\label{5.42}
        \mint{-}\limits_{Q^{(\lambda)}_{r}} \bigg| \frac{v_r}{r} \bigg|^p \, dz
        &\leq
        C  \big( 1 + K^{p(2-p)} \big) \Bigg[ \mint{-}\limits_{Q^{(\lambda)}_{r}} |Dv_r|^p \, dz 
        +
        K^{p(2-p)} |A|^{p} \big( H^*_\lambda(r,r) \big)^p \Bigg],
    \end{align}
    where the dependency of the constant is given by $C=C(n,N,p,L,\kappa_{2M})$. Furthermore, we want to estimate the $\sup$-term of the right-hand side of inequality (\ref{5.41}). To do so, we apply the Caccioppoli inequality from Lemma \ref{Lem5.1} on the cylinders $Q_r^{(\lambda)}$ and $Q_R^{(\lambda)}$. Using hypotheses (\ref{5.401}) and (\ref{2.14}), this results in
    \begin{align}\label{5.43}
        \sup\limits_{t\in \Lambda_r^{(\lambda)}} \mint{-}\limits_{B_r} \bigg| \frac{v_r(\,\cdot\,,t)}{r} \bigg|^2 \, dx 
        &\leq C 
        \mint{-}\limits_{Q_R^{(\lambda)}} \lambda^{2-p} \bigg| V_{|A|} \bigg(\frac{v_r}{R-r}\bigg) \bigg|^2 + \bigg| \frac{v_r}{R-r} \bigg|^2 \, dz
        +
        L \frac{|A|^{p}}{\lambda^{p-2}} H^*_\lambda(R,r)
        \nonumber \\ & \leq C 
        \mint{-}\limits_{Q_R^{(\lambda)}} \bigg| \frac{v_r}{R-r} \bigg|^2 \, dz
        +
        L \lambda^{2-p} |A|^{p} H^*_\lambda(R,r),
    \end{align}
    where $C=C(p,\nu,L,K,\kappa_{2M})$. Let us now replace in (\ref{5.43}) the function $v_r$ on the right-hand side by $v_R$. To this end, we estimate the difference $|v_r-v_R|$ on $Q_R^{(\lambda)}$. By the definitions of $v_r$ and  $v_R$, an application of Hölder's inequality with exponents $(2,2)$ and an enlargement of the domain of integration from $Q_r^{(\lambda)}$ to $Q_R^{(\lambda)}$ (note that $|Q_R^{(\lambda)}|/|Q_r^{(\lambda)}| \leq 2^{n+2}$) we find that
    \begin{align}\label{5.44}
        |v_r - v_R|^2 &= \big| (u)_r^{(\lambda)} - (u)_R^{(\lambda)} \big|^2 = 
        \bigg| \mint{-}\limits_{Q_r^{(\lambda)}} \big( u - (u)_R^{(\lambda)} - Ax \big) \, dz\bigg|^2
        =
        \bigg| \mint{-}\limits_{Q_r^{(\lambda)}} v_R \, dz\bigg|^2
        \leq
        2^{n+2}  \mint{-}\limits_{Q_R^{(\lambda)}} |v_R|^2 \, dz.
    \end{align}
    Let us now estimate the first term of $H^*_\lambda(R,r)$. We obtain, after increasing the domain of integration, that
    \begin{align*}
        \mint{-}\limits_{Q^{(\lambda)}_{R}}  \big|u- (u)_r^{(\lambda)} \big|^{p} \, dz
        &\leq
        2^{p-1} \mint{-}\limits_{Q^{(\lambda)}_{R}}  \big|u- (u)_\varrho^{(\lambda)} \big|^{p} \, dz
        +
        2^{p-1} \mint{-}\limits_{Q^{(\lambda)}_{r}}  \big|u- (u)_\varrho^{(\lambda)} \big|^{p} \, dz
        \nonumber\\ & \leq
        2^{n+p+2} \mint{-}\limits_{Q^{(\lambda)}_{\varrho}}  \big|u- (u)_\varrho^{(\lambda)} \big|^{p} \, dz,
    \end{align*}
    where we used that $|Q_\varrho^{(\lambda)}|/|Q_r^{(\lambda)}| \leq |Q_\varrho^{(\lambda)}|/|Q_R^{(\lambda)}| \leq 2^{n+2}$. This yields
    \begin{align}\label{5.444}
        H^*_\lambda(R,r) \leq C(\mathfrak{C}) H^*_\lambda(\varrho,\varrho)
        \qquad\text{and}\qquad
        H^*_\lambda(r,r) \leq C(\mathfrak{C}) H^*_\lambda(\varrho,\varrho),
    \end{align}
    where the latter estimate is proven in exactly the same manner as the former one. If we join (\ref{5.41})-(\ref{5.444}), apply Young's inequality with exponents $\big(\frac{2}{2-p}, \frac{2}{p} \big)$ and enlarge the domain of integration, we arrive at
    \begin{align}\label{5.45}
        \mint{-}\limits_{Q_\varrho^{(\lambda)}} \bigg| \frac{v_r}{r} \bigg|^2 \, dz
        &\leq C
        \bigg( \frac{R}{R-r} \bigg)^{2-p} \Bigg[ \mint{-}\limits_{Q^{(\lambda)}_{r}} |Du-A|^p \, dz
        + 
        |A|^{p} \big(  H^*_\lambda(r,r) \big)^p \Bigg]
        \Bigg[ \mint{-}\limits_{Q_R^{(\lambda)}} \bigg| \frac{v_R}{R} \bigg|^2 \, dz
        \nonumber\\ & \qquad +
        L \frac{|A|^{p}}{\lambda^{p-2}} H^*_\lambda(R,r) \Bigg]^{1-\frac{p}{2}}
        \leq 
        \frac{1}{2} \mint{-}\limits_{Q_R^{(\lambda)}} \bigg| \frac{v_R}{R} \bigg|^2 \, dz
        +
        L \lambda^{2-p} |A|^p H^*_\lambda(\varrho,\varrho)
        \nonumber\\ & \qquad+ C
        \bigg( \frac{\varrho}{R-r} \bigg)^{\frac{2(2-p)}{p}}
        \cdot 
        \Bigg( \mint{-}\limits_{Q^{(\lambda)}_{\varrho}} \big|Du - A \big|^{p} \, dz 
        + 
        |A|^{p} \big( H^*_\lambda(\varrho,\varrho) \big)^p \Bigg)^{\frac{2}{p}},
    \end{align}
    where the constant is of the form $C=C(\mathfrak{C},K,\kappa_{2M})$. We obtain the stated inequality of the Lemma after applying Lemma \ref{Lem2.2}.
\end{proof}

\section{Approximate \texorpdfstring{$\mathfrak{A}$}{A}- and \texorpdfstring{$p$}{p}-caloricity}\label{SLin}

In this chapter, we linearize our non-linear parabolic system similar to \cite[Section 6]{bogelein2013regularity}. When linearizing, we distinguish between two different regimes (NDR) and (DR) described in Section \ref{Strategy}. The main reason for this is that the diffusion term behaves quite differently, when $|Du|$ is small or large.

\subsection{Linearization in the non-degenerate regime (NDR)}

Let $Q_\varrho^{(\lambda)}(z_0)\subset \Omega_T$ with parameters $\varrho,\lambda>0$ be a intrinsic geometric cylinder and $A\in \mathbb{R}^{Nn}$. Then we define the \textbf{excess functional in the non-degenerate regime} by
\begin{equation}\label{6.01}
    \Phi_\lambda(z_0,\varrho,A)= \mint{-}\limits_{Q_\varrho^{(\lambda)}(z_0)} \big| V_{|A|}(Du-A) \big|^2 \, dz.
\end{equation}
Due to the dependency of the vector field on the space-time point and solution, it will be useful to introduce the \textbf{preliminary hybrid excess functional}
\begin{equation}\label{6.02}
E^*_\lambda(z_0,\varrho,A)=\Phi_\lambda(z_0,\varrho,A)+|A|^{p} H^*_\lambda\big(z_0,\varrho,(u)_{z_0;\varrho}^{(\lambda)}\big),
\end{equation}
which will be estimated from above in dependence on whether we are in the sub- or super-quadratic regime. Note that $H^*_\lambda$ was defined in (\ref{5.01}). With this preliminary hybrid excess in mind, we can linearize the system inside an (intrinsic) cylinder in the following way:
\begin{Lem}\label{Lem6.1}
    Let $p>\frac{2n}{n+2}$, $M,K\geq 1$ and
    \begin{equation*}
        u\in C\big([0,T];L^2\big(\Omega,\mathbb{R}^{N}\big) \big) \cap L^p\big(0,T;W^{1,p}\big(\Omega,\mathbb{R}^{N}\big) \big)
    \end{equation*}
    be a weak solution to (\ref{Sys1}) in $\Omega_T$ that satisfies the assumptions (\ref{As1})-(\ref{ModConu}). Then for any intrinsic geometric cylinder $Q_\varrho^{(\lambda)}(z_0)\subset \Omega_T$ with parameters $\varrho, \lambda>0$ and $A\in \mathbb{R}^{Nn}$ satisfying
    \begin{align}
        |A|\leq M
        \qquad\text{and}\qquad \frac{\lambda}{K}<|A|\leq \lambda K
    \end{align}
    there holds
    \begin{align}\label{6.10}
        \Bigg| \mint{-}\limits_{Q_{\varrho}^{(\lambda)}(z_0)} & u\cdot \partial_t \phi  - \big(D_\xi a\big)\big(z_0 , (u)^{(\lambda)}_{z_0;\varrho}, A \big)(Du-A,D\phi) \, dz \Bigg| 
        \nonumber\\& \leq 
        C(p,\kappa_{2M}) L |A|^{\frac{p-2}{2}} \big(E^*_\lambda\big)^{\frac{1}{2}} \Bigg[ \bigg( \frac{E^*_\lambda}{|A|^p} \bigg)^{\left|\frac{1}{2}-\frac{1}{p}\right|} + \omega_{2M} \bigg( \frac{E^*_\lambda}{|A|^p} \bigg) +
        \bigg( \frac{E^*_\lambda}{|A|^p} \bigg)^{\frac{1}{2}} \Bigg] \sup\limits_{Q_\varrho^{(\lambda)}(z_0)} |D\phi| 
    \end{align}
    for all $\phi\in C^1_0\big( Q_{\varrho}^{(\lambda)}(z_0),  \mathbb{R}^{N} \big)$. Here we abbreviated $E^*_\lambda=E^*_\lambda(z_0,\varrho,A)$.
\end{Lem}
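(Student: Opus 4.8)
The plan is to test the weak formulation \eqref{weakfrom} with an arbitrary $\phi\in C^1_0(Q_\varrho^{(\lambda)}(z_0),\mathbb R^N)$ and then rewrite the resulting identity so that the leading term is the constant-coefficient bilinear form $(D_\xi a)(z_0,(u)_{z_0;\varrho}^{(\lambda)},A)(Du-A,D\phi)$, collecting all remaining contributions into error terms that will be estimated by the preliminary hybrid excess $E^*_\lambda$. First I would note that, since $\phi$ has compact support and $A$ is constant, $\int_{Q} Ax\cdot\partial_t\phi\,dz=0$ and $\int_Q (D_\xi a)(z_0,(u)_{z_0;\varrho}^{(\lambda)},A)(A,D\phi)\,dz=0$, so that the left-hand side of \eqref{6.10} equals
\[
\mint{-}\limits_{Q}\Big[\big(a(z,u,Du)-a(z,u,A)\big)-(D_\xi a)(z_0,(u)_{z_0;\varrho}^{(\lambda)},A)(Du-A)\Big]\cdot D\phi\,dz
+\mint{-}\limits_{Q}\big(a(z,u,A)-a(z_0,(u)_{z_0;\varrho}^{(\lambda)},A)\big)\cdot D\phi\,dz,
\]
up to using $\partial_t u=\mathrm{div}\,a(z,u,Du)$ in the distributional sense. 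The second integral is the "frozen coefficient" error: by the Hölder-continuity assumption \eqref{ModConu} it is bounded by $C L|A|^{p-1}\mint{-}_Q\min\{(|x-x_0|^2+|t-t_0|)^{\beta/2}+|u-(u)_{z_0;\varrho}^{(\lambda)}|^\beta,1\}\,dz$, which after Young's/Hölder's inequality is $\le CL|A|^{p-1}H^*_\lambda(z_0,\varrho,(u)_{z_0;\varrho}^{(\lambda)})$; writing $|A|^{p-1}H^*_\lambda=|A|^{(p-2)/2}\cdot |A|^{p/2}(H^*_\lambda)^{1/2}\cdot(H^*_\lambda)^{1/2}$ and bounding $|A|^p H^*_\lambda\le E^*_\lambda$ and $H^*_\lambda\le (E^*_\lambda/|A|^p)$ (since $|A|^pH^*_\lambda\le E^*_\lambda$ forces $H^*_\lambda\le E^*_\lambda/|A|^p$) produces the term $|A|^{(p-2)/2}(E^*_\lambda)^{1/2}(E^*_\lambda/|A|^p)^{1/2}$, i.e.\ the third bracketed contribution on the right of \eqref{6.10}.

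For the first integral I would split the integrand via the fundamental theorem of calculus along the segment from $A$ to $Du$: write $a(z,u,Du)-a(z,u,A)=\int_0^1 (D_\xi a)(z,u,A+s(Du-A))(Du-A)\,ds$, and subtract the frozen form to get
\[
\mint{-}\limits_{Q}\Big[\int_0^1\big((D_\xi a)(z,u,A+s(Du-A))-(D_\xi a)(z_0,(u)_{z_0;\varrho}^{(\lambda)},A)\big)\,ds\Big](Du-A)\cdot D\phi\,dz.
\]
The inner difference is estimated by the modulus-of-continuity hypothesis \eqref{ModCon} (in $\xi$) together with \eqref{ModConu}/\eqref{ModConu2}-type control for the $(z,u)$-dependence of $D_\xi a$; more precisely one uses \eqref{ModCon} with $\xi_1=A+s(Du-A)$, $\xi_2=A$, so the $\xi$-oscillation is controlled by $\omega_{2M}\!\big(|Du-A|^2/(|A|^2+|A+s(Du-A)|^2)\big)$ times the appropriate power weight, while the $(z,u)$-oscillation contributes another $H^*_\lambda$-type term. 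One then splits the domain of integration into $\{|Du-A|\le|A|\}$ and $\{|Du-A|>|A|\}$: on the first set the weight is comparable to $|A|^{p-2}$, on the second it is comparable to $|Du-A|^{p-2}$, and in each case $|Du-A|^2$ times the weight is controlled by $|V_{|A|}(Du-A)|^2$ up to constants (using Lemma \ref{Lem2.1}, in particular \eqref{2.11} and \eqref{2.14}). Applying Hölder's inequality in $z$ (exponents $2$ and $2$, after pulling out $\sup|D\phi|$), concavity of $\omega_{2M}^2$ with Jensen's inequality to move the average inside $\omega_{2M}$, and the definition \eqref{6.01} of $\Phi_\lambda$, one arrives at a bound of the shape $CL|A|^{(p-2)/2}(\Phi_\lambda)^{1/2}\big[(\Phi_\lambda/|A|^p)^{|1/2-1/p|}+\omega_{2M}(\Phi_\lambda/|A|^p)\big]\sup|D\phi|$, where the first exponent $|1/2-1/p|$ is exactly what emerges from the power-weight bookkeeping when converting between $|Du-A|^2$-weighted integrals and $|V_{|A|}(Du-A)|^2$, separately in the super- and sub-quadratic cases. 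Replacing $\Phi_\lambda\le E^*_\lambda$ yields the first two bracketed terms of \eqref{6.10}.

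The main obstacle is the careful bookkeeping of powers of $|A|$ and $|Du-A|$ on the two sets $\{|Du-A|\lessgtr|A|\}$, simultaneously in the super-quadratic and sub-quadratic regimes, so that the $V$-function algebra of Lemma \ref{Lem2.1} (especially the two-sided bound \eqref{2.14} and the Cauchy–Schwarz-type inequality \eqref{2.11}) converts every weighted $|Du-A|$-integral into something dominated by $(\Phi_\lambda)^{1/2}$ with precisely the exponents $|1/2-1/p|$ and the modulus $\omega_{2M}$ appearing as stated; the constraint $\lambda/K<|A|\le\lambda K$ is used here to keep the weight $(\lambda^2+|A|^2)$ comparable to $|A|^2$ so that $V_{|A|}$ and $V_\lambda$ are interchangeable up to constants depending on $K$. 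A secondary technical point, as in \cite[Remark 5.2]{bogelein2013regularity}, is justifying the use of $(D_\xi a)(z,u,\cdot)$ along a segment that may pass through the origin in the sub-quadratic case; this is handled by the same approximation/continuity argument and only affects constants. Once these pieces are assembled and $\Phi_\lambda$, the two $H^*_\lambda$-contributions are all absorbed into $E^*_\lambda$ via $|A|^pH^*_\lambda\le E^*_\lambda$ and $H^*_\lambda\le E^*_\lambda/|A|^p$, estimate \eqref{6.10} follows with a constant of the claimed form $C(p,\kappa_{2M})L$.
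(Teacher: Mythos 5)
Your decomposition is genuinely different from the paper's, and unfortunately the difference creates a gap. You split off the frozen-coefficient error at the argument $A$, writing the main term as
\begin{equation*}
\mint{-}\limits_{Q}\Big[\big(a(z,u,Du)-a(z,u,A)\big)-(D_\xi a)\big(z_0,(u)_{z_0;\varrho}^{(\lambda)},A\big)(Du-A)\Big]\cdot D\phi\,dz,
\end{equation*}
and then apply the fundamental theorem of calculus to get the integrand
$\int_0^1\big[(D_\xi a)(z,u,A+s(Du-A))-(D_\xi a)(z_0,(u)_{z_0;\varrho}^{(\lambda)},A)\big]\,ds\cdot(Du-A)$.
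This difference of $D_\xi a$ mixes the $\xi$-variation and the $(z,u)$-variation. The hypothesis (\ref{ModCon}) controls the oscillation of $D_\xi a$ only in $\xi$ at fixed $(z,u)$; there is no assumption anywhere that $(z,u)\mapsto D_\xi a(z,u,\xi)$ is Hölder (or even uniformly) continuous. The assumptions (\ref{ModConu}) and (\ref{ModConu2}) concern $a$ and $\Tilde a$, not $D_\xi a$. Your phrase "together with \eqref{ModConu}/\eqref{ModConu2}-type control for the $(z,u)$-dependence of $D_\xi a$" is exactly the missing ingredient: no such control is available, so the step cannot be carried out under the stated structure conditions.

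The paper avoids this by reordering the freezing. Set $\Tilde a(\xi)=a\big(z_0,(u)_{z_0;\varrho}^{(\lambda)},\xi\big)$ and write the error as $J_1+J_2$ with $J_1=\mint{-}_Q[a(z,u,Du)-\Tilde a(Du)]\cdot D\phi$ and $J_2=\mint{-}_Q[\Tilde a(Du)-\Tilde a(A)-(D_\xi\Tilde a)(A)(Du-A)]\cdot D\phi$. In $J_1$ the $\xi$-argument is the same on both sides, so only (\ref{ModConu}) (Hölder continuity of $a$ in $(z,u)$) is needed; in $J_2$ the $(z,u)$-argument is frozen at $\big(z_0,(u)_{z_0;\varrho}^{(\lambda)}\big)$, so (\ref{ModCon}) applies directly along the segment from $A$ to $Du$ on $S_1$, and Lemma \ref{Lem2.7} plus (\ref{As1})/(\ref{As3}) handle $S_2$. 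If you want to keep your split, you would have to add and subtract $(D_\xi a)(z,u,A)$ inside your bracket, which isolates precisely the unestimable term $(D_\xi a)(z,u,A)-(D_\xi a)(z_0,(u)_{z_0;\varrho}^{(\lambda)},A)$; this shows the gap is structural, not a bookkeeping oversight. A minor secondary point: you claim the hypothesis $\lambda/K<|A|\le\lambda K$ is needed to compare $V_{|A|}$ with $V_\lambda$, but the excess (\ref{6.01}) is already built with $V_{|A|}$ and the paper's constant does not depend on $K$; that bound is not actually invoked in this particular lemma.
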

\begin{proof}
    To shorten the notation, we set $z_0=0$ and abbreviate in the following $ Q_\varrho^{(\lambda)}$ with $Q_\varrho$, $\Phi_\lambda(0,\varrho,A)$ with $\Phi_\lambda$ and $E^*_\lambda(0,\varrho,A)$ with $E^*_\lambda$. Furthermore, we define the vector field
    \begin{equation*}\label{6.11}
        \xi\mapsto \Tilde{a}(\xi) = a\big(0,(u)_{\varrho}^{(\lambda)},\xi\big).
    \end{equation*}
    Without loss of generality, we may assume $\| D\phi \|_\infty\leq 1$. The general case can be retrieved by linearity using $\phi/\| D\phi \|_\infty$ in the obtained, when $\| D\phi \|_\infty>0$. From the parabolic system and the fact that 
    \begin{equation*}\label{6.12}
        \mint{-}\limits_{Q_{\varrho}} \Tilde{a}(A) \cdot D\phi \, dz = 0,
    \end{equation*}
    we infer
    \begin{align*}\label{6.13}
        \Bigg| \mint{-}\limits_{Q_{\varrho}} & u\cdot \partial_t \phi  - \big(D_\xi \Tilde{a}\big)(A)(Du-A,D\phi) \, dz \Bigg|
        =
        \Bigg| \mint{-}\limits_{Q_{\varrho}} a(z,u,Du) \cdot D\phi - \big(D_\xi \Tilde{a}\big)(A)(Du-A,D\phi) \, dz \Bigg|
        \\ & =
        \Bigg| \mint{-}\limits_{Q_{\varrho}} \big[ a(z,u,Du) - \Tilde{a}(Du) + \Tilde{a}(Du) - \Tilde{a}(A) \big] \cdot D\phi - \big(D_\xi \Tilde{a}\big)(A)(Du-A,D\phi) \, dz \Bigg|
        \\ & \leq
        \Bigg| \mint{-}\limits_{Q_{\varrho}} \big[ a(z,u,Du)
         - \Tilde{a}(Du) \big] \cdot D\phi \, dz \Bigg|
        \\ & \qquad +
        \Bigg| \mint{-}\limits_{Q_{\varrho}} \big[\Tilde{a}(Du) - \Tilde{a}(A) \big] \cdot D\phi - \big(D_\xi \Tilde{a}\big)(A)(Du-A,D\phi) \, dz \Bigg|
        =: J_1 + J_2
    \end{align*}
    with the obvious definitions of $J_1$ and $J_2$. We start by estimating $J_2$. In order to do this, we once again decompose the domain of integration $Q$ into two pieces, $S_1$ and $S_2$, which are defined by
    \begin{equation*}\label{6.14}
        S_1= \{ z\in Q_{\varrho} \,|\, |Du(z)-A| < |A| \}, \qquad S_2= \{ z\in Q_{\varrho} \,|\, |Du(z)-A| \geq |A| \}.
    \end{equation*}
    With this decomposition, we split the estimation of $J_2$ into two parts, $J_{2,1}$ and $J_{2,2}$, corresponding to the domains of integration $S_1$ and $S_2$, respectively. Since the treatment of $J_{2,1}$ is verbatim that of \cite[Inequalities (6.3)-(6.6)]{bogelein2013regularity}, we just state the result
    \begin{align}\label{6.15}
        J_{2,1} \leq
        C(p) L |A|^{\frac{p-2}{2}} \omega_{2M} \bigg( \frac{E^*_\lambda}{|A|^p} \bigg) \big(E^*_\lambda \big)^{\frac{1}{2}} 
    \end{align}
    In the next step, we estimate the integral $J_{2,2}$. On the domain of integration $S_2$, we use Lemma \ref{Lem2.7} with $B\equiv Du - A$ and the growth assumption (\ref{As3}) for $D_\xi a$. Note that Lemma is applicable since we have $0<|A|\leq |Du-A|$ on $S_2$ and $ \kappa_M \leq \kappa_{2M}$ due to the monotonicity of $\kappa$. This results in
    \begin{align}\label{6.19}
        J_{2,2} 
        &= 
        \frac{1}{|Q_{\varrho}|} \Bigg| \int\limits_{S_2} \big[\Tilde{a}(Du) - \Tilde{a}(A) \big] \cdot D\phi - \big(D_\xi \Tilde{a}\big)(A)(Du-A,D\phi) \, dz \Bigg|
        \nonumber\\ & \leq
        \frac{1}{|Q_{\varrho}|} \int\limits_{S_2} | \Tilde{a}(Du) - \Tilde{a}(A)| + |D_\xi \Tilde{a}|(A)|Du-A| \, dz
        \nonumber\\ & \leq
        \frac{1}{|Q_{\varrho}|} \int\limits_{S_2} \big| a\big(0,(u)_{\varrho}^{(\lambda)},Du \big) - a\big(0,(u)_{\varrho}^{(\lambda)},A \big)\big| + |D_\xi a|\big(0,(u)_{\varrho}^{(\lambda)},A \big)|Du-A| \, dz 
        \nonumber\\ & \leq
        \frac{C(p,\kappa_{2M}) L}{|Q_{\varrho}|}\int\limits_{S_2}  |Du-A|^{p-1} + |A|^{p-2}|Du-A| \, dz.
    \end{align}
    We again distinguish between the sub- and super-quadratic cases. In the latter case, we use the fact that $|A|\leq |Du-A|$ on $S_2$ and Hölder's inequality with exponents $\big(p,\frac{p}{p-1}\big)$ to estimate the right-hand side of (\ref{6.19}). This yields
    \begin{align}\label{6.191}
        J_{2,2} 
        &\leq
        \frac{C L}{|Q_{\varrho}|}\int\limits_{S_2}  |Du-A|^{p-1} \, dz
        \leq
        C L \Bigg[\mint{-}\limits_{Q_{\varrho}}  |Du-A|^{p} \, dz \Bigg]^{1-\frac{1}{p}}
        \leq
        C(p,\kappa_{2M}) L \big(\Phi_\lambda\big)^{1-\frac{1}{p}}.
    \end{align}
    In the sub-quadratic case, we use that $ |Du-A|^{p-2} \leq |A|^{p-2}$ on $S_2$ and apply Hölder's inequality with exponents $\big(p,\frac{p}{p-1}\big)$ to obtain with inequality (\ref{2.14}) of Lemma \ref{Lem2.1} that
    \begin{align}\label{6.192}
        J_{2,2} 
        &\leq
        \frac{C L}{|Q_{\varrho}|}\int\limits_{S_2}  |A|^{p-2}|Du-A| \, dz
        \leq
        C(p,\kappa_{2M}) L |A|^{p-2} \big(\Phi_\lambda\big)^{\frac{1}{p}},
    \end{align}
    Joining (\ref{6.191}) and (\ref{6.192}), we have
    \begin{align}\label{6.193}
        J_{2,2} \leq
        C(p,\kappa_{2M}) L |A|^{\frac{p-2}{2}} \big( E^*_\lambda \big)^{\frac{1}{2}} \bigg( \frac{E^*_\lambda}{|A|^p} \bigg)^{\left|\frac{1}{2}-\frac{1}{p}\right|}.
    \end{align}
    It remains to estimate $J_1$. This will be done by splitting the integral into two different domains of integration. We have
    \begin{align}\label{6.194}
        J_1 & \leq  \frac{1}{|Q_{\varrho}|}\int\limits_{S_1} \big| a(z,u,Du) - a\big(0,(u)_{\varrho}^{(\lambda)},Du \big) \big| \, dz
        +
        \frac{1}{|Q_{\varrho}|}\int\limits_{S_2} \big| a(z,u,Du) - a\big(0,(u)_{\varrho}^{(\lambda)},Du\big)\big| \, dz
        \nonumber\\ & =: J_{1,1} + J_{1,2}
    \end{align}
    with the obvious definitions of $J_{1,1}$ and $J_{1,2}$. Let us start by estimating $J_{1,1}$. First, note that we have $|Du| \leq |Du-A| + |A| \leq 2|A|$ on $S_1$. We estimate with the Hölder-continuity assumption (\ref{ModConu}) on $a$ and Hölder's inequality with exponents $\big(\frac{p}{\beta},\frac{p}{p-\beta}\big)$ that
    \begin{align}\label{6.195}
        J_{1,1} & \leq
        L
        \frac{1}{|Q_{\varrho}|} \int\limits_{S_1} \big( (|x|^2 + |t|)^{\frac{\beta}{2}} + |u-(u)_{\varrho}^{(\lambda)}|^\beta \big) |Du|^{p-1} \, dz
        \nonumber \\ & \leq
        L 2^{p-1} |A|^{p-1} \frac{1}{|Q_{\varrho}|} \int\limits_{S_1} \big( (|x|^2 + |t|)^{\frac{\beta}{2}} + \big|u-(u)_{\varrho}^{(\lambda)}\big|^\beta \big) \, dz
        \nonumber \\ & \leq
        L 2^{p-1} |A|^{p-1} \mint{-}\limits_{Q_{\varrho}} \Big( (|x|^2 + |t|)^{\frac{\beta}{2}} + \big|u-(u)_{\varrho}^{(\lambda)}\big|^\beta \Big) \, dz
        \nonumber \\ & \leq
        L 2^{p-1} |A|^{p-1}\big( \varrho^\beta ( 1+\lambda^{2-p})^{\frac{\beta}{2}} + \Psi_\lambda^{\frac{\beta}{p}} \big)
        \leq
        C(p) L |A|^{\frac{p-2}{2}} \big( E^*_\lambda \big)^{\frac{1}{2}} \bigg( \frac{E^*_\lambda}{|A|^p} \bigg)^{\frac{1}{2}}.
    \end{align}
    Now, we continue with $J_{1,2}$. Here we apply Lemma \ref{Lem2.7} with $B\equiv Du - A$. This is possible on $S_2$. Afterwards, we estimate similar to (\ref{6.191}), (\ref{6.192}) and (\ref{6.195}), which results in
    \begin{align}\label{6.196}
        J_{1,2} & \leq
        \frac{1}{|Q_{\varrho}|}\int\limits_{S_2} | a(z,u,Du) - a(z,u,A)| \, dz
        +
        \frac{1}{|Q_{\varrho}|}\int\limits_{S_2} \big| a\big(0,(u)_{\varrho}^{(\lambda)},A\big) - a\big(0,(u)_{\varrho}^{(\lambda)},Du\big)\big| \, dz
        \nonumber \\ & \qquad+
        \frac{1}{|Q_{\varrho}|}\int\limits_{S_2} \big| a(z,u,A) - a\big(0,(u)_{\varrho}^{(\lambda)},A\big)\big| \, dz
        \nonumber \\ & \leq
        \frac{C(p,\kappa_{1}) L}{|Q_{\varrho}|}\int\limits_{S_2}  |Du-A|^{p-1} \, dz
        +
        L \frac{1}{|Q_{\varrho}|} \int\limits_{S_2} \Big( (|x|^2 + |t|)^{\frac{\beta}{2}} + \big|u-(u)_{\varrho}^{(\lambda)}\big|^\beta \Big) |A|^{p-1} \, dz
        \nonumber \\ & \leq
        C(p,\kappa_{2M}) L |A|^{\frac{p-2}{2}} \big( E^*_\lambda \big)^{\frac{1}{2}} \bigg( \frac{E^*_\lambda}{|A|^p} \bigg)^{\left|\frac{1}{2}-\frac{1}{p}\right|}
        +
        C(p) L |A|^{\frac{p-2}{2}} \big( E^*_\lambda \big)^{\frac{1}{2}} \bigg( \frac{E^*_\lambda}{|A|^p} \bigg)^{\frac{1}{2}}.
    \end{align}
    The Lemma now immediately follows, if we combine (\ref{6.15}), (\ref{6.193}) and (\ref{6.196}).
\end{proof}

\subsection{Linearization in the degenerate regime (DR)} In the degenerate case, the gradient is "small" compared to the excess. Since we assume that the vector field behaves like the $p$-Laplacian for small gradients, we expect the solution to behave similarly to that of the parabolic $p$-Laplacian, which will later allow us to apply the $p$-caloric approximation Lemma. We start by defining the \textbf{excess functional in the degenerate case}
\begin{equation}\label{defPsi}
    \Tilde{\Phi}_\lambda(z_0,\varrho)= \mint{-}\limits_{Q_\varrho^{(\lambda)}(z_0)} |Du|^p \, dz.
\end{equation}
Note that this is just the excess functional in the non-degenerate regime with the particular choice $A=0$. With the excess functional in the degenerate case in mind, we have the following
\begin{Lem}\label{Lem6.2}
    Let $p>\frac{2n}{n+2}$ and 
    \begin{equation*}
        u\in C\big([0,T];L^2\big(\Omega,\mathbb{R}^{N}\big) \big) \cap L^p\big(0,T;W^{1,p}\big(\Omega,\mathbb{R}^{N}\big) \big)
    \end{equation*}
    be a weak solution to (\ref{Sys1}) in $\Omega_T$ satisfying the assumptions (\ref{As1})-(\ref{As3}) and (\ref{Beh0})-(\ref{ModConu2}). Then for any intrinsic geometric cylinder $Q_\varrho^{(\lambda)}(z_0) \subset \Omega_T$ with $\varrho,\lambda>0$ and $\delta>0$ we have
    \begin{align}\label{6.201}
        \Bigg| \mint{-}\limits_{Q_\varrho^{(\lambda)}(z_0)} & u \cdot \partial_t \phi - \Tilde{a}\big(z_0,(u)_{z_0;\varrho}^{(\lambda)}\big) |Du|^{p-2} Du \cdot D\phi \, dz \Bigg| 
        \nonumber \\ & \leq
        C \Bigg[ \delta + \frac{\Tilde{\Phi}_\lambda^{\frac{1}{p}}}{\eta(\delta)}
        +
        \varrho^\beta \Big( ( 1+\lambda^{2-p})^{\frac{\beta}{2}} + 
        \big(  \Tilde{\Phi}_\lambda + \lambda^{p(2-p)} \Tilde{\Phi}_\lambda^{p-1}\big)^{\frac{\beta}{p}} \Big) \Bigg]
       \Tilde{\Phi}_\lambda^{1-\frac{1}{p}} \sup\limits_{Q_\varrho^{(\lambda)}(z_0)} |D\phi|
    \end{align}
    whenever $\phi \in C^1_0(Q_\varrho^{(\lambda)}(z_0),\mathbb{R}^N)$ with $C=C(\mathfrak{C})$. Here we abbreviated $\Tilde{\Phi}_\lambda = \Tilde{\Phi}_\lambda (z_0,\varrho)$ and $\eta(\,\cdot\,)$ is from (\ref{2.7}).
\end{Lem}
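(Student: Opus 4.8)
The plan is to mimic the structure of the proof of Lemma \ref{Lem6.1}, but now comparing $a(z,u,Du)$ to the $p$-Laplacian vector field $\Tilde{a}(z_0,(u)^{(\lambda)}_{z_0;\varrho})|Du|^{p-2}Du$ rather than to a linearized vector field. As usual I would set $z_0=0$, abbreviate $Q_\varrho = Q_\varrho^{(\lambda)}(0)$, $\Tilde\Phi_\lambda = \Tilde\Phi_\lambda(0,\varrho)$ and $\Tilde a_0 := \Tilde a(0,(u)^{(\lambda)}_\varrho)$, and assume $\|D\phi\|_\infty \le 1$ by homogeneity. Testing the weak formulation (\ref{weakfrom}) with $\phi$ and using that $\int_{Q_\varrho} \Tilde a_0 |A|^{p-2}A\cdot D\phi\,dz$ need not vanish here (since the frozen vector field is not affine in $\xi$), but rather subtracting and adding the natural comparison term, we are led to estimate
\[
    \Bigg| \mint{-}\limits_{Q_\varrho} \big[ a(z,u,Du) - \Tilde a_0 |Du|^{p-2} Du \big] \cdot D\phi \, dz \Bigg|
\]
from above. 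I would split $a(z,u,Du) - \Tilde a_0|Du|^{p-2}Du$ into three pieces: $\big(a(z,u,Du) - a(0,(u)^{(\lambda)}_\varrho,Du)\big)$, which is controlled by the Hölder-continuity assumption (\ref{ModConu}); $\big(a(0,(u)^{(\lambda)}_\varrho,Du) - \Tilde a(0,(u)^{(\lambda)}_\varrho)|Du|^{p-2}Du\big)$, which is the genuinely degenerate term handled by (\ref{2.7}); and there is no third piece, so really it is a two-term split. The first term yields, after Hölder's inequality with exponents $(p/\beta, p/(p-\beta))$ and enlarging the domain of integration, a contribution of the form
\[
    C L \Big( \varrho^\beta (1+\lambda^{2-p})^{\frac\beta2} + \Psi_\lambda^{\beta/p} \Big) \Tilde\Phi_\lambda^{1-\frac1p},
\]
where $\Psi_\lambda = \Psi_\lambda(0,\varrho,(u)^{(\lambda)}_\varrho) = \mint{-}_{Q_\varrho}|u - (u)^{(\lambda)}_\varrho|^p\,dz$; the mean oscillation $\Psi_\lambda$ must then be replaced by gradient quantities via the Poincaré inequality of Lemma \ref{Lem5.3} applied with $A=0$, which by the accompanying remark kills the $H^*_\lambda$ terms and gives $\Psi_\lambda \lesssim \varrho^p\big(\Tilde\Phi_\lambda + (\lambda^{2-p}\Tilde\Phi_\lambda^{(p-1)/p})^p\big) = \varrho^p\big(\Tilde\Phi_\lambda + \lambda^{p(2-p)}\Tilde\Phi_\lambda^{p-1}\big)$ (modulo the $\varrho\le 1$ normalization absorbing the $\varrho^p$), matching the claimed right-hand side.

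For the degenerate term I would further split the domain of integration of $Q_\varrho$ into $S_{\le} := \{|Du| \le \eta(\delta)\}$ and $S_{>} := \{|Du| > \eta(\delta)\}$. On $S_{\le}$, estimate (\ref{2.7}) gives pointwise $|a(0,(u)^{(\lambda)}_\varrho,Du) - \Tilde a_0|Du|^{p-2}Du| \le \delta|Du|^{p-1}$, so by Hölder's inequality with exponents $(p,p/(p-1))$ this contributes $C\delta\,\Tilde\Phi_\lambda^{1-1/p}$. On $S_{>}$ one cannot use smallness of the gradient; instead one uses the crude bounds $|a(0,(u)^{(\lambda)}_\varrho,Du)| \le L|Du|^{p-1}$ from (\ref{As1}) and $|\Tilde a_0||Du|^{p-2}|Du| \le L|Du|^{p-1}$ from (\ref{Asa}), together with the fact that on $S_{>}$ we have $1 \le |Du|/\eta(\delta)$, hence $|Du|^{p-1} \le |Du|^p/\eta(\delta)$. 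Integrating and applying Hölder then yields a contribution bounded by $C L\,\Tilde\Phi_\lambda/\eta(\delta) = CL\,\eta(\delta)^{-1}\Tilde\Phi_\lambda^{1/p}\,\Tilde\Phi_\lambda^{1-1/p}$, which is exactly the $\Tilde\Phi_\lambda^{1/p}/\eta(\delta)$ term in (\ref{6.201}). Collecting the three contributions and factoring out $\Tilde\Phi_\lambda^{1-1/p}\sup|D\phi|$ gives the claim.

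I expect the only slightly delicate point to be the bookkeeping on $S_{>}$: one must resist the temptation to apply the improved bound (\ref{2.7}) there and instead settle for the lossy estimate via $|Du|^{p-1}\le|Du|^p/\eta(\delta)$, which is exactly why the term $\Tilde\Phi_\lambda^{1/p}/\eta(\delta)$ (and not something better) appears — this is the mechanism that forces one later, in the degenerate regime, to take $\delta$ small and then $\Tilde\Phi_\lambda$ correspondingly small. The rest is routine: Hölder's inequality, enlargement of domains, and a single application of the $A=0$ Poincaré inequality from Lemma \ref{Lem5.3}. No $\mathcal{A}$-caloric machinery is needed at this stage; this lemma merely certifies approximate $(\Tilde a_0|\cdot|^{p-2}\cdot,\,p)$-caloricity so that Lemma \ref{Lem4.1} or \ref{Lem4.2} can be invoked downstream.
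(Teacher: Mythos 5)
Your proof is correct in substance but takes a genuinely different decomposition than the paper. You write
\begin{equation*}
a(z,u,Du) - \Tilde a_0|Du|^{p-2}Du = \big[a(z,u,Du) - a(0,l,Du)\big] + \big[a(0,l,Du) - \Tilde a(0,l)|Du|^{p-2}Du\big],
\end{equation*}
freezing the $(z,u)$-dependence of $a$ first (via the H\"older continuity \eqref{ModConu} of $a$) and applying \eqref{2.7} at the frozen point $(0,l)$. The paper instead writes
\begin{equation*}
a(z,u,Du) - \Tilde a_0|Du|^{p-2}Du = \big[a(z,u,Du) - \Tilde a(z,u)|Du|^{p-2}Du\big] + \big[\Tilde a(z,u) - \Tilde a(0,l)\big]|Du|^{p-2}Du,
\end{equation*}
applying \eqref{2.7} at the evolving point $(z,u)$ and then the H\"older continuity \eqref{ModConu2} of $\Tilde a$ for the frozen-coefficient difference. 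Both give identical bounds, since the H\"older moduli in \eqref{ModConu} and \eqref{ModConu2} are the same and \eqref{2.7} is uniform in $(z,u)$. The paper's route is more economical in hypotheses: notice that Lemma \ref{Lem6.2} explicitly lists only \eqref{As1}--\eqref{As3} and \eqref{Beh0}--\eqref{ModConu2}, which \emph{excludes} \eqref{ModConu}. Your decomposition thus uses a structural assumption that the lemma does not grant itself (though of course \eqref{ModConu} is assumed in the main Theorem \ref{Theo1}, so nothing downstream breaks). On the other hand, your treatment of $S_{>}$ via the pointwise inequality $1\le |Du|/\eta(\delta)$ is a cleaner route to the factor $\Tilde\Phi_\lambda/\eta(\delta)$ than the paper's measure estimate $|S_2|\le\eta(\delta)^{-p}\int_{S_2}|Du|^p$ followed by H\"older; the results coincide. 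Two small technical remarks: the H\"older exponents $(p/\beta,p/(p-\beta))$ you cite for the product $|u-l|^\beta|Du|^{p-1}$ do not pair; the correct route (and the one used in \eqref{6.29}) is H\"older with exponents $(p,p/(p-1))$ followed by Jensen to pass from $|u-l|^{\beta p}$ to $(\mint{-}|u-l|^p)^\beta$. Also, no ``$\varrho\le 1$ normalization'' is needed: the Poincar\'e estimate $\Psi_\lambda\le C\varrho^p(\Tilde\Phi_\lambda+\lambda^{p(2-p)}\Tilde\Phi_\lambda^{p-1})$ raised to the power $\beta/p$ produces the factor $\varrho^\beta$ exactly as it appears in \eqref{6.201}.
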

\begin{proof}
    Without loss of generality, assume that $z_0=0$. We abbreviate $Q\equiv Q_\varrho^{(\lambda)}$ and $l=(u)_{\varrho}^{(\lambda)}$. Furthermore, as in the previous Lemma, we can assume by scaling that $\phi \in C^1_0(Q,\mathbb{R}^N)$ fulfills the condition $|D\phi| \leq 1$. From the weak formulation (\ref{weakfrom}) of the parabolic system (\ref{Sys1}), we know that
    \begin{align}\label{6.21}
        \Bigg| \mint{-}\limits_{Q} u \cdot \partial_t \phi & - \Tilde{a}(0,l) |Du|^{p-2} Du \cdot D\phi \, dz \Bigg| 
        =
        \Bigg| \mint{-}\limits_{Q} \big[ a(z,u,Du) - \Tilde{a}(0,l) |Du|^{p-2} Du \big] \cdot D\phi  \, dz \Bigg|=: I 
    \end{align}
    Now we estimate the right-hand side. For this purpose, the decomposition of the domain of integration $Q$ into the sets $S_1 = \{ z\in Q \,|\, |Du(z)|\leq \eta(\delta) \}$ and its complement $S_2= Q\setminus S_1$ is used. We have
    \begin{align}\label{6.22}
        I &\leq
        \Bigg| \mint{-}\limits_{S_1} \big[ a(z,u,Du) - \Tilde{a}(z,u) |Du|^{p-2} Du \big] \cdot D\phi  \, dz \Bigg| 
        \nonumber \\ & \qquad +
        \Bigg| \mint{-}\limits_{S_2} \big[ a(z,u,Du) - \Tilde{a}(z,u) |Du|^{p-2} Du \big] \cdot D\phi  \, dz \Bigg|
        \nonumber \\ & \qquad +
        \mint{-}\limits_{Q} \big| \Tilde{a}(z,u) - \Tilde{a}(0,l) \big| |Du|^{p-1} \, dz
        =: I_1 + I_2 + I_3,
    \end{align}
    where $I_1$, $I_2$ and $I_3$ are defined in the obvious manner. We start by estimating $I_1$ with the help of inequality (\ref{2.7}). This yields
    \begin{align}\label{6.23}
        I_1 \leq \delta \mint{-}\limits_{Q} |Du|^{p-1} \, dz
        \leq
        \delta \Bigg[ \mint{-}\limits_{Q} |Du|^{p} \, dz \Bigg]^{1-\frac{1}{p}}
        =
        \delta \Tilde{\Phi}_\lambda.
    \end{align}
    Now we continue with the treatment of $I_2$. First, note that we can estimate the measure of $S_2$ by the $L^p$-norm of $Du$. As a matter of fact, we have
    \begin{align}\label{6.24}
        |S_2| \leq \frac{1}{\eta(\delta)^p} \int\limits_{S_2} |Du|^p \, dz.
    \end{align}
    Furthermore, we can estimate the integrand with the growth assumptions (\ref{As1}) and (\ref{Asa}) to obtain
    \begin{align}\label{6.25}
        \big| a(z,u,Du) - \Tilde{a}(z,u) |Du|^{p-2} Du \big| \leq
        L |Du|^{p-1} + L |Du|^{p-1} \leq L 2 |Du|^{p-1}.
    \end{align}
    This results in the following upper bound for the integral $I_2$ using Hölder's inequality with exponents $\big(p,\frac{p-1}{p}\big)$ and estimate (\ref{6.24}):
    \begin{align}\label{6.28}
        I_2 \leq \frac{C}{|Q|} \int\limits_{S_2} |Du|^{p-1} \, dz
        \leq \frac{C}{|Q|} |S_2|^{\frac{1}{p}} \Bigg[ \int\limits_{Q} |Du|^{p} \, dz \Bigg]^{1-\frac{1}{p}}
        \leq \frac{C(\mathfrak{C})}{\eta(\delta)} \Tilde{\Phi}_\lambda.
    \end{align}
    We will conclude the proof of the Lemma by estimating $I_3$. To this end, it should be noted that the Poincaré inequality (\ref{5.301}) applied with $A=0$ and $q=p$ implies:
    \begin{align*}
        \mint{-}\limits_{Q} |u-l|^{p} \, dz \leq C(n,N,p) L \varrho^p  \big(  \Tilde{\Phi}_\lambda + \lambda^{p(2-p)} \Tilde{\Phi}_\lambda^{p-1}\big).
    \end{align*}
    We now use the preceding inequality and the continuity assumption (\ref{ModConu2}) to finally estimate $I_3$. By applying Hölder's inequality with exponents $\big(p,\frac{p}{p-1}\big)$, we obtain
    \begin{align}\label{6.29}
        I_3 
        &\leq L \mint{-}\limits_{Q} \big( (|x|^2 + |t| )^{\frac{\beta}{2}} + |u-l|^\beta \big) |Du|^{p-1} \, dz
        \leq 
        C L \varrho^\beta ( 1+\lambda^{2-p})^{\frac{\beta}{2}} \Bigg[ \mint{-}\limits_{Q} |Du|^{p} \, dz \Bigg]^{1-\frac{1}{p}} 
        \nonumber\\ & \qquad +
        C L
        \Bigg[ \mint{-}\limits_{Q} |u-l|^{\beta p} \, dz \Bigg]^{\frac{1}{p}}
        \Bigg[ \mint{-}\limits_{Q} |Du|^{p} \, dz \Bigg]^{1-\frac{1}{p}}
        \leq 
        C \Bigg( \varrho^\beta ( 1+\lambda^{2-p})^{\frac{\beta}{2}} + \bigg[ \mint{-}\limits_{Q} |u-l|^{p} \, dz \bigg]^{\frac{\beta}{p}} \Bigg)
        \Tilde{\Phi}_\lambda^{1-\frac{1}{p}}
        \nonumber\\ & \leq 
        C \varrho^\beta  \Big( ( 1+\lambda^{2-p})^{\frac{\beta}{2}} + 
        \big(  \Tilde{\Phi}_\lambda + \lambda^{p(2-p)} \Tilde{\Phi}_\lambda^{p-1}\big)^{\frac{\beta}{p}} \Big)
       \Tilde{\Phi}_\lambda^{1-\frac{1}{p}}.
    \end{align}
    Combining the estimates (\ref{6.23}), (\ref{6.28}) and (\ref{6.29}) with (\ref{6.22}) proves the statement of the Lemma.
\end{proof}

\section{DiBenedetto \& Friedman regularity theory}\label{SDI}

In this Section, we revisit the famous estimates used by DiBenedetto \& Friedman \cite{DiBenedettoFriedman1,DiBenedettoFriedman2,DiBenedettoFriedman3} to prove the $C^{0,\alpha}$ gradient regularity for solutions to the parabolic $p$-Laplace system. 

\subsection{The degenerate theory}

In this section, we consider weak solutions to the parabolic $p$-Laplacian in the domain $U\times[t_1,t_2)$ with $t_1<t_2$ and $U\subset \R^n$. This revers to functions 
\begin{align}\label{7.101}
    h\in C\big([t_1,t_2]; L^2(U,\R^N)\big) \cap L^p\big(t_1,t_2; W^{1,p}\big(U,\R^N\big) \big),
\end{align}
that satisfy the partial differential equation 
\begin{align}\label{7.102}
    \partial_t h - \Lambda \, \mathrm{div}\big( |Dh|^{p-2} Dh \big) = 0 \qquad \text{in } U\times[t_1,t_2)
\end{align}
for some constant $\Lambda\in  [\nu,L]$ with $\nu,L\in \R$ in the usual weak sense. For this type of solution, we use an excess-decay type estimate for both the sub- and super-quadratic cases, which is based on the famous proof of $C^{1,\alpha}$ continuity for solutions in \cite[Chapter IX]{DiBenedettoDPE}. The excess-decay type estimate allows us to derive an appropriate a priori estimate for the solution. \\
We start by showing that a quantified local $L^p$-estimate implies a quantified upper bound for the modulus of the gradient of the solution. This is the content of the following Lemma which also holds true for Orlicz growth; see \cite[Corollary 5.3]{ok2024regularity}. 
\begin{Lem}\label{Lem7.01}
    Suppose that $p>\frac{2n}{n+2}$, $\Lambda>0$, $z_0\in \R^{n+1}$, $\varrho>0$ and $h$ is a weak solution of (\ref{7.102}) in $Q_r(z_0)$ with 
    \begin{align}\label{7.103}
        \mint{-}\limits_{Q_\varrho(z_0)} |Dh|^p \, dz \leq C_*.
    \end{align}
    Then there exists $C=C(n,N,p,\nu,L,C_*)\geq 1$ such that the $\mathrm{sup}$-estimate
    \begin{align}
        \sup\limits_{Q_{\varrho/2}(z_0)} |Dh| \leq C
    \end{align}
    holds.
\end{Lem}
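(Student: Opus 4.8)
The plan is to renormalise and then invoke the classical intrinsic gradient bound of DiBenedetto and Friedman. Translating we may assume $z_0=0$; then the parabolic rescaling $\tilde h(x,t):=\varrho^{-1}h(\varrho x,\varrho^2 t)$ again solves $(\ref{7.102})$ with the \emph{same} constant $\Lambda$, satisfies $\mint{-}_{Q_1(0)}|D\tilde h|^p\,dz=\mint{-}_{Q_\varrho(z_0)}|Dh|^p\,dz\le C_*$, and obeys $\sup_{Q_{1/2}(0)}|D\tilde h|=\sup_{Q_{\varrho/2}(z_0)}|Dh|$, so we may reduce to $\varrho=1$, $z_0=0$. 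The tool is the intrinsic $\sup$-estimate for the $p$-Laplacian system: there is $C_0=C_0(n,N,p,\nu,L)\ge 1$ such that whenever $g$ is a weak solution of $(\ref{7.102})$ in an intrinsic cylinder $Q^{(\lambda)}_{2r}(z')$ and $\mint{-}_{Q^{(\lambda)}_{2r}(z')}|Dg|^p\,dz\le\lambda^p$, then $\sup_{Q^{(\lambda)}_{r}(z')}|Dg|\le C_0\lambda$ (this is the content of \cite[Ch.~VIII--IX]{DiBenedettoDPE}).

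Granting this, I would fix a single pair $(\lambda,r)$, depending only on $n,N,p,\nu,L,C_*$, for which \emph{every} $z'\in Q_{1/2}(0)$ satisfies both (i) $Q^{(\lambda)}_{2r}(z')\subset Q_1(0)$ and (ii) $\mint{-}_{Q^{(\lambda)}_{2r}(z')}|Dh|^p\,dz\le\lambda^p$; applying the intrinsic estimate centred at each such $z'$ and using that $z'$ is the centre of $Q^{(\lambda)}_r(z')$ then gives $|Dh(z')|\le C_0\lambda$ for a.e.\ $z'\in Q_{1/2}(0)$, whence $\sup_{Q_{1/2}(0)}|Dh|\le C_0\lambda=:C(n,N,p,\nu,L,C_*)$, and undoing the scaling finishes the proof. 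Comparing Lebesgue measures, $\mint{-}_{Q^{(\lambda)}_{2r}(z')}|Dh|^p\,dz\le\big(\lambda^{2-p}(2r)^{n+2}\big)^{-1}\mint{-}_{Q_1(0)}|Dh|^p\,dz\le C_*\big(\lambda^{2-p}(2r)^{n+2}\big)^{-1}$, so (ii) reduces to the inequality $C_*\le\lambda^2(2r)^{n+2}$; and (i) holds for all $z'\in Q_{1/2}(0)$ provided $2r\le\tfrac12$ and $\lambda^{2-p}(2r)^2\le\tfrac12$.

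In the degenerate range $p>2$ one simply takes $2r=\tfrac12$ and $\lambda:=\max\{1,2^{(n+2)/2}C_*^{1/2}\}$: then $\lambda^{2-p}(2r)^2=\tfrac14\lambda^{-(p-2)}\le\tfrac14$ and $\lambda^2(2r)^{n+2}=2^{-(n+2)}\lambda^2\ge C_*$, so (i) and (ii) hold. In the singular range $\tfrac{2n}{n+2}<p<2$ one sets $2r=\lambda^{-\gamma}$ with a fixed exponent $\gamma$ chosen strictly between $\tfrac{2-p}{2}$ and $\tfrac{2}{n+2}$; such $\gamma$ exists precisely because $\tfrac{2-p}{2}<\tfrac{2}{n+2}\Longleftrightarrow p>\tfrac{2n}{n+2}$. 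With this $\gamma$ one has $\lambda^{2-p}(2r)^2=\lambda^{2-p-2\gamma}\to0$ and $\lambda^2(2r)^{n+2}=\lambda^{2-\gamma(n+2)}\to\infty$ as $\lambda\to\infty$, so a choice $\lambda=\lambda(n,p,C_*)\ge1$ large enough makes $\lambda^{-\gamma}\le\tfrac12$, $\lambda^{2-p-2\gamma}\le\tfrac12$ and $\lambda^{2-\gamma(n+2)}\ge C_*$, i.e.\ (i) and (ii) hold for all $z'\in Q_{1/2}(0)$.

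The only genuinely delicate point is this last parameter juggling in the sub-quadratic case: enlarging $\lambda$ to secure the intrinsic coupling (ii) simultaneously makes $\lambda^{2-p}$ large, so the intrinsic cylinder elongates in the time direction and $r$ must be shrunk to keep $Q^{(\lambda)}_{2r}(z')$ inside $Q_1(0)$ — the two requirements are compatible for large $\lambda$ exactly in the admissible range $p>\tfrac{2n}{n+2}$. Everything else is bookkeeping with volume ratios. Should one prefer a self-contained argument, the substitute for the quoted intrinsic $\sup$-estimate is a De Giorgi iteration for $|Dh|$ in intrinsically scaled cylinders as in \cite{DiBenedettoDPE}, which is considerably longer; in the present context it is cleaner to take that estimate as known, as the section heading announces.
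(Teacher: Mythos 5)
Your argument is correct and leans on the same DiBenedetto--Friedman machinery, but packaged differently. The paper invokes the \emph{non-intrinsic} $\sup$-estimates of \cite[Ch.\ VIII, (5.1)--(5.2)]{DiBenedettoDPE} directly on the standard cylinder $Q(\gamma,\vartheta)=Q(\varrho,\varrho^2)$; these two formulas already carry the additive correction term $(\gamma^2/\vartheta)^{\pm 1/|p-2|}$ that absorbs the degeneracy or singularity, so substituting $\gamma=\varrho$, $\vartheta=\varrho^2$ immediately yields $\sup_{Q_{\varrho/2}}|Dh|\le C\sqrt{C_*}+1$ in the super-quadratic case and $\sup_{Q_{\varrho/2}}|Dh|\le C\,C_*^{2/(p(n+2)-2n)}+1$ in the sub-quadratic one, with no covering argument or parameter tuning. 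You instead quote the \emph{intrinsic} form of the same estimate (which is equivalent to (5.1)/(5.2) up to a one-line rescaling $g=\lambda^{-1}h(\cdot,\lambda^{2-p}\cdot)$, or equivalently to the $C_*=1$ case of the lemma itself), and then calibrate $(\lambda,r)$ by hand so that the intrinsic coupling $\mint{-}_{Q^{(\lambda)}_{2r}(z')}|Dh|^p\le\lambda^p$ and the containment $Q^{(\lambda)}_{2r}(z')\subset Q_1$ hold simultaneously for every $z'\in Q_{1/2}$. Your bookkeeping is correct: the volume ratio and containment reduce to $C_*\le\lambda^2(2r)^{n+2}$, $2r\le 1/2$, $\lambda^{2-p}(2r)^2\le 1/2$, and in the singular range the choice $2r=\lambda^{-\gamma}$ with $\tfrac{2-p}{2}<\gamma<\tfrac{2}{n+2}$ works precisely because $p>\tfrac{2n}{n+2}$. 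The cost is that your route is more circuitous -- the parameter juggling you flag as delicate is done implicitly once and for all inside the proof of (5.1)/(5.2) -- and your exponent on $C_*$ in the sub-quadratic case, $1/(2-\gamma(n+2))$, is strictly larger than the paper's $2/(p(n+2)-2n)$ unless $\gamma$ is pushed to the endpoint, though the dependence is still only on $n,N,p,\nu,L,C_*$ so the statement is unaffected. If you rewrote your argument to apply (5.1)/(5.2) directly with $\gamma=\varrho$, $\vartheta=\varrho^2$ you would recover the paper's proof essentially verbatim.
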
 
\begin{proof}
    Without loss of generality, we assume $z_0=0$. We observe that the considered homogeneous parabolic $p$-Laplace system (\ref{7.102}) fulfills the structure assumptions of \cite[Chapter VIII, 1-(ii)]{DiBenedettoDPE} with $C_0=\nu \min\{1,p-1\}$, $C_1=L$ and $\phi_0=\phi_1=0$. We distinguish between the cases $ p\geq 2$ and $2n/(n+2) < p < 2$. In the super-quadratic case, we use $\sigma=1/2$ in \cite[Chapter VIII, (5.1), page 238]{DiBenedettoDPE} and obtain for the cylinder $Q(\gamma,\vartheta)(z_0) = B_\gamma(x_0) \times \Lambda_{\sqrt{\vartheta}}^{(1)}(t_0) \subset Q_r$ that
    \begin{align}
        \sup\limits_{Q(\gamma/2,\vartheta/2)(z_0)} |Dh| \leq C(n,N,p,\nu,L) \sqrt{\frac{\vartheta}{\gamma^2}} \Bigg[ \mint{-}\limits_{Q(\varrho,\gamma)(z_0)} |Dh|^p \, dz \Bigg]^{\frac{1}{2}} + \bigg( \frac{\gamma^2}{\vartheta} \bigg)^{\frac{1}{p-2}}
    \end{align}
    We apply this with the choices $\gamma=\varrho$, $\vartheta=\varrho^2$ and use the hypothesis (\ref{7.103}) to immediately obtain
    \begin{align}\label{7.012}
        \sup\limits_{Q_{\varrho/2}(z_0)} |Dh| \leq C(n,N,p,\nu,L) \sqrt{C_*} + 1 =: C.
    \end{align}
    We notice that $C\geq 1$ has the stated dependencies of the Lemma. In the sub-quadratic case, we use \cite[Chapter VIII, (5.2), page 238f]{DiBenedettoDPE} with $\sigma=1/2$ to infer for the cylinder $Q(\gamma,\vartheta)(z_0) \subset Q_\varrho$ that
    \begin{align}
        \sup\limits_{Q(\gamma/2,\vartheta/2)(z_0)} |Dh| \leq C(n,N,p,\nu,L) \bigg( \frac{\vartheta}{\gamma^2} \bigg)^{\frac{n}{p(n+2)-2n}} \Bigg[ \mint{-}\limits_{Q(\varrho,\gamma)(z_0)} |Dh|^p \, dz \Bigg]^{\frac{2}{p(n+2)-2n}} + \bigg( \frac{\gamma^2}{\vartheta} \bigg)^{\frac{1}{2-p}}
    \end{align}
    and we can proceed exactly as in (\ref{7.012}). This completes the proof.
\end{proof}
Using the established sup-estimate, one can show a Campanato-type estimate, which is obtained similar to \cite[Theorem 1.3]{BOGELEIN2022113119} and \cite[Theorem 6.1]{ok2024regularity}.
\begin{Lem}\label{Lem7.02}
    Let $p>1$, $\Lambda>0$, $z_0\in \R^{n+1}$, $\varrho>0$ and $h$ be a weak solution of (\ref{7.102}) in $Q_\varrho(z_0)$. If there exists a constant $C_*$ and $\varrho_0 \leq \varrho$ such that 
    \begin{align}
        \sup\limits_{Q_{\varrho_0}(z_0)} |Dh| \leq C_* 
    \end{align}
    holds, then there exist constants $\gamma>0$ and $\beta_1\in (0,1)$ depending on the data $n,N,p,\nu,L$ and $C_*$ such that for all radii $r\in \big(0,\frac{\varrho_0}{2}]$ there holds
    \begin{align}
        \mint{-}\limits_{Q_{r}(z_0)} \big|Dh - (Dh)_{z_0;r} \big|^p \leq \gamma \bigg( \frac{r}{\varrho_0} \bigg)^{\beta_1 p}.
    \end{align}
\end{Lem}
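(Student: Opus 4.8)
The plan is to normalise the cylinder by a parabolic rescaling, then to extract from the a priori sup-bound the quantitative $C^{1,\beta_1}$-regularity theory of DiBenedetto and Friedman for the parabolic $p$-Laplace system, and finally to read off the asserted Campanato-type estimate from the resulting gradient Hölder bound.

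\emph{Step 1: reduction.} Setting $\tilde h(x,t):=\varrho_0^{-1}\,h\!\left(x_0+\varrho_0 x,\,t_0+\varrho_0^2 t\right)$, one checks directly that $\tilde h$ is again a weak solution of \eqref{7.102} in $Q_{\varrho/\varrho_0}(0)\supseteq Q_1(0)$ with the \emph{same} constant $\Lambda\in[\nu,L]$, that $D\tilde h(x,t)=(Dh)\!\left(x_0+\varrho_0 x,\,t_0+\varrho_0^2 t\right)$ and hence $\sup_{Q_1(0)}|D\tilde h|\le C_*$, and that
\[
    \mint{-}\limits_{Q_r(0)}\big|D\tilde h-(D\tilde h)_{0;r}\big|^p\,dz
    \;=\;\mint{-}\limits_{Q_{\varrho_0 r}(z_0)}\big|Dh-(Dh)_{z_0;\varrho_0 r}\big|^p\,dz
    \qquad\text{for every }r\le 1.
\]
It therefore suffices to treat the case $z_0=0$, $\varrho_0=1$, i.e.\ to show that $\sup_{Q_1(0)}|Dh|\le C_*$ forces the estimate of the Lemma with $(r/\varrho_0)^{\beta_1 p}$ replaced by $r^{\beta_1 p}$, for all $r\in(0,\tfrac12]$ and with $\beta_1\in(0,1)$ and $\gamma$ depending only on $\mathfrak C$ and $C_*$.

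\emph{Step 2: quantitative gradient Hölder estimate (the core).} With $\sup_{Q_1(0)}|Dh|\le C_*$ in hand, each spatial derivative $D_sh$, $s=1,\dots,n$, is a local weak solution of the linear parabolic system $\partial_t(D_sh)-\mathrm{div}\big(\mathcal B(\cdot)\,D(D_sh)\big)=0$ with $\mathcal B(z)=\Lambda\big(|Dh(z)|^{p-2}\,\mathrm{Id}+(p-2)|Dh(z)|^{p-4}\,Dh(z)\otimes Dh(z)\big)$; the eigenvalue ratio of $\mathcal B$ is bounded by $\max\{p-1,(p-1)^{-1}\}$, but $\mathcal B$ degenerates for $p>2$, resp.\ becomes singular for $1<p<2$, on the set $\{Dh=0\}$, so $\mathcal B$ is not uniformly parabolic. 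To circumvent this one runs the intrinsic-scaling dichotomy of \cite[Chapter~IX]{DiBenedettoDPE}: on a generic intrinsic cylinder one separates the non-degenerate alternative, in which $|Dh|$ is comparable to its supremum there, so that $\mathcal B$ is uniformly elliptic on that cylinder and De Giorgi--Nash--Moser theory yields an oscillation decay for $Dh$, from the degenerate alternative, in which either $\sup|Dh|$ itself drops by a fixed factor on a smaller cylinder, or a De Giorgi iteration forces $|Dh|$ to stay bounded away from $0$ and one is reduced to the first alternative. In each case a fixed amount is gained in the gradient excess, and the gain is iterable. Carrying this out precisely as in \cite[Theorem~1.3]{BOGELEIN2022113119} and \cite[Theorem~6.1]{ok2024regularity}, which cover both ranges $p\ge 2$ and $1<p<2$, produces an exponent $\beta_1\in(0,1)$ and a constant $C\ge 1$, both depending only on $\mathfrak C$ and $C_*$, such that $\big[Dh\big]_{C^{\beta_1,\beta_1/2}(Q_{1/2}(0))}\le C$, that is, $|Dh(z)-Dh(w)|\le C\,d_p(z,w)^{\beta_1}$ for all $z,w\in Q_{1/2}(0)$, where $d_p$ denotes the parabolic metric.

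\emph{Step 3: conclusion.} For $r\in(0,\tfrac12]$ the parabolic diameter of $Q_r(0)$ is at most $2r$, so, averaging the Hölder bound over $z,w\in Q_r(0)$ and using Jensen's inequality,
\[
    \mint{-}\limits_{Q_r(0)}\big|Dh-(Dh)_{0;r}\big|^p\,dz
    \;\le\;\mint{-}\limits_{Q_r(0)}\mint{-}\limits_{Q_r(0)}\big|Dh(z)-Dh(w)\big|^p\,dw\,dz
    \;\le\;C^p(2r)^{\beta_1 p}
    \;=:\;\gamma\,r^{\beta_1 p}.
\]
Undoing the rescaling of Step 1 turns $r^{\beta_1 p}$ into $(r/\varrho_0)^{\beta_1 p}$, which is the assertion, with $\gamma=\gamma(\mathfrak C,C_*)$. (If \cite[Theorem~1.3]{BOGELEIN2022113119} is quoted directly as an excess decay rather than as a pointwise Hölder bound, Steps~2 and~3 collapse into a single reference, the two formulations being equivalent by Campanato's characterization.) The only genuinely non-routine ingredient is Step~2: the regularity there is the classical DiBenedetto--Friedman intrinsic-geometry argument (first and second alternatives combined with a De Giorgi iteration), and the real work is to keep track of how the exponent $\beta_1$ and the seminorm bound depend on $\mathfrak C$ and on the a priori bound $C_*$ — which is exactly what the cited theorems supply.
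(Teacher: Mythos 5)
Your proposal is correct and takes essentially the same route the paper does: the paper gives no detailed proof of this lemma, instead invoking the sup-estimate together with the quantitative DiBenedetto--Friedman intrinsic-scaling regularity theory from \cite[Theorem 1.3]{BOGELEIN2022113119} and \cite[Theorem 6.1]{ok2024regularity}, which is exactly what your Step 2 does. Your Step 1 (parabolic rescaling to $z_0=0$, $\varrho_0=1$) and Step 3 (Jensen's inequality to pass from the Hölder seminorm bound to the Campanato decay) are the routine bookkeeping the paper leaves implicit, and you correctly observe that they may be absorbed into the citation if the referenced theorems are stated directly as excess-decay estimates.
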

We now apply the preceding Lemma with the choice $\varrho_0=\varrho/2$.
\begin{Lem}\label{Lem7.1v2}
    Suppose, that $\frac{2n}{n+2} < p$, $\max\{2,p\}\leq q$, $\nu,L>0$, $t_1<t_2$ and $U\subset \R^n$. Assume that $h$ is a weak solution to (\ref{7.102}) satisfying (\ref{7.103}) for some cylinder $Q_\varrho(z_0)\subset U\times[t_1,t_2)$ and constant $C_*$. Then there exist $C\geq 1$ $0< \beta_1 <1$ depending on the data $\mathfrak{C},q$ and $C_*$ such that there holds:
    \begin{align}\label{7.105}
        \sup\limits_{Q_{\varrho/4}(z_0)} |Dh| \leq C.
    \end{align}
    Moreover, for any 
    \begin{align}\label{defs}
        s\in \begin{cases}
            [2,p] & \text{for } p\geq 2, \\
            [1,2] & \text{for } \frac{2n}{n+2} < p < 2
        \end{cases}
    \end{align}
    we have the decay estimate
    \begin{align}\label{7.106}
        \frac{1}{r^s} \mint{-}\limits_{Q_r(z_0)} \big| h - (h)_{z_0;r} - (Dh)_{z_0;r} (x-x_0) \big|^s \, dz 
        \leq
        C \bigg( \frac{r}{\varrho} \bigg)^{\min\{s,p,s(p-1)\}\beta_1}
        \qquad \forall 0<r\leq \varrho/4,
    \end{align}
    where $C=C(\mathfrak{C},\Lambda,C_*)$.
\end{Lem}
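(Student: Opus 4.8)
The strategy is to chain the two preceding Lemmas and then convert the gradient Campanato estimate of Lemma~\ref{Lem7.02} into the stated estimate for $h$ minus its first order Taylor polynomial by means of the intermediate Poincaré inequality; by the parabolic scaling invariance of \eqref{7.102} and of \eqref{7.106} we may assume $\varrho=1$. Estimate \eqref{7.105} is immediate: under hypothesis \eqref{7.103}, Lemma~\ref{Lem7.01} already yields $\sup_{Q_{\varrho/2}(z_0)}|Dh|\leq\overline{C}$ with $\overline{C}=\overline{C}(n,N,p,\nu,L,C_*)$, and $Q_{\varrho/4}(z_0)\subset Q_{\varrho/2}(z_0)$. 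Feeding this bound into Lemma~\ref{Lem7.02} with $\varrho_0=\varrho/2$ produces $\gamma>0$ and $\beta_1\in(0,1)$ depending only on $\mathfrak{C}$ and $C_*$ with
\begin{equation*}
    \mint{-}\limits_{Q_r(z_0)}\big|Dh-(Dh)_{z_0;r}\big|^p\,dz\leq\gamma\Big(\tfrac{2r}{\varrho}\Big)^{\beta_1 p}\qquad\text{for all }0<r\leq\varrho/4,
\end{equation*}
and, since the same chain applies with $z_0$ replaced by any $z_1\in Q_{\varrho/8}(z_0)$ (now using the cylinder $Q_{\varrho/2}(z_1)\subset Q_\varrho(z_0)$), the analogous decay holds at every such center with uniform constants.

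Next, for a fixed $r\in(0,\varrho/4]$ I would set $w:=h-(h)_{z_0;r}-(Dh)_{z_0;r}(x-x_0)$ and $c:=|(Dh)_{z_0;r}|^{p-2}(Dh)_{z_0;r}$, so that $(w)_{z_0;r}=0$, $Dw=Dh-(Dh)_{z_0;r}$, and, using that $(h)_{z_0;r}+(Dh)_{z_0;r}(x-x_0)$ is $t$-independent and $\int_{Q_\varrho(z_0)}c\cdot D\phi\,dz=0$, the function $w$ is a weak solution of
\begin{equation*}
    \int\limits_{Q_\varrho(z_0)}w\cdot\partial_t\phi-\Lambda\big(|Dh|^{p-2}Dh-c\big)\cdot D\phi\,dz=0\qquad\text{for all }\phi\in C^\infty_0(Q_\varrho(z_0),\R^N).
\end{equation*}
Then I would apply the intermediate Poincaré inequality Lemma~\ref{Lem3.1} with $\lambda=1$, exponent $\min\{s,p\}\in[1,p]$ and $\xi:=\Lambda(|Dh|^{p-2}Dh-c)$, estimating the mean of $|\xi|$ over $Q_r(z_0)$ by the elementary inequality $\big||a|^{p-2}a-|b|^{p-2}b\big|\leq C(p)(|a|+|b|)^{p-2}|a-b|$ for $p\geq2$ (combined with $|Dh|\leq\overline{C}$), respectively $\big||a|^{p-2}a-|b|^{p-2}b\big|\leq C(p)|a-b|^{p-1}$ for $p<2$, followed by Jensen's inequality and the gradient decay above. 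This yields
\begin{equation*}
    \frac{1}{r^{\min\{s,p\}}}\mint{-}\limits_{Q_r(z_0)}|w|^{\min\{s,p\}}\,dz\leq C\Big(\tfrac{2r}{\varrho}\Big)^{\min\{1,p-1\}\beta_1\min\{s,p\}},\qquad C=C(\mathfrak{C},\Lambda,C_*).
\end{equation*}
If $s\leq p$ — in particular for the whole admissible range when $p\geq2$ — this is \eqref{7.106}, because $\min\{1,p-1\}\min\{s,p\}=\min\{s,p,s(p-1)\}$ in each sign regime; for $p<2$ the choice of exponent $p$ moreover records the auxiliary bound $\mint{-}_{Q_r(z_0)}|w|^p\,dz\leq Cr^p(2r/\varrho)^{p(p-1)\beta_1}$.

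It remains to treat $p<2$ together with $p<s\leq2$. Applying the previous step at every center $z_1\in Q_{\varrho/8}(z_0)$ gives the uniform second order Campanato estimate $\rho^{-p}\mint{-}_{Q_\rho(z_1)}|h-(h)_{z_1;\rho}-(Dh)_{z_1;\rho}(x-x_1)|^p\,dz\leq C(\rho/\varrho)^{p(p-1)\beta_1}$, which by the parabolic Campanato--Da Prato isomorphism theorem upgrades to $h\in C^{1+(p-1)\beta_1}$ in the parabolic metric and hence $\sup_{Q_r(z_0)}|w|\leq C\,r\,(r/\varrho)^{(p-1)\beta_1}$ for $r\leq\varrho/8$ (for the remaining scales $r\in(\varrho/8,\varrho/4]$ the estimate \eqref{7.106} is trivial since $r/\varrho\approx1$). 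Interpolating through $\mint{-}_{Q_r(z_0)}|w|^s\,dz\leq(\sup_{Q_r(z_0)}|w|)^{s-p}\mint{-}_{Q_r(z_0)}|w|^p\,dz$ and inserting the auxiliary bound produces the decay exponent $[(s-p)+p(p-1)]\beta_1$, and the elementary identity $(s-p)+p(p-1)-s(p-1)=(s-p)(2-p)\geq0$ shows that this dominates $s(p-1)\beta_1=\min\{s,p,s(p-1)\}\beta_1$, which is \eqref{7.106}. I expect this last case to be the genuine obstacle: the point is to secure a supremum bound for $w$ with the sharp decay $(r/\varrho)^{(p-1)\beta_1}$ — stronger than what a naive testing of the equation for $h$ in the time variable would give — and then to track the exponents precisely enough to land exactly on $\min\{s,p,s(p-1)\}\beta_1$; everything else is a routine application of the cited Lemmas.
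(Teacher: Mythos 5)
Your proposal is correct and the first two steps are exactly as in the paper: Lemma~\ref{Lem7.01} gives the sup-bound \eqref{7.105} on $Q_{\varrho/2}(z_0)$, and Lemma~\ref{Lem7.02} with $\varrho_0=\varrho/2$ gives the gradient Campanato decay $\mint{-}_{Q_r}|Dh-(Dh)_r|^p\leq C(r/\varrho)^{\beta_1 p}$. From there, though, the two arguments diverge in a genuine way. The paper applies the Poincaré inequality of Lemma~\ref{Lem5.3} \emph{directly with exponent $q=s$} (permissible even when $s>p$ because $Dh$ is bounded, so $|Dh|\in L^s$, and the paper drops the $H^*_\lambda$ term since the $p$-Laplacian vector field has no $(z,u)$-dependence). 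It then reduces $\mint{-}|Dh-(Dh)_r|^s$ to $\mint{-}|Dh-(Dh)_r|^{\min\{s,p\}}$ via the sup-bound when $s>p$, applies Jensen/H\"older to trade the exponents, and lands immediately on $\big(\mint{-}|Dh-(Dh)_r|^p\big)^{\min\{s,p,s(p-1)\}/p}$ — no Campanato embedding, no separate case distinction beyond the sign of $p-2$. You instead invoke Lemma~\ref{Lem3.1} with the conservative exponent $\min\{s,p\}$ (respecting the $q\leq p$ restriction in its statement) and therefore, for $p<2$ and $p<s\leq 2$, must pass through the parabolic Campanato–Da Prato characterization to upgrade to $h\in C^{1+(p-1)\beta_1}$ and then interpolate. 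This is correct but considerably heavier machinery: the paper's route stays entirely inside the Poincaré-plus-H\"older toolkit already set up, whereas yours imports a second-order Campanato embedding that the paper never needs in this lemma. You correctly identified the $p<2$, $s>p$ case as the genuine obstacle, but the paper resolves it simply by noting the extra integrability of $Dh$ entitles one to use the Poincaré inequality at exponent $s$ rather than $\min\{s,p\}$.

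One small arithmetic slip: with $\sup_{Q_r}|w|\leq Cr(r/\varrho)^{(p-1)\beta_1}$ and $\mint{-}_{Q_r}|w|^p\leq Cr^p(r/\varrho)^{p(p-1)\beta_1}$, the interpolation $\mint{-}|w|^s\leq(\sup|w|)^{s-p}\mint{-}|w|^p$ yields the exponent $(s-p)(p-1)\beta_1+p(p-1)\beta_1=s(p-1)\beta_1$ on the nose — exactly $\min\{s,p,s(p-1)\}\beta_1$ — not $[(s-p)+p(p-1)]\beta_1$ as you wrote. Your dominance check $(s-p)(2-p)\geq 0$ then rescues the conclusion, but it should not have been needed; you appear to have dropped a factor $(p-1)$ when raising the sup-bound to the power $s-p$.
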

\begin{proof}
    For simplicity of notation, we again assume that $z_0=0$ and note that the assumptions of Lemma \ref{Lem7.01} are satisfied, implying that (\ref{7.105}) holds for $0<r\leq \varrho/2$. This in turn shows that the assumptions of Lemma \ref{Lem7.02} are satisfied, yielding two constants $C=C(\mathfrak{C},C_*)>0$ and $\beta_1=\beta_1(\mathfrak{C},C_*)\in (0,1)$ such that
    \begin{align}\label{7.11}
        \mint{-}\limits_{Q_{r}} \big|Dh - (Dh)_{r} \big|^p \leq C \bigg( \frac{r}{\varrho} \bigg)^{\beta_1 p}
    \end{align}
    for every $0<r\leq \varrho/4$. Now, we apply the Poincaré inequality (\ref{5.301}) of Lemma \ref{Lem5.3} with the parameter choices $q=s$ and $A= (Dh)_r$, where $s$ is given by (\ref{defs}). We note that the vector field under consideration does not depend on $x$ and $u$, which is why we can disregard the additional terms. Therefore, we obtain
    \begin{align}\label{7.12}
        r^{-s} \mint{-}\limits_{Q_r} \big| h & - (h)_r - (Dh)_r x |^s \, dz
        \leq
        C \Bigg[ \mint{-}\limits_{Q_{r}} \big| Du - (Dh)_r \big|^s \, dz 
        \nonumber \\ & \qquad +
        \bigg( \mint{-}\limits_{Q_{r}} \mathds{1}_{\{p\geq 2\}} \big|(Dh)_r \big|^{p-2} \big| Dh - (Dh)_r \big| 
        + \big| Dh - (Dh)_r \big|^{p-1}  \, dz \bigg)^s  \Bigg]
        \nonumber \\ & \leq
        C \Bigg[ \mint{-}\limits_{Q_{r}} \big| Dh - (Dh)_r \big|^s \, dz 
        +
        \bigg( \mint{-}\limits_{Q_{r}} \big| Dh - (Dh)_r \big|^{p-1} \, dz \bigg)^s  \Bigg],
    \end{align}
    where we used Young's inequality and (\ref{7.105}) to estimate the term that occurs in the super-quadratic case. For the constant, we have by its monotonous dependence on $s$ that $C=C(\mathfrak{C},C_*)$ and it should be mentioned that we presently have $\kappa_M \equiv 1$ and $L \equiv (p-1)L$. If we reduce the exponent of integration by means of (\ref{7.105}) in the sub-quadratic case, we obtain from (\ref{7.11}), (\ref{7.12}) and Hölder's inequality that
    \begin{align}\label{7.14}
        r^{-s} \mint{-}\limits_{Q_r} \big| h & - (h)_r - (Dh)_rx |^s \, dz
        \leq
        C \Bigg[ \mint{-}\limits_{Q_{r}} \big| Dh - (Dh)_r \big|^s \, dz 
        +
        \bigg( \mint{-}\limits_{Q_{r}} \big| Dh - (Dh)_r \big|^{p-1}  \, dz \bigg)^s  \Bigg]
        \nonumber\\ & \leq
        C \Bigg[ \mint{-}\limits_{Q_{r}} \big| Dh - (Dh)_r \big|^{\min\{s,p\}} \, dz 
        +
        \bigg( \mint{-}\limits_{Q_{r}} \big| Dh - (Dh)_r \big|^{p}  \, dz \bigg)^{\frac{s(p-1)}{p}}  \Bigg]
        \nonumber\\ & \leq
        C \bigg( \mint{-}\limits_{Q_{r}} \big| Dh - (Dh)_r \big|^{p}  \, dz \bigg)^{\frac{\min\{s,p,s(p-1)\}}{p}} 
        \leq
        C \bigg( \frac{r}{\varrho} \bigg)^{\min\{s,p,s(p-1)\}\beta_1},
    \end{align}
    where $C=C(\mathfrak{C},C_*)$ and the statement of the Lemma follows.
\end{proof}

\subsection{The linear theory}

In the non-degenerate case, we approximate with the solution of a linear parabolic system with constant coefficients by freezing the $(z,u)$ dependency of the vector field. In this situation, we need the following classical excess-decay estimate; for a statement, see \cite[Lemma 7.3]{bogelein2013regularity}. The proof can be found, for instance, in \cite[Lemma 5.1]{campanato1966equazioni}.
\begin{Lem}\label{Lem7.2}
    Let $h\in L^2\big(\Lambda_\varrho(t_0); W^{1,2}(B_\varrho(x_0),\R^N)\big)$ be a weak solution in the cylinder $Q_\varrho(z_0)$ of a linear parabolic system with constant coefficients, in the sense, that
    \begin{align}\label{7.201}
        \mint{-}\limits_{Q_r(z_0)} h \cdot \partial_t \phi - \mathcal{A}(Dh,D\phi) \, dz = 0 \qquad \forall \phi \in C^\infty_0(Q_r(z_0),\R^N),
    \end{align}
    where the coefficients $\mathcal{A}$ satisfy
    \begin{align}\label{7.202}
        \mathcal{A}(\eta \otimes \zeta, \eta\otimes \zeta) \geq \nu |\eta|^2 |\zeta|^p, \qquad
        \mathcal{A}(\xi,\Tilde{\xi}) \leq L |\xi| |\Tilde{\xi}|,
    \end{align}
    for any $\eta\in \R^n$, $\xi\in \R^N$ and $\xi,\Tilde{\xi} \in \R^{Nn}$. Then $h$ is smooth in $Q_r(z_0)$ and there exits a constant $C_{pa}=C_{pa}(n,N,p,L/\nu)\geq 1$ such that for all $s\geq 1$ and $\vartheta\in (0,1]$ there holds
    \begin{align}\label{7.203}
        \frac{1}{(\vartheta\varrho)^s} \mint{-}\limits_{Q_{\vartheta\varrho}(z_0)} & \big| h - (h)_{z_0;\vartheta\varrho} - (Dh)_{z_0;\vartheta\varrho} (x-x_0) \big|^s \, dz 
        \nonumber\\ &\leq
        C_{pa} \vartheta^s \varrho^{-s} \mint{-}\limits_{Q_{\varrho}(z_0)} \big| h - (h)_{z_0;\varrho} - (Dh)_{z_0;\varrho} (x-x_0) \big|^s \, dz.
    \end{align}
\end{Lem}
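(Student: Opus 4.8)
The statement is the classical second–order Campanato decay estimate for constant–coefficient linear parabolic systems, and the plan is to derive it from the interior regularity theory for such systems together with a second–order Taylor expansion; no tool beyond those already available is needed once that regularity theory is granted. The one genuine analytic input is the following: since $\mathcal A$ is constant and satisfies the (rank–one) ellipticity and the boundedness in (\ref{7.202}), the solution $h$ is of class $C^\infty$ in the interior of $Q_\varrho(z_0)$, every spatial and temporal derivative $D_x^k\partial_t^j h$ is again a weak solution of the same system there, and for every $s\ge 1$ and every cylinder $Q_r(z')\subset Q_\varrho(z_0)$ one has the scale–invariant a priori bound
\[
    \sup_{Q_{r/2}(z')} r^{\,k+2j}\,\big|D_x^k\partial_t^j h\big| \ \le\ C\,\Big(\mint{-}\limits_{Q_r(z')} |h|^s \, dz\Big)^{\frac1s},\qquad C=C(n,N,p,L/\nu,k,j,s).
\]
This is obtained in the standard way (difference quotients in the spatial directions, the Caccioppoli inequality and Sobolev embedding, iterated and then bootstrapped in time via the equation; equivalently via the explicit fundamental solution), and I would simply quote it from \cite{campanato1966equazioni}. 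The rest of the argument is essentially bookkeeping.

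Next, exploit that every time–independent affine map $x\mapsto a+B(x-x_0)$ solves (\ref{7.201}), so that with $\ell_\varrho(x):=(h)_{z_0;\varrho}+(Dh)_{z_0;\varrho}(x-x_0)$ the function $w:=h-\ell_\varrho$ is again a solution in $Q_\varrho(z_0)$. Applying the a priori bound to $w$ with $r=\varrho$ gives, on $Q_{\varrho/2}(z_0)$,
\[
    \varrho^2\sup_{Q_{\varrho/2}(z_0)}|D_x^2 w| + \varrho^2\sup_{Q_{\varrho/2}(z_0)}|\partial_t w| + \varrho^3\sup_{Q_{\varrho/2}(z_0)}|\partial_t D_x w| \ \le\ C\,\Big(\mint{-}\limits_{Q_\varrho(z_0)}|h-\ell_\varrho|^s\,dz\Big)^{\frac1s}.
\]
For $0<\vartheta\le\frac12$ one has $Q_{\vartheta\varrho}(z_0)\subset Q_{\varrho/2}(z_0)$, and a second–order Taylor expansion of $w$ about $z_0$ (along spatial segments and in time) yields, for the affine map $T(x):=h(z_0)+Dh(z_0)(x-x_0)$ — which equals $\ell_\varrho(x)+w(z_0)+Dw(z_0)(x-x_0)$ because $w(z_0)=h(z_0)-(h)_{z_0;\varrho}$ and $Dw(z_0)=Dh(z_0)-(Dh)_{z_0;\varrho}$ —
\[
    \sup_{Q_{\vartheta\varrho}(z_0)}|h-T| \ \le\ C\big[(\vartheta\varrho)^2\sup|D_x^2w| + (\vartheta\varrho)^3\sup|\partial_t D_xw| + (\vartheta\varrho)^2\sup|\partial_t w|\big] \ \le\ C\vartheta^2\,\Big(\mint{-}\limits_{Q_\varrho(z_0)}|h-\ell_\varrho|^s\,dz\Big)^{\frac1s},
\]
where the suprema on the left are over $Q_{\varrho/2}(z_0)$ and $\vartheta\le 1$ was used to absorb the higher powers of $\vartheta$ into $\vartheta^2$.

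Finally, use that $\ell_{\vartheta\varrho}(x):=(h)_{z_0;\vartheta\varrho}+(Dh)_{z_0;\vartheta\varrho}(x-x_0)$ is a quasi–minimizer of $L\mapsto\mint{-}\limits_{Q_{\vartheta\varrho}(z_0)}|h-L|^s\,dz$ among time–independent affine maps $L$, with a constant depending only on $n$ and $s$; this follows from the equivalence of the $L^s$– and $L^2$–norms on the finite–dimensional space of such maps on a cylinder together with the minimality properties recorded in \S\ref{SMinaff} (cf. also Lemma~\ref{Lem2.8}). Choosing $L=T$ gives $\mint{-}\limits_{Q_{\vartheta\varrho}(z_0)}|h-\ell_{\vartheta\varrho}|^s\,dz\le C\sup_{Q_{\vartheta\varrho}(z_0)}|h-T|^s\le C\vartheta^{2s}\mint{-}\limits_{Q_\varrho(z_0)}|h-\ell_\varrho|^s\,dz$, and dividing by $(\vartheta\varrho)^s$ produces exactly (\ref{7.203}) for $\vartheta\le\frac12$. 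The range $\vartheta\in(\frac12,1]$ is immediate: enlarging the domain of integration costs only the factor $|Q_\varrho|/|Q_{\vartheta\varrho}|\le 2^{n+2}$, while $\vartheta^{-s}$ is bounded and $\vartheta^s\ge 2^{-s}$, so after again invoking the quasi–minimality to replace $\ell_{\vartheta\varrho}$ by $\ell_\varrho$ one gets (\ref{7.203}) with $C_{pa}=C_{pa}(n,N,p,L/\nu,s)$. The only real obstacle is therefore the first paragraph — the interior $C^\infty$ regularity and the scale–invariant derivative bounds for constant–coefficient linear parabolic systems; the remaining steps are Taylor expansion and elementary comparison of affine functions, and the $L^s$–quasi–minimality of $\ell_r$ is a routine side issue.
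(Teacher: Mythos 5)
The paper does not prove Lemma~\ref{Lem7.2}; it merely cites Campanato and Bögelein--Duzaar--Mingione. Your proposal is therefore a self-contained alternative, and the overall strategy --- interior $C^\infty$ regularity with scale-invariant derivative bounds for the constant-coefficient system, translation by the affine map $\ell_\varrho$, first-order parabolic Taylor expansion around $z_0$, then comparison of affine maps on $Q_{\vartheta\varrho}$ --- is sound and would produce the estimate.

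There is, however, a real gap in the final comparison step. You assert that $\ell_{\vartheta\varrho}(x):=(h)_{z_0;\vartheta\varrho}+(Dh)_{z_0;\vartheta\varrho}(x-x_0)$ is an $L^s$-quasi-minimizer among time-independent affine maps, citing \S\ref{SMinaff} and Lemma~\ref{Lem2.8}. But that machinery refers to the $L^2$-minimizing affine map $L_{z_0;\vartheta\varrho}$, whose gradient coefficient $(Dl)_{z_0;\vartheta\varrho}$ is, by~\eqref{2.5.2}, the \emph{weighted} first moment $\tfrac{n+2}{(\vartheta\varrho)^2}\mint{-}_{Q_{\vartheta\varrho}}h\otimes(x-x_0)$, not the unweighted mean $(Dh)_{z_0;\vartheta\varrho}$ appearing in~\eqref{7.203}. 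The two coincide for affine $h$ but not in general, and the map with the unweighted gradient mean is \emph{not} a quasi-minimizer of the $L^s$-distance for arbitrary $h$: one cannot control $|(Dh)_{z_0;\vartheta\varrho}-B|$ by $\bigl(\mint{-}_{Q_{\vartheta\varrho}}|h-L|^s\,dz\bigr)^{1/s}$ by finite-dimensional norm equivalence alone, since $Dh$ is not a continuous functional of $h$ in $L^s$. Your finite-dimensionality argument controls the constant part $|a-(h)_{z_0;\vartheta\varrho}|$ but not the gradient part, so the comparison as written does not close.

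The fix stays within your framework and in fact improves the conclusion. Because $h$ solves the constant-coefficient system, the same a priori bounds give $\sup_{Q_{\vartheta\varrho}}|Dh-Dh(z_0)|\le C\vartheta\,\varrho^{-1}\bigl(\mint{-}_{Q_\varrho}|h-\ell_\varrho|^s\,dz\bigr)^{1/s}$ for $\vartheta\le\tfrac12$; hence by Jensen $|(Dh)_{z_0;\vartheta\varrho}-Dh(z_0)|\le C\vartheta\,\varrho^{-1}(\cdots)$, while $|(h)_{z_0;\vartheta\varrho}-h(z_0)|=\bigl|\mint{-}_{Q_{\vartheta\varrho}}(h-T)\,dz\bigr|\le\sup_{Q_{\vartheta\varrho}}|h-T|\le C\vartheta^2(\cdots)$ since $\mint{-}_{Q_{\vartheta\varrho}}(x-x_0)\,dz=0$. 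Writing $h-\ell_{\vartheta\varrho}=(h-T)+(h(z_0)-(h)_{z_0;\vartheta\varrho})+(Dh(z_0)-(Dh)_{z_0;\vartheta\varrho})(x-x_0)$ then gives $\sup_{Q_{\vartheta\varrho}}|h-\ell_{\vartheta\varrho}|\le C\vartheta^2\,\bigl(\mint{-}_{Q_\varrho}|h-\ell_\varrho|^s\,dz\bigr)^{1/s}$ pointwise; raising to the $s$-th power, averaging, and dividing by $(\vartheta\varrho)^s$ yields~\eqref{7.203}. No quasi-minimality is needed, and since every step is a pointwise bound followed by Jensen, the constant $C_{pa}$ is independent of $s\ge1$, matching the statement of the lemma; your quasi-minimality route would have introduced an $s$-dependence through the finite-dimensional norm equivalence, which is weaker than claimed.
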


\section{Partial gradient regularity}\label{SIN}

In this final chapter we want to proof Theorem \ref{Theo1} and \ref{Theo2}, where we distinguish between the non-degenerate and degenerate regimes. But before we do this, let us start this Section by defining the hybrid excess functional $E_\lambda(z_0,\varrho)$ as follows:
\begin{equation}\label{8.02}
    E_\lambda(z_0,\varrho) = \Phi_\lambda \big(z_0,\varrho,        
    (Du)_{z_0;\varrho}^{(\lambda)}\big) + H_\lambda(z_0,\varrho),
\end{equation}
where the latter term is defined by
\begin{equation}\label{8.04}
    H_\lambda(z_0,\varrho)= \big|(Du)_{z_0;\varrho}^{(\lambda)}\big|^{p} H^*_\lambda\big(z_0,\varrho,(u)_{z_0;\varrho}^{(\lambda)} \big)
    =
     \big|(Du)_{z_0;\varrho}^{(\lambda)}\big|^{p} \Big( \Psi_\lambda^{\frac{\beta}{p}}  \big(z_0,\varrho, (u)_{z_0;\varrho}^{(\lambda)}  \big) + \varrho^\beta (1+\lambda^{2-p})^{\frac{\beta}{2}} \Big)
\end{equation}
Note that this is just the preliminary hybrid excess functional $E^*_\lambda(z_0,\varrho,A)$ (see (\ref{6.02})) for the particular choice $A=(Du)_{z_0;\varrho}^{(\lambda)}$. It will be useful throughout this Section to derive an upper bound for the term $H_\lambda(z_0,\varrho)$. To this end, we start by estimating the term $\Psi_\lambda(z_0,\varrho,(u)_{\varrho}^{(\lambda)})$ of the hybrid excess functional. Thereby, we use that inequality (\ref{5.301}) of Lemma \ref{Lem5.3} applied with $A=0$ implies for every $p>2n/(n+2)$ that
\begin{align}\label{9.01}
    \mint{-}\limits_{Q_\varrho^{(\lambda)}(z_0)} \big| u & - (u)_{\varrho}^{(\lambda)}\big|^p \, dz
    \leq
    C \varrho^p \Bigg[ \mint{-}\limits_{Q^{(\lambda)}_{\varrho}(z_0)} \big|Du \big|^p \, dz 
    +
    \Bigg( \lambda^{2-p} \mint{-}\limits_{Q^{(\lambda)}_{\varrho}(z_0)} \big|Du \big|^{p-1} \, dz 
    \Bigg)^p \Bigg]
    \nonumber\\ & \leq
    C(\mathfrak{C}) \varrho^p \Bigg[ \mint{-}\limits_{Q^{(\lambda)}_{\varrho}(z_0)} \big|Du-(Du)_{z_0;\varrho}^{(\lambda)} \big|^p \, dz 
    +
    \lambda^{p(2-p)} \Bigg( \mint{-}\limits_{Q^{(\lambda)}_{\varrho}(z_0)} \big|Du-(Du)_{z_0;\varrho}^{(\lambda)} \big|^{p} \, dz 
    \Bigg)^{p-1} 
    \nonumber \\ & \qquad +
    \big|(Du)_{z_0;\varrho}^{(\lambda)} \big|^p + \lambda^{p(2-p)} \big|(Du)_{z_0;\varrho}^{(\lambda)} \big|^{p(p-1)} \Bigg].
\end{align}
The $L^p$-norm on the right must now be further estimated in terms of the $V$-function. In the sub-quadratic case, this is accomplished with inequality (\ref{2.14}) of Lemma \ref{Lem2.1}, and in the super-quadratic case, it is trivial. We obtain
\begin{align}\label{9.02}
    \mint{-}\limits_{Q^{(\lambda)}_{\varrho}(z_0)} & \big|Du-(Du)_{z_0;\varrho}^{(\lambda)} \big|^p \, dz 
    \leq
    C(p) \mint{-}\limits_{Q^{(\lambda)}_{\varrho}(z_0)} \Big| V_{|(Du)_{z_0;\varrho}^{(\lambda)}|} \big( Du - (Du)_{z_0;\varrho}^{(\lambda)} \big)\Big|^2 
    \nonumber \\ & \qquad+
    \mathds{1}_{\{p<2\}} \big| (Du)_{z_0;\varrho}^{(\lambda)} \big|^{\frac{p(2-p)}{2}} \Big| V_{|(Du)_{z_0;\varrho}^{(\lambda)}|} \big( Du - (Du)_{z_0;\varrho}^{(\lambda)} \big)\Big|^p \, dz
    \nonumber\\ & \leq
    C(p) \Big[ \Phi_\lambda\big(z_0,\varrho, (Du)_{z_0;\varrho}^{(\lambda)} \big) + \mathds{1}_{\{p<2\}} \big| (Du)_{z_0;\varrho}^{(\lambda)} \big|^{\frac{p(2-p)}{2}} \Phi_\lambda\big(z_0,\varrho, (Du)_{z_0;\varrho}^{(\lambda)} \big)^\frac{p}{2} \Big].
\end{align}
Note that we can replace the cylindrical mean in the previous calculation with any matrix $A\in \R^{Nn}$. Taking into account the two previous inequalities, we have for the additional term (\ref{8.04}) of the hybrid excess functional (\ref{8.02}) that
\begin{align}\label{9.03}
    H_\lambda(z_0,\varrho)& \leq C(\mathfrak{C}) \big|(Du)_{z_0;\varrho}^{(\lambda)} \big|^{p} \varrho^\beta 
    \Bigg( \bigg[
    \Phi_\lambda\big(z_0,\varrho, (Du)_{z_0;\varrho}^{(\lambda)} \big) 
    + 
    \mathds{1}_{\{p<2\}} \big| (Du)_{z_0;\varrho}^{(\lambda)} \big|^{\frac{p(2-p)}{2}} \Phi_\lambda\big(z_0,\varrho, (Du)_{z_0;\varrho}^{(\lambda)} \big)^\frac{p}{2} 
    \nonumber\\ \qquad & +
    \big|(Du)_{z_0;\varrho}^{(\lambda)} \big|^p + \lambda^{p(2-p)} \big|(Du)_{z_0;\varrho}^{(\lambda)} \big|^{p(p-1)} 
    +
    \lambda^{p(2-p)} \Big( \Phi_\lambda\big(z_0,\varrho, (Du)_{z_0;\varrho}^{(\lambda)} \big) 
    \nonumber \\ \qquad & + 
    \mathds{1}_{\{p<2\}} \big| (Du)_{z_0;\varrho}^{(\lambda)} \big|^{\frac{p(2-p)}{2}} \Phi_\lambda\big(z_0,\varrho, (Du)_{z_0;\varrho}^{(\lambda)} \big)^\frac{p}{2} \Big)^{p-1} \bigg]^{\frac{\beta}{p}} 
    + 
    ( 1+\lambda^{2-p})^{\frac{\beta}{2}} \Bigg).
\end{align}
With the previous estimate in mind, we begin with the proof of Theorem \ref{Theo1}.

\subsection{The non-degenerate regime (NDR)}

We now provide a excess-decay estimate for the non-degenerate regime,  which is characterized by (\ref{9.101})-(\ref{9.103}) below.
\begin{Lem}\label{Lem9.1}
    Let $2n/(n+2)< p\neq 2$, $\beta,\beta_0\in (0,1)$, $M,K\geq 1$ and
    \begin{equation*}
        u\in C\big([0,T];L^2\big(\Omega,\mathbb{R}^{N}\big) \big) \cap L^p\big(0,T;W^{1,p}\big(\Omega,\mathbb{R}^{N}\big) \big)
    \end{equation*}
    be a weak solution of system (\ref{Sys1}) in $\Omega_T$ that satisfies the assumptions of Section \ref{Assumptions}. Then, there exist constants
    \begin{align*}
        &\vartheta = \vartheta ( \mathfrak{C} , \beta , \beta_0, K ,\kappa_{8M}) \in (0,\min\{1/16,1/2^{\frac{1}{\beta}}],\\
        &\varepsilon_0 = \varepsilon_0(\mathfrak{C},\beta,\beta_0,K,\kappa_{8M},\omega_{4M}(\,\cdot\,)) \in (0,1] 
    \end{align*}
    for which the following holds true: Let $Q_\varrho^{(\lambda)}\subset \Omega_T$ be a intrinsic geometric cylinder with parameters $\varrho\in (0,1]$ and $0<\lambda\leq 1$ such that there holds
    \begin{align}\label{9.101}
        \big|(Du)_{z_0;\varrho}^{(\lambda)}\big| \leq 2M,
    \end{align}
    \begin{align}\label{9.102}
        \frac{\lambda}{2K} \leq \big|(Du)_{z_0;\varrho}^{(\lambda)}\big| \leq 2K\lambda
    \end{align}
    and the smallness condition
    \begin{align}\label{9.103}
        E_\lambda(z_0,\varrho)
        \leq
        \varepsilon_0 \big|(Du)_{z_0;\varrho}^{(\lambda)}\big|^p
    \end{align}
    is satisfied. Then we have the following excess-decay estimate:
    \begin{align}\label{9.104}
        E_\lambda(z_0,\vartheta\varrho) 
        \leq 
        \vartheta^{\beta \beta_0} E_\lambda (z_0,\varrho).
    \end{align}
\end{Lem}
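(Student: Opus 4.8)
The plan is to proceed via $\mathcal{A}$-caloric approximation applied to a suitably rescaled version of the solution. First I would introduce the frozen bilinear form $\mathcal{A} := (D_\xi a)(z_0, (u)^{(\lambda)}_{z_0;\varrho}, A)$ with $A := (Du)^{(\lambda)}_{z_0;\varrho}$, which by \eqref{As2}, \eqref{As3} and \eqref{2.11} is elliptic on $\R^{Nn}$ with constants comparable to $\nu |A|^{p-2}$ and $L\kappa_{2M}|A|^{p-2}$ (here \eqref{9.101} and $\kappa_{2M}\le \kappa_{8M}$ enter); after dividing out the factor $|A|^{p-2}$ we obtain a form with structural constants depending only on $\mathfrak{C}$ and $\kappa_{8M}$. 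Next I would set
$$
w := \frac{u - (u)^{(\lambda)}_{z_0;\varrho} - A(x-x_0)}{\varrho\, \big(E_\lambda(z_0,\varrho)/|A|^p\big)^{1/2}} \quad\text{(up to a further harmless normalization)},
$$
so that by the Caccioppoli inequality (Lemma \ref{Lem5.1}, with $l(x) = (u)^{(\lambda)}_{z_0;\varrho} + A(x-x_0)$, whose gradient $|A|\le 2M$ by \eqref{9.101}) together with the Poincaré inequalities of Lemma \ref{Lem5.3} / Lemma \ref{Lem5.4}, the function $w$ satisfies on a slightly smaller cylinder the normalization hypotheses \eqref{2.1202} (super-quadratic) or \eqref{2.1302} (sub-quadratic) of the $\mathcal{A}$-caloric approximation Lemmas \ref{Lem2.12}/\ref{Lem2.13}. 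The coupling \eqref{9.102} between $|A|$ and $\lambda$ is what makes the intrinsic scaling of $Q^{(\lambda)}_\varrho$ and the normalization consistent, and the smallness \eqref{9.103} guarantees $E_\lambda/|A|^p \le \varepsilon_0$ so the excess quantity is genuinely small.

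The heart of the argument is verifying the approximate $\mathcal{A}$-caloricity \eqref{2.1203}: this is exactly what Lemma \ref{Lem6.1} delivers, since
$$
\Bigg| \mint{-}_{Q^{(\lambda)}_\varrho(z_0)} u\cdot\partial_t\phi - (D_\xi a)(z_0,(u)^{(\lambda)}_{z_0;\varrho},A)(Du-A,D\phi)\,dz\Bigg| \le C L |A|^{\frac{p-2}{2}} (E^*_\lambda)^{1/2}\,\omega\!\left(\tfrac{E^*_\lambda}{|A|^p}\right)\sup|D\phi|,
$$
where $\omega(t) := t^{|1/2-1/p|} + \omega_{2M}(t) + t^{1/2} \to 0$ as $t\to 0$, and since $E^*_\lambda \le E_\lambda$ (with $A = (Du)^{(\lambda)}_{z_0;\varrho}$ the preliminary hybrid excess is dominated by the genuine one after the estimate \eqref{9.03} on $H_\lambda$; this also uses $\varrho\le 1$, $\lambda\le 1$). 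After rescaling, the right-hand side becomes $\delta_0\sup|D\phi|$ provided $\varepsilon_0$ is chosen small enough in terms of the approximation threshold $\delta_0(n,N,p,\nu,L,\varepsilon)$ and the modulus $\omega_{4M}$ — this is where the dependence of $\varepsilon_0$ on $\omega_{4M}(\cdot)$ comes from. We then obtain an $\mathcal{A}$-caloric comparison map $h$ satisfying the a priori bounds \eqref{2.1205}/\eqref{2.1305} and the closeness \eqref{2.1206}/\eqref{2.1306}; passing to $h$'s affine approximation and invoking the linear excess-decay Lemma \ref{Lem7.2} (with the rescaled form, whose $L/\nu$ is controlled by $\kappa_{8M}$), we get, for any $\vartheta\in(0,1/2]$,
$$
\frac{1}{(\vartheta\varrho)^2}\mint{-}_{Q_{\vartheta\varrho}} |h - \ell_{\vartheta\varrho}|^2\,dz \le C_{pa}\,\vartheta^2 \cdot \frac{1}{\varrho^2}\mint{-}_{Q_{\varrho/2}}|h-\ell|^2\,dz \le C_{pa}\,\vartheta^2,
$$
and in the super-quadratic case the analogous $L^p$-version. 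Combining with \eqref{2.1206}, undoing the scaling, using Lemma \ref{Lem2.4} to compare with cylindrical means, and applying \eqref{2.5.3} to control $|(Du)^{(\lambda)}_{z_0;\vartheta\varrho} - A|$, one arrives at
$$
\Phi_\lambda\big(z_0,\vartheta\varrho,(Du)^{(\lambda)}_{z_0;\vartheta\varrho}\big) \le C\big(\vartheta^2 + \vartheta^{-(n+2+p)}\varepsilon\big)\, E_\lambda(z_0,\varrho).
$$
One must also check that the intrinsic geometry does not change: since \eqref{9.103} forces $|(Du)^{(\lambda)}_{z_0;\vartheta\varrho} - A|^p \le C\vartheta^{-(n+2)} E_\lambda \ll |A|^p$, the new mean $(Du)^{(\lambda)}_{z_0;\vartheta\varrho}$ still satisfies $|(Du)^{(\lambda)}_{z_0;\vartheta\varrho}| \approx |A| \approx \lambda$, so that $Q^{(\lambda)}_{\vartheta\varrho}$ remains the correct intrinsic cylinder and \eqref{9.101}--\eqref{9.102} persist (with the slack factors $2M$, $2K$ absorbing the perturbation).

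Finally I would handle the $H_\lambda$ term. Using the upper bound \eqref{9.03} together with $H^*_\lambda$ scaling like $\varrho^\beta$ in its second summand and like $\Psi^{\beta/p}_\lambda$ (which is controlled via \eqref{9.01}--\eqref{9.02} by powers of $\Phi_\lambda$ and $|A|^p$) in its first, one gets $H_\lambda(z_0,\vartheta\varrho) \le C\vartheta^\beta\big|(Du)^{(\lambda)}_{z_0;\vartheta\varrho}\big|^p(\cdots) \le C\vartheta^\beta E_\lambda(z_0,\varrho)$, the gain of $\vartheta^\beta$ coming purely from the explicit radius factor $\varrho^\beta$ in $H^*_\lambda$. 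Adding the two contributions,
$$
E_\lambda(z_0,\vartheta\varrho) \le \Big(C\vartheta^2 + C\vartheta^{-(n+2+p)}\varepsilon + C\vartheta^\beta\Big)\, E_\lambda(z_0,\varrho);
$$
choosing first $\vartheta$ small so that $C\vartheta^{2} + C\vartheta^\beta \le \tfrac12\vartheta^{\beta\beta_0}$ (possible since $\beta\beta_0 < \beta < 2$ and $\beta\beta_0<2$ — this pins down $\vartheta$ in terms of $\mathfrak{C},\beta,\beta_0,K,\kappa_{8M}$), then $\varepsilon$ (hence $\delta_0$, hence $\varepsilon_0$) small so that $C\vartheta^{-(n+2+p)}\varepsilon \le \tfrac12\vartheta^{\beta\beta_0}$, yields \eqref{9.104}. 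The main obstacle I anticipate is the bookkeeping around the intrinsic scaling — one must show the freezing parameter stays locked to $\lambda$ after passing to the sub-cylinder so that the hypotheses are reproduced, and simultaneously track the many powers of $|A|\approx\lambda$ through the Caccioppoli and Poincaré estimates (particularly delicate in the sub-quadratic range where Lemma \ref{Lem5.4} and the $V$-function inequalities \eqref{2.14} are needed); the $\mathcal{A}$-caloric approximation step itself is then essentially a black box.
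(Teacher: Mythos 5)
Your proposal follows essentially the same route as the paper's proof: normalize $u$ via the Caccioppoli and Poincar\'e inequalities (Lemmas~\ref{Lem5.1},~\ref{Lem5.3},~\ref{Lem5.4}), verify approximate $\mathcal{A}$-caloricity through Lemma~\ref{Lem6.1}, apply the $\mathcal{A}$-caloric approximation, invoke the linear excess-decay of Lemma~\ref{Lem7.2}, transfer the decay back to $u$, and treat the $H_\lambda$-term at the smaller scale. The only place where your recipe is imprecise is the passage from the $L^s$-control of $u-L^{(\lambda)}_{2\vartheta\varrho}$ at scale $\vartheta\varrho$ to the gradient excess $\Phi_\lambda(z_0,\vartheta\varrho)$: Lemma~\ref{Lem2.4} and~\eqref{2.5.3} alone do not convert function-level into gradient-level information, and a \emph{second} application of the Caccioppoli inequality (Lemma~\ref{Lem5.1}) on the sub-cylinder is required, which re-introduces the $H^*_\lambda$-term so that the resulting $\Phi_\lambda$-decay is in fact $C\vartheta^\beta E_\lambda(z_0,\varrho)$ rather than the $C\vartheta^2 E_\lambda(z_0,\varrho)$ you wrote --- a distinction that, since $\beta<2$, does not affect the final choice of $\vartheta$.
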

\begin{proof}
    Without loss of generality we assume $z_0=0$. We abbreviate $A=(Du)_{\varrho}^{(\lambda)}$, $\Phi_\lambda(\varrho) = \Phi_\lambda (0,\varrho, (Du)_{\varrho}^{(\lambda)})$, $E_\lambda(\varrho)=E_\lambda (0,\varrho)$ and $H^*_\lambda(\varrho)=H^*_\lambda (0,\varrho,(u)_\varrho^{(\lambda)})$. Due to $|A|>0$, we have  $E_\lambda(\varrho)>0$. Moreover, we assume that (\ref{9.103}) holds for some $\varepsilon_0\in (0,1]$ with $\varepsilon_0$ to be determined later. To shorten the notation further and to unify the sub- and super-quadratic case, we define
    \begin{equation*}
        \varrho_* = \begin{cases}
            \varrho/4 &\text{for } p<2, \\
            \varrho & \text{for } p>2
        \end{cases}
        \qquad\text{and}\qquad
        \gamma= \bigg( \frac{E_\lambda(\varrho)}{|A|^{p}}\bigg)^{\frac12}
    \end{equation*}
    as well as the auxiliary functions
    \begin{align}\label{9.10}
        v(x,t) =
        \begin{cases}
            u(x,t) - (u)_{\varrho/2}^{(\lambda)} - A x &\text{for } p<2, \\
            u(x,t) - (u)_{\varrho}^{(\lambda)} - A x\varrho & \text{for } p>2
        \end{cases} 
        \qquad \text{for } (x,t)\in Q_\varrho^{(\lambda)}
    \end{align}
    and
    \begin{align}\label{9.11}
        w(x,t) = \big|(Du)_{\varrho}^{(\lambda)}\big|^{\frac{p-2}{2}} \frac{v(x,\lambda^{2-p}t)}{C_1 (E_\lambda(\varrho))^{\frac{1}{2}}}
        \qquad \text{for } (x,t)\in Q_\varrho \equiv Q_\varrho^{(1)},
    \end{align}
    where the constant $C_1\geq 1$ will be specified later. Note that, in contrast to the super-quadratic case, we use the mean of the map $u$ on the cylinder $Q_{\varrho/2}^{(\lambda)}$ instead of $Q_{\varrho}^{(\lambda)}$. This is necessary since the Sobolev-Poincaré inequality of Lemma \ref{Lem5.4} will be applied with $A=(Du)_{\varrho}^{(\lambda)}$ instead of the Poincaré inequality of Lemma \ref{Lem5.3}. We divide the proof into three steps. \\
    \textbf{Step 1: Approximation.} In the first step, we again show that the assumptions of the $\mathcal{A}$-caloric approximation Lemmas \ref{Lem2.12} and \ref{Lem2.13} are satisfied in the respective cases. We start with the \textbf{super-quadratic case}: To establish the $L^2$- and $L^p$-estimate of assumption (\ref{2.1202}), we apply the Poincaré type inequality from Lemma \ref{Lem5.3} with our particular choice of $A$ and note $|A|\leq 2M$. Using (\ref{9.102}) as well as Hölder's and Minkowski's inequality for $s=2,p$, we obtain that
    \begin{align}\label{8.12}
        \mint{-}\limits_{Q_{\varrho}^{(\lambda)}} & \bigg|\frac{v}{\varrho}\bigg|^s \, dz
        \leq 
        C \Bigg[ \mint{-}\limits_{Q_{\varrho}^{(\lambda)}} |Dv|^s \, dz + \Bigg( \lambda^{2-p} \mint{-}\limits_{Q_{\varrho}^{(\lambda)}} |A|^{p-2} |Dv| + |Dv|^{p-1} \, dz 
        +
        \lambda^{2-p} |A|^{p-1} H^*_\lambda(\varrho)
        \Bigg)^s \Bigg]
        \nonumber\\ & \leq
        C (1 + K^{s(p-2)}) \Bigg[\mint{-}\limits_{Q_{\varrho}^{(\lambda)}} |Dv|^s \, dz
        +
        |A|^{s(2-p)} \bigg( \mint{-}\limits_{Q_{\varrho}^{(\lambda)}} |Dv|^p \, dz \bigg)^{\frac{s(p-1)}{p}}
        +
        |A|^{s(1-p)}  E_\lambda^s(\varrho)\Bigg],
    \end{align}
    where we have for the constant $C=C(\mathfrak{C},\kappa_{4M})$. Now we need to rewrite the preceding inequality in terms of $w$. To achieve this, we multiply (\ref{8.12}) by $|A|^{p-s}$, recall the definition (\ref{8.02}) of $E_\lambda(\varrho)$ and use hypothesis (\ref{9.103}) with $K\geq 1$ to estimate
    \begin{align*}
        |A|^{p-s} \mint{-}\limits_{Q_{\varrho}^{(\lambda)}} \bigg|\frac{v}{\varrho}\bigg|^s \, dz 
        &\leq
        C E_\lambda(\varrho) 
        + 
        C \bigg[ \varepsilon_0^{\frac{s(p-2)-(p-s)}{p}} \big( E_\lambda(\varrho) \big)^{-\frac{s(p-2)-(p-s)}{p}} \big( E_\lambda(\varrho) \big)^{\frac{s(p-1)}{p}} 
        \nonumber\\ & \qquad +
        C \varepsilon_0^{1-s} \big(E_\lambda(\varrho)\big)^{1-s} \big(E_\lambda(\varrho)\big)^s
        \bigg]
        \leq
        C E_\lambda(\varrho) 
    \end{align*}
    for a constant $C=C(\mathfrak{C},K,\kappa_{4M})$. We then infer from the definitions of $w$ and $\Phi_\lambda$, as well as the previous inequality for the parameters $s=2$ and $s=p$, that
    \begin{align*}
        \mint{-}\limits_{Q_{\varrho_*}^{(\lambda)}} & \bigg|\frac{w}{\varrho}\bigg|^2 + |Dw|^2 \, dz 
        + \gamma^{p-2}
        \mint{-}\limits_{Q_{\varrho_*}^{(\lambda)}} \bigg|\frac{w}{\varrho}\bigg|^p + |Dw|^p \, dz 
        \nonumber \\ & =
        \frac{|A|^{p-2}}{C_1^2 E_\lambda(\varrho)} 
        \mint{-}\limits_{Q_{\varrho}^{(\lambda)}} \bigg|\frac{v}{\varrho}\bigg|^2 + |Dv|^2 \, dz 
        +
        \frac{1}{C_1^p E_\lambda(\varrho)} 
        \mint{-}\limits_{Q_{\varrho}^{(\lambda)}} \bigg|\frac{v}{\varrho}\bigg|^p + |Dv|^p \, dz 
        \nonumber \\ & \leq
        \frac{CK^{2(p-2)}+1}{C_1^2 }
        +
         \frac{CK^{p(p-2)}+1}{C_1^p } \leq 1,
    \end{align*}
    provided we choose $C_1$ large enough. For the dependency of the constant $C_1$, we obtain $C_1=C_1(\mathfrak{C},K,\kappa_{4M})$. In the \textbf{sub-quadratic case}, we verify the $L^2$-estimate (\ref{2.1302}) of Lemma \ref{Lem2.13}. From the Sobolev-Poincaré inequality of Lemma \ref{Lem5.4} and the upper bound (\ref{9.103}), we infer with (\ref{9.02}) that
    \begin{align}\label{9.12}
        \mint{-}\limits_{Q_{\varrho/2}^{(\lambda)}} \bigg|\frac{v}{\varrho/2}\bigg|^2 \, dz
        &\leq 
        C 
        \Bigg[ \lambda^{2-p} |A|^{p} H_\lambda^*(\varrho)
        +
        \Bigg( \mint{-}\limits_{Q^{(\lambda)}_{\varrho}} |Dv|^p \, dz 
        + 
        |A|^{p} \big(H_\lambda^*(\varrho)\big)^p\Bigg)^{\frac{2}{p}} \Bigg]
        \nonumber \\ &\leq 
        C
        \bigg[ \lambda^{2-p} E_\lambda(\varrho) + \Big( \Phi_\lambda(\varrho) + |A|^{\frac{p(2-p)}{2}} \big( \Phi_\lambda(\varrho) \big)^{\frac{p}{2}}
        + 
        |A|^{\frac{p(2-p)}{2}} E_\lambda(\varrho) \Big)^{\frac{2}{p}} \bigg]
        \nonumber \\ &\leq 
        C
        \big( E_\lambda(\varrho)^{\frac{2}{p}} + |A|^{2-p} E_\lambda(\varrho)
        \big)
        \leq 
        C
        |A|^{2-p} E_\lambda(\varrho),
    \end{align}
    where the dependency of the constant $C$ is given by $C=C(\mathfrak{C},K,\kappa_{4M})$. Next, we use the supremum part of the Caccioppoli inequality of Lemma \ref{Lem5.1} and (\ref{9.102}) to estimate
    \begin{align*}
        \sup\limits_{\Lambda_{\varrho/4}^{(\lambda)}} \mint{-}\limits_{B_{\varrho/4}} \bigg|\frac{v(\,\cdot\,,t)}{\varrho/4}\bigg|^2 \, dx 
        &\leq
        C \mint{-}\limits_{Q_{\varrho}^{(\lambda)}} \lambda^{2-p} |A|^{p-2} \bigg|\frac{v}{\varrho/2}\bigg|^2 + \bigg|\frac{v}{\varrho/2}\bigg|^2 \, dz 
        +
        \lambda^{2-p} H_\lambda(\varrho)
        \leq 
        C |A|^{2-p} E_\lambda(\varrho),
    \end{align*}
    where again $C=C(\mathfrak{C},\kappa_{4M})$. We then infer from the definitions of $w$ and $E_\lambda$, as well as the preceding inequality, the supremum-estimate
    \begin{align}\label{9.14}
        \sup\limits_{\Lambda_{\varrho_*}^{(\lambda)}} \mint{-}\limits_{B_{\varrho_*}} \bigg|\frac{w(\,\cdot\,,t)}{\varrho/4}\bigg|^2 \, dx 
        =
        \frac{|A|^{p-2}}{C_1^2 E_\lambda (\varrho)}
        \sup\limits_{\Lambda_{\varrho/4}^{(\lambda)}} \mint{-}\limits_{B_{\varrho/4}} \bigg|\frac{v(\,\cdot\,,t)}{\varrho/4} \bigg|^2 \, dx 
        \leq
        \frac{C}{C_1^2}.
    \end{align}
    We continue with the estimation of $|V_1(Dw)|$. To this end, we use the definitions (\ref{6.01}) and (\ref{9.11}) of $\Phi_\lambda$ and $w$, respectively, as well as the inequalities $\Phi_\lambda \leq  E_\lambda$, $\varepsilon_0\leq 1$ and $C_1 \geq 1$. This yields with (\ref{9.103}) that
    \begin{align}\label{9.141}
        \mint{-}\limits_{Q_{\varrho_*}^{(\lambda)}} \big| V_1 (Dw) \big|^2 \, dz
        &=
        \frac{|A|^{p-2}}{C_1^2 E_\lambda (\varrho)}
        \mint{-}\limits_{Q_{\varrho/4}^{(\lambda)}} \bigg( 1 + \frac{|A|^{p-2}}{C_1^2 E_\lambda (\varrho)} |Dv|^2 \bigg)^{\frac{p-2}{2}} |Dv|^2 \, dz
        \nonumber \\ & =
        \frac{1}{C_1^2 E_\lambda (\varrho)}
        \mint{-}\limits_{Q_{\varrho/4}^{(\lambda)}} \bigg( |A|^{2} + \frac{|A|^{p}}{C_1^2 E_\lambda (\varrho)} |Dv|^2 \bigg)^{\frac{p-2}{2}} |Dv|^2 \, dz
        \nonumber \\ & \leq
        \frac{1}{C_1^2 E_\lambda (\varrho)}
        \mint{-}\limits_{Q_{\varrho/4}^{(\lambda)}} \bigg( |A|^{2} + \frac{1}{C_1^2 \varepsilon_0} |Dv|^2 \bigg)^{\frac{p-2}{2}} |Dv|^2 \, dz
        \nonumber \\ & \leq
        \frac{4^{n+2}}{C_1^2 E_\lambda (\varrho)}
        \mint{-}\limits_{Q_{\varrho}^{(\lambda)}} \frac{1}{C_1^{p-2}} \big|V_{|A|}(Dv)\big|^2 \, dz
        \leq
        \frac{4^{n+2}}{C_1^p}.
    \end{align}
    If we now join (\ref{9.14}) and (\ref{9.141}), we obtain
    \begin{align}\label{9.142}
        \sup\limits_{\Lambda_{\varrho/4}^{(\lambda)}} \mint{-}\limits_{B_{\varrho/4}} \bigg|\frac{w(\,\cdot\,,t)}{\varrho/4}\bigg|^2 \, dx 
        +
        \mint{-}\limits_{Q_{\varrho/4}^{(\lambda)}} \big| V_1 (Dw) \big|^2 \, dz
        \leq
        \frac{C}{C_1^2} + \frac{4^{n+2}}{C_1^p}\leq 1,
    \end{align}
provided $C_1$ is chosen large enough. Therefore, assumption (\ref{2.1302}) of Lemma \ref{Lem2.13} is satisfied. Furthermore, for the dependence of the constant $C_1$, we obtain $C_1=C_1(\mathfrak{C},K,\kappa_{4M})$. We now verify assumption (\ref{2.1203}) on $Q_{\varrho_*}$ with the help of Lemma \ref{Lem6.1} for both the sub- and super-quadratic case. Since 
    \begin{align*}
        \mint{-}\limits_{Q_{\varrho_*}^{(\lambda)}} (u-v) \cdot \partial_t \phi \, dz = 0 \qquad\forall \phi\in C^1_0(Q_{\varrho_*}^{(\lambda)},\R^N),
    \end{align*}
    we obtain
    \begin{align}\label{8.16}
        \Bigg| \mint{-}\limits_{Q_{\varrho_*}^{(\lambda)}} & v\cdot \partial_t \phi  - (D_\xi a) \big(0,(u)_{\varrho_*}^{(\lambda)},A \big)(Dv,D\phi) \, dz \Bigg| 
        \nonumber \\ & \leq 
        C |A|^{\frac{p-2}{2}} \big(E_\lambda(\varrho) \big)^{\frac{1}{2}} 
        \Bigg[ \bigg( \frac{E_\lambda(\varrho)}{|A|^p} \bigg)^{\left|\frac{1}{2}-\frac{1}{p}\right|} + \omega_{4M} \bigg( \frac{E_\lambda(\varrho)}{|A|^p} \bigg) 
        +
        \bigg( \frac{E_\lambda(\varrho)}{|A|^p} \bigg)^{\frac{1}{2}} \Bigg] \sup\limits_{Q_{\varrho_*}^{(\lambda)}} |D\phi|
    \end{align}
    for all $\phi\in C^1_0\big( Q_{\varrho_*}^{(\lambda)}(z_0),  \mathbb{R}^{N} \big)$. Here, we used in the sub-quadratic case that a simple calculation shows $E_\lambda^*(0,\varrho/4,A)\leq C(n,p) E_\lambda^*(0,\varrho,A)$. Note that for the dependency of the constant on the right-hand side, we have $C=C(p,\kappa_{4M}) L$. Now recall the definition (\ref{9.10}) of $w$. Then we obtain by a transformation in the time variable and the assumptions (\ref{9.102}) and (\ref{9.103}) that (\ref{8.16}) can be rewritten and estimated as follows:
    \begin{align}\label{8.17}
        \Bigg| \mint{-}\limits_{Q_{\varrho_*}} & w\cdot \partial_t \phi  - \frac{(D_\xi a)(0,(u)_{\varrho_*}^{(\lambda)},A)}{\lambda^{p-2}}(Dw,D\phi) \, dz \Bigg| 
        \nonumber\\& \leq 
        \frac{C |A|^{p-2} }{C_1\lambda^{p-2}}
        \Bigg[ \bigg( \frac{E_\lambda(\varrho)}{|A|^p} \bigg)^{\left|\frac{1}{2}-\frac{1}{p}\right|} + \omega_{4M} \bigg( \frac{E_\lambda(\varrho)}{|A|^p} \bigg) 
        +
        \bigg( \frac{E_\lambda(\varrho)}{|A|^p} \bigg)^{\frac{1}{2}} \Bigg] \sup\limits_{Q_{\varrho_*}} |D\phi|
        \nonumber\\& \leq 
        \Bigg[ \bigg( \frac{E_\lambda(\varrho)}{|A|^p} \bigg)^{\left|\frac{1}{2}-\frac{1}{p}\right|} + \omega_{4M} \bigg( \frac{E_\lambda(\varrho)}{|A|^p} \bigg) 
        +
        \bigg( \frac{E_\lambda(\varrho)}{|A|^p} \bigg)^{\frac{1}{2}} \Bigg] \sup\limits_{Q_{\varrho_*}} |D\phi|,
    \end{align}
    for any $\phi\in C^1_0(Q_\varrho,\R^N)$, where the last inequality holds due to (\ref{9.102}) after a possible enlargement of $C_1$, resulting in an additional dependence on $p,L,\kappa_{4M}$ and $K$.  This implies that the chosen $C_1$ depends on $\mathfrak{C}$, $K$ and $\kappa_{4M}$. Now define the elliptic bilinear form $\mathcal{A}$ by
    \begin{align*}
        \mathcal{A}(\xi,\Tilde{\xi}) = \frac{(D_\xi a)(0,(u)_{\varrho_*}^{(\lambda)},A)}{\lambda^{p-2}}(\xi,\Tilde{\xi}) 
        \qquad\text{for }\xi,\Tilde{\xi} \in \R^{Nn}.
    \end{align*}
    From the growth assumptions (\ref{As1}), (\ref{As3}) and (\ref{9.102}), we deduce that $\mathcal{A}$ satisfies the following ellipticity and growth conditions:
    \begin{align*}
        \mathcal{A}(\xi,\xi) \geq \nu (2K)^{p-2} |\xi|^2 , \qquad
        \mathcal{A}(\xi,\Tilde{\xi}) \leq L \kappa_{2M} (2K)^{2-p} |\xi||\Tilde{\xi}|
    \end{align*}
    for all $\xi,\Tilde{\xi} \in \R^{Nn}$. Given $\varepsilon>0$, which we will specified later, let  
    \begin{equation*}
        \delta=\delta(n,N,p,\nu\equiv\nu (2K)^{2-p}, L\equiv L \kappa_{2M} (2K)^{p-2}, \varepsilon)=\delta(\mathfrak{C},K,\kappa_{2M},\varepsilon)\in (0,1]
    \end{equation*}
    be the constants from Lemma \ref{Lem2.12} and \ref{Lem2.13} for the parameter choices $(\nu (2K)^{2-p},L \kappa_{2M} (2K)^{p-2})$ replacing $(\nu,L)$. If we impose the \textbf{smallness condition}
    \begin{align}\label{9.191}
        \bigg( \frac{E_\lambda(\varrho)}{|A|^p} \bigg)^{\left|\frac{1}{2}-\frac{1}{p}\right|} + \omega_{4M} \bigg( \frac{E_\lambda(\varrho)}{|A|^p} \bigg) 
        +
        \bigg( \frac{E_\lambda(\varrho)}{|A|^p} \bigg)^{\frac{1}{2}} \leq \delta,
    \end{align}
    we can apply $\mathcal{A}$-caloric approximation Lemmas \ref{Lem2.12} and \ref{Lem2.13} to $(w,\mathcal{A})$ in the super- and sub-quadratic case, respectively, with the choice of parameters $(\nu (2K)^{2-p},L \kappa_{2M} (2K)^{p-2})$ replacing $(\nu,L)$. This yields the existence of an $\mathcal{A}$-caloric function $h\in L^2\big(\Lambda_{\varrho_*/2};W^{1,2}(B_{\varrho_*/2},\R^N)\big)$ on $Q_{\varrho_*/2}$ satisfying in the sub-quadratic case that
    \begin{align}\label{9.192}
        \mint{-}\limits_{Q_{\varrho_*/2}} & \bigg|\frac{h}{\varrho_*/2}\bigg|^2 + \big|V_1(Dh)\big|^2 \, dz 
        \leq 2^{n+5}
        \qquad\text{and}\qquad
        \mint{-}\limits_{Q_{\varrho_*/2}} \bigg|\frac{w-h}{\varrho_*/2}\bigg|^2 \, dz 
        \leq \varepsilon
    \end{align}
    or in the super-quadratic case that
    \begin{align}\label{8.192}
        \mint{-}\limits_{Q_{\varrho_*/2}} & \bigg|\frac{h}{\varrho_*/2}\bigg|^2 + |Dh|^2 \, dz 
        + \gamma^{p-2}
        \mint{-}\limits_{Q_{\varrho_*/2}} \bigg|\frac{h}{\varrho_*/2}\bigg|^p + |Dh|^p \, dz  \leq 2^{n+3+2p}
    \end{align}
    and
    \begin{align}\label{8.193}
        \mint{-}\limits_{Q_{\varrho_*/2}} & \bigg|\frac{w-h}{\varrho_*/2}\bigg|^2 \, dz 
        + \gamma^{p-2}
        \mint{-}\limits_{Q_{\varrho/2}} \bigg|\frac{w-h}{\varrho_*/2}\bigg|^p \, dz  \leq \varepsilon.
    \end{align}\\
    \textbf{Step 2: Linearization.} In the second step, we prove a decay estimate by appropriately linearizing the solution $u$. Here we use the a priori estimate of Lemma \ref{Lem7.2} for the $\mathcal{A}$-caloric function $h$, where we use the parameters $s\in \{2,\max\{2,p\}\}$ and $\theta\in (0,1/4]$. In the \textbf{super-quadratic case}, we deduce from Lemma \ref{Lem7.2} and inequality (\ref{8.192}) that
    \begin{align}\label{8.194}
        \frac{1}{(2\theta\varrho)^s} \mint{-}\limits_{Q_{2\theta\varrho_*}} & \big| h - (h)_{2\theta\varrho_*} - (Dh)_{2\theta\varrho_*} x \big|^s \, dz 
        \leq
        C_{pa} (4\theta)^s \bigg(\frac{2}{\varrho_*}\bigg)^{s} \mint{-}\limits_{Q_{\varrho_*/2}} \big| h - (h)_{\varrho_*/2} - (Dh)_{\varrho_*/2} x \big|^s \, dz
        \nonumber\\ &\leq
        2^{4s-1} C_{pa} \theta^s \Bigg[ \bigg(\frac{2}{\varrho_*}\bigg)^{s} \mint{-}\limits_{Q_{\varrho_*/2}}  |h|^s + \left|(h)_{\varrho_*/2}\right|^s \, dz + \big| (Dh)_{\varrho_*/2} \big|^s  \Bigg]
        \nonumber\\ &\leq
        2^{4s} C_{pa} \theta^s \Bigg[ \bigg(\frac{2}{\varrho_*}\bigg)^{s} \mint{-}\limits_{Q_{\varrho_*/2}}  |h|^s \, dz + \mint{-}\limits_{Q_{\varrho_*/2}}  |Dh|^s \, dz  \Bigg]
        \leq
        2^{n+8 p} C_{pa} \gamma^{2-s} \theta^s,
    \end{align}
    where $C_{pa}=C_{pa}(\mathfrak{C},K,\kappa_{2M})$. In the \textbf{sub-quadratic case}, we calculate with the help of Hölder's inequality and the inequalities (\ref{2.14}) as well as (\ref{9.192}) that
    \begin{align}\label{9.194}
        \frac{1}{(2\theta\varrho_*)^s} &\mint{-}\limits_{Q_{2\theta\varrho_*}}  \big| h - (h)_{2\theta\varrho_*} - (Dh)_{2\theta\varrho_*} x \big|^s \, dz 
        \leq
        2^{4s} C_{pa} \theta^s \Bigg[ \bigg(\frac{2}{\varrho_*}\bigg)^{s} \mint{-}\limits_{Q_{\varrho_*/2}}  |h|^s \, dz + \bigg( \mint{-}\limits_{Q_{\varrho_*/2}}  |Dh| \, dz \bigg)^s \Bigg]
        \nonumber\\ &\leq
        C(p) C_{pa} \theta^2 \Bigg[ \bigg(\frac{2}{\varrho_*}\bigg)^{s} \mint{-}\limits_{Q_{\varrho_*/2}}  |h|^s \, dz + \bigg( \mint{-}\limits_{Q_{\varrho_*/2}}  \big|V_1(Dh)\big|^2 + \big|V_1(Dh)\big|^{\frac{2}{p}} \, dz \bigg)^s \Bigg]
        \nonumber\\ &\leq
        C(p) C_{pa} \theta^2 \Bigg[ \bigg(\frac{2}{\varrho_*}\bigg)^{s} \mint{-}\limits_{Q_{\varrho_*/2}}  |h|^s \, dz + \bigg( \mint{-}\limits_{Q_{\varrho_*/2}}  \big|V_1(Dh)\big|^2 \, dz \bigg)^s + \bigg( \mint{-}\limits_{Q_{\varrho_*/2}}  \big|V_1(Dh)\big|^2 \, dz \bigg)^{\frac{s}{p}} \Bigg]
        \nonumber\\ &\leq
        C(p) C_{pa} \theta^s,
    \end{align}
    where $C_{pa}=C_{pa}(\mathfrak{C},K,\kappa_{2M})$. The previous two inequalities can now be used to obtain a quantified decay estimate for $w$. From (\ref{9.192}) and (\ref{8.193}), we obtain in both the sub- and super-quadratic case that
    \begin{align}\label{9.195}
        \frac{1}{(2\theta\varrho_*)^s} \mint{-}\limits_{Q_{2\theta\varrho_*}} & \big| w - (h)_{2\theta\varrho_*} - (Dh)_{2\theta\varrho_*} x \big|^s \, dz 
        \nonumber\\ &\leq
        \frac{2^{s-1}}{(2\theta\varrho_*)^s}  \Bigg[\mint{-}\limits_{Q_{2\theta\varrho_*}} | w - h |^s \, dz 
        +
        \mint{-}\limits_{Q_{2\theta\varrho_*}} \big| h - (h)_{2\theta\varrho_*} - (Dh)_{2\theta\varrho_*} x \big|^s \, dz 
        \Bigg]
        \nonumber\\ &\leq
        C(n,p) \Bigg[ \theta^{-n-2-s} (\varrho_*/2)^{-s} \mint{-}\limits_{Q_{\varrho_*/2}} | w - h |^s \, dz + C_{pa} \gamma^{2-s} \theta^s \Bigg]
        \nonumber\\ &\leq
        C(\mathfrak{C},K,\kappa_{2M}) \gamma^{2-s} \big[ \theta^{-n-2-s} \varepsilon +
         \theta^s \big].
    \end{align}
    To further estimate the right-hand side, we choose $\varepsilon:= \theta^{n+4+2p}$, where the parameter $\theta\in (0,1/4]$ remains to be chosen later. If we now define
    \begin{equation*}
        \vartheta= \begin{cases}
            \theta/4 &\text{for }p<2,\\
            \theta &\text{for }p>2,
        \end{cases}
    \end{equation*}
    and recall the definitions (\ref{9.10}) and (\ref{9.11}) of $v$ and $w$ respectively, we can rewrite the latter inequality in terms of $\varrho$ and $u$. This yields that
    \begin{align*}
        \frac{1}{(2\vartheta\varrho)^s} \mint{-}\limits_{Q_{2\vartheta\varrho}^{(\lambda)}} & \Big| u - (u-v) -  C_1 |A|^{\frac{2-p}{2}} \sqrt{E_\lambda(\varrho)} \big[(h)_{2\vartheta\varrho} - (Dh)_{2\vartheta\varrho} x \big] \Big|^s \, dz 
        \nonumber\\ &\leq
        C C_1^s \vartheta^2 \gamma^2 |A|^{s}
        \leq
         C |A|^{s-p}
        \vartheta^s E_\lambda (\varrho),
    \end{align*}
    where we have for the constant $C=C(\mathfrak{C},K,\kappa_{2M})$. Let $L_{2\vartheta\varrho}^{(\lambda)}: \R^n\to \R^N x \mapsto l_{2\vartheta\varrho}^{(\lambda)} + (Dl)_{2\vartheta\varrho}^{(\lambda)}x$ be the unique affine minimizer in accordance to section \ref{SMinaff} and cylinder $Q_{2\vartheta\varrho}^{(\lambda)}$. Then the previous estimate (\ref{9.195}) and an application of Lemma \ref{Lem2.8} in the super-quadratic case imply 
    \begin{align}\label{9.198}
        \mint{-}\limits_{Q_{2\vartheta\varrho}^{(\lambda)}} \bigg|\frac{u-L_{2\vartheta\varrho}^{(\lambda)}}{\vartheta\varrho} \bigg|^s \, dz
        \leq
        C |A|^{s-p}
         \vartheta^s E_\lambda (\varrho),
    \end{align}
    where again $C=C(\mathfrak{C},K,\kappa_{2M})$. In order to proceed further, it will be necessary to replace $A$ by $(Dl)_{2\vartheta\varrho}^{(\lambda)}$ in (\ref{9.198}). For this purpose, note that (\ref{2.5.3}) implies the following estimate for the difference:
    \begin{align*}\label{9.199}
        \big|(Dl)_{2\vartheta\varrho}^{(\lambda)} - (Du)_{\varrho}^{(\lambda)}\big|^2 
        \leq 
        n(n+2) \mint{-}\limits_{Q_{2\vartheta\varrho}^{(\lambda)}} \bigg|\frac{ u - (u)_{2\vartheta\varrho}^{(\lambda)} - (Du)_{\varrho}^{(\lambda)} x}{2\vartheta\varrho} \bigg|^2 \, dz=
        n(n+2) \mint{-}\limits_{Q_{2\vartheta\varrho}^{(\lambda)}} \bigg|\frac{\Tilde{v}}{2\vartheta\varrho} \bigg|^2 \, dz,
    \end{align*}
    where we defined $\Tilde{v}=u - (u)_{2\vartheta\varrho}^{(\lambda)} - (Du)_{\varrho}^{(\lambda)} x$. Let us now estimate the right-hand side further. To this end, we notice that
    \begin{align*}
        \mint{-}\limits_{Q_{2\vartheta\varrho}^{(\lambda)}} & \big| u - (u)_{2\vartheta\varrho}^{(\lambda)}\big|^p \, dz
        \leq
        2^{p-n-2}\vartheta^{-n-2} \mint{-}\limits_{Q_{\varrho}^{(\lambda)}} \big| u - (u)_{\varrho}^{(\lambda)}\big|^p \, dz,
    \end{align*}
    which implies
    \begin{equation*}
        H^*_\lambda (2\vartheta\varrho) \leq C(n,p) \vartheta^{-\frac{\beta(n+2)}{p}} H^*_\lambda (\varrho)
        \qquad\text{and similarly}\qquad
        H^*_\lambda (4\vartheta\varrho) \leq C(n,p) \vartheta^{-\frac{\beta(n+2)}{p}} H^*_\lambda (\varrho).
    \end{equation*}
    We proceed by distinguishing the sub- and super-quadratic case. In the \textbf{super-quadratic case}, we use the Poincaré inequality from Lemma \ref{Lem5.3} with the parameter choice $s=2$ exactly as in (\ref{8.12}) to obtain that
    \begin{align}\label{8.1991}
        \mint{-}\limits_{Q_{2\vartheta\varrho}^{(\lambda)}} & \bigg|\frac{\Tilde{v}}{2\vartheta\varrho} \bigg|^2 \, dz
        \leq 
        C \Bigg[ \mint{-}\limits_{Q_{2\vartheta\varrho}^{(\lambda)}} |Dv|^2 \, dz 
        + 
        \Bigg( \lambda^{2-p} \mint{-}\limits_{Q_{2\vartheta\varrho}^{(\lambda)}} |A|^{p-2} |Dv| 
        + 
        |Dv|^{p-1} \, dz 
        +
        \lambda^{2-p} |A|^{p-1}  H_\lambda^*( 2\vartheta\varrho)
        \Bigg)^2 \Bigg]
        \nonumber\\ & \leq
        \frac{C}{\vartheta^{n+2}} \mint{-}\limits_{Q_{\varrho}^{(\lambda)}} |Dv|^2 \, dz
        +
        C |A|^{2(2-p)} \bigg( \frac{1}{\vartheta^{n+2}} \mint{-}\limits_{Q_{\varrho}^{(\lambda)}} |Dv|^p \, dz \bigg)^{\frac{2(p-1)}{p}}
        +
        C  |A|^{2(1-p)} \bigg( \frac{E_\lambda(\varrho)}{\vartheta^{\frac{(n+2)\beta}{p}}} \bigg)^2
        \nonumber\\ &\leq
        \frac{C }{\vartheta^{2(n+2)}} \Big[|A|^{2-p} E_\lambda(\varrho)
        +
        C |A|^{2(2-p)} \big( E_\lambda(\varrho) \big)^{\frac{2(p-1)}{p}}
        +
        |A|^{2(1-p)} \big( E_\lambda(\varrho) \big)^2 \Big]
        \leq
        \frac{C}{\vartheta^{2(n+2)}} \varepsilon_0 |A|^2.
    \end{align}
    In the \textbf{sub-quadratic case}, we estimate with the Sobolev-Poincaré inequality from Lemma \ref{Lem5.4} exactly as in (\ref{9.12}), but with different radii. We obtain with (\ref{9.103}) that
    \begin{align}\label{9.1991}
        \mint{-}\limits_{Q_{2\vartheta\varrho}^{(\lambda)}} & \bigg|\frac{\Tilde{v}}{2\vartheta\varrho} \bigg|^2 \, dz
        \leq 
        C 
        \Bigg[ \lambda^{p-2} |A|^{p} H^*_\lambda (4\vartheta\varrho) 
        +
        \Bigg( \mint{-}\limits_{Q^{(\lambda)}_{4\vartheta\varrho}} |Dv|^p \, dz 
        + 
        |A|^{p} \big( H^*_\lambda (4\vartheta\varrho)\big)^p \Bigg)^{\frac{2}{p}} \Bigg]
        \nonumber \\ &\leq 
        C 
        \Bigg[ \vartheta^{-\frac{\beta(n+2)}{p}} \lambda^{p-2} E_\lambda(\varrho) + \Bigg( \vartheta^{-n-2} \mint{-}\limits_{Q^{(\lambda)}_{\varrho}} |Dv|^p \, dz 
        + 
        \vartheta^{-\beta(n+2)} |A|^{p(1-p)} E^p_\lambda(\varrho) \Bigg)^{\frac{2}{p}} \Bigg]
        \nonumber \\ & \leq 
        \frac{C}{\vartheta^{2(n+2)}}
        \bigg[ \lambda^{p-2} E_\lambda(\varrho) + \Big(  \Phi_\lambda(\varrho) + \lambda^{\frac{p(2-p)}{2}} \big( \Phi_\lambda(\varrho) \big)^{\frac{p}{2}}
        + 
        |A|^{p(1-p)} E^p_\lambda(\varrho) \Big)^{\frac{2}{p}}\bigg]
        \nonumber \\ &\leq 
        \frac{C}{\vartheta^{2(n+2)}}
        \Big[ \varepsilon_0 |A|^{2} +  \Big( \varepsilon_0 |A|^{p} + \varepsilon_0^{\frac{p}{2}} |A|^{p} + \varepsilon_0^p |A|^{p} \Big)^{\frac{2}{p}}
        \Big]
        \leq 
        \frac{C}{\vartheta^{2(n+2)}} \varepsilon_0
        |A|^{2}.
    \end{align}
    Using the derived inequality (\ref{8.1991}) and (\ref{9.1991}) yields 
    \begin{align}\label{9.1992}
        \big|(Dl)_{2\vartheta\varrho}^{(\lambda)} - A \big|^2 
        &\leq
        Cn(n+2) \vartheta^{-2(n+2)} \varepsilon_0
        |A|^{2}
        \leq
        |A|^2,
    \end{align}
    provided that the \textbf{smallness condition}
    \begin{align}\label{9.1993}
        C n(n+2) \vartheta^{-2(n+2)} \varepsilon_0 \leq 1
    \end{align}
    is satisfied. Note that the constant has the dependency $C=C(\mathfrak{C},K,\kappa_{4M})$. The inequality (\ref{9.1992}) in particular implies that 
    \begin{equation}\label{9.1994}
        \big| 
        (Dl)^{(\lambda)}_{2\vartheta\varrho} \big|\leq 2 |A| 
        \qquad\text{and}\qquad
        |A| \leq 2 \big| (Dl)^{(\lambda)}_{2\vartheta\varrho} \big|.
    \end{equation}
    \textbf{Step 3: Excess improvement.} In the third and final step, we use the estimate (\ref{9.198}) to obtain the desired excess improvement of the Lemma. We start again with the estimation of the right-hand side of the Caccioppoli inequality of Lemma \ref{Lem5.1}. To do this, we observe with inequalities (\ref{9.198}) \& (\ref{9.1994}) as well as hypotheses (\ref{9.102}) \& (\ref{9.103}) in a first step that
    \begin{align*}
        \mint{-}\limits_{Q_{2\vartheta\varrho}^{(\lambda)}} & \big|u-(u)_{2\vartheta\varrho}^{(\lambda)} \big|^p  \, dz
        \leq 
        2^{p-1} (2\vartheta\varrho)^p \mint{-}\limits_{Q_{2\vartheta\varrho}^{(\lambda)}} \bigg|\frac{u-L_{2\vartheta\varrho}^{(\lambda)}}{2\vartheta\varrho} \bigg|^p \, dz
        +
        2^{p-1} (2\vartheta\varrho)^p \big| (Dl)^{(\lambda)}_{2\vartheta\varrho} \big|^p
        \\ & \leq
        C (2\vartheta\varrho)^p\Big( \mathds{1}_{\{p<2\}} \big( |A|^{2-p} \vartheta^2  E_\lambda (\varrho) \big)^{\frac{p}{2}} + \mathds{1}_{\{p>2\}}\vartheta^p E_\lambda(\varrho)+ |A|^p \Big)
        \leq 
        C (2\vartheta\varrho)^p \lambda^p
        \leq 
        C (2\vartheta\varrho)^p.
    \end{align*}
    Using the inequalities (\ref{9.102}), (\ref{9.198}) and (\ref{9.1994}), then yields that
    \begin{align*}
        &\mint{-}\limits_{Q_{2\vartheta\varrho}^{(\lambda)}} \bigg| V_{|(Dl)^{(\lambda)}_{2\vartheta\varrho}|}\bigg(\frac{u-L_{2\vartheta\varrho}^{(\lambda)}}{2\vartheta\varrho} \bigg) \bigg|^2
        + \lambda^{p-2}
        \bigg| \frac{u-L_{2\vartheta\varrho}^{(\lambda)}}{2\vartheta\varrho} \bigg|^2
        \, dz +
        L \big| 
        (Dl)^{(\lambda)}_{2\vartheta\varrho} \big|^{p} \Bigg( \mint{-}\limits_{Q_{2\vartheta\varrho}^{(\lambda)}} \big|u-(u)_{2\vartheta\varrho}^{(\lambda)} \big|^p  \, dz \Bigg)^{\frac{\beta}{p}}
        \nonumber\\ & \qquad + L \big| 
        (Dl)^{(\lambda)}_{2\vartheta\varrho} \big|^{p} (2\vartheta\varrho)^\beta (1+\lambda^{2-p})^{\frac{\beta}{2}}
        \nonumber\\ & \leq C
        \mint{-}\limits_{Q_{2\vartheta\varrho}^{(\lambda)}} \mathds{1}_{\{p>2\}} \bigg| \frac{u-L_{2\vartheta\varrho}^{(\lambda)}}{2\vartheta\varrho} \bigg|^p + \Big(\big|(Dl)^{(\lambda)}_{2\vartheta\varrho}|^{p-2} + \lambda^{p-2} \Big)
        \bigg| \frac{u-L_{2\vartheta\varrho}^{(\lambda)}}{2\vartheta\varrho} \bigg|^2 \, dz
        +
        C |A|^{p} (2\vartheta\varrho)^\beta (1+\lambda^{2-p})^{\frac{\beta}{2}}
        \nonumber\\ & \leq 
        C \vartheta^\beta E_\lambda (\varrho),
    \end{align*}
    where $C=C(\mathfrak{C},K,\kappa_{8M})$ and the dependency on $\kappa_{8M}$ is due to (\ref{9.1994}) and (\ref{9.101}). Then the Caccioppoli inequality (\ref{Lem5.1}) yields
    \begin{align}\label{9.1996}
        \mint{-}\limits_{Q_{\vartheta\varrho}^{(\lambda)}} \big| V_{|(Dl)^{(\lambda)}_{2\vartheta\varrho}|}\big(Du - (Dl)^{(\lambda)}_{2\vartheta\varrho} \big) \big|^2
        \, dz
        \leq 
        C \vartheta^\beta E_\lambda (\varrho)
    \end{align}
    and Lemma \ref{Lem2.4} allows us to replace $(Dl)^{(\lambda)}_{2\vartheta\varrho}$ on the left-hand side by $(Du)^{(\lambda)}_{\vartheta\varrho}$. This results in
    \begin{align}\label{9.1997}
        \Phi_\lambda(\vartheta\varrho) & =
        \mint{-}\limits_{Q_{\vartheta\varrho}^{(\lambda)}} \big| V_{|A|}(Du -A ) \big|^2
        \, dz
        \leq C(p)
        \mint{-}\limits_{Q_{\vartheta\varrho}^{(\lambda)}} \big| V_{|(Dl)^{(\lambda)}_{2\vartheta\varrho}|}\big(Du - (Dl)^{(\lambda)}_{2\vartheta\varrho} \big) \big|^2
        \, dz
        \nonumber\\ & \leq
        C \vartheta^\beta E_\lambda (\varrho)
        =: C_f \vartheta^\beta E_\lambda (\varrho)
    \end{align}
    with the obvious definition of the constant $C_f=C_f(\mathfrak{C},K,\kappa_{8M})$. It remains to estimate the other terms of $E_\lambda(\vartheta\varrho)$. To this end, we use (\ref{9.1996}), (\ref{9.103}) and (\ref{9.02}) and Hölder's inequality to estimate the difference
    \begin{align}\label{9.1998}
        \big|(Dl)_{2\vartheta\varrho}^{(\lambda)} & - A\big| 
        \leq 
        \mint{-}\limits_{Q_{\vartheta\varrho}^{(\lambda)}} \big|Du -(Dl)_{2\vartheta\varrho}^{(\lambda)}  \big| \, dz
        \leq
        C(p) \Bigg[ \mint{-}\limits_{Q_{\vartheta\varrho}^{(\lambda)}} \big| V_{|(Dl)^{(\lambda)}_{2\vartheta\varrho}|}\big(Du - (Dl)^{(\lambda)}_{2\vartheta\varrho} \big) \big|^2
        \, dz 
        \nonumber\\ & \qquad + \mathds{1}_{\{p<2\}} \big| (Dl)_{2\vartheta\varrho}^{(\lambda)} \big|^{\frac{p(2-p)}{2}} \bigg( \mint{-}\limits_{Q_{\vartheta\varrho}^{(\lambda)}} \big| V_{|(Dl)^{(\lambda)}_{2\vartheta\varrho}|}\big(Du - (Dl)^{(\lambda)}_{2\vartheta\varrho} \big) \big|^2
        \, dz \bigg)^\frac{p}{2} \Bigg]^{\frac{1}{p}} 
        \nonumber \\ &\leq
        C(p) \Big[ C_f \vartheta^{\beta} \varepsilon_0 |A|^p
        + 
        \mathds{1}_{\{p<2\}} C_f^{\frac{p}{2}} \vartheta^{\frac{\beta p}{2}} \varepsilon^\frac{p}{2}_0 |A|^p \Big]^{\frac{1}{p}} 
        \leq
        \frac{1}{4}|A|.
    \end{align}
    For the last inequality additionally assumed that the \textbf{smallness assumption}
    \begin{align}\label{9.1999}
         \Big[ C_f
        + 
        C_f^{\frac{p}{2}}  \Big]^{\frac{1}{p}}
        \varepsilon_0^\frac{1}{2} \leq
         \frac{1}{4}
    \end{align}
    is satisfied. From the inequalities (\ref{9.1994}) and (\ref{9.1998}) we deduce that $|A| \leq 4 |B|$ and $|B| \leq 4 |A|$, where we use the abbreviation $B= (Du)_{\vartheta\varrho}^{(\lambda)}$. With this in mind, we can estimate the untreated term $H_\lambda(\vartheta\varrho)$ (see (\ref{9.02})) of the complete excess functional. By a matter of fact the smallness assumptions (\ref{9.103}) and (\ref{9.1999}) in combination with (\ref{9.1997}) yield
    \begin{align}\label{9.19991}
        H_\lambda(\vartheta\varrho) & \leq C (\vartheta\varrho)^\beta  |B|^{p} \bigg( \Phi_{\lambda}(\vartheta\varrho) 
        + 
        \mathds{1}_{\{p<2\}}|B|^{\frac{p(2-p)}{2}} \Phi^\frac{p}{2}_{\lambda}(\vartheta\varrho)
        +
        (1+\lambda^{2-p})^{\frac{\beta}{2}}
        \nonumber\\ & \qquad +
        \Big[ |B|^p \big( 1 + \lambda^{p(2-p)} |B|^{p(p-2)} \big) 
        + 
        \lambda^{p(2-p)} \big( \Phi_\lambda (\vartheta\varrho) + 
        \mathds{1}_{\{p<2\}} |B|^{\frac{p(2-p)}{2}} \Phi^\frac{p}{2}_{\lambda}(\vartheta\varrho) \big)^{p-1} \Big) \Big]^{\frac{\beta}{p}}  \bigg)
        \nonumber \\ & \leq
        C 4^p \vartheta^\beta \varrho^\beta |A|^p \bigg( 
         \big( C_f \vartheta^\beta \varepsilon_0 + \mathds{1}_{\{p<2\}} (C_f \vartheta^\beta \varepsilon_0)^{\frac{p}{2}} 4^p \big) |A|^p 
         + 
         (1+\lambda^{2-p})^{\frac{\beta}{2}}
        +
        \Big[ 4^p |A|^p \big( 1 
        \nonumber\\ & \qquad 
        + \lambda^{p(2-p)} 4^{p(p-2)} |A|^{p(p-2)} \big) 
        + 
        \lambda^{p(2-p)} |A|^{p(p-1)} \Big( C_f \vartheta^\beta \varepsilon_0 + \mathds{1}_{\{p<2\}} ( C_f \vartheta^\beta \varepsilon_0 )^{\frac{p}{2}} 4^p \Big)^{p-1} \Big]^{\frac{\beta}{p}} \bigg)
        \nonumber \\ & \leq
        C  \vartheta^\beta |A|^p \varrho^\beta (1+\lambda^{2-p})^{\frac{\beta}{2}}
        \leq
        C \vartheta^\beta E_\lambda(\varrho).
    \end{align}
    Combining (\ref{9.1997}) and (\ref{9.19991}) results in
    \begin{align*}
        E_\lambda (\vartheta\varrho) 
        \leq 
        C_e \vartheta^\beta E_\lambda (\varrho),
    \end{align*}
    where $C_e= C_e(\mathfrak{C},K,\kappa_{8M})$ and $\vartheta\in (0,1/4]$. To conclude the proof of the Lemma, we fix the constants $\vartheta$ and $\varepsilon_0$. Given $\beta_0\in (0,1)$, we first choose $\vartheta\in (0,1/4]$ such that $C_e\vartheta^\beta \leq \vartheta^{\beta\beta_0}$ and $\vartheta\leq 2^{-2/(\beta_0\beta)}\leq 2^{-p/(\beta_0\beta)}$. This fixes $\vartheta$ in dependence on $\mathfrak{C},K,\beta$ and $\kappa_{8M}$. As mentioned before this fixes $\varepsilon=\theta^{n+4+2p}$ in dependence on the same parameters and therefore $\delta$, also in dependence on the same parameters. Here we note that we could have used $\kappa_{8M}$ instead of $\kappa_{2M}$ for the upper bound of the bilinear form $\mathcal{A}$. Finally, we have to ensure that the smallness conditions (\ref{9.191}), (\ref{9.1993}) and (\ref{9.1999}) are satisfied. This can be achieved, if we require $E_\lambda(\varrho)\leq \varepsilon_0 |A|^p$ with a sufficiently small constant $\varepsilon_0$ depending on $\mathfrak{C},K,\beta,\kappa_{8M}$ and $\omega_{4M}(\,\cdot\,)$, which concludes the proof.
\end{proof}
In the following we want to iterate the obtained Lemma. Since the only prerequisite for the proof is Lemma \ref{Lem9.1}, we obtain exactly as in \cite[Lemma 8.2 and Lemma 9.2]{bogelein2013regularity} with the parameter $\beta$ replaced by $\beta_0\beta/2$ and $ E_\lambda$ replacing $\Phi_\lambda$ (note $\Phi_\lambda\leq E_\lambda$) the following
\begin{Lem}\label{Lem9.2}
    Let $2n/(n+2)< p\neq 2$, $\beta, \beta_0\in (0,1)$, $M,K\geq 1$ and
    \begin{equation*}
        u\in C\big([0,T];L^2\big(\Omega,\mathbb{R}^{N}\big) \big) \cap L^p\big(0,T;W^{1,p}\big(\Omega,\mathbb{R}^{N}\big) \big)
    \end{equation*}
    be a weak solution of system (\ref{Sys1}) in $\Omega_T$ that satisfies the assumptions of Section \ref{Assumptions}. Then there exits a positive constants
    \begin{align*}
        & \varepsilon_1= \varepsilon_1(\mathfrak{C},K,\beta,\beta_0,\kappa_{8M}, \omega_{4M}(\,\cdot\,)) \leq 1, \\
        &C=C(\mathfrak{C},K,\beta,\beta_0,\kappa_{8M}),
    \end{align*}
    for which the following is true: Let $Q_{\varrho}^{(\lambda)}\Subset\Omega_T$ be some intrinsic geometric cylinder with parameters $\varrho\in (0,1]$ and $0<\lambda\leq 1$ such that there holds
    \begin{align}\label{9.201}
        \big|(Du)_{z_0;\varrho}^{(\lambda)}\big| \leq M,
    \end{align}
    \begin{align}\label{9.202}
        \frac{\lambda}{K} \leq \big|(Du)_{z_0;\varrho}^{(\lambda)}\big| \leq K\lambda
    \end{align}
    and the smallness condition
    \begin{align}\label{9.203}
        E_\lambda (z_0,\varrho)
        \leq
        \varepsilon_0 \big|(Du)_{z_0;\varrho}^{(\lambda)}\big|^p
    \end{align}
    is satisfied. Then the limit
    \begin{align}\label{9.204}
        \Gamma_{z_0} \equiv \lim\limits_{r\downarrow 0} (Du)_{z_0,r}^{(\lambda)}
    \end{align}
    exists and the following decay estimate holds:
    \begin{align}\label{9.205}
        \mint{-}\limits_{Q_r^{(\lambda)}(z_0)} |Du - \Gamma_{z_0} |^p \, dz \leq C \Big( \frac{r}{\varrho} \Big)^{\beta_0 \beta} E_\lambda (z_0,\varrho)
        \qquad \forall 0<r\leq \varrho.
    \end{align}
    Moreover, we have
    \begin{align}\label{9.206}
        \frac{\lambda}{2K}\leq |\Gamma_{z_0}| \leq 2K\lambda.
    \end{align}
\end{Lem}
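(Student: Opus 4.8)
The plan is to iterate Lemma~\ref{Lem9.1} along the dyadic sequence $\varrho_j=\vartheta^j\varrho$, $j\ge 0$, where $\vartheta=\vartheta(\mathfrak{C},\beta,\beta_0,K,\kappa_{8M})$ is the constant furnished by Lemma~\ref{Lem9.1}. Writing $A_j:=(Du)^{(\lambda)}_{z_0;\varrho_j}$ and keeping $\lambda$ fixed throughout, the point is that the hypotheses (\ref{9.201})--(\ref{9.203}) of the present lemma are the ``tight'' versions of (\ref{9.101})--(\ref{9.103}), with $M,K$ in place of $2M,2K$, so the built-in factor of two in Lemma~\ref{Lem9.1} leaves exactly the room the cylindrical means $A_j$ need in order to drift as $j$ grows. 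I would fix $\varepsilon_1$ at the outset small enough relative to the $\varepsilon_0$ of Lemma~\ref{Lem9.1} and to $K$ -- in particular $\varepsilon_1\le 2^{-p}\varepsilon_0$ and $\varepsilon_1 K^p\le 1$, the latter guaranteeing $E_\lambda(z_0,\varrho)\le\varepsilon_1|A_0|^p\le 1$ -- and then prove by induction on $j$ the two assertions
\begin{align*}
  E_\lambda(z_0,\varrho_j)\le \vartheta^{j\beta\beta_0}\,E_\lambda(z_0,\varrho)
  \qquad\text{and}\qquad
  |A_j-A_0|\le \tfrac12|A_0| .
\end{align*}

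For the inductive step, the second assertion up to level $j$ gives $|A_i|\in[\tfrac12|A_0|,\tfrac32|A_0|]$ for $i\le j$, so (\ref{9.101}) and (\ref{9.102}) hold at every scale $\varrho_i$, while $E_\lambda(z_0,\varrho_i)\le E_\lambda(z_0,\varrho)\le\varepsilon_1|A_0|^p\le 2^p\varepsilon_1|A_i|^p\le\varepsilon_0|A_i|^p$ gives (\ref{9.103}); chaining the excess improvement (\ref{9.104}) from $i=0$ to $i=j$ yields the first assertion at level $j+1$. For the second assertion the decisive estimate is the drift bound
\begin{align*}
  |A_{i+1}-A_i|\le \mint{-}\limits_{Q^{(\lambda)}_{\varrho_{i+1}}(z_0)}|Du-A_i|\,dz
  \le \vartheta^{-\frac{n+2}{p}}\bigg(\mint{-}\limits_{Q^{(\lambda)}_{\varrho_i}(z_0)}|Du-A_i|^p\,dz\bigg)^{\frac1p},
\end{align*}
where, by inequality (\ref{9.02}) applied with the admissible choice $A=A_i$ (non-zero since $|A_i|\ge\lambda/(2K)$), the $L^p$--oscillation is dominated by $\Phi_\lambda(z_0,\varrho_i,A_i)$; in the sub-quadratic case the extra term $|A_i|^{p(2-p)/2}\Phi_\lambda^{p/2}$ is absorbed using $\Phi_\lambda\le E_\lambda\le 1$ together with $|A_i|\le 2K\lambda\le 2K$. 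Hence $|A_{i+1}-A_i|\le C\vartheta^{-(n+2)/p}(\vartheta^{i\beta\beta_0}E_\lambda(z_0,\varrho))^{1/p}$, and summing the geometric series in $i$ (ratio $\vartheta^{\beta\beta_0/p}<1$) gives $\sum_{i\ge 0}|A_{i+1}-A_i|\le C\varepsilon_1^{1/p}|A_0|$; choosing $\varepsilon_1$ so small that $C\varepsilon_1^{1/p}\le\tfrac12$ closes the induction. The same series shows $(A_j)_j$ is Cauchy, so $\Gamma_{z_0}:=\lim_j A_j$ exists; comparing $(Du)^{(\lambda)}_{z_0;r}$ with $A_j$ for $\varrho_{j+1}<r\le\varrho_j$ identifies it with the full limit (\ref{9.204}), and $|\Gamma_{z_0}-A_0|\le\tfrac12|A_0|$ combined with (\ref{9.202}) yields (\ref{9.206}).

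For the decay estimate (\ref{9.205}), given $0<r\le\varrho$ I would choose $j$ with $\varrho_{j+1}<r\le\varrho_j$, enlarge the domain of integration from $Q^{(\lambda)}_r(z_0)$ to $Q^{(\lambda)}_{\varrho_j}(z_0)$ at the cost of the factor $\vartheta^{-(n+2)}$, split $|Du-\Gamma_{z_0}|\le|Du-A_j|+|A_j-\Gamma_{z_0}|$, bound the first mean by $C\vartheta^{j\beta\beta_0}E_\lambda(z_0,\varrho)$ exactly as in the drift estimate and the second by the tail $\sum_{i\ge j}|A_{i+1}-A_i|\le C\vartheta^{j\beta\beta_0/p}E_\lambda(z_0,\varrho)^{1/p}$, and finally use $\vartheta^{j\beta\beta_0}\le\vartheta^{-\beta\beta_0}(r/\varrho)^{\beta\beta_0}$ coming from $\varrho_{j+1}<r$. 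I expect the main obstacle to be bookkeeping rather than a new idea: one must ensure that the factor-two slack in (\ref{9.101})--(\ref{9.102}) is never exhausted -- which is precisely what the summability of the drift series secures -- and, in the sub-quadratic regime, control the mismatch between the natural excess $\Phi_\lambda$ and the $L^p$--oscillation of $Du$ created by the exponent $p/2<1$ in (\ref{9.02}), a mismatch that forces $\varepsilon_1$ to be chosen only after $\vartheta$ and $\varepsilon_0$ have been fixed. With these choices the iteration is carried out exactly as in \cite[Lemmas~8.2 and~9.2]{bogelein2013regularity}.
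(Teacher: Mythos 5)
Your outline — iterate Lemma~\ref{Lem9.1} along the scales $\varrho_j=\vartheta^j\varrho$, track the drift of the cylindrical means $A_j$, sum a geometric series, and pass to the limit — is exactly what the paper has in mind; the paper gives no proof of its own but refers to \cite[Lemmas~8.2 and~9.2]{bogelein2013regularity}, and your plan is that template with $E_\lambda$ in place of $\Phi_\lambda$ and $\vartheta^{\beta\beta_0}$ as the one-step decay.

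There is, however, a concrete gap in the sub-quadratic drift estimate. You assert that the extra term $|A_i|^{p(2-p)/2}\Phi_\lambda^{p/2}$ coming from~(\ref{9.02}) is ``absorbed using $\Phi_\lambda\le E_\lambda\le 1$ together with $|A_i|\le 2K\lambda\le 2K$'', and conclude $|A_{i+1}-A_i|\le C\vartheta^{-(n+2)/p}\big(\vartheta^{i\beta\beta_0}E_\lambda(z_0,\varrho)\big)^{1/p}$. This absorption is false: when $p<2$ and $\Phi_\lambda\le 1$ we have $\Phi_\lambda^{p/2}\ge\Phi_\lambda$, and the smallness condition~(\ref{9.203}) yields $\Phi_\lambda\le E_\lambda\le\varepsilon_1|A_0|^p< |A_i|^p$, whence $|A_i|^{p(2-p)/2}\Phi_\lambda^{p/2}>\Phi_\lambda$. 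So the extra term is the \emph{dominant} one, and after taking $1/p$-th powers the honest bound is
\begin{equation*}
|A_{i+1}-A_i|
\le C\vartheta^{-\frac{n+2}{p}}\Big[\Phi_\lambda(z_0,\varrho_i,A_i)^{\frac1p}+|A_i|^{\frac{2-p}{2}}\Phi_\lambda(z_0,\varrho_i,A_i)^{\frac12}\Big],
\end{equation*}
with the second term in charge. What closes the induction is not absorption but the interaction of the inductive comparability $|A_i|\le 2|A_0|$ with~(\ref{9.203}): from $\Phi_\lambda(i)\le\vartheta^{i\beta\beta_0}E_\lambda(z_0,\varrho)\le\vartheta^{i\beta\beta_0}\varepsilon_1|A_0|^p$ one gets $|A_i|^{(2-p)/2}\Phi_\lambda(i)^{1/2}\le C\varepsilon_1^{1/2}\vartheta^{i\beta\beta_0/2}|A_0|$, so the series sums to $C\big(\varepsilon_1^{1/p}+\varepsilon_1^{1/2}\big)|A_0|$; the decisive power of $\varepsilon_1$ is $1/2$, not $1/p$. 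This is not purely cosmetic, because the tail then decays like $\vartheta^{j\beta\beta_0/2}$, not the $\vartheta^{j\beta\beta_0/p}$ you use, and your crude bound of the first mean $\mint_{Q_{\varrho_j}^{(\lambda)}}|Du-A_j|^p$ by $C\vartheta^{j\beta\beta_0}E_\lambda(z_0,\varrho)$ also fails for the same reason: the dominant piece is $|A_j|^{p(2-p)/2}\Phi_\lambda(j)^{p/2}$, which after inserting $\Phi_\lambda(j)\le\vartheta^{j\beta\beta_0}E_\lambda(z_0,\varrho)$ only produces the rate $\vartheta^{jp\beta\beta_0/2}$ against $|A_0|^p$ rather than $E_\lambda(z_0,\varrho)$. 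So when $p<2$ your bookkeeping does not, as written, reproduce the stated exponent $\beta_0\beta$ in~(\ref{9.205}). This is precisely the place where the reference to \cite[Lemma~9.2]{bogelein2013regularity} is carrying the real weight; if you want a self-contained argument you must revisit how the $V$-function excess is converted back to the $L^p$-oscillation in the degenerate sub-quadratic scaling, rather than treating the mismatch as a constant that can be hidden in $\varepsilon_1$.
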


\subsection{The degenerate regime (DR)}

We now consider the \textbf{degenerate regime}.
\begin{Lem}\label{Lem9.3}
    Let $2n/(n+2)< p\neq 2$, $M_0\geq 1$, $\beta\in (0,1)$, $\chi,\chi_1\in (0,1]$ and
    \begin{equation*}
        u\in C\big([0,T];L^2\big(\Omega,\mathbb{R}^{N}\big) \big) \cap L^p\big(0,T;W^{1,p}\big(\Omega,\mathbb{R}^{N}\big) \big)
    \end{equation*}
    be a weak solution of system (\ref{Sys1}) in $\Omega_T$ that satisfies the assumptions of Section \ref{Assumptions}. Then there exist constants 
    \begin{align*}  
    &\alpha_1=\alpha_1(\mathfrak{C},M_0,\beta,\kappa_2)\in (0,\beta) ,\\
    &\mathfrak{m}=\mathfrak{m}(\mathfrak{C},M_0,\kappa_2), \\
    & C_d = C_d(\mathfrak{C},M_0,\kappa_\mathfrak{m}) \geq 1,\\
    &\vartheta=\vartheta(\mathfrak{C},M_0,\beta,\kappa_\mathfrak{m} ,\chi_1) \in (0,1/16],\\
    &K = K(\mathfrak{C},M_0,\beta,\kappa_\mathfrak{m}, \chi_1 , \eta(\,\cdot\,)) \geq M_0,\\
    &\varepsilon_2 = \varepsilon_2(\mathfrak{C},M_0,\beta,\kappa_\mathfrak{m}
    ,\chi , \chi_1, \eta(\,\cdot\,)) \in (0,1)
    \end{align*}
    such that the following is true: Let $Q_{\varrho}^{(\lambda)}\Subset\Omega_T$ be a intrinsic geometric cylinder with parameters $\varrho>0$ and $\lambda\in (0,1]$, which satisfy
    \begin{equation}\label{9.300}
        \varrho^\beta ( 1 + \lambda^{2-p})^{\frac{\beta}{2}} \leq \varepsilon_2,
    \end{equation}
    such that there holds
    \begin{align}\label{9.301}
        \big|(Du)_{\varrho}^{(\lambda)} \big| \leq M_0 \lambda,
    \end{align}
    \begin{align}\label{9.302}
        \chi \big|(Du)_{\varrho}^{(\lambda)} \big|^p \leq E_\lambda (\varrho)
        \qquad\text{or}\qquad
        \big|(Du)_{\varrho}^{(\lambda)} \big| \leq \frac{\lambda}{K}
    \end{align}
    and the smallness condition
    \begin{align}\label{9.303}
        E_\lambda(\varrho) \leq \min\{ \lambda^p , \varepsilon_2 \}
    \end{align}
    is fulfilled. Then we have for every $\Tilde{\lambda} \in [ \vartheta^{\alpha_1} \lambda,\lambda]$ that $Q_{\vartheta\varrho}^{(\Tilde{\lambda})}\Subset Q_{\varrho}^{(\lambda)}$,
    \begin{align}\label{9.304}
        E_{\Tilde{\lambda}}(\vartheta \varrho) \leq \chi_1 \Tilde{\lambda}^p
        \qquad\text{and}\qquad
        \big|(Du)_{\vartheta\varrho}^{(\Tilde{\lambda})} \big| \leq C_d \lambda.
    \end{align}
    Here we again abbreviated $Q_{\varrho}^{(\lambda)} \equiv Q_{\varrho}^{(\lambda)}(z_0)$,
    \begin{align*}
        (Du)_{\varrho}^{(\lambda)} \equiv (Du)_{z_0;\varrho}^{(\lambda)}
        ,\qquad
        \Phi_\lambda(\varrho) \equiv \Phi_\lambda \big(z_0,\varrho, (Du)_{z_0;\varrho}^{(\lambda)} \big).
        \quad\text{and}\quad
        E_\lambda(\varrho) \equiv E_\lambda (z_0,\varrho)
    \end{align*}
\end{Lem}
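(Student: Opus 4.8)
The strategy, following \cite{bogelein2013regularity} and Subsection~\ref{Strategy}, is to compare $u$ on $Q_\varrho^{(\lambda)}(z_0)$ with a solution of the model $p$-Laplacian system through the $p$-caloric approximation, and then to import the DiBenedetto--Friedman estimates of Section~\ref{SDI}. Assume $z_0=0$ and write $A=(Du)_\varrho^{(\lambda)}$. First, in either alternative of \eqref{9.302} together with \eqref{9.303}, Lemma~\ref{Lem2.1} and the bound $\mint{-}\limits_{Q_\varrho^{(\lambda)}}|Du|^p\,dz\le C\big(\Phi_\lambda(0,\varrho,A)+(\text{l.o.})+|A|^p\big)$ give $\Tilde{\Phi}_\lambda(\varrho)\le C(M_0)\lambda^p$ (notation of \eqref{defPsi}); moreover $\Tilde{\Phi}_\lambda(\varrho)$ is arbitrarily small once $\chi^{-1}\varepsilon_2$ is small (first alternative) or $K^{-1}$ and $\varepsilon_2$ are small (second alternative). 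Set $\gamma=\Tilde{a}(0,(u)_\varrho^{(\lambda)})\in[\nu,L]$ by \eqref{Asa}, $\mathcal{A}(\xi)=\gamma|\xi|^{p-2}\xi$, and let $w$ on $Q_\varrho^{(1)}$ be the intrinsic rescaling of $u-(u)_\varrho^{(\lambda)}$ normalized so that \eqref{4.101} (and, for $p<2$, \eqref{4.201} via Lemma~\ref{Lem5.3}) holds; the normalizing factor is $C_2\lambda\varrho$ with $C_2$ large depending on $\mathfrak{C},K,\kappa_\mathfrak{m},\chi$. The Poincaré hypothesis \eqref{4.103} for $w$ follows from Lemma~\ref{Lem5.3} with $A=0$, the residual $H^*_\lambda$-term being absorbed using \eqref{9.300}.

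Next, Lemma~\ref{Lem6.2} (rescaled) shows that $w$ is approximately $(\mathcal{A},p)$-caloric with relative error $C\big[\delta+\Tilde{\Phi}_\lambda(\varrho)^{1/p}/\eta(\delta)+\varrho^\beta\big((1+\lambda^{2-p})^{\beta/2}+\dots\big)\big]$; picking $\delta$ small, then $K$ large (depending on $\eta(\delta)$), then $\varepsilon_2$ small (depending on $\chi$) makes this at most the threshold $\delta_0$ of Lemma~\ref{Lem4.1} (resp.\ Lemma~\ref{Lem4.2}) for a prescribed accuracy $\varepsilon$. We thus obtain an $(\mathcal{A},p)$-caloric $h$ on $Q_{\varrho/2}^{(1)}$ with $\mint{-}\limits_{Q_{\varrho/2}^{(1)}}\big(|h/(\varrho/2)|^p+|Dh|^p\big)\,dz\le H$ and $\mint{-}\limits_{Q_{\varrho/2}^{(1)}}\big(|w-h|^p+|w-h|^2\big)\,dz\le\varepsilon$. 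Lemma~\ref{Lem7.1v2} then yields $\sup_{Q_{\varrho/4}^{(1)}}|Dh|\le C$ together with the Campanato-type decay \eqref{7.106} for $h$ on all standard sub-cylinders, with a rate governed by the DiBenedetto--Friedman exponent $\beta_1$, which depends only on the data (through $H$).

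Now fix $\Tilde{\lambda}\in[\vartheta^{\alpha_1}\lambda,\lambda]$ and the $L^2$-minimizing affine map $L\equiv L_{2\vartheta\varrho}^{(\Tilde{\lambda})}$ of \eqref{2.5.1} on $Q_{2\vartheta\varrho}^{(\Tilde{\lambda})}$. In rescaled variables this cylinder is the intrinsic cylinder $Q_{2\vartheta}^{(\mu)}(0)$ with $\mu=\Tilde{\lambda}/\lambda\in[\vartheta^{\alpha_1},1]$; for $\vartheta$ small and $\alpha_1<2/|p-2|$ it lies inside $Q_{\varrho/4}^{(1)}(0)$, so the $h$-estimates above are available on it — comparing, when $p>2$ and $Q_{2\vartheta}^{(\mu)}$ is fatter in time than $Q_{2\vartheta}^{(1)}$, with a slightly enlarged \emph{standard} cylinder of radius $\vartheta^{1-\alpha_1|p-2|/2}\varrho$. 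Using the closeness of $w$ to $h$, the sup-bound and the Hölder continuity of $Dh$ near the vertex, with \eqref{2.5.3} and the quasi-minimality properties of Subsection~\ref{SMinaff}, one gets $|DL|\le C\lambda$ (fix $\mathfrak{m}$ so that $|DL|\le\mathfrak{m}$ and the Caccioppoli constant of Lemma~\ref{Lem5.1} only sees $\kappa_\mathfrak{m}$) and
\[
\mint{-}\limits_{Q_{2\vartheta\varrho}^{(\Tilde{\lambda})}}\Big|\frac{u-L}{\vartheta\varrho}\Big|^{s}\,dz\le C\big(\vartheta^{\gamma_*}+\varepsilon_2\big)\lambda^{s},\qquad s\in\{2,\max\{2,p\}\},
\]
with $\gamma_*=\gamma_*(\mathfrak{C},M_0,\kappa_2,\beta,\alpha_1)>0$; the only $\vartheta$-dependent loss is a fixed negative power $\vartheta^{-c\alpha_1}$ from the change of intrinsic scaling (and the fattening for $p>2$), dominated by the decay/Hölder gain once $\alpha_1$ is small. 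Feeding this into Lemma~\ref{Lem5.1} on $Q^{(\Tilde{\lambda})}_{\cdot}$, replacing $DL$ by $(Du)_{\vartheta\varrho}^{(\Tilde{\lambda})}$ via Lemma~\ref{Lem2.4}, and estimating the term $H_{\Tilde{\lambda}}(\vartheta\varrho)$ of \eqref{8.04}--\eqref{9.03} by $C\varepsilon_2\lambda^p$ (using $|(Du)_{\vartheta\varrho}^{(\Tilde{\lambda})}|\le C_d\lambda$ and $\varrho^\beta(1+\Tilde{\lambda}^{2-p})^{\beta/2}\le C(\vartheta,\alpha_1)\,\varrho^\beta(1+\lambda^{2-p})^{\beta/2}\le C(\vartheta,\alpha_1)\varepsilon_2$), yields $E_{\Tilde{\lambda}}(\vartheta\varrho)\le C(\vartheta^{\gamma_*}+\varepsilon_2)\lambda^p$; the same bounds and Lemma~\ref{Lem2.1} give $\mint{-}\limits_{Q_{\vartheta\varrho}^{(\Tilde{\lambda})}}|Du-(Du)_{\vartheta\varrho}^{(\Tilde{\lambda})}|^p\,dz\le C\lambda^p$, hence $|(Du)_{\vartheta\varrho}^{(\Tilde{\lambda})}|\le C_d\lambda$ with $C_d$ independent of $\vartheta,K,\varepsilon_2$. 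One finally fixes, in this order: $\alpha_1<\min\{\beta,\gamma_*(\alpha_1)/p\}$ (possible since $\gamma_*(\alpha_1)$ tends to a positive constant as $\alpha_1\downarrow0$) and $\mathfrak{m}$; then $C_d$; then $\vartheta\in(0,1/16]$ so small that $C\vartheta^{\gamma_*-p\alpha_1}\le\chi_1/2$ and $Q_{\vartheta\varrho}^{(\Tilde{\lambda})}\Subset Q_\varrho^{(\lambda)}$ for all admissible $\Tilde{\lambda}$; then $K$; then $\varepsilon_2$ so small that $C\varepsilon_2\vartheta^{-p\alpha_1}\le\chi_1/2$. This gives $E_{\Tilde{\lambda}}(\vartheta\varrho)\le\chi_1\vartheta^{p\alpha_1}\lambda^p\le\chi_1\Tilde{\lambda}^p$, completing the proof.

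The heart of the argument — and where genuine care is needed — is this last balancing act: the excess decay obtained from the $p$-caloric approximation together with the DiBenedetto--Friedman theory only carries the rate $\beta_1=\beta_1(\text{data})$, whereas passing from the scaling $\lambda$ to an arbitrary $\Tilde{\lambda}\in[\vartheta^{\alpha_1}\lambda,\lambda]$ (and, for $p>2$, working on the resulting fatter cylinder, which must be correctly nested inside the domain of the $h$-estimates) costs a fixed power $\vartheta^{-c\alpha_1}$. One must take $\alpha_1$ small enough that the gain beats this loss, while keeping $C_d$ independent of the later-chosen parameters $\vartheta,K,\varepsilon_2$; the super-/sub-quadratic split and the Hölder-continuity error terms are then handled routinely via \eqref{9.300} and the lemmas of Sections~\ref{Not}--\ref{SDI}.
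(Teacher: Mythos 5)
Your proposal follows the paper's strategy closely: $p$-caloric approximation of the intrinsically rescaled $w$ on $Q_\varrho^{(1)}$, followed by the DiBenedetto--Friedman estimates from Section~\ref{SDI} applied to the comparison map $h$, then Caccioppoli (Lemma~\ref{Lem5.1}) to transfer the decay back to $u$, and finally a change of intrinsic scaling from $\lambda$ to $\Tilde{\lambda}\in[\vartheta^{\alpha_1}\lambda,\lambda]$ whose cost $\vartheta^{-c\alpha_1}$ is beaten by the Campanato decay once $\alpha_1$ is taken small. The overall structure and the order in which the constants are fixed ($\alpha_1$, then $\mathfrak{m}$, then $C_d$, then $\vartheta$, then $K$, then $\varepsilon_2$) match the paper.

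Two minor variations worth noting. First, you build your affine comparison from the $L^2$-minimizing affine map $L_{2\vartheta\varrho}^{(\Tilde{\lambda})}$ of Subsection~\ref{SMinaff}, which for $p>2$ forces an appeal to the quasi-minimality Lemma~\ref{Lem2.8}; the paper instead defines $l(x):=u - C_2\lambda\big(w + (h)_{2\theta\varrho_*} + (Dh)_{2\theta\varrho_*}x\big)$ directly from the $(\mathcal{A},p)$-caloric map $h$, which is a bit more direct. Second, you keep $\lambda^p$ in the intermediate decay estimate and convert to $\Tilde{\lambda}^p$ only at the end, paying a factor $\vartheta^{-p\alpha_1}$, whereas the paper absorbs the scaling change earlier; both are equivalent.

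One inaccuracy you should correct: you state that the normalizing constant $C_2$ for $w$ depends on $K$ and $\chi$. It does not --- and it must not, or the constant-fixing order becomes circular. The role of $K$ and $\chi$ is solely to make the approximate $p$-caloricity error from Lemma~\ref{Lem6.2} fall below the threshold $\delta_0$ of the $p$-caloric approximation lemma; they do \emph{not} enter the a priori normalization \eqref{4.101}, which follows from \eqref{9.301} and \eqref{9.303} alone via the bound $\Tilde{\Phi}_\lambda(\varrho)\le C(p,M_0)\lambda^p$. Thus $C_2=C_2(\mathfrak{C},M_0,\kappa_2)$, and consequently the sup-bound on $|Dh|$, $C_3$, $\mathfrak{m}$ and $C_d$ are fixed independently of $K$, $\chi$, $\vartheta$, $\varepsilon_2$ --- which is precisely what allows $\vartheta$, then $K$, then $\varepsilon_2$ to be chosen non-circularly afterwards.
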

\begin{proof}
    As always we will assume without loss of generality that $z_0=0$ and use the abbreviations stated in the Lemma together with $\Tilde{\Phi}_\lambda(\varrho)= \Tilde{\Phi}_\lambda(0,\varrho)$ and $H_\lambda^*(\varrho) = H_\lambda^*(0,\varrho,(u)_{\varrho}^{(\lambda)})$. In order to combine both the sub- and super-quadratic case, we again define 
    \begin{equation*}
        \varrho_* = \begin{cases}
            \varrho/4 &\text{for } p<2, \\
            \varrho & \text{for } p>2
        \end{cases}
    \end{equation*}
    and the auxiliary function
    \begin{align}\label{9.34}
        w(x,t) = 
        \begin{cases}
            \frac{u(x,\lambda^{2-p}t) - (u)^{(\lambda)}_{\varrho/2}}{C_2 \lambda} 
         &\text{for } p<2, \\
            \frac{u(x,\lambda^{2-p}t) - (u)^{(\lambda)}_{\varrho}}{C_2 \lambda} 
         & \text{for } p>2
        \end{cases}
        \qquad \text{for } (x,t)\in Q_\varrho \equiv Q_\varrho^{(1)},
    \end{align}
    where $C_2$ denotes a constant that will be fixed during the proof. We start with the observation that due to assumptions (\ref{9.301}) and (\ref{9.303}), the following is true
    \begin{align}\label{9.31}
        \Tilde{\Phi}_\lambda(\varrho) 
        &\leq 
        2^{p-1} \Bigg[ \mint{-}\limits_{Q^{(\lambda)}_{\varrho}} \big| Du- (Du)^{(\lambda)}_{\varrho} \big|^p \, dz + \big|(Du)^{(\lambda)}_{\varrho} \big|^p \Bigg]
        \nonumber \\ & \leq
        C(p) \Big[ E_\lambda(\varrho) + \mathds{1}_{\{p<2\}} \big|(Du)^{(\lambda)}_{\varrho} \big|^{\frac{p(2-p)}{2}}  E^{\frac{p}{2}}_\lambda(\varrho) + \big|(Du)^{(\lambda)}_{\varrho} \big|^p \Big]
        \nonumber \\ & \leq 
        C(p)\big(1 + \mathds{1}_{\{p<2\}} M_0^{\frac{p(2-p)}{2}} + M_0^p \big) \lambda^p 
        =: C_1(p,M_0) \lambda^p
    \end{align}
    with the obvious definition of $C_1\geq 1$. We now estimate $\Tilde{\Phi}_\lambda(\varrho)$ a second time, distinguishing between two cases. In the first case we assume that (\ref{9.302})\textsubscript{1} holds. In analogy to (\ref{9.31}) and using (\ref{9.301}), (\ref{9.303}) as well as $\lambda\leq 1$ this results in
    \begin{align}\label{9.32}
        \Tilde{\Phi}^{\frac{1}{p}}_\lambda(\varrho)
        \leq
        C(p)
        \Big[ E^{\frac{1}{p}}_\lambda(\varrho) + \mathds{1}_{\{p<2\}} \big|(Du)^{(\lambda)}_{\varrho} \big|^{\frac{2-p}{2}}  E^{\frac{1}{2}}_\lambda(\varrho) + \big|(Du)^{(\lambda)}_{\varrho} \big| \Big]
        \leq
        C(p) \big(1+\chi^{-\frac{1}{p}}\big) \big( E_\lambda(\varrho) \big)^{\frac{1}{p}},
    \end{align}
    while in the second case (\ref{9.302})\textsubscript{2}, we have
    \begin{align}\label{9.33}
        \Tilde{\Phi}^{\frac{1}{p}}_\lambda(\varrho)
        \leq
        C(p) \big( E_\lambda(\varrho) \big)^{\frac{1}{p}} + \frac{\lambda}{K}.
    \end{align}
    Similar to Lemma \ref{Lem9.1}, we apply the $p$-caloric approximation Lemma \ref{Lem4.1} or \ref{Lem4.2} to the re-scaled function $w$. To this end, we must assure the assumptions of aforementioned Lemma. Our initial step involves verifying whether $w$ satisfies the Poincaré inequality (\ref{4.103}). To this end, let $Q_{r}^{(\mu)}(z')\subset Q_\varrho$ be an arbitrary intrinsic geometric cylinder with center $z'=(x',t')\in \R^{n+1}$ and parameters $r,\mu>0$. Applying the Poincaré inequalities of Lemma \ref{Lem5.3} with the choice $A=0$ to $u$ on $Q_{r}^{(\lambda\mu)}(\Tilde{z})\subset Q^{(\lambda)}_\varrho$, where the center of the cylinder is given by $\Tilde{z}=(x',\lambda^{2-p}t')$, allows us to infer for the parameters $q\in [1,p]$ that
    \begin{align}\label{9.35}
        \mint{-}\limits_{Q_{r}^{(\lambda\mu)}(\Tilde{z})} \big| u & - (u)_{\Tilde{z};r}^{(\lambda\mu)} \big|^q \, dz
        \leq
        C(\mathfrak{C}) r^q \Bigg[ \mint{-}\limits_{Q_{r}^{(\lambda\mu)}(\Tilde{z})} |Du|^q \, dz 
        +
        \bigg((\mu\lambda)^{2-p} \mint{-}\limits_{Q_{r}^{(\lambda\mu)}(\Tilde{z})} |Du|^{p-1}  \, dz \bigg)^q \Bigg].
    \end{align}
    This inequality will be used to deduce that both (\ref{4.101}) and (\ref{4.103}) are satisfied. By scaling (\ref{9.35}) to $w$ on $Q_{r}^{(\mu)}(z')$, we obtain that
    \begin{align*}%
        \mint{-}\limits_{Q_{r}^{(\mu)}(z')} \big| w & - (w)_{z';r}^{(\mu)} \big|^q \, dz
        \leq
        C_p r^q \Bigg[ \mint{-}\limits_{Q_{r}^{(\mu)}(z')} |Dw|^q \, dz 
        +
        \bigg(\mu^{2-p} \mint{-}\limits_{Q_{r}^{(\mu)}(z')} |Dw|^{p-1}  \, dz \bigg)^q \Bigg],
    \end{align*}
    ensuring that the Poincaré inequality (\ref{4.103}) holds with the constant $C_p=C_p(\mathfrak{C},C_2)$. We continue by showing that the $L^p$ estimate (\ref{4.101}) is satisfied. To this end, we estimate $H_\lambda^*$. To start, we obtain with the help of the inequalities (\ref{9.01}), (\ref{9.31}) and $C_1\geq 1$ that
    \begin{align}\label{9.36}
        \mint{-}\limits_{Q_\varrho^{(\lambda)}} \bigg| \frac{u - (u)_{\varrho}^{(\lambda)}}{\varrho}\bigg|^p \, dz
         &\leq
        C(\mathfrak{C}) \big[ \Tilde{\Phi}_\lambda(\varrho) 
        +
        \lambda^{p(2-p)} \Tilde{\Phi}_\lambda(\varrho)^{p-1} \big]
        \leq
        C(\mathfrak{C}) C_1^{p} \lambda^p.
    \end{align}
    Now we distinguish the cases. In the \textbf{super-quadratic case}, we immediately obtain with the definition (\ref{9.34}) of $w$ and (\ref{9.31}) that inequality (\ref{9.36}) implies
    \begin{align*}
        \mint{-}\limits_{Q_{\varrho_*}} \bigg| \frac{w}{\varrho} \bigg|^p \, dz
        +
        \mint{-}\limits_{Q_{\varrho_*}} |Dw|^p \, dz
        &=
        \frac{1}{C_2^p \lambda^p} \Bigg[ \mint{-}\limits_{Q^{(\lambda)}_\varrho} \bigg| \frac{u - (u)^{(\lambda)}_{\varrho}}{\varrho} \bigg|^p \, dz
        +
        \mint{-}\limits_{Q^{(\lambda)}_\varrho} |Du|^p \, dz \Bigg]
        \leq
        \frac{C(\mathfrak{C}) C_1^{p} + C_1}{C_2^p}
        \leq 1,
    \end{align*}
    provided we have chosen $C_2=C_2(\mathfrak{C},C_1)\geq 1$ large enough. This determines $C_2$ as a constant depending on $\mathfrak{C}$ and $M_0$. In the \textbf{sub-quadratic case}, we use the Sobolev-Poincaré inequality of Lemma \ref{Lem5.4} with arbitrarily chosen $A\in \R^{Nn}\setminus \{0\}$ that satisfies $|A|=\lambda \leq 1$ and the estimate
    \begin{equation}\label{9.361}
        H_\lambda^*(\varrho)
        \leq
        C(\mathfrak{C}) \big[ C_1^\beta \varrho^\beta + \varrho^\beta (1+\lambda^{2-p})^{\frac{\beta}{2}} \big] \leq C(\mathfrak{C}) C_1,
    \end{equation}
    which is easily derived from the definition (\ref{5.01}), assumption (\ref{9.300}), (\ref{9.36}) and $C_1\geq 1$. Lemma \ref{Lem5.4} then allows us to estimate
    \begin{align}\label{9.37}
         \mint{-}\limits_{Q_{\varrho /2}^{(\lambda)}} & \bigg| \frac{u - (u)_{\varrho/2}^{(\lambda)}}{\varrho} \bigg|^2 \, dz
        \leq
        2 \mint{-}\limits_{Q_{\varrho/2}^{(\lambda)}} \bigg| \frac{u - (u)_{\varrho/2}^{(\lambda)}- A x}{\varrho} \bigg|^2 \, dz
        +
        2|A|^2
        \nonumber \\ & \leq 
        C
        \Bigg[ \lambda^2 H_\lambda^*(\varrho) + \Bigg( \mint{-}\limits_{Q^{(\lambda)}_{\varrho} } |Du-A|^p \, dz 
        + 
         \lambda^{p} \big( H_\lambda^*(\varrho)\big)^p \Bigg)^{\frac{2}{p}}\Bigg] + 2 \lambda^2
        \nonumber \\ & \leq 
        C \bigg[ C_1  \lambda^2 + \Big( \Tilde{\Phi}_\lambda (\varrho) + |A|^p
        + 
         \lambda^{p} C_1^{p} \Big)^{\frac{2}{p}} \bigg] + 2 \lambda^2
        \nonumber \\ & \leq 
        C \bigg[ C_1 \lambda^2  + \big( (1+C_1+C_1^{p})\lambda^p
         \big)^{\frac{2}{p}} \bigg] + 2 \lambda^2
        \leq C C_1^2 \lambda^2,
    \end{align}
    where we have for the dependency of the constant $C=C(\mathfrak{C},\kappa_2)$. Since $p<2$, this additionally provides an $L^p$-estimate for $u - (u)_{\varrho/2}^{(\lambda)}$ by Hölder's inequality. Recalling the definition (\ref{9.34}) of $w$ and (\ref{9.31}), the preceding estimates implies
    \begin{align*}
        \mint{-}\limits_{Q_{\varrho_*}} \bigg| \frac{w}{\varrho} \bigg|^p \, dz
        +
        \mint{-}\limits_{Q_{\varrho_*}} |Dw|^p \, dz
        &=
        \frac{1}{C_2^p \lambda^p} \Bigg[ \mint{-}\limits_{Q^{(\lambda)}_{\varrho/4}} \bigg| \frac{u - (u)^{(\lambda)}_{\varrho/2}}{\varrho} \bigg|^p \, dz
        +
        \mint{-}\limits_{Q^{(\lambda)}_{\varrho/4}} |Du|^p \, dz \Bigg]
        \nonumber\\ & \leq
        \frac{2^{n+2} C^{\frac{p}{2}} C_1^p + 4^{n+2} C_1}{C_2^p}
        \leq 1,
    \end{align*}
    provided we have chosen $C_2=C_2(\mathfrak{C}, \kappa_2,C_1)\geq 1$ large enough. This determines $C_2$ as a constant depending on $\mathfrak{C}, \kappa_2$ and $M_0$. Furthermore, we ensure that sup-bound (\ref{4.201}) is satisfied. Thereby, we employ the Caccioppoli inequality from Lemma \ref{Lem5.1} with $Dl=0$ (we use for the Lemma's parameters $M=1$ and $\kappa_1=1$), (\ref{9.37}) and Hölder's inequality to find
    \begin{align}\label{9.381}
        \sup\limits_{t\in \Lambda_{\varrho/4}^{(\lambda)}} \mint{-}\limits_{B_{\varrho/4}} \bigg| \frac{u - (u)_{\varrho/2}^{(\lambda)}}{\varrho} \bigg|^2 \, dx
        &\leq
        C \mint{-}\limits_{Q_{\varrho/4}^{(\lambda)}} \lambda^{2-p} \bigg| \frac{u - (u)_{\varrho/2}^{(\lambda)}}{\varrho} \bigg|^p 
        +
        \bigg| \frac{u - (u)_{\varrho/2}^{(\lambda)}}{\varrho} \bigg|^2 \, dz
        \leq
        C \lambda^2,
    \end{align}
    where we have for the constant $C=C(\mathfrak{C},C_1)$. Let us now rewrite this in terms of $w$. Taking into account its definition (\ref{9.34}), we obtain
    \begin{align}\label{9.382}
        \sup\limits_{t\in \Lambda_{\varrho_*}^{(\lambda)}} \mint{-}\limits_{B_{\varrho_*}} \bigg| \frac{w}{\varrho} \bigg|^2 \, dx
        =
        \frac{1}{C_2^2 \lambda^2}
        \sup\limits_{t\in \Lambda_{\varrho/4}^{(\lambda)}} \mint{-}\limits_{B_{\varrho/4}} \bigg| \frac{v}{\varrho} \bigg|^2 \, dx
         \leq
        \frac{C}{C_2^2} \leq 1,
    \end{align}
    where the last inequality holds after an eventual enlargement of $C_2$, which does not alter its dependencies. We now continue by treating the sub- and super-quadratic case simultaneously. By Lemma \ref{Lem6.2} we know that $u$ is approximately $p$-caloric, in the sense that for any $\delta>0$ and $\phi\in C^1_0\big(Q^{(\lambda)}_{\varrho_*},\R^N\big)$ there holds:
    \begin{align}\label{9.39}
        \Bigg| \mint{-}\limits_{Q_{\varrho_*}^{(\lambda)}} & u \cdot \partial_t \phi - \Tilde{a}(0,(u)_{\varrho_*}^{(\lambda)}) |Du|^{p-2} Du \cdot D\phi \, dz \Bigg| 
        \leq
        C(p,L) \Bigg[ \delta + \frac{\big(\Tilde{\Phi}_\lambda(\varrho_*)\big)^{\frac{1}{p}}}{\eta(\delta)}
        \nonumber \\ & \qquad +
        \varrho^\beta \bigg( (1+\lambda^{2-p})^{\frac{\beta}{2}} + 
        \big(  \Tilde{\Phi}_\lambda(\varrho_*) + \lambda^{p(2-p)} \Tilde{\Phi}_\lambda^{p-1}(\varrho_*)\big)^{\frac{\beta}{p}} \bigg) \Bigg]
        \big(\Tilde{\Phi}_\lambda(\varrho_*)\big)^{1-\frac{1}{p}} 
        \sup\limits_{Q_{\varrho_*}^{(\lambda)}} |D\phi|
        \nonumber \\ & \leq
        C(\mathfrak{C}) C_1^{\frac{p-1}{p}} \lambda^{p-1} \Bigg[ \delta + \frac{\big(\Tilde{\Phi}_\lambda(\varrho)\big)^{\frac{1}{p}}}{\eta(\delta)}
        +
        \varrho^\beta \big( (1+\lambda^{2-p})^{\frac{\beta}{2}} + C_1 \big) \Bigg] \sup\limits_{Q_{\varrho_*}^{(\lambda)}} |D\phi|,
    \end{align}
    where we used (\ref{9.31}), $\Tilde{\Phi}_\lambda(\varrho_*)\leq 4^{n+2}\Tilde{\Phi}_\lambda(\varrho)$, $C_1\geq 1$ and $\lambda\leq 1$ for the last inequality. If we keep 
    \begin{align*}
        \mint{-}\limits_{Q_{\varrho_*}^{(\lambda)}} (u-C_2\lambda w)\cdot \partial_t \phi \, dz = 0 \qquad\forall \phi\in C^1_0(Q_{\varrho_*}^{(\lambda)},\R^N)
    \end{align*}
    in mind, inequality (\ref{9.39}) can be rewritten as follows:
    \begin{align*}
        I:= \Bigg| \mint{-}\limits_{Q_{\varrho_*}} & w \cdot \partial_t \phi - C_2^{p-2} \Tilde{a}(0,(u)_{\varrho_*}^{(\lambda)}) |Dw|^{p-2} Dw \cdot D\phi \, dz \Bigg| 
        \nonumber \\ & \leq
        \frac{C(\mathfrak{C}) C_1^{\frac{p-1}{p}}}{C_2} \Bigg[ \delta + \frac{\big(\Tilde{\Phi}_\lambda(\varrho)\big)^{\frac{1}{p}}}{\eta(\delta)}
        +
        C_1 \varrho^\beta (1+\lambda^{2-p})^{\frac{\beta}{2}} \Bigg] \sup\limits_{Q_{\varrho_*}} |D\phi|
    \end{align*}
    for all $\phi\in C^1_0(Q_{\varrho_*},\R^N)$. By enlargement, we can assume $C_2\geq C(p,L) C_1^{\frac{p-1}{p}} \geq 1$ without changing the dependencies on $\mathfrak{C}, \kappa_2$ and $M_0$. We further estimate the right-hand side, distinguishing two cases. If (\ref{9.32}) holds we have
    \begin{align}\label{9.392}
        I\leq \Bigg[ \delta + C(p) \frac{1+\chi^{-\frac{1}{p}}}{\eta(\delta)} \big( E_\lambda(\varrho) \big)^{\frac{1}{p}}
        +
        C_1 \varrho^\beta (1+\lambda^{2-p})^{\frac{\beta}{2}} \Bigg] \sup\limits_{Q_{\varrho_*}} |D\phi|,
    \end{align}
    whereas using (\ref{9.33}) and $\lambda\leq 1$ we find
    \begin{align*}
        I\leq \Bigg[ \delta + \frac{\big( C(p) E_\lambda(\varrho) \big)^{\frac{1}{p}} + \frac{1}{K}}{\eta(\delta)} 
        +
        C_1 \varrho^\beta (1+\lambda^{2-p})^{\frac{\beta}{2}} \Bigg] \sup\limits_{Q_{\varrho_*}} |D\phi|.
    \end{align*}
    With the aim of applying the $p$-caloric approximation Lemmas \ref{Lem4.1} or \ref{Lem4.2}, we define the vector field
    \begin{align*}
        \mathcal{A}(\xi) = C_2^{p-2} \Tilde{a}(0,(u)_{\varrho_*}^{(\lambda)}) |\xi|^{p-2} \xi
        \qquad\text{for } \xi\in \R^{Nn},
    \end{align*}
    for which one easily calculates that the hypothesis of the Lemma is satisfied when the data $(\mu,\nu,L,L/\nu)$ is replaced by $(0,C(p)^{-1}C_2^{p-2}\nu, C_2^{p-2}L, C(p)L/\nu)$ for an appropriate constant $C(p)$. Overall, we have demonstrated that, depending on the sub- and super-quadratic case, $w$ satisfies all but one of the assumptions of Lemma \ref{Lem4.1} or \ref{Lem4.2} respectively, on $Q_{\varrho_*}$. It remains to establish (\ref{4.102}). To this end, let $\varepsilon>0$ to be chosen later and $\delta_0=\delta_0(\mathfrak{C},C_2,C_p,\varepsilon)\in (0,1]$ be the constant of Lemma \ref{Lem4.1} or \ref{Lem4.2}. Note that due to the dependencies of $C_2$ and $C_p$, we have $\delta_0=\delta_0(\mathfrak{C},M_0,\varepsilon)$. Let us now fix
    \begin{align}\label{9.395}
        \delta\leq \frac{\delta_0}{3}
    \end{align}
    and note that $\delta$ depends on $\mathfrak{C},M_0$ and $\varepsilon$. Choosing the latter will later determine $\eta(\delta)$. Furthermore, we assume that either
    \begin{align}\label{9.396}
        C(p)\frac{1+\chi^{-1/p}}{\eta(\delta)} \big( E_\lambda(\varrho)\big)^{\frac{1}{p}} \leq \frac{\delta_0}{3}
    \end{align}
    or 
    \begin{align}\label{9.397}
        \frac{\big( C(p) E_\lambda(\varrho) \big)^{\frac{1}{p}}} {\eta(\delta)} \leq \frac{\delta_0}{6}
        \qquad\text{and}\qquad
        \frac{1}{K\eta(\delta)} \leq \frac{\delta_0}{6}
    \end{align}
    is fulfilled and note that (\ref{9.396}) corresponds to (\ref{9.302})\textsubscript{1}, whereas (\ref{9.397}) corresponds to (\ref{9.302})\textsubscript{2}. Finally, we state a \textbf{smallness assumption for the radius $\varrho$}. Namely that
    \begin{align}\label{9.398}
        \varrho^\beta (1+\lambda^{2-p})^{\frac{\beta}{2}} \leq  \frac{\delta_0}{3C_1} .
    \end{align}
    Combining (\ref{9.392})-(\ref{9.398}), we notice that the $p$-caloric approximation Lemmas \ref{Lem4.1} or \ref{Lem4.2} are applicable. This implies the existence of a $p$-caloric function
    \begin{align*}
        h\in  C^0\big( \Lambda_{\varrho_*/2} ; L^2(B_{\varrho_*/2},\R^N)\big) \cap L^p \big( \Lambda_{\varrho_*/2} ; W^{1,p}(B_{\varrho_*/2},\R^N)\big)
    \end{align*}
    such that
    \begin{align}\label{9.3992}
        \int\limits_{Q_{\varrho_*/2} } \bigg| \frac{w-h}{\varrho_*/2} \bigg|^p 
        +
        \bigg| \frac{w-h}{\varrho_*/2} \bigg|^2 \, dz
        \leq \varepsilon.
    \end{align}
    Now apply Lemma \ref{Lem7.1v2} to the $(\mathcal{A},p)$-caloric function $h$ on $Q_{\varrho/8}$ with the particular choices $\nu C_2^{p-2} \leq \Lambda = C_2^{p-2} \Tilde{a}(0,(u)_{\varrho}^{(\lambda)}) \leq C_2^{p-2} L$ and $C_*=H(n,p)$. We once more denote with $\beta_1=\beta_1(\mathfrak{C}, \kappa_2,M_0,\beta)$ the corresponding Hölder exponent from Lemma \ref{Lem7.1v2} and assume that $p\beta_1 < \beta$ holds without loss of generality. For $\theta\in (0,1/16]$ to be chosen later, we apply the Lemma with $r=2\theta\varrho_*$ and obtain from (\ref{7.105}) that
    \begin{align}\label{9.3993}
        \sup\limits_{Q_{2\theta\varrho_*}} |Dh| \leq C.
    \end{align}
    Moreover, for any 
    \begin{equation*}
        s\in\begin{cases}
            [1,2] &\text{for }p<2,\\
            [2,p] &\text{for }p>2,
        \end{cases}
    \end{equation*}
    we have
    \begin{align}\label{9.3994}
        \frac{1}{(2\theta\varrho_*)^s} \mint{-}\limits_{Q_{2\theta\varrho_*}} \big| h - (h)_{2\theta\varrho_*} - (Dh)_{2\theta\varrho_*} x \big|^s \, dz 
        \leq
        C \theta^{\min\{s,p,s(p-1)\}\beta_1},
    \end{align}
    where in both inequalities $C=C(\mathfrak{C}, \kappa_2,M_0)$. We will use the previous decay estimate with parameters $s=2,p$ and note that we have for 
    \begin{equation*}
        \nu = \begin{cases}
            p(p-1) &\text{for }p<2,\\
            2 &\text{for }p>2
        \end{cases}
    \end{equation*}
    that $\nu \leq \min\{s,p,s(p-1)\}$. Then we obtain from (\ref{9.3992}) and (\ref{9.3994}) the estimate
    \begin{align*}
        \frac{1}{(2\theta\varrho_*)^s}  \mint{-}\limits_{Q_{2\theta\varrho_*}} & \big| w - (h)_{2\theta\varrho_*} - (Dh)_{2\theta\varrho_*} x \big|^s \, dz 
        \nonumber\\ & \leq
        \frac{2^{s-1}}{(2\theta\varrho_*)^s}
        \mint{-}\limits_{Q_{2\theta\varrho_*}} | w - h |^s + \big| h - (h)_{2\theta\varrho_*} - (Dh)_{2\theta\varrho_*} x \big|^s \, dz 
        \nonumber\\ & \leq
        2^{s-1} \Bigg[ \theta^{-s} \frac{|Q_{\varrho_*/2}|}{|Q_{2\theta\varrho_*}|} \mint{-}\limits_{Q_{\varrho_*/2}} \bigg| \frac{w - h}{\varrho_*/2} \bigg|^s \, dz
        +
        C \theta^{\nu \beta_1} \Bigg]
        \leq
        2^{s-1} \big[ \theta^{-n-2-s} \varepsilon
        +
        C \theta^{\nu\beta_1} \big]
        \nonumber\\ & \leq
        2^{s-1} \big[ \theta^{2}
        +
        C \theta^{\nu\beta_1} \big]
        \leq
        C \theta^{\nu\beta_1},
    \end{align*}
    where $C=C(\mathfrak{C}, \kappa_2,M_0)\geq 1$ and we chose $\varepsilon=\theta^{n+6+p}$. Scaling back to $u$ on $Q_{2\theta\varrho}^{(\lambda)}$ we arrive at the following estimate
    \begin{align}\label{9.3997}
        \frac{1}{(2\theta\varrho_*)^s}  \mint{-}\limits_{Q_{2\theta\varrho_*}^{(\lambda)}} & | u - l |^s \, dz 
        \leq
        C \lambda^s \theta^{\nu\beta_1},
    \end{align}
    for a constant $C$ depending only on $\mathfrak{C}, \kappa_2$ and $M_0$. Here we abbreviated
    \begin{align*}
        \R^n\ni x \mapsto l(x):= u - C_2 \lambda ( w + (h)_{2\theta\varrho_*} + (Dh)_{2\theta\varrho_*} x ).
    \end{align*}
    Due to (\ref{9.3993}) we have for the modulus of the gradient 
    \begin{align}\label{9.3998}
        |Dl| = C_2 \lambda \big| (Dh)_{2\theta\varrho_*} \big| \leq
        C_2 \lambda \sup\limits_{Q_{2\theta\varrho_*}^{(\mu)}} |Dh| \leq C\lambda =: C_3 \lambda ,
    \end{align}
    with the obvious definition of $C_3 = C_3(\mathfrak{C}, \kappa_2,M_0)$. If we use (\ref{9.300}) and (\ref{9.3997}) for $s=p$ and $s=2$ as well as $\lambda\leq 1$, we can estimate the right-hand side of the Caccioppoli inequality of Lemma \ref{Lem5.1} as follows:
    \begin{align*}
        \mint{-}\limits_{Q_{2\theta\varrho_*}^{(\lambda)}} & \bigg| V_{|Dl} \bigg( \frac{u-l}{2\theta\varrho_*} \bigg) \bigg|^2
        +
        \lambda^{p-2}
        \bigg| \frac{u-l}{2\theta\varrho_*} \bigg|^2\, dz 
        +
        L |Dl|^{p} \Bigg[\Bigg( \mint{-}\limits_{Q_{2\theta\varrho_*}^{(\lambda)}} |u-l(0)|^p  \, dz \Bigg)^{\frac{\beta}{p}}
        + (2\theta\varrho_*)^\beta (1+\lambda^{2-p})^{\frac{\beta}{2}} \Bigg] 
        \\ & \leq
        C \mint{-}\limits_{Q_{2\theta\varrho_*}^{(\lambda)}} \bigg| \frac{u-l}{2\theta\varrho_*} \bigg|^p 
        +
        \big( \mathds{1}_{\{p>2\}}  |Dl|^{p-2} + \lambda^{p-2} \big)
        \bigg| \frac{u-l}{2\theta\varrho_*} \bigg|^2\, dz 
        \\ & \qquad + 
        C |Dl|^{p} (2\theta\varrho_*)^\beta \Bigg( \mint{-}\limits_{Q_{2\theta\varrho_*}^{(\lambda)}} \bigg| \frac{u-l}{2\theta\varrho_*} \bigg|^p \, dz + |Dl|^p \Bigg)^{\frac{\beta}{p}}
        + 
        C |Dl|^{p} \theta^\beta
        \leq
        C(\mathfrak{C}, \kappa_2,M_0) \lambda^p \theta^{\nu\beta_1}.
    \end{align*}
    And the Caccioppoli inequality then yields
    \begin{align}\label{9.39991}
        \mint{-}\limits_{Q_{\theta\varrho_*}^{(\lambda)}} & \big| V_{|Dl|} ( Du - Dl) \big|^2\, dz 
        \leq
        C(\mathfrak{C},M_0,\kappa_{4C_3}) \lambda^p \theta^{\nu\beta_1},
    \end{align}
    where the dependence on $\kappa_{4C_3}$ stems from the fact that the constant of the Caccioppoli inequality depends on $|Dl|$ and we used the monotonicity of $\kappa$ with $4C_3\geq 2$. To shorten the notation, we define $\mathfrak{m}=4C_3$ and simply write $\kappa_{\mathfrak{m}}$ instead of $\kappa_{4C_3}$. By recalling the dependencies of $C_3$, we realize $\mathfrak{m}=\mathfrak{m}(\mathfrak{C}, \kappa_2,M_0)$. We now adjust the scaling parameter of the cylinders. To get started, we define
    \begin{align*}
        \vartheta = \begin{cases}
            \theta/4 &\text{for }p<2,\\
            \theta^{1+\frac{\beta_1}{2(n+2)}} &\text{for }p>2,
        \end{cases}
        \qquad\text{and}\qquad
        \alpha_1= \begin{cases}
            \nu\beta_1/4 &\text{for }p<2,\\
            \frac{\beta_1}{2(n+2)p} &\text{for }p>2,
        \end{cases}.
    \end{align*}
    Then for every $\Tilde{\lambda} \in [\vartheta^{\alpha_1}\lambda,\lambda]$ there holds
    \begin{align*}
        Q_{\vartheta\varrho}^{(\Tilde{\lambda})}\subset Q_{\theta\varrho_*}^{(\lambda)} 
        \qquad\text{and}\qquad
        \frac{|Q_{\theta\varrho_*}^{(\lambda)}|}{|Q_{\vartheta\varrho}^{(\Tilde{\lambda})}|} \leq  4^p \theta^{-(\mathds{1}_{\{p<2\}} (2-p)\alpha_1 + \mathds{1}_{\{p>2\}}\beta_1/2)} .
    \end{align*}
    This allows us to adapt the scaling parameter of the cylinders in (\ref{9.39991}) as follows:
    \begin{align}\label{9.39992}
        \mint{-}\limits_{Q_{\vartheta\varrho}^{(\Tilde{\lambda})}} & \big| V_{|Dl|} ( Du - Dl) \big|^2\, dz 
        \leq
        4^p \theta^{-(\mathds{1}_{\{p<2\}} (2-p)\alpha_1 + \mathds{1}_{\{p>2\}}\beta_1/2)}
        \mint{-}\limits_{Q_{\theta\varrho_*}^{(\lambda)}} \big| V_{|Dl|} ( Du - Dl) \big|^2\, dz 
        \nonumber\\ &\leq
        C \lambda^p \theta^{\nu \beta_1 -(\mathds{1}_{\{p<2\}} (2-p)\alpha_1 + \mathds{1}_{\{p>2\}}\beta_1/2)}
        \leq
        C \Tilde{\lambda}^p \theta^{\nu \beta_1 -(\mathds{1}_{\{p<2\}} (2-p)\alpha_1 + \mathds{1}_{\{p>2\}}\beta_1/2+p\alpha_1)}
        \leq
        C\Tilde{\lambda}^p \vartheta^{\frac{\nu\beta_1}{2}}.
    \end{align}
    To obtain an estimate for $\Phi_{\Tilde{\lambda}}(\vartheta\varrho)$, we have to replace $|Dl|$ by $\big| (Du)_{\vartheta\varrho}^{(\Tilde{\lambda})} \big|$. But this can be achieved by means of Lemma \ref{Lem2.4}. We obtain
    \begin{align}\label{9.39993}
        \Phi_{\Tilde{\lambda}}(\vartheta\varrho)
        &=
        \mint{-}\limits_{Q_{\vartheta\varrho}^{(\Tilde{\lambda})}} \big| V_{|(Du)_{\vartheta\varrho}^{(\Tilde{\lambda})}|} \big( Du - (Du)_{\vartheta\varrho}^{(\Tilde{\lambda})} \big) \big|^2\, dz 
        \leq
        C
        \mint{-}\limits_{Q_{\vartheta\varrho}^{(\Tilde{\lambda})}} \big| V_{|Dl|} ( Du - Dl) \big|^2\, dz 
        \nonumber\\ & \leq
        C \vartheta^{\frac{\nu \beta_1}{2}} \Tilde{\lambda}^p
        =: C_5(\mathfrak{C},M_0,\kappa_{\mathfrak{m}}) \vartheta^{\frac{\nu \beta_1}{2}} \Tilde{\lambda}^p,
    \end{align}
    with the obvious definition of $C_5$. We proceed by estimating $\big| (Du)_{\theta\varrho}^{(\Tilde{\lambda})} \big|$ with the help of inequalities (\ref{9.3998}), (\ref{9.39992}) and (\ref{9.02}), resulting in 
    \begin{align}\label{9.39994}
        \big| (Du)_{\vartheta\varrho}^{(\Tilde{\lambda})} \big| 
        &\leq 
        |Dl| + \bigg(\mint{-}\limits_{Q_{\vartheta\varrho}^{(\Tilde{\lambda})}} \big| Du - Dl \big|^p\, dz \bigg)^{\frac{1}{p}}
        \leq
        C_3 \lambda +  C \Bigg[ \mint{-}\limits_{Q_{\vartheta\varrho}^{(\Tilde{\lambda})}} \big| V_{|Dl|} ( Du - Dl) \big|^2\, dz 
        \nonumber\\ & \qquad + 
        \mathds{1}_{\{p<2\}}|Dl|^{\frac{p(2-p)}{2}} \bigg( \mint{-}\limits_{Q_{\vartheta\varrho}^{(\Tilde{\lambda})}} \big| V_{|Dl|} ( Du - Dl) \big|^2\, dz \bigg)^{\frac{p}{2}} \Bigg]
        \leq
        C_3 \lambda + C \lambda =: C_d \lambda
    \end{align}
    with the obvious definition of $C_d=C_d(\mathfrak{C},M_0,\kappa_{\mathfrak{m}})$. Let us now estimate the additional term (\ref{8.04}) of $E_{\Tilde{\lambda}}(\vartheta\varrho)$ with the help of inequality (\ref{9.03}). We obtain with assumption (\ref{9.300}), the estimates (\ref{9.39993}) and (\ref{9.39994}) as well as $\beta_1<\beta$ and $\Tilde{\lambda}\leq \lambda\leq 1$ the chain of inequalities
    \begin{align}\label{9.39995}
        H_{\Tilde{\lambda}}(\vartheta\varrho) & \leq C \big|(Du)_{\vartheta\varrho}^{(\Tilde{\lambda})} \big|^{p} (\vartheta\varrho)^\beta 
        \Bigg( \bigg[
        \Phi_{\Tilde{\lambda}}(\vartheta\varrho) 
        + 
        \mathds{1}_{\{p<2\}} \big| (Du)_{\vartheta\varrho}^{(\Tilde{\lambda})} \big|^{\frac{p(2-p)}{2}} \Phi_{\Tilde{\lambda}}(\vartheta\varrho)^\frac{p}{2} 
         \nonumber\\ & \qquad +
        \big|(Du)_{\vartheta\varrho}^{(\Tilde{\lambda})} \big|^p + \Tilde{\lambda}^{p(2-p)} \big|(Du)_{\vartheta\varrho}^{(\Tilde{\lambda})} \big|^{p(p-1)} 
        +
        \Tilde{\lambda}^{p(2-p)} \Big( \Phi_{\Tilde{\lambda}}\big(\vartheta\varrho, (Du)_{\vartheta\varrho}^{(\Tilde{\lambda})} \big) 
        \nonumber \\ & \qquad +  \mathds{1}_{\{p<2\}}
        \big| (Du)_{\vartheta\varrho}^{(\Tilde{\lambda})} \big|^{\frac{p(2-p)}{2}} \Phi_{\Tilde{\lambda}}\big(z_0,\varrho, (Du)_{\vartheta\varrho}^{(\Tilde{\lambda})} \big)^\frac{p}{2} \Big)^{p-1} \bigg]^{\frac{\beta}{p}} 
        + 
        ( 1+\Tilde{\lambda}^{2-p})^{\frac{\beta}{2}} \Bigg)
        \nonumber\\ & \leq
        C \vartheta^\beta \vartheta^{\mathds{1}_{\{p>2\}}\alpha_1 \beta (2-p)} \lambda^p
        \leq
        C \vartheta^{\beta - (p + \mathds{1}_{\{p>2\}}p)\alpha_1} \Tilde{\lambda}^p
        \nonumber\\ & \leq
        C \vartheta^{\beta - \beta_1} \Tilde{\lambda}^p
        =: 
        C_6(\mathfrak{C},M_0,\kappa_{\mathfrak{m}}) \vartheta^{\beta - \beta_1} \Tilde{\lambda}^p,
    \end{align}
    with the obvious definition of $C_6$. We proceed by choosing $\vartheta = \vartheta(\mathfrak{C},M_0,\beta,\kappa_\mathfrak{m} ,\chi_1) \in (0,1/16]$ such that
    \begin{align}\label{9.39996}
        C_5 \vartheta^{\frac{\nu\beta_1}{2}} \leq \frac{\chi_1}{2} \qquad\text{and}\qquad C_6 \vartheta^{\beta-\beta_1} \leq \frac{\chi_1}{2}.
    \end{align}
    Since $\varepsilon = \vartheta^{n+6+p}$ and $\delta_0 = \delta_0 (\mathfrak{C} , M_0 , \varepsilon)$, both are fixed by this choice and have the same dependencies as $\vartheta$. Due to (\ref{9.395}) this fixes $\delta$ and therefore $\eta(\delta)$. With these specifications in mind (\ref{9.396}) can be rewritten in the following form
    \begin{align*}
         E_\lambda(\varrho) \leq \bigg( \frac{\delta_0 \eta(\delta)}{3 C(p) \big( 1+\chi^{-\frac{1}{p}}\big)}  \bigg)^p,
    \end{align*}
    whereas (\ref{9.397}) yields
    \begin{align*}
         E_\lambda(\varrho) \leq \Big( \frac{\delta_0 \eta(\delta)}{6 C(p)}  \Big)^p
         \qquad\text{and}\qquad
         K\geq \frac{6}{\delta_0 \eta(\delta)}.
    \end{align*}
    Then the right-hand sides of the preceding inequalities define the constants $\varepsilon_2$ and $K$ in dependence of $\mathfrak{C} , M_0 , \beta, \kappa_{\mathfrak{m}} , \chi , \chi_1$ and $\eta(\,\cdot\,)$. Furthermore, the choice (\ref{9.39996}) of $\vartheta$ as well as estimates (\ref{9.39993}), (\ref{9.39994}) and (\ref{9.39995}) imply that
    \begin{align}\label{9.39997} 
        E_{\Tilde{\lambda}}(\vartheta \varrho) \leq \big( C_5 \vartheta^{\frac{\beta_1}{2}} + C_6 \vartheta^{\beta-\beta_1} \big) \Tilde{\lambda}^p \leq \chi_1 \Tilde{\lambda}^p
        \qquad\text{and}\qquad
        \big|(Du)_{\vartheta\varrho}^{(\Tilde{\lambda})} \big| \leq C_d \lambda
    \end{align}
    holds, which proves the assertion (\ref{9.304}).  \\
    By further decreasing $\varepsilon_2$ we can achieve that (\ref{9.300}) implies (\ref{9.398}). This finally concludes the proof of the Lemma.
\end{proof}

\subsection{Combining the degenerate and the non-degenerate regime}

In this subsection we join both the degenerate and non-degenerate case. The procedure will be as follows: Iterate Lemma \ref{Lem9.3} as long as we are in the degenerate regime. Once we are in the non-degenerate regime we obtain a suitable excess-decay estimate from Lemma \ref{Lem9.2}. \\
By $\alpha_1$ we denote the Hölder exponent from Lemma \ref{Lem9.3} corresponding to the choice of $M_0=1$, and if $\alpha_1 > 1/p$ we replace $\alpha_1$ by $1/p$; note $\alpha_1=\alpha_1(\mathfrak{C},\kappa_2)\in (0,1)$.
\begin{Lem}\label{Lem9.5}
    Let $2n/(n+2)< p\leq 2$. $M_1\geq 1$, $\alpha\in (0,\alpha_1)$ and
    \begin{equation*}
        u\in C\big([0,T];L^2\big(\Omega,\mathbb{R}^{N}\big) \big) \cap L^p\big(0,T;W^{1,p}\big(\Omega,\mathbb{R}^{N}\big) \big)
    \end{equation*}
    be a weak solution of system (\ref{Sys1}) in $\Omega_T$ that satisfies the assumptions of Section \ref{Assumptions}. Then, there exist constants 
    \begin{align*}
        &\mathfrak{m}=\mathfrak{m}(\mathfrak{C},M_1, \kappa_2),\\
        &C_d=C_d(\mathfrak{C},M_1,\kappa_{\mathfrak{m}}), \\ &\varepsilon_3=\varepsilon_3(\mathfrak{C},M_1,\kappa_{\mathfrak{m}}, \kappa_{8M}, \alpha, \omega_{4M}(\,\cdot\,),\eta(\,\cdot\,))\in (0,1] ,\\
        &\vartheta = \vartheta (\mathfrak{C} , M_1 , \kappa_{\mathfrak{m}} , \kappa_{8M}, \alpha, \omega_{4M}(\,\cdot\,))\in (0,1/16], \\
        &K = K ( \mathfrak{C} , M_1 , \kappa_{\mathfrak{m}}, \kappa_{8M}, \alpha, \omega_{4M}(\,\cdot\,), \eta(\,\cdot\,)) \geq 1, \\
        &C=C(\mathfrak{C},M_1,\kappa_{\mathfrak{m}}, \kappa_{8M}, \alpha, \omega_{4M}(\,\cdot\,), \eta(\,\cdot\,)), \\
        &\varrho_0 = \varrho_0(\mathfrak{C},M_1,\kappa_{\mathfrak{m}}, \kappa_{8M}, \alpha, \omega_{4M}(\,\cdot\,),\eta(\,\cdot\,)) \in (0,1/2],
    \end{align*}
    where $M:=\max\{M_1,C_d\}$, for which the following is true: Let $Q_{\varrho}\Subset\Omega_T$ be a parabolic cylinder with parameter $\varrho\in (0,\varrho_0]$ such that there holds
    \begin{align}\label{9.501}
        \big|(Du)_{z_0;\varrho}\big| \leq M_1
    \end{align}
    and
    \begin{align}\label{9.502}
        E_1 \big(z_0,\varrho,L_{z_0;\varrho} \big) \leq \varepsilon_3.
    \end{align}
    Then the limit
    \begin{align}\label{9.503}
        \Gamma_{z_0} \equiv \lim\limits_{\varrho\downarrow 0} (Du)_{z_0;\varrho},
    \end{align}
    a constant $m\in \N_0 \cup \infty$ and positive numbers $\{ \lambda_l \}^m_{l=0}$ exist that satisfy
    \begin{align}\label{9.504}
        \lambda_0 = 1, \qquad
        \lambda_1 \leq 1 \qquad\text{and}\qquad 
        \lambda_l \leq \vartheta^{(l-2)\alpha} \qquad\text{for }l \in \{2,\ldots,m\}
    \end{align}
    and we have the decay estimate
    \begin{align}\label{9.505}
        \mint{-}\limits_{Q_{\vartheta^l\varrho}^{(\lambda_l)}(z_0)} |Du - \Gamma_{z_0}|^p \, dz \leq C \lambda_l^p 
        \qquad
        \forall l\in \{0,\ldots,m\}.
    \end{align}
    Moreover, in the case $m<\infty$ there holds
    \begin{align}\label{9.506}
        \mint{-}\limits_{Q_{r}^{(\lambda_m)}(z_0)} |Du - \Gamma_{z_0}|^p \, dz \leq C \bigg( \frac{r}{\vartheta^m \varrho} \bigg)^{\alpha \beta p} \lambda_m^p 
        \qquad
        \forall 0<r\leq \vartheta^m \varrho
    \end{align}
    and
    \begin{align}\label{9.507}
        \frac{\lambda_m}{2K} \leq |\Gamma_{z_0}| \leq 2K \lambda_m,
    \end{align}
    whereas in the case $m=\infty$ we have $\Gamma_{z_0}=0$.
\end{Lem}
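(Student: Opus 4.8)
The plan is to construct, inductively in $l\in\N_0$, a chain of intrinsic cylinders $Q^{(\lambda_l)}_{\vartheta^l\varrho}(z_0)$ with $\lambda_0=1$, such that the pair $(\lambda_l,\vartheta^l\varrho)$ satisfies either the hypotheses of the degenerate Lemma~\ref{Lem9.3} — whereupon we pass to a finer cylinder with a new scaling parameter — or those of the non-degenerate Lemma~\ref{Lem9.2} — whereupon the iteration terminates and the decay (\ref{9.506})--(\ref{9.507}) is read off from there. Throughout we use $M_0=1$ in Lemma~\ref{Lem9.3}, so that (by the remark preceding it) $\alpha_1,\mathfrak m,C_d$ depend only on $\mathfrak C,\beta,\kappa_2,\kappa_{\mathfrak m}$; we put $M:=\max\{M_1,C_d\}$ and $\beta_0:=\alpha p\in(0,1)$, which is admissible since $\alpha<\alpha_1\le 1/p$. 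As usual $z_0=0$.

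\emph{The iteration.} At stage $l$ we carry along the invariants $\lambda_l\le1$, $E_{\lambda_l}(\vartheta^l\varrho)\le\min\{\lambda_l^p,\varepsilon_2\}$, the radius bound (\ref{9.300}), and $\big|(Du)^{(\lambda_l)}_{\vartheta^l\varrho}\big|\le\lambda_l$ for $l\ge1$. At $l=0$ the last invariant reads only $|(Du)_\varrho|\le M_1$; but if moreover $|(Du)_\varrho|>1/K$, then — provided $K\ge M_1$ and $\varepsilon_3\le\varepsilon_0 K^{-p}$ — the coupling $1/K\le|(Du)_\varrho|\le K$ and the smallness $E_1(\varrho)\le\varepsilon_3\le\varepsilon_0|(Du)_\varrho|^p$ already hold, so we are in the non-degenerate regime at once ($m=0$), while if $|(Du)_\varrho|\le1/K$ then $|(Du)_\varrho|\le 1=M_0\lambda_0$ and Lemma~\ref{Lem9.3} applies; the remaining $l=0$ invariants follow from (\ref{9.501})--(\ref{9.502}), $\varrho\le\varrho_0$, and the fact that replacing $L_{z_0;\varrho}$ by its constant gradient alters $E_1$ only by a multiplicative constant (Lemma~\ref{Lem2.4}). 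At a general stage $l$: if the coupling $\lambda_l/K\le\big|(Du)^{(\lambda_l)}_{\vartheta^l\varrho}\big|\le K\lambda_l$ and the smallness $E_{\lambda_l}(\vartheta^l\varrho)\le\varepsilon_0\big|(Du)^{(\lambda_l)}_{\vartheta^l\varrho}\big|^p$ both hold, we stop and set $m:=l$; otherwise (the bound $|(Du)^{(\lambda_l)}_{\vartheta^l\varrho}|\le\lambda_l<K\lambda_l$ rules out the excluded branch $\lambda_l\ll|(Du)|$) one of the two alternatives of (\ref{9.302}) holds — for the first we use $\chi\le\varepsilon_0$ — so Lemma~\ref{Lem9.3} yields, for every $\Tilde\lambda\in[\vartheta^{\alpha_1}\lambda_l,\lambda_l]$, the inclusion $Q^{(\Tilde\lambda)}_{\vartheta^{l+1}\varrho}\Subset Q^{(\lambda_l)}_{\vartheta^l\varrho}$, the estimate $E_{\Tilde\lambda}(\vartheta^{l+1}\varrho)\le\chi_1\Tilde\lambda^p$ and $\big|(Du)^{(\Tilde\lambda)}_{\vartheta^{l+1}\varrho}\big|\le C_d\lambda_l$. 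We then select $\lambda_{l+1}$ by continuity of $\Tilde\lambda\mapsto(Du)^{(\Tilde\lambda)}_{\vartheta^{l+1}\varrho}$: if $\big|(Du)^{(\Tilde\lambda)}_{\vartheta^{l+1}\varrho}\big|<\Tilde\lambda/(2K)$ throughout the interval we put $\lambda_{l+1}:=\vartheta^{\alpha_1}\lambda_l$ (so (\ref{9.302})$_2$ holds at the next stage, $\lambda$ having dropped by the fixed factor $\vartheta^{\alpha_1}\le\vartheta^\alpha$); otherwise we let $\lambda_{l+1}$ be the largest $\Tilde\lambda$ with $\big|(Du)^{(\Tilde\lambda)}_{\vartheta^{l+1}\varrho}\big|\ge\Tilde\lambda/(2K)$, so that continuity together with $K\ge C_d$ gives the coupling $\lambda_{l+1}/(2K)\le\big|(Du)^{(\lambda_{l+1})}_{\vartheta^{l+1}\varrho}\big|\le2K\lambda_{l+1}$, and, since $\chi_1\le\varepsilon_0(2K)^{-p}$, also $E_{\lambda_{l+1}}(\vartheta^{l+1}\varrho)\le\chi_1\lambda_{l+1}^p\le\varepsilon_0\big|(Du)^{(\lambda_{l+1})}_{\vartheta^{l+1}\varrho}\big|^p$, whence stage $l+1$ stops. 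Using $\chi_1\le\varepsilon_2$, $\lambda_{l+1}\le1$ with $p\le2$, and the shrinking radius, the invariants propagate; as every non-terminal step is of the first kind, $\lambda_l=\vartheta^{\alpha_1 l}$ up to the final step, hence $\lambda_l\le\vartheta^{\alpha(l-2)}$ for $l\ge2$ and $\lambda_1\le1$, i.e.\ (\ref{9.504}).

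\emph{Termination.} If the iteration never stops, then $\lambda_l=\vartheta^{\alpha_1 l}\downarrow0$; by (\ref{9.02}) and $\Phi_{\lambda_l}(\vartheta^l\varrho)\le E_{\lambda_l}(\vartheta^l\varrho)\le\lambda_l^p$ the mean of $|Du-(Du)^{(\lambda_l)}_{\vartheta^l\varrho}|^p$ over $Q^{(\lambda_l)}_{\vartheta^l\varrho}$ is $\le C\lambda_l^p$, and with $\big|(Du)^{(\lambda_l)}_{\vartheta^l\varrho}\big|\le\lambda_l$ also the mean of $|Du|^p$ is $\le C\lambda_l^p\to0$; comparing means over the nested cylinders — which, as $p\le2$ and $\lambda_l\le1$, lie inside the standard cylinders $Q_{\vartheta^l\varrho}$ — gives $(Du)_{z_0;r}\to0$, so $\Gamma_{z_0}=0$, $m=\infty$, and (\ref{9.505}) is the last estimate. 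If the iteration stops at $m<\infty$, the pair $(\lambda_m,\vartheta^m\varrho)$ satisfies (\ref{9.201})--(\ref{9.203}) of Lemma~\ref{Lem9.2}: (\ref{9.201}) because $\big|(Du)^{(\lambda_m)}_{\vartheta^m\varrho}\big|\le\max\{\lambda_m,C_d\}\le M$; (\ref{9.202}) from the selection (after a harmless enlargement of $K$); (\ref{9.203}) from the stopping criterion. Hence Lemma~\ref{Lem9.2} furnishes $\Gamma_{z_0}=\lim_{r\downarrow0}(Du)^{(\lambda_m)}_{z_0,r}$, the coupling (\ref{9.507}), and, since $E_{\lambda_m}(\vartheta^m\varrho)\le\lambda_m^p$ and $\beta_0=\alpha p$, the decay (\ref{9.506}); (\ref{9.503}) then follows by again comparing means over $Q_r$ and $Q^{(\lambda_m)}_r$. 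Finally, for $0\le l\le m$, (\ref{9.505}) follows from the mean of $|Du-(Du)^{(\lambda_l)}_{\vartheta^l\varrho}|^p$ being $\le C\lambda_l^p$ and from $\big|\Gamma_{z_0}-(Du)^{(\lambda_l)}_{\vartheta^l\varrho}\big|\le C\lambda_l$, both quantities being $\lesssim\lambda_j\le\lambda_l$ along the tail $j\ge l$.

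\emph{Main obstacle.} The delicate part is purely organisational: the thresholds $\varepsilon_0,\varepsilon_2,\varepsilon_3$, the coupling constant $K$, and the smallness parameters $\chi,\chi_1$ must be fixed in a non-circular order — schematically $M_0=1\rightsquigarrow\alpha_1,\mathfrak m,C_d\rightsquigarrow M,\beta_0\rightsquigarrow K\rightsquigarrow\varepsilon_0\rightsquigarrow\chi_1\le\varepsilon_0(2K)^{-p}\rightsquigarrow\vartheta,\varepsilon_2\rightsquigarrow\chi\le\min\{\varepsilon_0,\varepsilon_2\}\rightsquigarrow\varrho_0,\varepsilon_3$ — and the continuity/selection step defining $\lambda_{l+1}$, together with the verification that all invariants propagate and that the excluded alternative $\lambda\ll|(Du)|$ never occurs, has to be carried out with care. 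No genuinely new analytic estimate is needed beyond Lemmas~\ref{Lem9.2} and~\ref{Lem9.3}: the hybrid term $H_\lambda$ in $E_\lambda$ is already absorbed into those statements, so the present argument is a bookkeeping argument of the kind carried out in \cite{bogelein2013regularity}.
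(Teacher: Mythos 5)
Your iteration scheme is morally the same as the paper's (alternate Lemma~\ref{Lem9.3} in the degenerate regime, stop and apply Lemma~\ref{Lem9.2} as soon as the non-degenerate regime appears, select the scaling parameter $\lambda_{l+1}$ by continuity), and your idea of forcing the non-degenerate case at $l=0$ whenever $|(Du)_\varrho|>1/K$ by choosing $\varepsilon_3\le\varepsilon_0 K^{-p}$ is a clean alternative to the paper's first step with $M_0=M_1$. However, your order of fixing the structural constants is genuinely circular, and this is precisely the ``delicate'' organizational issue you flag at the end without actually resolving it.

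Concretely: your chain $K\rightsquigarrow\varepsilon_0\rightsquigarrow\chi_1\le\varepsilon_0(2K)^{-p}\rightsquigarrow\vartheta,\varepsilon_2\rightsquigarrow\chi$ fixes $K$ before $\chi_1$ and $\chi$. But the constant $K$ appearing in (\ref{9.302})${}_2$ is an \emph{output} of Lemma~\ref{Lem9.3}, and it depends (via $\vartheta$, $\varepsilon$, $\delta_0$ and $\eta(\delta)$ in the paper's proof) on the input $\chi_1$: as $\chi_1\to 0$, the output $K\to\infty$. If you want the selection threshold $\Tilde\lambda/(2K)$ and the smallness $\chi_1\le\varepsilon_0(2K)^{-p}$ to be consistent with Lemma~\ref{Lem9.3}'s $K$, you need $K\ge K(\chi_1)$ with $\chi_1=\varepsilon_0(K)(2K)^{-p}$, a fixed-point condition that may fail. (The same circularity afflicts your constraint $\chi\le\varepsilon_2$, since $\varepsilon_2$ is a Lemma~\ref{Lem9.3}-output depending on $\chi$.) The paper escapes this by using \emph{two} different coupling thresholds and \emph{two} smallness parameters: the ``alternative degenerate'' check after Lemma~\ref{Lem9.3} is performed against $C_d$ (which depends only on $\mathfrak{C}$, $M_0$, $\kappa_{\mathfrak m}$ and not on $\chi_1$), fixing $\chi_1=\varepsilon_1(C_d)$ first; this then determines $\vartheta$ and $K$; and only then is $\chi=\varepsilon_1(K)$ fixed, followed by $\varepsilon_2$ and $\varepsilon_3$. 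Merging $(K,\chi)$ and $(C_d,\chi_1)$ into one coupling constant, as you do, is exactly what re-introduces the loop. To repair, replace the threshold $\Tilde\lambda/(2K)$ in the selection step by $\Tilde\lambda/(2C_d)$, take $\chi_1\le\varepsilon_1(2C_d)(2C_d)^{-p}$, then obtain $\vartheta,K$ from Lemma~\ref{Lem9.3}, then set $\chi=\varepsilon_1(K)$, then $\varepsilon_2$, $\varrho_0$, $\varepsilon_3$; with that change the remainder of your bookkeeping argument goes through.
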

\begin{proof}
    As usual we set $z_0=0$ and abbreviate $\Phi_\lambda (0,\varrho,(Du)^{(\lambda)}_{\varrho})$ by $\Phi_\lambda (\varrho)$ as well as $E_\lambda (z_0,\varrho)$ by $E_\lambda (\varrho)$. To further shorten the notation we neglect the dependency of the constants of $\mathfrak{C}$ in the notation. Our objective is to iterate Lemma \ref{Lem9.3} until the assumptions of Lemma \ref{Lem9.2} are fulfilled. Since the application of Lemma \ref{Lem9.3} with $M_0=M_1$ in all iteration steps would lead to a Hölder exponent $\alpha_1$ depending on $M_1$ and thus on the gradient, we use the Lemma in the first iteration step with $M_0=M_1$ and thereafter with $M_0=1$. \\  
    We start by specifying all constants occurring in Lemma \ref{Lem9.2} and \ref{Lem9.3} in such a way that the statements of both Lemmas apply independently of the case $M_0=M_1$ or $M_0=1$. First let $\alpha_1(\kappa_2)$ denote the Hölder-exponent from Lemma \ref{Lem9.3} according to the choice $M_0=1$ and $\alpha_1^*=\alpha_1^*(\kappa_2, M_1)$ the corresponding exponent for the choice $M_0=M_1$. Without changing the dependency, we can assume $\alpha_1^*\leq \alpha_1$. Furthermore, if $\alpha_1>1/p$, we replace $\alpha_1$ by $1/p$. Next, we choose $\alpha\in (0,\alpha_1)$ in Lemma \ref{Lem9.3} and let $\beta_0:=p\alpha<1$ in Lemma \ref{Lem9.2}. Due to $\alpha<\alpha_1\leq 1/p$, this choice is admissible. Let
    \begin{align}\label{9.51}
        \mathfrak{m} = \mathfrak{m}(\kappa_2, M_1)
        \qquad\text{and}\qquad
        C_d = C_d(M_1,\kappa_{\mathfrak{m}})
    \end{align}
    be the constants of Lemma \ref{Lem9.3} according to the choice $M_0=M_1$. In the following we will neglect the dependency of $\mathfrak{m}$ on $\kappa_2$. Without loss of generality we can assume $\mathfrak{m}(M_1)\geq \mathfrak{m}(1)$ resulting in $\kappa_{\mathfrak{m}(M_1)}\geq \kappa_{\mathfrak{m}(1)}$ and $C_d(M_1,\kappa_{\mathfrak{m}(M_1)}) \geq C_d(1,\kappa_{\mathfrak{m}(1)})$. Thus Lemma \ref{Lem9.3} continues to hold for $M_0=1$ with the constants given in (\ref{9.51}). Define 
    \begin{align*}
        M:= \max\{ M_1 , C_d \},
    \end{align*}
    which fixes the constant 
    \begin{align*}
        \varepsilon_1(C_d) = \varepsilon_1(C_d,\beta,\alpha,\kappa_{8M}, \omega_{4M}(\,\cdot\,))
    \end{align*}
    of Lemma \ref{Lem9.2}. Now we choose
    \begin{align*}
        \chi_1= \varepsilon_1(C_d)
    \end{align*}
    in Lemma \ref{Lem9.3}. This fixes the constants
    \begin{align}\label{9.55}
        \vartheta = \vartheta(M_1,\kappa_{\mathfrak{m}},\chi_1,\alpha)\in (0,1/16], \quad
        K = K(M_1,\kappa_{\mathfrak{m}},\chi_1,\alpha,\eta(\,\cdot\,)) \geq M.
    \end{align}
    Furthermore, we can assume $K(M_1,\kappa_{\mathfrak{m}(M_1)}) \geq K(1,\kappa_{\mathfrak{m}(1)})$ and $\vartheta(M_1,\kappa_{\mathfrak{m}(M_1)}) \leq \vartheta(1,\kappa_{\mathfrak{m}(1)})$ , which implies that Lemma \ref{Lem9.3} is also applicable with the constants defined in (\ref{9.55}) for the choice $M_0=1$. Next we choose
    \begin{align*}
        \varepsilon_1(K) = \varepsilon_1(K,\beta,\alpha,\kappa_{8M}, \omega_{4M}(\,\cdot\,))
    \end{align*}
    according to Lemma \ref{Lem9.2} and let
    \begin{align*}
        \chi= \varepsilon_1(K).
    \end{align*}
    This fixes
    \begin{align*}
        \varepsilon_2 = \varepsilon_2 (M_1,\kappa_{\mathfrak{m}},\chi,\chi_1,\alpha,\eta(\,\cdot\,))\in (0,1]
    \end{align*}
    in Lemma \ref{Lem9.3}. Again we assume $ \varepsilon_2 (M_1,\kappa_{\mathfrak{m}(M_1)}) \leq  \varepsilon_2 (1,\kappa_{\mathfrak{m}(1)})$ implying that Lemma \ref{Lem9.3} is usable in the case $M_0=1$ with $\varepsilon_2 (M_1,\kappa_{\mathfrak{m}(M_1)}) $ instead of $\varepsilon_2 (1,\kappa_{\mathfrak{m}(1)}) $. Let $C_+$ be the constant of Lemma \ref{Lem2.4} and finally define
    \begin{align*}
        \varepsilon_3 = \varepsilon_2^{n_0},\quad\text{where} \quad
        n_0 = 1+\frac{\ln(2^{-1} C_+^{-1} \vartheta^{k(n+2)+\mathds{1}_{\{p<2\}}\alpha_1(2-p)})}{\ln(\varepsilon_2)}
        \quad\text{and}\quad 
        k= \left \lceil{\frac{\ln(\varepsilon_2)}{\alpha p \ln(\vartheta)}}\right \rceil.
    \end{align*}
    Furthermore, assume for the radius $\varrho\leq (\varepsilon_2/(16C_*))^{1/\beta}$, where $C_*$ is the constant on the right-hand side of estimate (\ref{9.03}). By definition, we have $n_0>1$ and $k\in\N$ such that there holds:
    \begin{align}\label{9.591}
        \varepsilon_3 =  \frac{1}{2C_+} \vartheta^{k(n+2)+\mathds{1}_{\{p<2\}} \alpha_1(2-p)} \varepsilon_2
        \qquad\text{and}\qquad
        \vartheta^{k\alpha p}\leq \varepsilon_2
        \qquad\text{and}\qquad
        \varrho^\beta\leq \frac{\varepsilon_2}{16 C_*}.
    \end{align}
    This concludes the set up and we are finally in the position to proof the Lemma. As mentioned this will be achieved by iterating, distinguishing the cases if
    \begin{align}\label{D1s} 
        \chi \big| (Du)_{\vartheta^i \varrho}^{(\lambda_i)} \big|^p \leq E_{\lambda_i}(\vartheta^i \varrho) 
        \qquad\text{or}\qquad
        \big| (Du)_{\vartheta^i \varrho}^{(\lambda_j)} \big| \leq \frac{\lambda_i}{K} \tag{$D_i$}
    \end{align}
    holds at step $i\in \N_0$ or not. If (\ref{D1s}) holds we call it the \textbf{degenerate case}, whereas when (\ref{D1s}) fails, namely when $>$ occurs in both inequalities, we call it the \textbf{non-degenerate case}. As soon as the non-degenerate case occurs, Lemma \ref{Lem9.2} will be applicable by choice of constants and the iteration stops. In the degenerate case at level $i$ we apply Lemma \ref{Lem9.3} and obtain excess decay for certain scaling parameters of the next cylinder. In order to proceed, we to distinguish whether on the next cylinder the \textbf{alternative degenerate case}
    \begin{align}\label{D2s} 
        \chi_1 \big| (Du)_{\vartheta^i \varrho}^{(\lambda_i)} \big|^p \leq E_{\lambda_i}(\vartheta^i \varrho) 
        \qquad\text{or}\qquad
        \big| (Du)_{\vartheta^i \varrho}^{(\lambda_i)} \big| \leq \frac{\lambda_i}{C_d} \tag{$AD_i$}
    \end{align}
    holds. If this is not the case, we again have by choice of constants that Lemma \ref{Lem9.2} will be applicable after switching the scaling parameter $\lambda_i$ and the iteration stops, whereas if this is the case we obtain a excess decay estimate. We visualize the procedure with the following picture:
\begin{figure}[ht]\label{fig1}
\tikzset{
  treenode/.style = {shape=rectangle, rounded corners,
                     draw, align=center,
                     top color=white, bottom color=blue!20},
  root/.style     = {treenode, font=\Large, bottom color=red!30},
  root2/.style     = {treenode, font=\footnotesize, bottom color=green!30},
  env/.style      = {treenode, font=\footnotesize},
  dummy/.style    = {circle,draw}
}
\begin{tikzpicture}
  [
    grow                    = right,
    sibling distance        = 8em,
    level distance          = 11em,
    edge from parent/.style = {draw, -latex},
    every node/.style       = {font=\footnotesize},
    sloped
  ]
  \node [root] {$Q_{\vartheta^i\varrho}^{(\lambda_i)}$}
    child { node [root2] {(\ref{9.506})}
      edge from parent node[above] {$\neg$(\ref{D1s}):}
      node[below]  {Lemma \ref{Lem9.2}} }
    child { node [env] {$Q_{\vartheta^{i+1}\varrho}^{(\Tilde{\lambda})}$ with \\
    $\Tilde{\lambda}\in [\vartheta^{\alpha_1}\lambda_{i},\lambda_{i}]$ \\
    \& (\ref{9.505})}
      child { 
        node [env] {Distinguish: \\$\big| (Du)_{\vartheta^{i+1} \varrho}^{(\vartheta^{\alpha_1}\lambda_{i})}\big| $}
        child { 
            node [root2] {(\ref{9.506}) with \\
            $\lambda_{i+1}=\vartheta^{\alpha_1}\lambda_{i}$}
                edge from parent node [above] {$\leq C_d\vartheta^{\alpha_1}\lambda_{i}$:}
                    node [below] {Lemma \ref{Lem9.2}} }
        child { 
            node [root2] {(\ref{9.506}) with \\
            $\lambda_{i+1}\in [\vartheta^{\alpha_1}\lambda_{i},\lambda_{i}]$}
                edge from parent node [above] {$>C_d\vartheta^{\alpha_1}\lambda_{i}$:}
                    node [below] {Lemma \ref{Lem9.2}}}
            edge from parent node[above] {$\neg$(\ref{D2s}):} 
                }
      child { 
        node [root] {$Q_{\vartheta^{i+1}\varrho}^{(\lambda_{i+1})}$}
        child{
            node [root] {...}
            edge from parent node[above] {Continue} node[below] {iteration}
            }
            edge from parent node[above] {(\ref{D2s}):}
                node[below]  {$\lambda_{i+1}=\vartheta^{\alpha_1}\lambda_{i}$}
                }
         edge from parent node[above] {(\ref{D1s}):}
             node[below]  {Lemma \ref{Lem9.3}} };
\end{tikzpicture}
\caption{Here, the cylinders represent the scaling parameters used in each case under consideration and the arrows indicate the conditions and Lemmas that allow us to move to the next step. The steps at which the iteration progresses are highlighted in red, while it stops at the green steps. The intermediate steps are highlighted in blue.}
\end{figure}
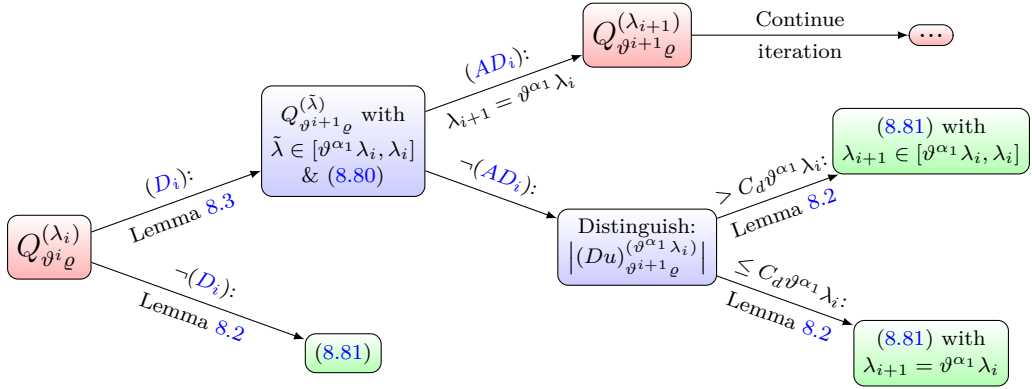\\
    We initialize the iteration by setting
    \begin{align*}
        \lambda_0=1
    \end{align*}
    and obtain immediately from (\ref{9.591}) that (\ref{9.300}) is satisfied for every iteration step $i\in \N_0$, since the later constructed cylinders $Q_{\vartheta^i\varrho}^{(\lambda_i)}$ will be nested. \\
    Assume that we have the \textbf{non-degenerate case} at $i=0$, meaning that (\ref{D1s}) does not hold for $i=0$. To treat this case we verify the assumptions of Lemma \ref{Lem9.2} for the choice of parameters $M_1$, $K$ and $\lambda=\lambda_0 = 1$. We start by establishing that (\ref{9.201}) holds, which is an immediate consequence of (\ref{9.501}). Additionally, hypothesis (\ref{9.202}), i.e. 
    \begin{align*}
        \frac{\lambda_0}{K} \leq \big| (Du)_{\varrho}^{(\lambda_0)} \big| \leq K \lambda_0
    \end{align*}
    is satisfied due to $M_1\leq K$ and the second inequality of $\neg$(\ref{D1s}) for $i=0$. The remaining assumption (\ref{9.203}) is again an immediate consequence of the first inequality of $\neg$(\ref{D1s}) for $i=0$, since we have chosen $\chi=\varepsilon_1(K)$. Then Lemma \ref{Lem9.2} yields that the limit 
    \begin{align*}
        \Gamma_{0}\equiv \lim\limits_{r\downarrow 0} (Du)_r
    \end{align*}
    exists and we set $m=0$. To establish (\ref{9.504})-(\ref{9.507}) for this particular choice of $m$, we first observe that (\ref{9.504}) is trivially satisfied. Moreover, (\ref{9.205}) and (\ref{9.502}) with our choice of $\beta_0$ allow us to deduce that
    \begin{align*}
        \mint{-}\limits_{Q_r^{(\lambda_0)}} |Du - \Gamma_{0} |^p \, dz 
        \leq 
        C \Big( \frac{r}{\varrho} \Big)^{\alpha\beta p} E_{\lambda_0}(\varrho)
        \leq
        C \Big( \frac{r}{\varrho} \Big)^{\alpha\beta p} \lambda_0^p
        \qquad \forall 0<r\leq \varrho,
    \end{align*}
    where the dependency of $C$ is given by $C=C(\kappa_{8M},K,\beta,\alpha)$. This shows (\ref{9.505}) and (\ref{9.506}) for $m=0$. Furthermore, (\ref{9.507}) can be deduced from (\ref{9.206}). Therefore, the Lemma follows for $m=0$ in the non-degenerate case. \\
    Let us now assume that the \textbf{degenerate case} (\ref{D1s}) holds for $i=0$. This case is treated by applying Lemma \ref{Lem9.3} with parameters $M_0=M_1$ and $\lambda=\lambda_0=1$. We again start with the observation that due to (\ref{9.501}) hypothesis (\ref{9.301}) is satisfied. Furthermore, we have by definition of the degenerate case (\ref{D1s}) that assumption (\ref{9.302}) holds. The smallness condition (\ref{9.303}) is easily verified by (\ref{9.502}) and definition (\ref{9.591}) of $\varepsilon_3$. Therefore, we can apply Lemma \ref{Lem9.3}, which yields that for every $\Tilde{\lambda}\in [\vartheta^{\alpha^*_1}\lambda_0,\lambda_0]$ we have $Q_{\vartheta\varrho}^{(\Tilde{\lambda})}\Subset Q_{\varrho}^{(\lambda)}$,
    \begin{align}\label{9.594}
        E_{\Tilde{\lambda}}(\vartheta \varrho) \leq \chi_1 \Tilde{\lambda}^p
        \qquad\text{and}\qquad
        \big|(Du)_{\vartheta\varrho}^{(\Tilde{\lambda})} \big| \leq C_d \lambda.
    \end{align}
    We define $\lambda_1 = \vartheta^{\alpha^*_1}\lambda_0$. At this stage we distinguish, if the alternative degenerate case (\ref{D2s}) holds at level $i=1$ or not. Note that in this case Lemma \ref{Lem9.2} with $C_d$ instead of $K$ must be applied. In the \textbf{alternative non-degenerate case} at level $i=1$, i.e. when (\ref{D2s}) fails to hold for $i=1$, we distinguish two subcases:\\
    \textbf{Case 1}: If we have
    \begin{align*}
        \big|(Du)_{\vartheta\varrho}^{(\lambda_1)} \big| \leq C_d \lambda_1 \leq C_d\leq M,
    \end{align*}
    then (\ref{9.201}) of Lemma \ref{Lem9.2} is satisfied. Furthermore, the second inequality of $\neg$(\ref{D2s}) for $i=1$ implies that 
    \begin{align*}
        \frac{\lambda_1}{C_d} \leq \big| (Du)_{\varrho}^{(\lambda_1)} \big| \leq C_d \lambda_1.
    \end{align*}
    Thus we know that (\ref{9.202}) of Lemma \ref{Lem9.2} holds with $K = C_d$. Finally, the smallness assumption (\ref{9.203}) is an immediate consequence of of the first inequality  of $\neg$(\ref{D2s}) for $i=1$ by choice of $\chi_1=\varepsilon_1(C_d)$. In summary, we showed that Lemma \ref{Lem9.2} is applicable with the parameters $(\vartheta\varrho, \lambda_1,C_d)$ instead of $(\varrho,\lambda,K)$. \\
    \textbf{Case 2}: If we have
    \begin{align*}
        \big|(Du)_{\vartheta\varrho}^{(\lambda_1)} \big| \geq C_d \lambda_1,
    \end{align*}
    then the continuity of the map
    \begin{align*}
        \lambda \mapsto \big|(Du)_{\vartheta\varrho}^{(\lambda)} \big|
    \end{align*}
    and (\ref{9.594})\textsubscript{2} implies the existence of $\lambda_1^*\in [\lambda_1,\lambda]$ such that 
    \begin{align*}
        \big|(Du)_{\vartheta\varrho}^{(\lambda^*_1)} \big| = C_d \lambda^*_1 \leq C_d \leq M.
    \end{align*}
    By abuse of notation, we redefine $\lambda_1$ to be equal to $\lambda_1^*$. From (\ref{9.594})\textsubscript{1} we can deduce that 
    \begin{align*}
        E_{\lambda_1}(\vartheta \varrho) \leq \chi_1 \lambda_1^p 
        =
        \frac{\chi_1}{C_d^p} \big|(Du)_{\vartheta\varrho}^{(\lambda_1)} \big|^p \leq 
        \chi_1 \big|(Du)_{\vartheta\varrho}^{(\lambda_1)} \big|^p
    \end{align*}
    and once more Lemma \ref{Lem9.2} is applicable with the parameters $(\vartheta\varrho, \lambda_1,C_d)$ replacing $(\varrho,\lambda,K)$. \\
    Since Lemma \ref{Lem9.2} can be applied in both cases, we obtain that the limit $\Gamma_0$ exists and set $m=1$. It remains to establish (\ref{9.504})-(\ref{9.507}) for this particular choice of $m$. First observe that (\ref{9.504}) is satisfied, since $\lambda_1\in [\vartheta^{\alpha^*_1}\lambda_0,\lambda_0]$ holds even after replacing $\lambda_1$. Moreover, (\ref{9.205}) and (\ref{9.594})\textsubscript{3} allow us to deduce for every $ 0<r\leq \vartheta\varrho$ that
    \begin{align}\label{9.595}
        \mint{-}\limits_{Q_r^{(\lambda_1)}} |Du - \Gamma_{0} |^p \, dz 
        \leq 
        C \Big( \frac{r}{\vartheta\varrho} \Big)^{\alpha\beta p} E_{\lambda_1} (\vartheta\varrho)
        \leq
        C \Big( \frac{r}{\vartheta\varrho} \Big)^{\alpha\beta p} \lambda_1^p
        \leq
        C \Big( \frac{r}{\vartheta\varrho} \Big)^{\alpha\beta p} \lambda_1^p,
    \end{align}
    where the dependency of $C$ is given by $C=C(\kappa_{8M+1},C_d,\beta,\alpha)$ and we keep in mind that $\lambda_1$ actually depends on the considered case. This shows (\ref{9.506}) for $m=1$. Furthermore, (\ref{9.506}) immediately follows from (\ref{9.206}), since $C_d\leq K$. It remains to show (\ref{9.505}) for $l=0$. This will be achieved by means of the preceding inequality (\ref{9.595}) and the smallness assumption (\ref{9.502}). We estimate
    \begin{align}\label{9.596}
        \bigg( \mint{-}\limits_{Q_{\varrho}^{(\lambda_0)}} & |Du - \Gamma_{0} |^p \, dz \bigg)^{\frac{1}{p}}
        \nonumber \\ & \leq
        \bigg( \mint{-}\limits_{Q_{\varrho}^{(\lambda_0)}} \big| Du - (Du)_{\varrho}^{(\lambda_0)}  \big|^p \, dz \bigg)^{\frac{1}{p}}
        +
        \big| (Du)_{\varrho}^{(\lambda_0)} - (Du)_{\vartheta\varrho}^{(\lambda_1)} \big|
        +
        \big| (Du)_{\vartheta\varrho}^{(\lambda_1)} - \Gamma_{0} \big|
        \nonumber \\ & \leq
        C \Big[ E_1(\varrho) + \mathds{1}_{\{p<2\}} \big|(Du)^{(\lambda_0)}_{\varrho}\big|^{\frac{p(2-p)}{2}} E_1^{\frac{p}{2}}(\varrho)
        \Big]^{\frac{1}{p}}
        +
        \bigg( \mint{-}\limits_{Q_{\vartheta\varrho}^{(\lambda_1)}} \big| Du - (Du)_{\varrho}^{(\lambda_0)}  \big|^p \, dz \bigg)^{\frac{1}{p}}
        \nonumber \\ & \qquad +
        \bigg( \mint{-}\limits_{Q_r^{(\lambda_1)}} |Du - \Gamma_{0} |^p \, dz \bigg)^{\frac{1}{p}}
         \leq
        C \lambda_0 
        +
        C \vartheta^{-\frac{n+2}{p}} \lambda_1^{\frac{p-2}{p}} \lambda_0
        +
        C\lambda_1 \leq C \lambda_0.
    \end{align}
    This concludes the non-degenerate case. \\
    For the first iteration step it remains to treat the \textbf{alternative degenerate case} (\ref{D2s}) at level $i=1$. Here we do not have to adapt $\lambda_1$ as in the preceding case and hence we have 
    \begin{align}\label{9.597}
        \lambda_1 = \vartheta^{\alpha^*_1}\lambda_0.
    \end{align}
    Furthermore, the inequalities (\ref{D2s})\textsubscript{1} and (\ref{9.594})\textsubscript{1} with the choice $\Tilde{\lambda}= \lambda_1$ imply
    \begin{align}\label{9.5971}
        \big|(Du)_{\vartheta\varrho}^{(\lambda_1)} \big| 
        \leq 
        \bigg(\frac{E_{\lambda_1}(\vartheta\lambda)}{\chi_1}\bigg)^{\frac{1}{p}}
        \leq 
        \lambda_1,
    \end{align}
    whereas (\ref{D2s})\textsubscript{2} trivially implies the same inequality. Since this is the only case where we could not obtain the results of Lemma \ref{Lem9.5} from Lemma \ref{Lem9.2}, we have to continue with the iteration scheme. \\
    Now we perform the \textbf{iteration step $j\to j+1$} when $j\geq 1$: Hence we have that the non-degenerate case does not occur up to level $j$. From the preceding iteration steps we know that the companions of (\ref{9.595}), (\ref{9.597}) and (\ref{9.5971}) hold for $l\in \{0,\ldots,j\}$, which implies
    \begin{align}\label{9.598}
        \begin{cases}
            E_{\lambda_l}(\vartheta^l\varrho) \leq \chi_1 \lambda_l^p & \text{for } l \in \{0,\ldots,j\},\\
            \big| (Du)_{\varrho}^{(\lambda_0)} \big|\leq M_1 &\text{and}\quad 
            \big| (Du)_{\vartheta^l \varrho}^{(\lambda_l)} \big| \leq \lambda_l \qquad
            \text{for } l \in \{1,\ldots,j\},\\
            \lambda_1 = \vartheta^{\alpha^*} \lambda_0 
            &\text{and}\quad 
            \lambda_l = \vartheta^\alpha \lambda_{l-1} \qquad
            \text{for } l \in \{2,\ldots,j\}.
        \end{cases}
    \end{align}
    We infer from (\ref{9.02}) and the preceding inequality (\ref{9.598})\textsubscript{1} that for any $l\in \{0,\ldots,j\}$ there holds
    \begin{align}\label{9.5981}
        \mint{-}\limits_{Q_{\vartheta^l \varrho}^{(\lambda_l)}} \big| Du - (Du)_{\vartheta^l \varrho}^{(\lambda_l)} \big|^p \, dz
        \leq
        C(p) \Big[ E_{\lambda_l}(\vartheta^l \varrho) + \mathds{1}_{\{p<2\}}\big|(Du)_{\vartheta^l \varrho}^{(\lambda_l)} \big|^{\frac{p(2-p)}{2}} E^{\frac{p}{2}}_{\lambda_l}(\vartheta^l \varrho) \Big] \leq C(p) \lambda_l^p.
    \end{align}
    Now we again distinguish between the non-degenerate and degenerate case at level $j$. If we assume the \textbf{non-degenerate case}, that is $\neg$(\ref{D1s}) for $i=j$, then due to (\ref{9.598})\textsubscript{2} and $\lambda_j\leq 1$ we have that hypothesis (\ref{9.201}) of Lemma \ref{Lem9.2} is satisfied with the parameters $(\vartheta^j\varrho,\lambda_j,K,1)$ replacing $(\varrho,\lambda,K,M)$. Furthermore, the second inequality of $\neg$(\ref{D1s}), $K\geq 1$ and (\ref{9.598})\textsubscript{2} ensures
    \begin{align*}
        \frac{\lambda_j}{K} \leq \big| (Du)_{\vartheta^j \varrho}^{(\lambda_j)} \big| \leq \lambda_j K
    \end{align*}
    and we deduce that (\ref{9.202}) holds. The first inequality of $\neg$(\ref{D1s}) for $i=j$ implies assumption (\ref{9.203}) due to our particular choice of $\chi=\varepsilon_1(K)$. Therefore, Lemma \ref{Lem9.2} is applicable and we infer that the limit $\Gamma_{0}$ exists. Now we set $m=j$ and verify the remaining assertions (\ref{9.504})-(\ref{9.506}) of the Lemma for this particular choice. We start with (\ref{9.504}), which is a easy consequence of (\ref{9.598})\textsubscript{3}. Let us now establish (\ref{9.505}) for $l=m$ and (\ref{9.506}) for our choice of $m$, with the help of the conclusion (\ref{9.205}) of Lemma \ref{Lem9.2}. If we use (\ref{9.598})\textsubscript{1} and recall $\beta_0 = \alpha p$, we obtain
    \begin{align}\label{9.599}
        \mint{-}\limits_{Q_r^{(\lambda_m)}} |Du - \Gamma_{0} |^p \, dz 
        \leq 
        C \Big( \frac{r}{\vartheta^m \varrho} \Big)^{\alpha\beta p} E_{\lambda_m} (\vartheta^m \varrho)
        \leq
        C \Big( \frac{r}{\vartheta^m \varrho} \Big)^{\alpha\beta p} \lambda_m^p
        \qquad \forall 0<r\leq \vartheta^m \varrho,
    \end{align}
    where $C=C(\kappa_{8M},K,\beta,\alpha)$. The second statement (\ref{9.206}) of Lemma \ref{Lem9.2} implies (\ref{9.507}). To conclude the non-degenerate case we only need to verify (\ref{9.505}) for $l\in \{0,\ldots,m-1\}$. To this end, we use the inequalities (\ref{9.5981}) and (\ref{9.599}) to estimate
    \begin{align}\label{9.5991}
        \bigg( \mint{-}\limits_{Q_{\vartheta^l \varrho}^{(\lambda_l)}} & |Du - \Gamma_{0} |^p \, dz \bigg)^{\frac{1}{p}}
        \nonumber \\ & \leq
        \bigg( \mint{-}\limits_{Q_{\vartheta^l \varrho}^{(\lambda_l)}} \big| Du - (Du)_{\vartheta^l \varrho}^{(\lambda_l)} \big|^p \, dz \bigg)^{\frac{1}{p}}
        +
        \sum\limits_{i=l}^{m-1}
        \big| (Du)_{\vartheta^i \varrho}^{(\lambda_i)}- (Du)_{\vartheta^{i+1} \varrho}^{(\lambda_{i+1})} \big|
        +
        \big| (Du)_{\vartheta^m\varrho}^{(\lambda_m)} - \Gamma_{0} \big|
        \nonumber \\ & \leq
        C(p) \lambda_l
        +
        \sum\limits_{i=l}^{m-1}
        \bigg( \mint{-}\limits_{Q_{\vartheta^{i+1}\varrho}^{(\lambda_{i+1})}} \big| Du - (Du)_{\vartheta^{i}\varrho}^{(\lambda_i)}  \big|^p \, dz \bigg)^{\frac{1}{p}}
        +
        \bigg( \mint{-}\limits_{Q_r^{(\lambda_m)}} |Du - \Gamma_{0} |^p \, dz \bigg)^{\frac{1}{p}}
        \nonumber \\ & \leq
        C\lambda_l
        +
        C \vartheta^{-\frac{n+2}{p}} \sum\limits_{i=l}^{m-1} \bigg( \frac{\lambda_i}{\lambda_{i+1}} \bigg)^{\frac{2-p}{p}} \lambda_i
        +
        C\lambda_m 
        \leq 
        C\lambda_l
        +C
        \sum\limits_{i=l}^{m} \lambda_i
        \leq
        C \lambda_l.
    \end{align}
    Therefore the assertion of the Lemma holds for $m=j$. \\
    If on the contrary the \textbf{degenerate case} (\ref{D1s}) holds for $i=j$, we want to apply Lemma \ref{Lem9.3} with the parameters $(\vartheta^j\varrho, \lambda_j,1)$ replacing $(\varrho,\lambda,M_0)$. We now verify that the hypotheses of the Lemma are satisfied. We start with assumption (\ref{9.301}), which is trivially fulfilled by (\ref{9.598})\textsubscript{2}. Since (\ref{9.302}) is implied by the case under consideration (\ref{D1s}), only the smallness condition (\ref{9.303}) needs to be validated. The first upper bound, i.e. $E_{\lambda_j}\leq \lambda_j^p$, is ensured by (\ref{9.598})\textsubscript{1}. Therefore, we only need to establish $E_{\lambda_j}(\vartheta^j \varrho) \leq \varepsilon_2$, for which we distinguish two cases. In the first case $j>k$ we obtain by choice (\ref{9.591}) of $\varepsilon_2$ that $\lambda_j^p\leq \vartheta^{(j-1)\alpha p}\leq \vartheta^{k\alpha p}\leq \varepsilon_2$. For the treatment of the second case $j\leq k$ we use Lemma \ref{Lem2.4}, the hypothesis (\ref{9.503}), the already existing estimates (\ref{9.598})\textsubscript{1\&2} together with the inequalities $\vartheta^{j\alpha_1} \leq \lambda_j\leq 1$, $\vartheta< 1/4$, (\ref{9.03}) and (\ref{9.591}) to estimate
    \begin{align}\label{9.59911}
        E_{\lambda_j}(\vartheta^j \varrho) 
        &\leq
        \mint{-}\limits_{Q_{\vartheta^{j}\varrho}^{(\lambda_{j})}}  \Big| V_{| (Du)_{\vartheta^{j}\varrho}^{(\lambda_j)}|} \Big( Du - (Du)_{\vartheta^{j}\varrho}^{(\lambda_j)} \Big) \Big|^2 \, dz
        + 
        C_*(\vartheta^j \varrho)^\beta  \big| (Du)_{\vartheta^j \varrho}^{(\lambda_j)} \big|^{p} \Bigg( \bigg[
        \Phi_{\lambda_j}(\vartheta^j\varrho) 
        \nonumber \\ & \qquad + 
        \mathds{1}_{\{p<2\}} \big| (Du)_{\vartheta^j \varrho}^{(\lambda_j)} \big|^{\frac{p(2-p)}{2}} \Phi_{\lambda_j}(\vartheta^j\varrho)^\frac{p}{2} 
        +
        \big| (Du)_{\vartheta^j \varrho}^{(\lambda_j)} \big|^p + \lambda^{p(2-p)} \big| (Du)_{\vartheta^j \varrho}^{(\lambda_j)} \big|^{p(p-1)} 
        \nonumber \\ & \qquad +
        \lambda^{p(2-p)} \Big( \Phi_{\lambda_j}(\vartheta^j\varrho) 
        + 
        \mathds{1}_{\{p<2\}} \big| (Du)_{\vartheta^j \varrho}^{(\lambda_j)} \big|^{\frac{p(2-p)}{2}} \Phi_{\lambda_j}(\vartheta^j\varrho)^\frac{p}{2} \Big)^{p-1} \bigg]^{\frac{\beta}{p}} + ( 1+\lambda_j^{2-p})^{\frac{\beta}{2}} \Bigg)
        \nonumber \\ & \leq
        C_+\lambda_j^{p-2} \vartheta^{-j(n+2)} \Phi_1(\varrho)
        +
        C_* 8\varrho^\beta
        \leq
        C(p) \vartheta^{-k[(n+2)+\mathds{1}_{\{p<2\}} \alpha_1(2-p)]} \varepsilon_3 + 8 C_* \varrho^\beta
        \leq 
        \varepsilon_2.
    \end{align}
    This allows us to apply Lemma \ref{Lem9.3} with the parameters $(M_0,\alpha_1)$ replacing $(1,\alpha_1)$, which yields for every $\Tilde{\lambda}\in [\vartheta^{\alpha_1}\lambda_j,\lambda_j]$ that $Q_{\vartheta\varrho}^{(\Tilde{\lambda})}\Subset Q_{\varrho}^{(\lambda_j)}$,
    \begin{align}\label{9.5993}
        E_{\Tilde{\lambda}}(\vartheta^{j+1} \varrho) \leq \chi_1 \Tilde{\lambda}^p 
        \qquad\text{and}\qquad
        \big|(Du)_{\vartheta^{j+1} \varrho}^{(\Tilde{\lambda})} \big| \leq C_d \lambda_j.
    \end{align}
    We set $\lambda_{j+1}= \vartheta^{\alpha_1}\lambda_j$ and distinguish again the alternative non-degenerate and degenerate case at the level $j+1$. In the \textbf{alternative non-degenerate case}, i.e. when (\ref{D2s}) fails for $i=j+1$, we once more consider two sub-cases: \\
    \textbf{Case 1}: If we have
    \begin{align*}
        \big|(Du)_{\vartheta^{j+1} \varrho}^{(\lambda_{j+1})} \big| \leq C_d \lambda_{j+1} \leq C_d\leq M,
    \end{align*}
    then (\ref{9.201}) of Lemma \ref{Lem9.2} is satisfied. Furthermore, the second inequality of $\neg$(\ref{D2s}) for $i=j+1$ implies that
    \begin{align*}
        \frac{\lambda_{j+1}}{C_d} \leq \big| (Du)_{\vartheta^{j+1} \varrho}^{(\lambda_{j+1})} \big| \leq C_d \lambda_{j+1}.
    \end{align*}
    and we observe that (\ref{9.202}) of Lemma \ref{Lem9.2} is satisfied for $K = C_d$. Finally, the smallness assumption (\ref{9.203}) is an immediate consequence of the first inequality of $\neg$(\ref{D2s}) for $i=1$ by choice of $\chi_1=\varepsilon_1(C_d)$. In summary, we showed that Lemma \ref{Lem9.2} is applicable with $(\vartheta\varrho, \lambda_{j+1},C_d)$ replacing $(\varrho,\lambda,K)$. \\
    \textbf{Case 2}: If we have on the contrary that
    \begin{align*}
        \big|(Du)_{\vartheta^{j+1} \varrho}^{(\lambda_{j+1})} \big| \geq C_d \lambda_{j+1},
    \end{align*}
    then (\ref{9.5993})\textsubscript{2} implies, similar to the preceding, the existence of $\lambda_{j+1}^*\in [\lambda_{j+1},\lambda_j]$ such that 
    \begin{align*}
        \big|(Du)_{\vartheta^{j+1} \varrho}^{(\lambda^*_{j+1})} \big| = C_d \lambda^*_{j+1} \leq C_d \leq M.
    \end{align*}
    By abuse of notation, we once more redefine $\lambda_{j+1}$ to be equal to $\lambda_{j+1}^*$. Then (\ref{9.5993})\textsubscript{1} implies 
    \begin{align*}
        E_{\lambda_{j+1}}(\vartheta^{j+1} \varrho) \leq \chi_1 \lambda_{j+1}^p 
        =
        \frac{\chi_1}{C_d^p} \big|(Du)_{\vartheta^{j+1} \varrho}^{(\lambda_1)} \big|^p \leq 
        \chi_1 \big|(Du)_{\vartheta^{j+1} \varrho}^{(\lambda_1)} \big|^p
    \end{align*}
    and Lemma \ref{Lem9.2} is applicable with the parameters $(\vartheta\varrho, \lambda_{j+1},C_d)$ replacing $(\varrho,\lambda,K)$. \\
    Since Lemma \ref{Lem9.2} can be applied in both cases, we can deduce that the limit $\Gamma_0$ again exists and set $m=j+1$. It remains to establish (\ref{9.504})-(\ref{9.507}) for this particular choice of $m$. We observe that (\ref{9.504}) holds, since $\lambda_{j+1}\in [\vartheta^{\alpha_1}\lambda_j,\lambda_j] $. From (\ref{9.205}) and (\ref{9.5993})\textsubscript{3} we further obtain for every $0<r\leq \vartheta^{j+1}\varrho$, similar to (\ref{9.595}), that there holds
    \begin{align*}
        \mint{-}\limits_{Q_r^{(\lambda_{j+1})}} |Du - \Gamma_{0} |^p \, dz 
        \leq 
        C \Big( \frac{r}{\vartheta^{j+1}\varrho} \Big)^{\alpha\beta p} E_{\lambda_1} (\vartheta^{j+1}\varrho)
        \leq
        C \Big( \frac{r}{\vartheta^{j+1}\varrho} \Big)^{\alpha\beta p} \lambda_{j+1}^p,
    \end{align*}
    where $C=C(\kappa_{8M},C_d,\beta,\alpha)$. This shows (\ref{9.506}) with $m=j+1$ and (\ref{9.505}) for $l=m$. Furthermore, (\ref{9.507}) follows again from (\ref{9.206}), where we once more use that $C_d\leq K$. It remains to show (\ref{9.505}) for $l\in \{ 0,\ldots,m-1 \}$. Since this can be done in exactly the same manner as (\ref{9.5991}), we skip the computation. Therefore, the Lemma follows for the parameter $m=j+1$.\\
    It remains to treat the \textbf{alternative degenerate case} \ref{D2s} for $i=j+1$. Here we do not have to adapt $\lambda_{j+1}=\vartheta^{\alpha_1}\lambda_j$ and obtain similar to the case $j+1=1$ that
    \begin{align}
        \big|(Du)_{\vartheta\varrho}^{(\lambda_{j+1})} \big|
        \leq 
        \lambda_{j+1},
    \end{align}
    which concludes the proof of the iteration step. \\
    We are now in the position to finish the proof of the Lemma. We call $m$ the \textit{switching-index} from the degenerate to the non-degenerate case. If $m<\infty$, the iteration scheme stops at the switching-index $m$ and the Lemma follows immediately from the already established. It remains to treat the case $m=\infty$. To do this, we observe that for any $0<r\leq \varrho$ we can find $j\in \N_0$ such that $\vartheta^{j+1}< r \leq \vartheta^j \varrho$. Since we have for the scaling parameter $\lambda_j\leq 1$, there holds $Q_r\subset Q_{\vartheta^j \varrho}^{(\lambda_j)}$. This allows us to estimate the cylindrical mean with the help of inequality (\ref{9.5981}) as follows:
    \begin{align*}
        \big| (Du)_r \big| 
        &\leq
        \big| (Du)_r - (Du)_{\vartheta^j \varrho}^{(\lambda_j)} \big| + \big| (Du)_{\vartheta^j \varrho}^{(\lambda_j)} \big|
        \leq
        \bigg( \mint{-}\limits_{Q_r} \big| Du - (Du)_{\vartheta^j \varrho}^{(\lambda_j)} \big|^p \, dz \bigg)^{\frac{1}{p}} 
        +
        M_1 \lambda_j
        \nonumber\\ & \leq
        \bigg( \frac{|Q_{\vartheta^j \varrho}^{(\lambda_j)}|}{|Q_r|} \mint{-}\limits_{Q_{\vartheta^j \varrho}^{(\lambda_j)}} \big| Du - (Du)_{\vartheta^j \varrho}^{(\lambda_j)} \big|^p \, dz \bigg)^{\frac{1}{p}} 
        +
        M_1 \lambda_j
        \leq
        \big( \vartheta^{-(n+2)} \lambda_j^{2-p} \lambda_j^p \big)^{\frac{1}{p}} 
        +
        M_1 \lambda_j
        \nonumber\\ &  \leq 
        \vartheta^{-\frac{n+2}{p}} (1+M_1) \lambda_j \vartheta^{\frac{2j\alpha}{p}}
        \leq 
        \vartheta^{-\frac{n+2+2\alpha}{p}} (1+M_1) \lambda_j \bigg( \frac{r}{\varrho} \bigg)^{\frac{2j\alpha}{p}}.
    \end{align*}
    and we deduce 
    \begin{align*}
        \Gamma_0 \equiv \lim\limits_{r\downarrow 0} (Du)_r = 0.
    \end{align*}
    It remains to show (\ref{9.505}) for $j\in \N$. To this end, we again use (\ref{9.598}) to derive
    \begin{align*}
        \mint{-}\limits_{Q_{\vartheta^j \varrho}^{(\lambda_j)}} | Du-\Gamma_0|^p \, dz
        &\leq 
        2^{p-1} \Bigg( \mint{-}\limits_{Q_{\vartheta^j \varrho}^{(\lambda_j)}} \big| Du - (Du)_{\vartheta^j \varrho}^{(\lambda_j)} \big|^p \, dz \Bigg) 
        +
        \big| (Du)_{\vartheta^j \varrho}^{(\lambda_j)} \big|^p
        \nonumber\\ & \leq
        2^{p-1} (1+M_1^p) \lambda_j^p.
    \end{align*}
    This concludes the proof of the Lemma.
\end{proof}
We are now in the setting to proof the main Theorem \ref{Theo1} in the sub-quadratic case. 

\subsection{Proof of partial regularity in the super-quadratic case}

As in the proof of the preceding Lemma \ref{Lem9.5} we omit the dependencies on $\mathfrak{C}$ of the constants. We denote with $\alpha_1\in (0,1/p]$ the constant in Lemma \ref{Lem9.5} and define $\alpha=\alpha_1/p \in (0,\alpha_1)$. This Lemma will now be applied for some $z_0\in \Omega_T \setminus (\Sigma_1 \cup \Sigma_2)$, where $\Sigma_1$ and $\Sigma_2$ were specified in Theorem \ref{Theo2}. Since $z_0 \notin \Sigma_2$, there exists $M_1\geq 1$ and $\varrho_1>0$ such that $Q_{2\varrho_1}(z_0)\subset\Omega_T$ and $|(Du)_{z_0;\varrho}|< M_1$ for every $0<\varrho \leq \varrho_1$. Now we check that the smallness assumptions (\ref{9.502}) is satisfied for a particular $0<\varrho \leq \min\{\varrho_0,\varrho_1\}$, where $\varrho_0$ denotes the constant of Lemma \ref{Lem9.5}. To this end, let $\varepsilon_3$ be the smallness parameter of the same Lemma and recall the definition (\ref{8.02}) of $E_1(z_0,\varrho)$. Then we infer from the upper bound (\ref{9.03}), where we denote with $C_*$ the corresponding constant, for $0<\varrho \leq \min\{\varrho_0,\varrho_1\}$ the estimate
\begin{align*}
    &E_1(z_0,\varrho) \leq \Phi_1 \big(z_0,\varrho,        
    (Du)_{z_0;\varrho}\big) + C_* \varrho^\beta \big| (Du)_{z_0;\varrho} \big|^{p} \Bigg( \bigg[
    \Phi_\lambda\big(z_0,\varrho, (Du)_{z_0;\varrho} \big) 
    \nonumber \\ & \qquad + 
    \mathds{1}_{\{p<2\}}\big| (Du)_{z_0;\varrho} \big|^{\frac{p(2-p)}{2}} \Phi_1\big(z_0,\varrho, (Du)_{z_0;\varrho} \big)^\frac{p}{2} 
    +
    \big|(Du)_{z_0;\varrho}\big|^p +  \big|(Du)_{z_0;\varrho} \big|^{p(p-1)} 
    \nonumber \\ & \qquad +
    \Big( \Phi_1\big(z_0,\varrho, (Du)_{z_0;\varrho} \big) 
    + 
    \mathds{1}_{\{p<2\}} \big| (Du)_{z_0;\varrho} \big|^{\frac{p(2-p)}{2}} \Phi_1\big(z_0,\varrho, (Du)_{z_0;\varrho} \big)^\frac{p}{2} \Big)^{p-1} \bigg]^{\frac{\beta}{p}} + 2^{\frac{\beta}{2}} \Bigg)
    \nonumber\\ & \leq
    \Phi_1 \big(z_0,\varrho,        
    (Du)_{z_0;\varrho}\big) + C_* \varrho^\beta M_1^p \Bigg( \bigg[
    \Phi_\lambda\big(z_0,\varrho, (Du)_{z_0;\varrho} \big) 
    + 
    \mathds{1}_{\{p<2\}} M_1^{\frac{p(2-p)}{2}} \Phi_1\big(z_0,\varrho, (Du)_{z_0;\varrho} \big)^\frac{p}{2} 
    \nonumber \\ & \qquad +
    M_1^p +  M_1^{p(p-1)} 
    +
    \Big( \Phi_1\big(z_0,\varrho, (Du)_{z_0;\varrho} \big) 
    + 
    \mathds{1}_{\{p<2\}} M_1^{\frac{p(2-p)}{2}} \Phi_1\big(z_0,\varrho, (Du)_{z_0;\varrho} \big)^\frac{p}{2} \Big)^{p-1} \bigg]^{\frac{\beta}{p}} + 2^{\frac{\beta}{2}} \Bigg).
\end{align*}
Since $z_0\notin \Sigma_1$, we furthermore have by definition (\ref{6.01}) of $\Phi_1^{\frac{\beta}{p}}\big( z_0,\varrho, (Du)_{z_0;\varrho} \big)$
\begin{align*}
    \limsup\limits_{\varrho\downarrow 0} \Phi_1\big( z_0,\varrho, (Du)_{z_0;\varrho} \big) = 0.
\end{align*}
This ensures the existence of $\varrho_2(\varepsilon_3)>0$ such that
\begin{align*}
    \Phi_1\big( z_0,\varrho, (Du)_{z_0;\varrho} \big) \leq \frac{\varepsilon_3}{2} \leq M_1^p \qquad \forall 0<\varrho\leq \varrho_2
\end{align*}
and we conclude that the smallness assumption (\ref{9.502}) is satisfied for
\begin{align*}
    \varrho= \frac{1}{4} \min \Bigg\{\varrho_0,\varrho_1,\varrho_2, \bigg(\frac{\varepsilon_3}{16 C_* M_1^{p+p\beta}} \bigg)^{\frac{1}{\beta}} \Bigg\}.
\end{align*}
From measure theory we further know that the mappings $z\mapsto (Du)_{z;\varrho}$ and $z\mapsto E_1(z,\varrho)$ are continuous. This implies the existence of $0<R\leq \varrho/2$ such that $\big|(Du)_{z;\varrho}\big|<M_1$ and $E_1(z,\varrho)<\varepsilon_3$ holds for all $z\in Q_R(z_0)$. By choice of $R$ we furthermore have $Q_R(z)\subset Q_{2\varrho}(z_0)\subset\Omega_T$ for every $z\in Q_R(z_0)$. Thus, Lemma \ref{Lem9.5} is applicable for every $z\in Q_R(z_0)$ with the particular choices $\alpha$ and $M_1$ from the beginning of this section. From here on, we can proceed as in \cite[Section 8.4 and 9.4]{bogelein2013regularity} to obtain the Hölder continuity for the Lebesgue representative $z\mapsto \Gamma_z$ of $Du$.  This completes the proof of Theorem \ref{Theo1} and \ref{Theo2}.

\phantom{a}\\
\lyxaddress{$\hspace*{1em}$\textbf{Fabian Bäuerlein}\\
 Fachbereich Mathematik, Universität Salzburg \\
 Hellbrunner Str. 34, 5020 Salzburg, Austria.\\
 \textit{E-mail address}: fabian.baeuerlein@plus.ac.at}

\end{document}